\newcommand{\Ai}{\mathrm{Ai}}
\newcommand{\Mei}{\mathrm{Mei}}
\newcommand{\gfrac}[2]{\genfrac{}{}{0pt}{}{#1}{#2}}
\newcommand{\MeijerG}[5]{G^{#1}_{#2} \left( \left. \gfrac{#3}{#4} \right\rvert {#5} \right)}
\newcommand{\compC}{\mathbb{C}}
\newcommand{\realR}{\mathbb{R}}
\newcommand{\intZ}{\mathbb{Z}}
\newcommand{\halfH}[1][\theta]{\mathbb{H}_{#1}}
\newcommand{\bigO}{\mathcal{O}}
\newcommand{\g}{\widetilde{g}}
\newcommand{\E}{\mathbb{E}}
\newcommand{\I}{\widetilde{I}}
\newcommand{\G}{\widetilde\Psi^{(\Mei)}}
\newcommand{\Y}{\widetilde{Y}}
\newcommand{\T}{\widetilde{T}}
\newcommand{\St}{\widetilde{S}}
\newcommand{\J}{\widetilde{J}_c}
\newcommand{\V}{\widetilde{V}}
\newcommand{\Pb}{\widetilde P^{(b)}}
\newcommand{\Vb}{\widetilde V^{(b)}}
\renewcommand{\v}{\widetilde{v}}
\renewcommand{\c}{\widehat{c}}
\newcommand{\N}{\widetilde{N}}
\newcommand{\U}{\widetilde{U}}
\newcommand{\f}{\widetilde{f}}
\renewcommand{\gg}{\widetilde{g}}
\renewcommand{\P}{\widetilde{P}}
\newcommand{\Pmodeltilde}{\widetilde{\mathsf{P}}}
\newcommand{\C}{\widetilde{C}}
\newcommand{\R}{\widetilde{R}}
\newcommand{\Q}{\widetilde{Q}}
\newcommand{\Pinfty}{\widetilde P^{(\infty)}}
\newcommand{\s}{\widetilde{s}}
\newcommand{\n}{\widetilde{n}}
\renewcommand{\b}{\widetilde{b}}
\renewcommand{\a}{\widetilde{a}}
\newcommand{\tP}{\mathcal{P}}
\newcommand{\tR}{\mathcal{R}}
\newcommand{\tRhat}{\widehat{\mathcal{R}}}
\newcommand{\Shat}{\widehat{S}}
\newcommand{\Yhat}{\widehat{Y}}
\newcommand{\SigmaR}{\pmb{\mathit{\Sigma}}}
\newcommand{\DeltaR}{\Delta_{\pmb{\mathit{\Sigma}}}}
\newcommand{\DeltatildeR}{\Delta_{\widetilde{\pmb{\mathit{\Sigma}}}}}
\newcommand{\Z}{\widetilde{Z}}
\newcommand{\Psitilde}{\widetilde{\Psi}}
\newcommand{\Mcyclic}{M_{\mathrm{cyclic}}}
\DeclareMathOperator{\supp}{supp}
\DeclareMathOperator{\diag}{diag}
\newtheorem{theorem}{Theorem}[section]
\newtheorem{lemma}[theorem]{Lemma}
\newtheorem{prop}[theorem]{Proposition}
\newtheorem{RHP}[theorem]{RH problem}
\theoremstyle{remark}
\newtheorem{rmk}[theorem]{Remark}
\theoremstyle{definition}
\numberwithin{equation}{section}
\begin{document}

\title{A vector Riemann-Hilbert approach to the Muttalib-Borodin ensembles}
\author{Dong Wang\footnotemark[1] ~ and \, Lun Zhang\footnotemark[2]}
\maketitle
\renewcommand{\thefootnote}{\fnsymbol{footnote}}
\footnotetext[1]{
  School of Mathematical Sciences, University of Chinese Academy of Sciences, Beijing 100049, P.~R.~China. E-mail: wangdong\symbol{'100}wangd-math.xyz}
\footnotetext[2]{School of Mathematical Sciences and Shanghai Key Laboratory for Contemporary Applied Mathematics, Fudan University, Shanghai 200433, P.~R.~China. E-mail: lunzhang\symbol{'100}fudan.edu.cn}
\begin{abstract}
In this paper, we consider the Muttalib-Borodin ensemble of Laguerre type, a determinantal point process on $[0,\infty)$ which depends on the varying weights $x^{\alpha}e^{-nV(x)}$, $\alpha>-1$, and a parameter $\theta$. For $\theta$ being a positive integer, we derive asymptotics of the associated biorthogonal polynomials near the origin for a large class of potential functions $V$ as $n\to \infty$. This further allows us to establish the hard edge scaling limit of the correlation kernel, which is previously only known in special cases and conjectured to be universal. Our proof is based on a Deift/Zhou nonlinear steepest descent analysis of two $1 \times 2$ vector-valued Riemann-Hilbert problems that characterize the biorthogonal polynomials and the explicit construction of $(\theta+1)\times(\theta+1)$-dimensional local parametrices near the origin in terms of Meijer G-functions.
\end{abstract}

\setcounter{tocdepth}{2} \tableofcontents

\section{Introduction and statement of main results}
\subsection{The model}
The Muttalib-Borodin ensemble refers to $n$ particles $x_1< \dotsb < x_n$ distributed on $[0,\infty)$, following a probability density function of the form
\begin{equation} \label{eq:bioLag}
  \frac{1}{\mathcal{Z}_n}\Delta(x_1,\ldots,x_n)\Delta(x_1^\theta,\ldots,x_n^\theta)\prod_{j=1}^nw(x_j), \qquad \theta >0,
\end{equation}
where $w$ is a weight function over the positive real axis, $\mathcal{Z}_n$ is the normalization constant, and $\Delta(x_1,\ldots,x_n)=\prod_{1\leq i <j \leq n}(x_j-x_i)$ is the standard Vandermonde determinant. As a generalization of the classical unitary invariant ensemble \cite{Anderson-Guionnet-Zeitouni10,Forrester10}, which corresponds to $\theta=1$, this ensemble was first introduced by Muttalib as a toy model in the studies of quasi-$1$ dimensional disordered conductors \cite{Muttalib95}. Because of the two body interaction term $\Delta(x_1^\theta,\ldots,x_n^\theta)$, it gives a more effective description of the disorder phenomena than the classical random matrix theory; cf. \cite{Beenakker97,Lueck-Sommers-Zirnbauer06,Katori-Takahashi12} for some concrete physical models related to \eqref{eq:bioLag} with specified $\theta$ and $w$. Since this ensemble was further studied by Borodin under the general framework of biorthogonal ensembles \cite{Borodin99,Desrosiers-Forrester08}, we call this class of joint probability density function \eqref{eq:bioLag} the Muttalib-Borodin ensemble, following the terminology initiated in \cite{Forrester-Wang15}.

The Muttalib-Borodin ensemble has attracted great interest nowadays, due to its close connections with various stochastic models. For the classical Laguerre or Jacobi weights, it has been shown in \cite{Adler-van_Moerbeke-Wang11,Cheliotis14,Forrester-Wang15} that \eqref{eq:bioLag} can be realized as the eigenvalues of certain random matrices, while the significant progresses achieved recently on random matrix products models \cite{Akemann-Ipsen15,Akemann-Ipsen-Kieburg13,Kuijlaars-Zhang14} have unveiled deep relations between these two ensembles in both finite $n$ and limiting cases; cf. \cite{Forrester-Liu14,Kuijlaars-Stivigny14,Zhang15}. We also refer to \cite{Betea-Occelli20,Betea-Occelli20a} and \cite{Gautie-Le_Doussal-Majumdar-Schehr19,Grela-Majumdar-Schehr21} for the emergences of Muttalib-Borodin ensembles in plane partitions and in the Dyson's Brownian motion.

Apart from wide applications in mathematical physics aforementioned, the Muttalib-Borodin ensemble itself exhibits rich mathematical structures as well, which provides the other reason of intensive studies. To this end, it is helpful to see  that the particles in \eqref{eq:bioLag} form a determinantal point process \cite{Soshnikov00,Tracy-Widom98}. This means that there exits a correlation kernel $K_n(x,y)$ such that the density function can be rewritten in the following determinantal form:
\begin{equation} \label{eq:kernel_intro}
  \frac{1}{n!}\det\left(K_n(x_i,x_j)\right)_{i,j=1}^n.
\end{equation}
For the classical Laguerre and Jacobi weights, it has been shown in Borodin's pioneering work \cite{Borodin99} that the scaling limit of $K_n$ near the origin (also known as the hard edge) converges to a new family of limiting kernels depending on the parameter $\theta$ as $n \to \infty$. This new family of limiting kernels, which describes the limiting distribution of the smallest particles in \eqref{eq:bioLag}, is a natural generalization of the classical Bessel kernel \cite{Forrester93,Tracy-Widom94b}, and reduces to the Meijer G-kernels encountered mainly in the products of random matrices and related models (cf. \cite{Akemann-Strahov16,Bertola-Bothner14,Bertola-Gekhtman-Szmigielski14,Kuijlaars-Zhang14,Silva-Zhang20}) if $\theta$ or $1/\theta$ is a positive integer, as observed in \cite{Kuijlaars-Stivigny14}. The corresponding gap probabilities near the origin are investigated in \cite{Charlier-Claeys20,Charlier-Lenells-Mauersberger19,Claeys-Girotti-Stivigny19,Zhang17} and the local limits of $K_n$ away from the origin for the classical weights are given in \cite{Forrester-Wang15} and \cite{Zhang15}. The limiting mean distribution of the particles in Muttalib-Borodin ensemble is formulated as the minimizer of a (vector) equilibrium problem in \cite{Claeys-Romano14,Kuijlaars16}, and the large deviation results can be found in \cite{Bloom-Levenberg-Totik-Wielonsky17,Butez17,Credner-Eichelsbacher15,Eichelsbacher-Sommerauer-Stolz11}; see also \cite{Charlier21,Lambert18,Alam-Muttalib-Wang-Yadav20} for other investigations and extensions of the Muttalib-Borodin ensemble.

In this paper, we are concerned with the Muttalib-Borodin ensemble of Laguerre type, i.e.,
\begin{equation} \label{def:weight}
  w(x)=x^{\alpha}e^{-nV(x)}, \quad \alpha>-1, \quad x>0,
\end{equation}
where $V$ is a potential function (also known as the external field) independent of $n$ and which satisfies the growth condition
\begin{equation}\label{eq:loggrowth}
\lim_{x\to +\infty}\frac{V(x)}{\log x}= + \infty.
\end{equation}
Inspired by the principle of \emph{universality}, a fundamental issue in random matrix theory, it is generally conjectured that, as $n\to\infty$, the local behaviours of the associated biorthogonal polynomials and correlation kernel are the same as those derived for classical weights for a large class of $V$ with the parameter $\theta>0$ fixed. An attempt to justify this conjecture is given in recent works \cite{Kuijlaars-Molag19,Molag20}, where local universality of the correlation kernel near the origin is established for $1/\theta \in \mathbb{N}=\{1,2,\ldots\}$ under weak conditions on $V$. It is the aim of the present work to prove a parallel universality result for a general class of $V$ and $\theta\in \mathbb{N}$. Moreover, our approach is different from that used in \cite{Kuijlaars-Molag19,Molag20}, which might be of independent interest and serves as the other main contribution of this paper. We next state our main results.

\subsection{Statement of main results} \label{subsec:statements}

Before stating the main results, we need to impose some conditions on the potential function $V$ and introduce some other functions. We start with the fact that if $V$ is continuous on $[0, +\infty)$ and satisfies \eqref{eq:loggrowth}, the limiting empirical measure of the particles in \eqref{eq:bioLag} with Laguerre type weight functions \eqref{def:weight} exists as $n\to \infty$, and it is the unique probability measure over $[0,+\infty)$ that minimizes the energy functional
\begin{equation} \label{eq:equilibriump}
  I^{(V)}(\nu):=\frac{1}{2}\iint \log \frac{1}{\lvert x-y \rvert} d\nu(x) d \nu(y)+\frac{1}{2}\iint \log \frac{1}{\lvert x^\theta-y^\theta \rvert} d\nu(x) d \nu(y) +\int V(x) d\nu(x);
\end{equation}
see \cite[Theorem 2.1 and Corollary 2.2]{Eichelsbacher-Sommerauer-Stolz11} and \cite[Theorem 1.2 and Corollary 1.4]{Butez17}.
Moreover, the equilibrium measure $\mu=\mu^{(V)}$ is characterized by the following Euler-Lagrange conditions:
\begin{align}
  \int \log \lvert x-y \rvert d\mu(y)+ \int \log \lvert x^\theta-y^\theta \rvert d\mu(y)-V(x)&=\ell, \quad x \in \supp(\mu), \label{eq:EL1}
  \\
  \int \log \lvert x-y \rvert d\mu(y)+ \int \log \lvert x^\theta-y^\theta \rvert d\mu(y)-V(x)&\leq \ell, \quad x \in [0,+\infty), \label{eq:EL2}
\end{align}
where $\ell$ is some real constant.

Following \cite{Kuijlaars-Molag19,Molag20}, we require the potential $V$ to be \emph{one-cut regular}, in the sense that
\begin{itemize}
\item the equilibrium measure $\mu$ is supported on one interval $[0, b]$ with a continuous density function $\psi=\psi^{(V)}$ for some $b=b^{(V)}>0$, that is,
\begin{equation}\label{eq:dmu}
d\mu(x) = \psi (x) dx,\qquad x \in (0, b);
\end{equation}
\item
  $\psi(x) > 0$ on $(0, b)$, and there exist two positive numbers $d_1=d^{(V)}_1$ and $d_1=d^{(V)}_1$ such that
  \begin{equation}\label{eq:psiasy}
  \psi(x) = \left\{
                      \begin{array}{ll}
                        d_1 x^{-\frac{1}{\theta + 1}}(1 + o(1)), & \hbox{$x \to 0_+$,} \\
                        d_2(b - x)^{\frac12}(1 + o(1)), & \hbox{$x\to  b_-$;}
                      \end{array}
                    \right.
\end{equation}
\item the inequality \eqref{eq:EL2} holds strictly for $x\in (b,+\infty)$.
\end{itemize}
An explicit expression of $\psi$ is given in \cite{Claeys-Romano14}; see also Section \ref{sec:equmeasure} below. We then introduce the $g$-functions  (cf.~\cite[Equations (4.4)--(4.5)]{Claeys-Romano14})
\begin{align}
  g(z) := {}& \int^b_0 \log(z - y) \psi(y) dy, & z \in {}& \compC \setminus (-\infty, b], \label{def:g} \\
 \g(z) := {}& \int^b_0 \log(z^{\theta} - y^{\theta}) \psi(y) dy, & z \in {}& \mathbb{H}_{\theta} \setminus [0, b], \label{def:tildeg}
\end{align}
where the branch cut of $\log z$ is taken along the negative real axis and
\begin{equation}\label{def:Htheta}
  \mathbb{H}_\theta:=\left\{ z\in \mathbb{C} \mid z = 0 \text{ or } -\frac{\pi}{\theta} < \arg z <  \frac{\pi}{\theta} \right\}.
\end{equation}

Our first result is about asymptotics of the biorthogonal polynomials associated with the model. They have their own probability meaning and are also important in our study of the correlation kernel of the Muttalib-Borodin ensemble; see \eqref{eq:p_q_meaning} and \eqref{eq:sum_form_K} below. These polynomials are two sequences of monic polynomials $\{p_j(x)=p^{(V)}_{n, j}(x) \}_{j=0}^{\infty}$ and $\{q_k(x)=q^{(V)}_{n, k}(x)\}_{k=0}^\infty$ that satisfy the biorthogonal conditions
\begin{equation}\label{eq:pqbioOP}
  \int^{\infty}_0 p_{j}(x) q_{k}(x^\theta) x^{\alpha} e^{-nV(x)} dx = \kappa_j\delta_{j, k},
\end{equation}
where $\kappa_j=\kappa_{n,j}^{(V)}>0$. The functions $p_n(z)$ and $q_n(z^{\theta})$ can be interpreted as the averages over the Muttalib-Borodin ensemble. Indeed, similar to the proofs of \cite[Proposition 2.1]{Bleher-Kuijlaars04a} and \cite[Proposition 1]{Claeys-Wang11}, it is easily seen that
\begin{equation} \label{eq:p_q_meaning}
    p_n(z) = \E \left( \prod^n_{j = 1} (z - x_j) \right), \quad q_n(z^{\theta}) = \E \left( \prod^n_{j = 1} (z^{\theta} - x^{\theta}_j) \right),
\end{equation}
where $\E$ denotes the expectation with respect to \eqref{eq:bioLag} and \eqref{def:weight}.

Let $\MeijerG{m, n}{p, q}{a_1,\ldots,a_p}{b_1,\ldots,b_q}{z}$ be the Meijer G-function (see Appendix \ref{appendix:Meijer} for a brief introduction), and define
\begin{equation} \label{eq:defn_rho}
c =  c^{(V)} =  b \theta(1+\theta)^{-1-\frac1\theta},
\qquad
\rho = \rho^{(V)} = \left. \theta^{-1} d_1 \pi \middle/ \sin\left(\frac{\pi}{1 + \theta}\right), \right.
\end{equation}
with $b$ and $d_1$ given in \eqref{eq:dmu} and \eqref{eq:psiasy}, respectively, we have the following asymptotic behaviours of the biorthogonal polynomials $p_n = p^{(V)}_{n, n}$ and $q_n = q^{(V)}_{n, n}$ defined by \eqref{eq:pqbioOP} near the origin.
\begin{theorem} \label{thm:pqkappa}
Suppose the potential function $V$ is real analytic on $[0,\infty)$ and one-cut regular. As $n\to \infty$, we have, for $\theta \in \mathbb{N}$,
  \begin{equation}
    p_n \left( \frac{z}{(\rho n)^{1 + 1/\theta}} \right) = (-1)^n C_n \left( z^{\theta - \alpha - 1} \MeijerG{\theta, 0}{0, \theta + 1}{-}{\frac{\alpha - \theta + 1}{\theta}, \frac{\alpha - \theta + 2}{\theta}, \dotsc, \frac{\alpha - 1}{\theta}, \frac{\alpha}{\theta}, 0}{z^{\theta}} + \bigO ( n^{-\frac{1}{2\theta + 1}} ) \right), \label{eq:p_thm}
\end{equation}
and
\begin{equation}
    q_n \left( \frac{z^{\theta}}{(\rho n)^{\theta + 1}} \right) = (-1)^n \C_n \left( \MeijerG{1, 0}{0, \theta + 1}{-}{0, -\frac{\alpha}{\theta}, \frac{1 - \alpha}{\theta}, \dotsc, \frac{\theta - 1 - \alpha}{\theta}}{z^{\theta}} + \bigO ( n^{-\frac{1}{2\theta + 1}} )\right), \label{eq:q_thm}
  \end{equation}
uniformly for $z$ in any compact subset of $\compC$ and $\halfH \cup \{ \text{boundary of $\halfH$} \}$, respectively, where $\halfH$ is defined in \eqref{def:Htheta},
  \begin{equation}\label{def:Cn}
      C_n = (2\pi)^{1 - \frac{\theta}{2}} \sqrt{\theta} c^{\frac{2(\alpha+1)-\theta}{2(1+\theta)}} (\rho n)^{\frac{\alpha + 1}{\theta} - \frac{1}{2}} e^{n \Re g(0)}, \quad \C_n = (2\pi)^{\frac{\theta}{2}} c^{\frac{(\alpha + 1/2)\theta}{\theta + 1}} (\rho n)^{\frac{1}{2} + \alpha} e^{n \Re \g(0)}, \end{equation}
with the constants $c$, $\rho$, and the functions $g$, $\g$ given in \eqref{eq:defn_rho}, \eqref{def:g} and  \eqref{def:tildeg}.
\end{theorem}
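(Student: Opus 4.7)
The plan is to apply the Deift--Zhou nonlinear steepest descent method to two $1 \times 2$ vector-valued Riemann--Hilbert problems (RHPs) whose first entries are $p_n(z)$ and $q_n(z^{\theta})$ respectively. For $p_n$ one takes a row vector $Y$ whose second component is an explicit Cauchy-type transform of $p_n(x) x^{\alpha} e^{-nV(x)}$ built so that the biorthogonality \eqref{eq:pqbioOP} translates into a jump on $(0,+\infty)$ and the average formula \eqref{eq:p_q_meaning} fixes the behaviour at $\infty$; for $q_n$ one works on the $\theta$-fold sector $\halfH$ and sets up an analogous row vector $\Y$.

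The first transformation $Y \mapsto T$ (respectively $\Y \mapsto \T$) normalises at infinity using $g$ (respectively $\g$) from \eqref{def:g}--\eqref{def:tildeg}. The Euler--Lagrange conditions \eqref{eq:EL1}--\eqref{eq:EL2}, together with one-cut regularity, imply that the resulting jump is purely oscillatory on the support $(0,b)$ and exponentially close to the identity away from $[0,b]$. A second transformation $T \mapsto S$ opens lenses around $(0,b)$ via the standard upper/lower triangular factorisation of the oscillatory jump, so that the jumps on the lens boundaries tend to $I$ exponentially, uniformly away from the endpoints $0$ and $b$.

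Next come the parametrices. A global parametrix $P^{(\infty)}$ with jump only on $[0,b]$ is constructed from explicit algebraic functions on an auxiliary Riemann surface, in the spirit of the equilibrium measure analysis of \cite{Claeys-Romano14}; near the soft edge $x=b$ a standard Airy parametrix handles the $\sqrt{b-x}$ density in \eqref{eq:psiasy}. The decisive input is the hard-edge parametrix on a fixed disk around the origin: one builds a $(\theta+1) \times (\theta+1)$ model $\Psitilde^{(\Mei)}$ whose entries are Meijer G-functions of the types $\MeijerG{\theta,0}{0,\theta+1}{-}{\frac{\alpha-\theta+1}{\theta},\ldots,\frac{\alpha}{\theta},0}{\zeta}$ and $\MeijerG{1,0}{0,\theta+1}{-}{0,-\frac{\alpha}{\theta},\ldots,\frac{\theta-1-\alpha}{\theta}}{\zeta}$ already visible in \eqref{eq:p_thm}--\eqref{eq:q_thm}, verifies that its jump structure reproduces that of $S$ (this is compatible with the branched structure of $\g$ and the $x^{-1/(\theta+1)}$ behaviour of $\psi$ in \eqref{eq:psiasy}), and then extracts two rows of $\Psitilde^{(\Mei)}$, with the appropriate rescaling in $c$ and $\rho n$, to serve as the $1\times 2$ local parametrices for the $p_n$- and $q_n$-problems.

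A final transformation to $R$ yields a small-norm RHP on the matching circles and on the residual lens contours; the leading asymptotics of $\Psitilde^{(\Mei)}$ match $P^{(\infty)}$ on the boundary of the disk up to an error of order $n^{-1/(2\theta+1)}$, so that $R = I + O(n^{-1/(2\theta+1)})$. Unwinding the chain $R \to S \to T \to Y$ (respectively for $\Y$) inside the disk and reading off the first vector entry produces \eqref{eq:p_thm}--\eqref{eq:q_thm}; the prefactors in \eqref{def:Cn} assemble from the $e^{n g(0)}$, $e^{n\g(0)}$ factors introduced in the first transformation, the powers of $c$ and $\rho n$ produced by matching $P^{(\infty)}$ to $\Psitilde^{(\Mei)}$, and the $(2\pi)^{\pm \theta/2}\sqrt{\theta}$ factors from the Meijer-G normalisation. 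The main obstacle I anticipate is the explicit construction and analysis of the $(\theta+1)$-dimensional Meijer-G parametrix: pinning down the jumps on its star-shaped contour, identifying the constants that make the two extracted rows match $P^{(\infty)}$ on the boundary circle, and tracking the Meijer-G large-argument expansion carefully enough to obtain the sharp $n^{-1/(2\theta+1)}$ rate.
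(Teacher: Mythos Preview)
Your outline captures the overall Deift--Zhou architecture, but it misses the central obstruction that drives the paper's construction near the origin, and your proposed fix would not work as stated. The two components of $Y$ live in different domains: $Y_1$ in $\compC$ and $Y_2$ in the sector $\halfH$. For $\theta\geq 2$ the sector $\halfH$ has opening $2\pi/\theta<\pi$, so there is \emph{no} open disk centred at $0$ contained in $\compC\cap\halfH$; hence one cannot build a $1\times 2$ local parametrix ``on a fixed disk around the origin'' at all, and simply ``extracting two rows'' of a $(\theta+1)\times(\theta+1)$ Meijer-G matrix does not produce an object with the right domain of definition or jump structure for the vector problem. The paper's resolution is to first \emph{convert} the $1\times 2$ problem near $0$ into a $1\times(\theta+1)$ problem $U=(U_0,\dots,U_\theta)$ on a genuine disk $D(0,r)\subset\compC$, via $U_0(z)=Q_2(z^{1/\theta})$ and $U_k(z)=Q_1(z^{1/\theta}e^{2(k-1)\pi i/\theta})$; only after this conversion does the full $(\theta+1)\times(\theta+1)$ Meijer-G parametrix $\Psi^{(\Mei)}$ act as a local parametrix (for $U$, not for $S$). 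Moreover the disk is taken to \emph{shrink}, $r_n=n^{-2(\theta+1)/(2\theta+1)}$: the matching of $\Psi^{(\Mei)}((\rho n)^{\theta+1}z)$ against the outer data uses only the leading behaviour of the $g$-functions at $0$, and it is precisely this shrinking rate that yields the $n^{-1/(2\theta+1)}$ error; on a fixed disk the subleading terms in $nM(z)$ would be unbounded.

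A second gap concerns the final small-norm step. For a $1\times 2$ vector RHP there is no ready-made small-norm theory, so ``$R=I+\bigO(n^{-1/(2\theta+1)})$'' is not a routine conclusion. The paper pulls the remainder $R$ back to a \emph{scalar} function $\tR$ on the $s$-plane via the uniformising map $J_c$ from the equilibrium analysis, obtaining a scalar shifted RHP (the jumps on $\partial D(0,r_n)$ couple values of $\tR$ at $\theta+1$ different preimages), and runs the $L^2$ small-norm argument there. Both the local $1\times(\theta+1)$ conversion and the scalar reduction via $J_c$ are essential ideas that your plan should incorporate; the hard-edge step is not a routine Bessel-type construction.
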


\begin{rmk}
  If $\theta=1$, it follows from \cite[Formula 10.9.23]{Boisvert-Clark-Lozier-Olver10} that $z^{-\alpha}\MeijerG{1, 0}{0, 2}{-}{\alpha, 0}{z}=\MeijerG{1, 0}{0, 2}{-}{0,-\alpha}{z}=z^{-\alpha/2}J_{\alpha}(2\sqrt{z})$ with $J_\alpha$ being the Bessel function of the first kind of order $\alpha$, which appears in the strong asymptotics of Laguerre type orthogonal polynomials; cf. \cite{Vanlessen07}. Our asymptotic analysis, leading to the proof of the above theorem, also enables us to obtain asymptotics of $p_n$ and $q_n$ in other parts of the complex plane. All the ingredients for obtaining such results are presented, but we will not write the details down neither comment them any further.
\end{rmk}

With the biorthogonal polynomials given in \eqref{eq:pqbioOP}, it is known that the correlation kernel in \eqref{eq:kernel_intro} can be written as \cite{Borodin99}
\begin{equation} \label{eq:sum_form_K}
  K_n(x, y) = x^{\alpha} e^{-nV(x)}\sum^{n - 1}_{j = 0} \frac{p_{j}(x)q_{j}(y^{\theta})}{\kappa_j},
\end{equation}
where again we emphasize that $p_j = p^{(V)}_{n, j}$, $q_j = q^{(V)}_{n, j}$ and $\kappa_j = \kappa^{(V)}_{n, j}$ depend on $n$ and the potential function $V$. This, together with Theorem \ref{thm:pqkappa}, allows us to establish the hard edge scaling limit of the correlation kernel, which proves the universality conjecture in the underlying case. We note that unless $\theta = 1$, there is no easy way to make $K_n(x, y)$ symmetric.

To state the result, we define a family of kernels
\begin{multline}\label{eq:defn_kernel_integral}
K^{(\alpha, \theta)}(x, y)=\theta^2 x^{\theta-1} \int_0^1 u^{\theta - 1} \MeijerG{\theta, 0}{0, \theta + 1}{-}{\frac{\alpha - \theta + 1}{\theta}, \frac{\alpha - \theta + 2}{\theta}, \dotsc, \frac{\alpha - 1}{\theta}, \frac{\alpha}{\theta}, 0}{(ux)^{\theta}}
\\
\times \MeijerG{1, 0}{0, \theta + 1}{-}{0, -\frac{\alpha}{\theta}, \frac{1 - \alpha}{\theta}, \dotsc, \frac{\theta - 1 - \alpha}{\theta}}{(uy)^{\theta}}du,
\end{multline}
for $\alpha>-1$ and $\theta\in\mathbb{N}$, which coincide with the Meijer G-kernels given in \cite[Theorem 5.3]{Kuijlaars-Zhang14} after proper scaling. Due to a technical reason, we also assume that
\begin{equation} \label{eq:one-cut_regular}
  V''(x)x+V'(x)>0, \qquad x > 0.
\end{equation}
It comes out that the conditions \eqref{eq:loggrowth} and \eqref{eq:one-cut_regular} ensure the potential function $V$ is one-cut regular, as shown in \cite[Proposition 3.6]{Kuijlaars-Molag19} for $\theta = 1/2$, and later confirmed in \cite{Molag20} for all rational $\theta$. Here we note that the argument in \cite{Kuijlaars-Molag19} works for all $\theta > 0$; see also \cite[Theorem 1.8]{Claeys-Romano14} for a slightly weaker sufficient condition for the one-cut regular property. The conditions \eqref{eq:loggrowth} and \eqref{eq:one-cut_regular} will also imply the potential functions $V_{\tau}$ defined in \eqref{def:vtau} are all one-cut regular for all $\tau \in (0, 1]$, which plays an important role in the proof of our universal result; see more discussions in Section \ref{subsec:proof_kernel_formula} below.

Our second result is the following scaling limit of the correlation kernel $K_n(x,y)$ near the origin.
\begin{theorem} \label{thm:kernel}
For real analytic $V$ that satisfies the conditions \eqref{eq:loggrowth} and \eqref{eq:one-cut_regular}, we have, for $\theta\in \mathbb{N}$,
  \begin{equation} \label{eq:thm:kernel}
    \lim_{n \to \infty} (\rho n)^{-(1 + \frac{1}{\theta})} K_n \left( \frac{x}{(\rho n)^{1 + 1/\theta}}, \frac{y}{(\rho n)^{1 + 1/\theta}} \right) = K^{(\alpha, \theta)}(x, y),
  \end{equation}
uniformly for $x, y$ in compact subsets of $(0,\infty)$, where $\rho$ and $K^{(\alpha, \theta)}$ are given in \eqref{eq:defn_rho} and \eqref{eq:defn_kernel_integral}.
\end{theorem}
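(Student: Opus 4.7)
The strategy is to start from the sum representation \eqref{eq:sum_form_K} for $K_n$ and, via the substitution $\tau=j/n$, turn the sum over $j$ into a Riemann sum whose continuum limit reproduces the integral \eqref{eq:defn_kernel_integral}. The pivotal observation is that the weight $x^\alpha e^{-nV(x)}$ equals $x^\alpha e^{-jV_\tau(x)}$ when $V_\tau:=V/\tau$, so the biorthogonality \eqref{eq:pqbioOP} forces
\[
p^{(V)}_{n,j}=p^{(V_\tau)}_{j,j},\qquad q^{(V)}_{n,j}=q^{(V_\tau)}_{j,j},\qquad \kappa^{(V)}_{n,j}=\kappa^{(V_\tau)}_{j,j}.
\]
Under the condition \eqref{eq:one-cut_regular} every member of $\{V_\tau\}_{\tau\in(0,1]}$ is one-cut regular, so Theorem~\ref{thm:pqkappa} is applicable---with $V$ replaced by $V_\tau$ and $n$ by $j$---to the ``top'' pair $(p^{(V_\tau)}_{j,j},q^{(V_\tau)}_{j,j})$ at every $\tau=j/n\in(0,1]$. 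This is precisely the role of the family $V_\tau$ flagged by the authors in the paragraph preceding Theorem~\ref{thm:kernel}.

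Fix $\delta\in(0,1)$ and split the sum at $j=\delta n$. For $j\ge\delta n$ introduce
\[
u(\tau):=\left(\frac{\rho^{(V_\tau)}\tau}{\rho}\right)^{1+1/\theta},
\]
so the hard-edge scaled arguments $x_n=x/(\rho n)^{1+1/\theta}$ and $y_n^\theta=y^\theta/(\rho n)^{\theta+1}$, rewritten in the variables of \eqref{eq:p_thm}-\eqref{eq:q_thm} for $V_\tau$, become $xu(\tau)$ and $yu(\tau)$. Theorem~\ref{thm:pqkappa} then yields
\[
p_j(x_n)\,q_j(y_n^\theta)=C_j^{(\tau)}\C_j^{(\tau)}\,(xu(\tau))^{\theta-\alpha-1}\,\Phi^{(p)}\bigl((xu(\tau))^\theta\bigr)\,\Phi^{(q)}\bigl((yu(\tau))^\theta\bigr)\bigl(1+O(j^{-1/(2\theta+1)})\bigr),
\]
where $\Phi^{(p)},\Phi^{(q)}$ denote the Meijer G-functions of \eqref{eq:p_thm} and \eqref{eq:q_thm}. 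Asymptotics of $\kappa^{(V_\tau)}_{j,j}$ are a by-product of the same Deift/Zhou steepest descent and take an analogous explicit form. The exponentials $e^{j\Re g^{(V_\tau)}(0)}$ and $e^{j\Re\g^{(V_\tau)}(0)}$ inside $C_j^{(\tau)}\C_j^{(\tau)}$ combine with the corresponding exponential in $\kappa^{(V_\tau)}_{j,j}$ and with the prefactor $x_n^\alpha e^{-nV(x_n)}=x_n^\alpha e^{-nV(0)}(1+o(1))$ through the Euler-Lagrange identity \eqref{eq:EL1} evaluated at $0$ for $V_\tau$---which gives $\Re g^{(V_\tau)}(0)+\Re\g^{(V_\tau)}(0)=V(0)/\tau+\ell_\tau$ and hence $j[\Re g^{(V_\tau)}(0)+\Re\g^{(V_\tau)}(0)]=nV(0)+j\ell_\tau$---so that each summand reduces to $n^{-1}\mathcal{F}(\tau;x,y)(1+o(1))$ uniformly in $\tau\in[\delta,1]$ for some explicit $\mathcal{F}$ built from $u(\tau)$, $\Phi^{(p)}$, $\Phi^{(q)}$ and the algebraic constants from \eqref{def:Cn} and the $\kappa_j$ asymptotics.

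Standard Riemann-sum convergence then gives
\[
(\rho n)^{-(1+1/\theta)}K_n(x_n,y_n)\longrightarrow\int_{\delta}^{1}\mathcal{F}(\tau;x,y)\,d\tau,
\]
and the change of variable $u=u(\tau)$---with $u(1)=1$, $u(\tau)\to 0^+$ as $\tau\to 0^+$, and whose Jacobian together with the remaining $\tau$-factors of $\mathcal{F}$ reproduces the measure $\theta^2 u^{\theta-1}\,du$---identifies the limit with $\theta^2 x^{\theta-1}\int_0^1 u^{\theta-1}\Phi^{(p)}((ux)^\theta)\Phi^{(q)}((uy)^\theta)\,du=K^{(\alpha,\theta)}(x,y)$ after finally sending $\delta\to 0$.

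The main technical obstacle is two-fold. First, one must secure uniformity in $\tau\in[\delta,1]$ of every constant and of the error term $O(j^{-1/(2\theta+1)})$ in Theorem~\ref{thm:pqkappa}; this reduces to the continuous $\tau$-dependence of $b^{(V_\tau)},d_1^{(V_\tau)},c^{(V_\tau)},\rho^{(V_\tau)},\ell_\tau,g^{(V_\tau)},\g^{(V_\tau)}$ and of the local Meijer G parametrix built in the proof of that theorem---ultimately resting on the fact that the equilibrium data vary smoothly along the family $V_\tau$. Second, the discarded tail $j<\delta n$ must contribute $o(1)$; this is expected because $\mu^{(V_\tau)}$ concentrates at the origin as $\tau\to 0^+$ (in particular $b^{(V_\tau)}\downarrow 0$), so the evaluations $p_j(x_n),q_j(y_n^\theta)$ probe points far outside the natural hard-edge window of $p^{(V_\tau)}_{j,j},q^{(V_\tau)}_{j,j}$, allowing the tail to be dominated by the exponential decay of the Meijer G-functions and made arbitrarily small by choosing $\delta$ small before sending $n\to\infty$.
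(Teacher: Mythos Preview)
Your strategy matches the paper's: rewrite the sum \eqref{eq:sum_form_K} as a Riemann sum by relating $p^{(V)}_{n,j},q^{(V)}_{n,j}$ to the top-degree pair for a $\tau$-dependent potential, apply Theorem~\ref{thm:pqkappa} uniformly in $\tau$, and finish with a change of variable $\tau\mapsto u$. Two genuine gaps remain.

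\textbf{The tail.} Your family $V_\tau=V/\tau$ degenerates as $\tau\to0^+$: the support $b^{(V_\tau)}$ collapses and $\rho^{(V_\tau)}\to\infty$, so the constants in the RH analysis blow up and Theorem~\ref{thm:pqkappa} cannot be made uniform down to $\tau=0$. You therefore cut at $\delta n$ and claim the tail is negligible because for small $\tau$ the point $x_n$ lies ``far outside'' the hard-edge window of $p^{(V_\tau)}_{j,j}$. This is backwards: in your own variable $u(\tau)=(\rho^{(V_\tau)}\tau/\rho)^{1+1/\theta}$ the rescaled argument is $xu(\tau)$, and $u(\tau)\to0$ as $\tau\to0^+$, so $x_n$ sits \emph{deeper inside} the hard-edge window, where the Meijer G-functions in \eqref{eq:p_thm}--\eqref{eq:q_thm} are bounded and provide no decay. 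You thus have no control over the $\sim\delta n$ discarded terms. The paper sidesteps this by taking instead $V_\tau(x)=\tau^{-1}V(\tau^{1/r}x)$ (after normalising $V(x)=x^r+\bigO(x^{r+1})$ near $0$), which converges to the nondegenerate potential $x^r$ as $\tau\to0$. That choice makes the analogue of Lemma~\ref{lem:asyknnV} uniform over the whole interval $\tau\in[0,1]$ (their Lemma~\ref{prop:generalized}), so only a \emph{fixed} finite number $N$ of small-$j$ terms need be discarded, and those are trivially $o(1)$ after scaling.

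\textbf{The Jacobian.} The assertion that the substitution $\tau\mapsto u(\tau)$ turns the Riemann integral into $\theta^2\int_0^1 u^{\theta-1}(\cdots)\,du$ is the nontrivial identity of the whole argument; it amounts to $\theta\,\dfrac{du}{d\tau}$ matching a specific combination of $(c^{(V_\tau)})^{-\theta/(\theta+1)}$ and $(\rho^{(V_\tau)})^{1/\theta}$. The paper spends a page on it (their \eqref{eq:integral_for_kernel}--\eqref{eq:finalreduc}), differentiating the contour-integral representations \eqref{eq:defn_c} and \eqref{def:d1} of $c^{(V_\tau)}$ and $d_1^{(V_\tau)}$ and reducing the claim to an exact-derivative identity on $\gamma$. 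This does not fall out of bookkeeping and should be proved, not asserted.
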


\subsection{Proof strategy and methodological novelty} \label{sec:proof}

To prove the results, we adopt the Riemann-Hilbert (RH) approach \cite{Deift99,Deift-Kriecherbauer-McLaughlin-Venakides-Zhou99}, whose development had an important influence to nonlinear mathematical physics. It has been shown in \cite{Claeys-Romano14} that the biorthogonal polynomials underlying the Muttalib-Borodin ensemble can also be characterized by a $1 \times 2$ vector-valued RH problem for any $\theta \geq 1$. Although vector RH problems have appeared in the context of integrable systems for a long time (cf. \cite{Vekua67} and Percy Deift's lecture notes \cite{Deift17a} on inverse scattering theory in 1991), they have not received much attention in asymptotic analysis. Claeys and the first-named author have firstly succeeded in performing a Deift/Zhou steepest descent analysis of a vector-valued RH problem in the context of random matrices with equispaced external source \cite{Claeys-Wang11}. Almost simultaneously to the present work, Charlier investigated asymptotics of Muttalib-Borodin determinants with Fisher-Hartwig singularities \cite{Charlier21}, following similar idea in \cite{Claeys-Wang11}. Although we are also inspired from \cite{Claeys-Wang11}, the situation encountered in this work is essentially different from that in \cite{Charlier21} and is much more challenging. More precisely, due to the fact that there does not exist an open disk centred at the origin and lying in $\compC \cap \halfH$, the greatest technical obstacle of our analysis lies in the construction of a local approximation therein. To solve this issue for integer $\theta$, we convert the $1\times 2$ vector RH problem locally to an equivalent one of size $1\times(\theta+1)$, but defined in a small disk of $\compC$. With the aid of a $(\theta+1)\times(\theta+1)$ model RH problem solved in terms of the Meijer G-functions, we are able to complete the construction of a local parametrix near the origin. Conceptually, in our paper the model RH problem serves as an operator to trivialize the original RH problem, rather than an approximated local solution to the original one. This point of view may be also used in the general real $\theta$ case.

It is also worthwhile to emphasize that the set-up of the vector RH problem, the major part of our asymptotic analysis on $p_n$ and $q_n$, and our proof of the local universality of the correlation kernel (which relies on the summation formula \eqref{eq:sum_form_K} but not the Christoffel-Darboux formula that is available for rational $\theta$ \cite[Theorem 1.1]{Claeys-Romano14}) are applicable to all real parameter $\theta$. The integral property of $\theta$ only plays a role in the construction of a local parametrix near the origin. As a consequence, to generalize our current work to the general real $\theta$ case, we only need to solve the corresponding local paramatrix problem at the origin. It is still a challenge, but now \emph{localized} to tackle, and will be the goal of a follow-up paper.

Before our work, the RH problem has also been applied in asymptotic analysis of the Muttalib-Borodin ensembles of Laguerre type by Kuijlaars and Molag in a different approach for the special $\theta = 1/r$ with $r \in \mathbb{N}$ \cite{Kuijlaars-Molag19,Molag20}. For such $\theta$, the biorthogonal polynomials $p_j, q_k$ in \eqref{eq:pqbioOP} can be viewed as multiple orthogonal polynomials \cite{Kuijlaars10a}. The scaling limit of correlation kernel then follows from an asymptotic analysis of the associated $(r+1)\times(r+1)$ RH problem \cite{Geronimo-Kuijlaars-Van_Assche00}, which generalizes the classical $2 \times 2$ RH problem for orthogonal polynomials \cite{Fokas-Its-Kitaev92}. Seemingly, by the transformation $x\to x^{1/\theta}$, the result for $\theta = 1/r$ covers that for $\theta = r$, but the comments after \cite[Theorem 1.1]{Molag20} explain the limitations. Up to now, it is not clear how to generalize this approach to other rational $\theta$, and more unlikely to irrational $\theta$.

In conclusion, the present work provides a new perspective on resolving the local universality conjecture in the Muttalib-Borodin ensemble for general weight functions and real $\theta$, with only the local obstacle to be conquered.

\paragraph{Outline}

The rest of this paper is organized as follows. We present some preliminary results in Section \ref{sec:preli} to facilitate further analysis, which particularly includes the vector-valued RH problems characterizing the biorthogonal polynomials for Muttalib-Borodin ensemble. We carry out a Deift-Zhou steepest descent analysis on the RH problems in Sections \ref{sec:asyY} and \ref{sec:asytildeY}. After the RH asymptotic analysis is finished, the proofs of our main results, i.e., Theorems \ref{thm:pqkappa} and \ref{thm:kernel}, are given in Section \ref{sec:provmainresult}.

\paragraph{Notations}
Throughout this paper, the following notations are frequently used. We denote by $\compC_{\pm}=\{z\in\compC \mid \pm \Im z>0\}$, by $D(z_0, \delta)$ the open disc centred at $z_0$ with radius $\delta>0$, i.e.,
\begin{equation} \label{eq:size_U_b}
  D(z_0,\delta) := \{z\in \mathbb{C} ~|~ \lvert z - z_0 \rvert < \delta \},
\end{equation}
and by $I_m$, $m\in \mathbb{N}$, the identity matrix of size $m$. If $A$ and $B$ are two square matrices of sizes $m$ and $n$, then $A \oplus B$ is the $(m + n) \times (m + n)$ block matrix $
\begin{pmatrix}
  A & 0 \\
  0 & B
\end{pmatrix}
$.
For $\theta\in \mathbb{N}$, the $(\theta + 1) \times (\theta + 1)$ matrix $\Mcyclic$ is defined by
\begin{equation} \label{eq:defn_Mcyclic}
  \Mcyclic = I_1 \oplus
  \begin{pmatrix}
    0 & 0 & \cdots & 0 & 1 \\
    1 & 0 & \cdots & 0 & 0\\
    0 & 1 & \ddots & 0 & 0 \\
    \vdots& \ddots & \ddots & \ddots & \vdots\\
    0& \cdots & 0 & 1 & 0
  \end{pmatrix}
  =
  \begin{pmatrix}
    1 & 0 & 0 & \cdots & 0 & 0 \\
    0 & 0 & 0 & \cdots & 0 & 1 \\
    0 & 1 & 0 & \cdots & 0 & 0\\
    0 & 0 & 1 & \ddots & 0 & 0 \\
    \vdots& \vdots& \ddots & \ddots & \ddots & \vdots\\
    0& 0& \cdots & 0 & 1 & 0
  \end{pmatrix}.
\end{equation}
It is easily seen that
\begin{equation}
\det ( \Mcyclic)=(-1)^{\theta-1},\qquad \Mcyclic^{-1}=\begin{pmatrix}
      1 & 0 & 0 & 0 & \cdots & 0 \\
      0 & 0 & 1 & 0 & \cdots & 0 \\
      0 & 0 & 0 & 1 & \ddots & 0 \\
      \vdots & \vdots & \vdots & \ddots & \ddots & 0 \\
      0 & 0 & 0 & 0 & \ddots & 1  \\
      0 & 1 & 0 & 0 & \cdots & 0
    \end{pmatrix}.
\end{equation}
Finally, when we say ``$X = (X_1, X_2)$ is analytic in $(D_1, D_2)$'' for some $D_1,D_2 \subseteq \mathbb{C}$, it means that $X_1$ and $X_2$ are analytic in $D_1$ and $D_2$, respectively.

\section{Preliminaries}\label{sec:preli}

In this section, we review the precise description of the equilibrium measure and the vector-valued RH problems associated with the biorthogonal polynomials. The properties of the $g$-functions are also collected for later use.

\subsection{Integral representation of the equilibrium measure and properties of the $g$-functions} \label{sec:equmeasure}

If the real analytic potential function $V$ is one-cut regular and satisfies the growth condition \eqref{eq:loggrowth}, it follows from
\cite[Theorems 1.8 and 1.11]{Claeys-Romano14} that the density function $\psi$ of the equilibrium measure that minimizes the energy functional \eqref{eq:equilibriump} can be constructed explicitly via an integral formula. To state the relevant results, we need a mapping defined by
\begin{equation}\label{def:Jcs}
  J_c(s)=c(s + 1) \left( \frac{s + 1}{s} \right)^{1/\theta}, \quad s\in\mathbb{C} \setminus [-1,0],
\end{equation}
where the constant $c$ is determined through \eqref{eq:defn_c} below, and the branch of the $1/\theta$ power function is chosen such that $J_c(s) \sim c s$ as $s \to \infty$. It has two critical points $-1$ and $s_b=1/\theta$, which are mapped respectively to $0$ and
\begin{equation}\label{def:b}
  b = c \theta^{-1} (1+\theta)^{1+\frac{1}{\theta}}.
\end{equation}
Note that $J_{c}(s)$ is real for $s\in (-\infty,-1]\cup [s_b, +\infty)$, while the two complex conjugate curves $\gamma_1$ and $\gamma_2$, shown in Figure \ref{fig:mapJ}, going from $-1$ to $s_b$ are mapped to the interval $[0,b]$. Let $\gamma := (-\gamma_1) \cup \gamma_2$ and denote by $D$ the region bounded by $\gamma$. We have from \cite[Lemma 4.1]{Claeys-Romano14} that $J_c$ maps $\mathbb{C} \setminus \overline{D}$ bijectively to $\mathbb{C} \setminus [0,b]$, and maps $D\setminus [-1,0]$ bijectively to $\mathbb{H}_\theta \setminus [0,b]$, where $\halfH$ is defined in \eqref{def:Htheta}; see Figure \ref{fig:mapJ} for an illustration.
\begin{figure}
  \begin{minipage}[c]{5cm}
    \includegraphics{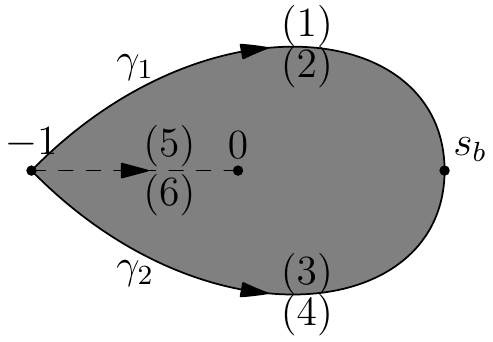}
  \end{minipage}
  \hspace{\stretch{1}}
  $\begin{aligned}
    & \phantom{\int} \\
    J_c: {}& D \setminus [-1, 0] \to \halfH \setminus [0, b] \\
    & \phantom{\int} \\
    & \phantom{\int} \\
    & \phantom{\int} \\
    J_c: {}& \compC \setminus \overline{D} \to \compC \setminus [0, b]
  \end{aligned}$
  \hspace{\stretch{1}}
  \begin{minipage}[c]{3cm}
    \includegraphics{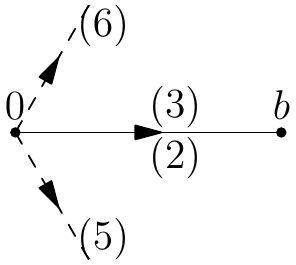}
    \linebreak
    \vspace{1cm}
    \linebreak
    \includegraphics{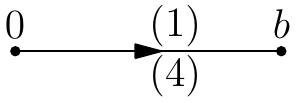}
  \end{minipage}
 \caption{The mapping properties of $J_c$ and the region $D$ (shaded area).}
  \label{fig:mapJ}
\end{figure}

Under the aforementioned conditions imposed on $V$, one has (see \cite[Theorems 1.8 and 1.11]{Claeys-Romano14}) the equilibrium measure $\mu$ is supported on $[0,b]$, where $b$ is given by \eqref{def:b} with $c$ determined uniquely by the equation
\begin{equation} \label{eq:defn_c}
  \frac{1}{2\pi i}\oint_{\gamma}\frac{V'(J_{c} (s))J_{c}(s)}{s}ds=1+\theta.
\end{equation}
Note that in \eqref{eq:defn_rho}, we define the constant $c$ through $b$ by \eqref{def:b}. The density function $\psi$ then admits the following integral representation:
\begin{equation}\label{def:psi}
  \psi(x)=\frac{1}{2 \pi^2 x}\int_0^b(V''(y)y + V'(y))\log \left| \frac{I_+(y)-I_-(x)}{I_+(y)-I_+(x)}\right|dy, \quad x\in[0,b],
\end{equation}
where $I_+(x)$ and $I_{-}(x)$ stand for the inverse images of $x\in[0,b]$ belonging to the curves $\gamma_1$ and $\gamma_2$, respectively. Moreover, by \cite[Equations (1.33)--(1.36)]{Claeys-Romano14}, we have
\begin{align}
  d_1 = & -\frac{1}{\pi^2}c^{-\frac{\theta}{1+\theta}}\sin \left(\frac{\pi}{1+\theta}\right)\int_0^b(V''(y)y+V'(y))\Im \left(\frac{1}{1+I_+(y)}\right)dy, \label{def:d1} \\
  d_2 = & -\frac{1}{\pi^2 b}\sqrt{\frac{2}{J''_c(s_b)}}\int_0^b(V''(y)y+V'(y))\Im \left( \frac{1}{I_+(y)-s_b}\right)dy, \label{eq:defn_d2}
\end{align}
where $d_1$ and $d_2$ are the two constants given in \eqref{eq:psiasy}. Thus, the regularity conditions therein
can be sharpened as
\begin{equation}\label{eq:localb}
  \psi(x)=\left\{
           \begin{array}{ll}
             d_1x^{-\frac{1}{1+\theta}}+ \bigO(x^{\frac{\theta-1}{\theta+1}}), & \hbox{$x\to 0_+$,} \\
             d_2(b-x)^{\frac12} f(x), & \hbox{$x \to b_-$,}
           \end{array}
         \right.
\end{equation}
for some real analytic function $f$ satisfying $f(b) = 1$.

We conclude this subsection with some properties of the $g$-functions.
\begin{prop} \label{prop:propg}
  The $g$-functions defined in \eqref{def:g} and \eqref{def:tildeg} have the following properties.
  \begin{enumerate}[label=\emph{(\alph*)}, ref=(\alph*)]
  \item \label{enu:prop:propg_a}
    $g(z)$ and $\g(z)$ are analytic in $\compC \setminus (-\infty, b]$ and $\mathbb{H}_{\theta} \setminus [0, b]$, respectively. Furthermore, as $z\to \infty$, we have
    \begin{align}
      g(z) = {}& \log z+ \bigO(z^{-1}), & z \in {}& \compC, \\
      \g(z)= {}& \theta \log z+ \bigO(z^{-\theta}), & z\in {}& \mathbb{H}_{\theta}. \label{eq:gtilde_asy}
    \end{align}
  \item
    The $g$-functions satisfy the following boundary conditions:
    \begin{align}
      \g(e^{-\frac{\pi i}{\theta}}x) & =\g(e^{\frac{\pi i}{\theta}}x)-2 \pi i, & x > {}& 0, \label{eq:tildgpm} \\
      g_+(x) & =g_-(x)+2 \pi i, & x < {}& 0. \label{eq:gpm}
    \end{align}
  \item
    For $x\in(0,b)$, we have
    \begin{equation}\label{eq:psiandg}
      \psi(x)=-\frac{1}{2\pi i}(g_+'(x)-g_-'(x))=-\frac{1}{2\pi i}(\g_+'(x)-\g_-'(x)).
    \end{equation}
  \item \label{enu:prop:propg_d}
    With the same constant $\ell$ as in \eqref{eq:EL1}, we have
    \begin{align}
      g_{\pm}(x)+\g_{\mp}(x)-V(x)-\ell = {}& 0, & x \in {}& (0,b], \label{eq:gequal}      \\
      g(x)+\g(x)-V(x)-\ell < {}& 0, & x > {}& b. \label{eq:gequal2}
    \end{align}
  \item If $\theta>1$, we have, as $z \to 0$,
    \begin{equation} \label{eq:asy_formula_for_g}
      g(z) = g_+(0) +
        \begin{cases}
          \frac{(1+\theta)d_1\pi}{\theta} \frac{e^{\frac{2+\theta}{1+\theta}\pi i}}{\sin (\frac{\pi}{1+\theta})}z^{\frac{\theta}{1+\theta}} + \bigO(z), & \arg z \in (0,\pi),
          \\
          - 2 \pi i+\frac{(1+\theta)d_1\pi}{\theta} \frac{e^{\frac{\theta}{1+\theta}\pi i}}{\sin (\frac{\pi}{1+\theta})}z^{\frac{\theta}{1+\theta}} + \bigO(z), & \arg z \in (-\pi,0),
        \end{cases}
    \end{equation}
    where $g_+(0) = \lim_{z\to 0,\ 0 < \arg z < \pi} \g(z)$, $d_1$ is given in \eqref{def:d1}, and
    \begin{equation} \label{eq:asy_formula_for_g_tilde}
      \g(z) = \g_+(0) +
      \begin{cases}
        \frac{(1+\theta)d_1\pi}{\theta} \frac{e^{\frac{1+2\theta}{1+\theta}\pi i}}{\sin (\frac{\pi}{1+\theta} )}z^{\frac{\theta}{1+\theta}} + \bigO(z^{\frac{2\theta}{1+\theta}}), & \arg z \in (0,\frac{\pi}{\theta}), \\
        -2 \pi i+\frac{(1+\theta)d_1\pi}{\theta}  \frac{e^{\frac{3+2\theta}{1+\theta}\pi i}}{\sin (\frac{\pi}{1+\theta})}z^{\frac{\theta}{1+\theta}} + \bigO(z^{\frac{2\theta}{1+\theta}}), & \arg z \in (-\frac{\pi}{\theta},0),
      \end{cases}
    \end{equation}
    where $\g_+(0) = \lim_{z\to 0,\ 0 < \arg z < \pi/\theta} \g(z)$.
  \end{enumerate}
\end{prop}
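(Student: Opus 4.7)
The plan is to derive parts (a)--(d) by direct manipulation of the integral definitions \eqref{def:g}, \eqref{def:tildeg} combined with the Euler--Lagrange equations \eqref{eq:EL1}--\eqref{eq:EL2}, and to handle the more delicate part (e) by differentiating $g$ and $\g$ once, extracting the leading singularity of the resulting Cauchy-type integrals near the origin via the refined local expansion \eqref{eq:localb} of $\psi$, and then antidifferentiating.

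For (a), analyticity of $g$ on $\compC \setminus (-\infty, b]$ is immediate from the integrand's analyticity for each $y \in [0, b]$; for $\g$ one checks that whenever $z \in \halfH \setminus [0, b]$, the quantity $z^\theta$ lies in $\compC \setminus [0, b^\theta]$, so $z^\theta - y^\theta \notin (-\infty, 0]$ for every $y \in [0, b]$. The asymptotics at infinity follow from $\log(z-y) = \log z + \log(1-y/z)$ and $\log(z^\theta - y^\theta) = \theta \log z + \log(1-(y/z)^\theta)$, together with $\int_0^b \psi = 1$. For (b), the jump \eqref{eq:gpm} is the standard $2\pi i$ jump of $\log(z-y)$ across the negative real axis, uniform in $y \in [0, b]$ and integrated against $\psi$; for \eqref{eq:tildgpm}, when $z = e^{\pm i\pi/\theta} x$ the value $z^\theta = -x^\theta \pm i0$ approaches the negative real axis from opposite sides, so $\log(z^\theta - y^\theta)$ acquires the stated $2\pi i$ jump for every $y \geq 0$.

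For (c), splitting the integral at $y = x$ yields $g_+(x) - g_-(x) = 2\pi i \int_x^b \psi(y)\, dy$ for $x \in (0, b)$, and differentiating gives the $\psi$ identity; the same computation applies to $\g$ after tracking the branch of $z^\theta - y^\theta$. For (d), the explicit splits
\begin{equation*}
g_+(x) = \int_0^b \log \lvert x-y \rvert \psi(y)\, dy + i \pi \int_x^b \psi(y)\, dy, \qquad \g_-(x) = \int_0^b \log \lvert x^\theta - y^\theta \rvert \psi(y)\, dy - i \pi \int_x^b \psi(y)\, dy,
\end{equation*}
combine with \eqref{eq:EL1} to give \eqref{eq:gequal} on $(0, b]$; the opposite combination is symmetric, and \eqref{eq:gequal2} is the strict version of \eqref{eq:EL2} since $g$ and $\g$ are both real on $(b, \infty)$.

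The main obstacle is (e). The plan is to differentiate to obtain
\begin{equation*}
g'(z) = \int_0^b \frac{\psi(y)}{z-y}\, dy, \qquad \g'(z) = \theta z^{\theta - 1} \int_0^b \frac{\psi(y)}{z^\theta - y^\theta}\, dy,
\end{equation*}
and extract the leading behaviour as $z \to 0$ from the segment near $y = 0$, where $\psi(y) = d_1 y^{-1/(1+\theta)} + \bigO(y^{(\theta-1)/(\theta+1)})$ by \eqref{eq:localb}. The Mellin evaluation
\begin{equation*}
\int_0^\infty \frac{y^{-a}}{y - z}\, dy = \frac{\pi}{\sin(\pi a)} (-z)^{-a}, \qquad 0 < a < 1, \ z \in \compC \setminus [0, \infty),
\end{equation*}
applied with $a = 1/(1+\theta)$ gives $g'(z) = -\frac{d_1 \pi}{\sin(\pi/(1+\theta))} (-z)^{-1/(1+\theta)} + \bigO(1)$, whose antiderivative produces the $z^{\theta/(1+\theta)}$ term of \eqref{eq:asy_formula_for_g}. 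An identical analysis after the change of variable $t = y^\theta$, which exposes an integrable singularity $t^{-\theta/(1+\theta)}$ at $t = 0$ and combines with the prefactor $\theta z^{\theta - 1}$, produces \eqref{eq:asy_formula_for_g_tilde}. The chief source of computational care will be the branch bookkeeping: each of the four sectors $\arg z \in (0, \pi), (-\pi, 0)$ for $g$ and $\arg z \in (0, \pi/\theta), (-\pi/\theta, 0)$ for $\g$ fixes a specific branch of $(-z)^{-1/(1+\theta)}$, and matching these against the additive normalisations $g_+(0)$, $\g_+(0)$ together with the jumps from (b) will produce the phases $e^{(2+\theta)\pi i/(1+\theta)}$, $e^{\theta \pi i/(1+\theta)}$, $e^{(1+2\theta)\pi i/(1+\theta)}$, $e^{(3+2\theta)\pi i/(1+\theta)}$ appearing in \eqref{eq:asy_formula_for_g}, \eqref{eq:asy_formula_for_g_tilde}. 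The sharper remainder $\bigO(z)$ in \eqref{eq:asy_formula_for_g} versus $\bigO(z^{2\theta/(1+\theta)})$ in \eqref{eq:asy_formula_for_g_tilde} will be controlled by the next-order correction in \eqref{eq:localb} together with the regular part of the Cauchy integrals.
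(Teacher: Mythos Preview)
Your proposal is correct and follows essentially the same route as the paper's proof: parts (a)--(d) are dismissed as direct consequences of the definitions and the Euler--Lagrange conditions, and part (e) is obtained by differentiating, splitting off the leading $d_1 y^{-1/(1+\theta)}$ singularity in the resulting Cauchy-type integrals (the paper cites Muskhelishvili's theory for the local behaviour of such integrals, while you invoke the equivalent Mellin evaluation), handling $\g'$ via the substitution $t=y^\theta$, and integrating back. The only cosmetic difference is that you evaluate $\int_0^\infty y^{-a}/(y-z)\,dy$ on the full half-line and absorb the harmless $\int_b^\infty$ tail into the $\bigO(1)$ remainder, whereas the paper works directly on $[0,b]$.
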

\begin{proof}
The proofs of items \ref{enu:prop:propg_a}--\ref{enu:prop:propg_d} follow directly from the definitions \eqref{def:g}--\eqref{def:tildeg} and the Euler-Lagrange conditions for the equilibrium problem \eqref{eq:equilibriump} under our assumptions.

To show \eqref{eq:asy_formula_for_g}, it suffices to consider the behaviour of $g'(z)$ at $z=0$. By \eqref{def:g}, it is readily seen that, with $d_1$ given in \eqref{def:d1},
\begin{equation}\label{eq:gderi}
  g'(z)=\int_0^b\frac{\psi(y)}{z-y}dy = I_1(z)+ I_2(z),
\end{equation}
where
\begin{equation}
 I_1(z) = \int_0^b\frac{d_1y^{-\frac{1}{1+\theta}}}{z-y}dy, \qquad I_2(z) = \int_0^b\frac{\psi(y)-d_1y^{-\frac{1}{1+\theta}}}{z-y}dy.
\end{equation}
By \cite[Sections~30]{Muskhelishvili72}, we have, as $z\to 0$,
\begin{equation}
  I_1(z) = -d_1\pi \sin \left( \frac{\pi}{1+\theta} \right)^{-1} e^{\pm\frac{\pi i}{1+\theta}} z^{-\frac{1}{1+\theta}} + \bigO(1), \qquad  z \in \compC_{\pm}.
\end{equation}
From \eqref{eq:localb} and \cite[Section~29]{Muskhelishvili72}, it also follows that $I_2(z) = \bigO(1)$ as $z\to 0$. Combining the estimates of $I_1$ and $I_2$ as $z \to 0$, we obtain \eqref{eq:asy_formula_for_g} by integrating both sides of \eqref{eq:gderi} and \eqref{eq:gpm}.

Similarly, to show \eqref{eq:asy_formula_for_g_tilde}, we write
\begin{equation}\label{eq:tildegder}
\g'(z)=\theta z^{\theta-1}\int_0^b\frac{\psi(y)}{z^\theta-y^\theta}dy=z^{\theta-1}\int_0^{b^\theta}\frac{\psi(t^\frac{1}{\theta})t^{\frac{1}{\theta}-1}}{z^\theta-t}dt = z^{\theta-1}(\I_1(z) + \I_2(z)),
\end{equation}
where
\begin{equation}
\I_1 (z)= \int_0^{b^\theta}\frac{d_1t^{-\frac{\theta}{1+\theta}}}{z^\theta-t}dt, \qquad \I_2(z) = \int_0^{b^\theta}\frac{\psi(t^\frac{1}{\theta})t^{\frac{1}{\theta}-1}-d_1t^{-\frac{\theta}{1+\theta}}}{z^\theta-t}dt.
\end{equation}
As $z\to 0$, since $\psi(t^{1/\theta})t^{1/\theta-1}=d_1t^{-\theta/(1+\theta)}(1+\bigO(t^{1/(1+\theta)}))$ as $t \to 0$, it is readily seen that
\begin{equation}
  z^{\theta-1} \I_1(z) = d_1\pi \frac{e^{\mp\frac{\pi i}{1+\theta}}}{\sin (\frac{\theta}{1+\theta}\pi )}z^{-\frac{1}{1+\theta}}+\bigO(z^{\theta-1}), \quad z \in \halfH \cap \compC_{\pm},
\end{equation}
and $z^{\theta-1} \I_2(z) = \bigO(z^{\frac{\theta-1}{1+\theta}})$ as $z \to 0$. This, together with \eqref{eq:tildegder} and \eqref{eq:tildgpm}, implies the desired result.
\end{proof}

\subsection{RH characterization of the biorthogonal polynomials}\label{sec:RHP}

As mentioned in Section \ref{sec:proof}, the proofs of our results rely on $1 \times 2$ vector-valued RH problems characterizing the biorthogonal polynomials $p_j$  and $q_k$ in \eqref{eq:pqbioOP}, as shown in \cite[Section 3]{Claeys-Romano14}. More precisely, let
\begin{equation} \label{def:Y}
  Y (z) = (p_n(z), Cp_n(z)),
\end{equation}
where
\begin{equation} \label{eq:defn_of_Cp_n}
  Cp_n(z) = \frac{1}{2 \pi i}\int^{\infty}_0 \frac{p_n(x)}{x^\theta - z^\theta}w(x) dx=\frac{1}{2 \pi i}\int^{\infty}_0 \frac{p_n(x)}{x^\theta - z^\theta}x^{\alpha}e^{-nV(x)} dx
\end{equation}
is the modified Cauchy transform of the polynomial $p_n$. We will consider $Cp_n(z)$ as a function defined in $\mathbb{H}_{\theta} \setminus [0, +\infty)$. By \cite[Theorem 1.4]{Claeys-Romano14}, $Y$ is the unique solution of the following RH problem.

\begin{RHP} \label{RHP:original_p}
\hfill
  \begin{enumerate}[label=\emph{(\arabic*)}, ref=(\arabic*)]
  \item \label{enu:RHP:original_p:1}
    $Y = (Y_1, Y_2)$ is analytic in $(\compC, \mathbb{H}_{\theta} \setminus [0, +\infty))$.
  \item \label{enu:RHP:original_p:2}
    $Y$ has continuous boundary values $Y_{\pm}$ when approaching $(0, +\infty)$ from above $(+)$ and below $(-)$\footnote{All RH problems in this paper satisfy the continuous boundary value condition along an edge of jump curve, unless otherwise specified. Hence we are not going to state this condition in subsequent RH problems.},  and satisfies
    \begin{equation}
      Y_+(x) = Y_-(x)
      \begin{pmatrix}
        1 & \frac{1}{\theta x^{\theta-1}} w(x) \\
        0 & 1
      \end{pmatrix},
      \qquad x>0,
    \end{equation}
    where $w$ is given in \eqref{def:weight}.
  \item \label{enu:RHP:original_p:3}
    As $z \to \infty$ in $\compC$, $Y_1$ behaves as $Y_1(z) = z^n + \bigO(z^{n - 1})$.
  \item \label{enu:RHP:original_p:4}
    As $z \to \infty$ in $\mathbb{H}_{\theta}$, $Y_2$ behaves as $Y_2(z) = \bigO(z^{-(n + 1)\theta})$.
  \item \label{enu:RHP:original_p:5}
    As $z \to 0$ in $\compC$, we have
    \begin{equation}
    Y_1(z)=\bigO(1).
    \end{equation}
  \item \label{enu:RHP:original_p:6}
    As $z \to 0$ in $\mathbb{H}_{\theta}$,
    we have
    \begin{equation}
      Y_2(z) =
      \begin{cases}
        \bigO(1), & \text{$\alpha+1-\theta > 0$,} \\
         \bigO(\log z), & \text{$\alpha+1-\theta = 0$,} \\
       \bigO(z^{\alpha+1-\theta}), & \text{$\alpha+1-\theta < 0$.}
      \end{cases}
    \end{equation}
  \item \label{enu:RHP:original_p:7}
    For $x > 0$, we have the boundary condition\footnote{Here and below, for a function $f(z)$ defined on $\halfH$ and $x > 0$, $f(e^{\pm \pi i/\theta} x) = \lim_{z \to e^{\pm \pi i/\theta} x \text{ and $z\in \halfH$}} f(z)$, assuming that the limit exists.} $Y_2(e^{\frac{\pi i}{\theta} }x) = Y_2(e^{-\frac{\pi i}{\theta}}x)$.
  \end{enumerate}
\end{RHP}

The polynomials $q_j$ are also characterized by a similar RH problem. By setting
\begin{equation}\label{eq:defn_of_Cq_n}
\widetilde C q_n(z):=\frac{1}{2\pi i}\int_0^{+\infty} \frac{q_n(x^{\theta})}{x-z}w(x)dx
=\frac{1}{2\pi i}\int_0^{+\infty} \frac{q_n(x^{\theta})}{x-z}x^{\alpha}e^{-nV(x)}dx,\quad z\in \compC \setminus [0,+\infty),
\end{equation}
we have that
\begin{equation} \label{def:Ytilde}
\Y(z) = (q_n(z^{\theta}), \widetilde C q_n(z))
\end{equation}
is the unique solution of the following RH problem; see \cite[Theorem 1.4]{Claeys-Romano14}.

\begin{RHP} \label{RHP:original_q} \hfill
  \begin{enumerate}[label=\emph{(\arabic*)}, ref=(\arabic*)]
  \item
    $\Y = (\Y_1, \Y_2)$ is analytic in $(\mathbb{H}_{\theta}, \compC \setminus [0, +\infty))$.
  \item For $x>0$, we have
    \begin{equation}
      \Y_+(x) =\Y_-(x)
      \begin{pmatrix}
        1 & w(x) \\
        0 & 1
      \end{pmatrix},
    \end{equation}
    where $w$ is given in \eqref{def:weight}.
  \item
    As $z \to \infty$ in $\mathbb{H}_{\theta}$, $\Y_1$ behaves as $\Y_1(z) = z^{n\theta} + \bigO(z^{(n - 1)\theta})$.
  \item
    As $z \to \infty$ in $\compC$, $\Y_2$ behaves as $\Y_2(z) = \bigO(z^{-(n + 1)})$.
  \item
    As $z \to 0$ in $\mathbb{H}_{\theta}$, we have
    \begin{equation}
    \Y_1(z)=\bigO(1).
    \end{equation}
   \item
    As $z \to 0$ in $\compC$, we have
    \begin{equation}
    \Y_2(z) = \left\{
               \begin{array}{ll}
                 \bigO(1), & \hbox{ $\alpha > 0$,} \\
                 \bigO(\log z), & \hbox{ $\alpha = 0$,} \\
                 \bigO(z^{\alpha}), & \hbox{ $\alpha < 0$.}
               \end{array}
             \right.
        \end{equation}

  \item
    For $x > 0$, we have the boundary condition  $\Y_1(e^{\frac{\pi i}{\theta} }x) = \Y_1(e^{-\frac{\pi i}{\theta}}x)$.
  \end{enumerate}
\end{RHP}

The next two sections are then devoted to the asymptotic analysis of the RH problems for $Y$ and $\Y$, respectively.

\section{Asymptotic analysis of the RH problem for $Y$}
\label{sec:asyY}

\subsection{First transformation: $Y \to T$}

With the $g$-functions given in \eqref{def:g} and \eqref{def:tildeg}, we define a vector-valued function
\begin{equation} \label{eq:Y_to_T}
  T(z) = e^{-\frac{n\ell}{2}} Y(z)
  \begin{pmatrix}
    e^{-n g(z)} & 0 \\
    0 & e^{n \g(z)}
  \end{pmatrix}
  e^{\frac{n\ell}{2} \sigma_3} = (Y_1(z) e^{-n g(z)}, Y_2(z) e^{n (\g(z) - \ell)}),
\end{equation}
where the constant $\ell$ is the same as that in \eqref{eq:EL1}. It is then readily seen that $T$ satisfies the following RH problem.
\begin{RHP} \label{RHP:original_T} \hfill
  \begin{enumerate}[label=\emph{(\arabic*)}, ref=(\arabic*)]
  \item \label{enu:RHP:original_T:2}
    $T = (T_1, T_2)$ is analytic in $(\compC \setminus [0, b], \mathbb{H}_{\theta} \setminus [0, \infty))$.
  \item \label{enu:RHP:original_T:3}
    For $x>0$, we have
       \begin{equation}
       T_+(x) = T_-(x) J_T(x),
       \end{equation}
       where
       \begin{equation}
       J_T(x)=
       \begin{pmatrix}
        e^{n(g_-(x) - g_+(x))} & \frac{x^{\alpha+1-\theta}}{\theta} e^{n(g_-(x) + \g_+(x) - V(x) - \ell)} \\
        0 & e^{n(\g_+(x) - \g_-(x))}
      \end{pmatrix}.
    \end{equation}
  \item \label{enu:RHP:original_T:4}
    As $z \to \infty$ in $\compC$, $T_1$ behaves as $T_1(z) = 1 + \bigO(z^{-1})$.
  \item \label{enu:RHP:original_T:5}
    As $z \to \infty$ in $\mathbb{H}_{\theta}$, $T_2$ behaves as $T_2(z) = \bigO(z^{-\theta})$.
  \item \label{enu:RHP:original_T:6}
    As $z \to 0$ in $\compC$ or $\halfH$, $T(z)$ has the same behaviour as $Y(z)$.
  \item \label{enu:RHP:original_T:7}
    As $z \to b$, we have $T_1(z) = \bigO(1)$ and $T_2(z) = \bigO(1)$.
  \item \label{enu:RHP:original_T:8}
    For $x > 0$, we have the boundary condition $T_2(e^{\pi i/\theta}x) = T_2(e^{-\pi i/\theta}x)$.
  \end{enumerate}
\end{RHP}

\subsection{Second transformation: $T \to S$}

The second transformation involves ``opening of the lens''. We start with the following decomposition of $J_T$ for $x\in (0,b)$:
\begin{align}\label{eq:decomp_J_T}
    J_T(x) = {}&
    \begin{pmatrix}
      1 & 0 \\
      \frac{\theta}{x^{\alpha+1-\theta}} e^{-n \phi_-(x)} & 1
    \end{pmatrix}
    \begin{pmatrix}
      0 & \frac{x^{\alpha+1-\theta}/\theta}{e^{n(-g_-(x) - \g_+(x) + V(x) + \ell)}} \\
      -\frac{e^{n(-g_+(x) - \g_-(x) + V(x) + \ell)}}{x^{\alpha+1-\theta}/\theta}  & 0
    \end{pmatrix} \nonumber \\
    &\times
    \begin{pmatrix}
      1 & 0 \\
      \frac{\theta}{x^{\alpha+1-\theta}} e^{-n \phi_+(x)} & 1
    \end{pmatrix}
\nonumber \\
    = {}&
    \begin{pmatrix}
      1 & 0 \\
      \frac{\theta}{x^{\alpha+1-\theta}} e^{-n \phi_-(x)} & 1
    \end{pmatrix}
    \begin{pmatrix}
      0 & \frac{x^{\alpha+1-\theta}}{\theta}  \\
      -\frac{\theta}{x^{\alpha+1-\theta}}  & 0
    \end{pmatrix}
    \begin{pmatrix}
      1 & 0 \\
      \frac{\theta}{x^{\alpha+1-\theta}} e^{-n \phi_+(x)} & 1
    \end{pmatrix},
  \end{align}
where
\begin{equation}\label{def:phi}
  \phi(z): = g(z) + \g(z) - V(z) - \ell,\qquad z\in \halfH \setminus [0,+\infty),
\end{equation}
and we have made use of \eqref{eq:gequal} in the second equality of \eqref{eq:decomp_J_T}.

By opening the lens around $(0,b)$ as shown in Figure \ref{fig:Sigma}, we define two contours, $\Sigma_1$ and $\Sigma_2$, lying in the upper sector $\halfH \cap \compC_+$ and the lower sector $\halfH \cap \compC_-$, respectively, such that both of them go from $0$ to $b$. We leave the specific shapes of $\Sigma_1$ and $\Sigma_2$ to be determined later. We call the region between $\Sigma_1$ and $\Sigma_2$ the lens, and the upper/lower part of the lens is the intersection of the lens with the upper/lower sectors. The second transformation is then defined by
\begin{equation} \label{def:S}
  S(z) =
  \begin{cases}
    T(z), & \text{$z$ outside the lens}, \\
    T(z)
    \begin{pmatrix}
      1 & 0 \\
      \frac{\theta}{z^{\alpha+1-\theta}} e^{-n \phi(z)} & 1
    \end{pmatrix},
    & \text{$z$ in the lower part of the lens}, \\
    T(z)
    \begin{pmatrix}
      1 & 0 \\
      -\frac{\theta}{z^{\alpha+1-\theta}} e^{-n \phi(z)} & 1
    \end{pmatrix},
    & \text{$z$ in the upper part of the lens}.
  \end{cases}
\end{equation}
We have that $S$ satisfies the following RH problem.

\begin{RHP} \label{rhp:S} \hfill
  \begin{enumerate}[label=\emph{(\arabic*)}, ref=(\arabic*)]
  \item \label{enu:rhp:S:1}
    $S = (S_1, S_2)$ is analytic in $(\compC \setminus \Sigma, \mathbb{H}_{\theta} \setminus \Sigma)$, where
   \begin{equation}\label{def:Sigma}
   \Sigma:= [0,+\infty) \cup \Sigma_1 \cup \Sigma_2;
   \end{equation}
   see Figure \ref{fig:Sigma} for an illustration and the orientations.
  \item \label{enu:rhp:S:2}
   For $z \in \Sigma$, we have
    \begin{equation}
      S_+(z) = S_-(z) J_S(z),
      \end{equation}
      where
      \begin{equation} \label{def:Js}
      J_S(z) =
      \begin{cases}
        \begin{pmatrix}
          1 & 0 \\
         \frac{\theta}{z^{\alpha+1-\theta}} e^{-n \phi(z)} & 1
        \end{pmatrix},
        & z \in \Sigma_1 \cup \Sigma_2, \\
        \begin{pmatrix}
          0 & \frac{ z^{\alpha+1-\theta}}{\theta} \\
          -\frac{\theta}{z^{\alpha+1-\theta}} & 0
        \end{pmatrix},
        & \text{$z \in (0, b)$}, \\
        \begin{pmatrix}
          1 & \frac{ z^{\alpha+1-\theta}}{\theta} e^{n \phi(z)} \\
          0 & 1
        \end{pmatrix},
        & z \in (b, +\infty).
      \end{cases}
    \end{equation}
  \item \label{enu:rhp:S:3}
    As $z\to \infty$ in $\compC$ or $\halfH$, $S(z)$ has the same behaviour as $T(z)$.
  \item \label{enu:rhp:S:5}
    As $z \to 0$ in $\compC \setminus \Sigma$, we have
    \begin{equation} \label{eq:asy_S_1_in_lens}
      S_1(z) =
      \begin{cases}
        \bigO(z^{\theta - \alpha - 1}), & \text{$\alpha > \theta - 1$ and $z$ inside the lens,} \\
        \bigO(\log z), & \text{$\alpha = \theta - 1$ and $z$ inside the lens,} \\
        \bigO(1), & \text{$z$ outside the lens or $-1 < \alpha <\theta - 1$.}
      \end{cases}
    \end{equation}
  \item
    As $z \to 0$ in $\halfH \setminus \Sigma$, we have
    \begin{equation}
      S_2(z) =
      \begin{cases}
        \bigO(1), & \alpha > \theta - 1, \\
        \bigO(\log z), & \alpha = \theta - 1, \\
        \bigO(z^{\alpha+1-\theta}), & -1 < \alpha < \theta - 1.
      \end{cases}
    \end{equation}
  \item \label{enu:rhp:S:6}
    As $z \to b$, we have $S_1(z) = \bigO(1)$ and $S_2(z) = \bigO(1)$.
  \item \label{enu:rhp:S:7}
    For $x>0$, we have the boundary condition $S_2(e^{\pi i/\theta}x) = S_2(e^{-\pi i/\theta}x)$.
  \end{enumerate}
\end{RHP}

\begin{figure}[htb]
  \centering
  \includegraphics{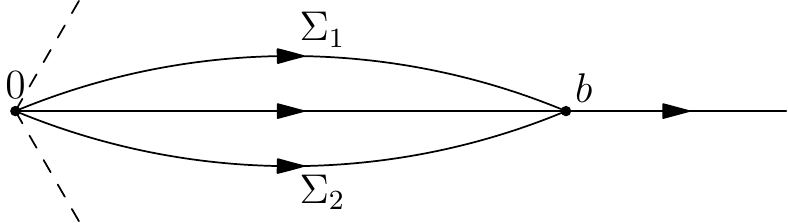}
  \caption{The contour $\Sigma$. ($\theta = 3$ in this instance.)}
  \label{fig:Sigma}
\end{figure}

Although the transformations $Y \to T \to S$ given in \eqref{eq:Y_to_T} and \eqref{def:S} are invertible, the inverse one, however, does not transform the RH problem \ref{rhp:S} for $S$ back to the RH problem \ref{RHP:original_p} for $Y$ directly. Thus, the uniqueness of the solution to RH problem \ref{rhp:S} is not a trivial consequence from that of the RH problem \ref{RHP:original_p}. For later use, we prove the following result.

\begin{prop} \label{prop:unique_S}
  The solution of RH problem \ref{rhp:S} is unique, even if item \ref{enu:rhp:S:6} therein is replaced by the weaker condition that $S_1(z) = \bigO((z-b)^{-1/4})$ and $S_2(z) = \bigO((z-b)^{-1/4})$ as $z \to b$.
\end{prop}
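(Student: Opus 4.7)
The plan is to invert the transformations $Y \to T \to S$ defined in \eqref{eq:Y_to_T} and \eqref{def:S} and reduce the uniqueness statement to the uniqueness of the solution of RH problem \ref{RHP:original_p}, which was established in \cite[Theorem 1.4]{Claeys-Romano14}. Given any solution $S$ of the (modified) RH problem \ref{rhp:S}, one defines $T$ by multiplying $S$ in each region by the inverse of the corresponding analytic triangular factor in \eqref{def:S}, and then sets $Y(z) := (T_1(z) e^{ng(z)}, T_2(z) e^{-n(\g(z) - \ell)})$. All matrix factors involved are analytic and, where necessary, nonvanishing in their respective domains; by construction, the artificial jumps on $\Sigma_1 \cup \Sigma_2$ cancel, the jump on $[0, +\infty)$ collapses to the one prescribed in RH problem \ref{RHP:original_p}, and the endpoint conditions at $0$ and at $\infty$, together with the boundary condition on $\arg z = \pm \pi/\theta$ in item \ref{enu:RHP:original_p:7}, are inherited from RH problem \ref{rhp:S}.

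The crucial step is to verify that the weakened endpoint behaviour at $b$ nevertheless produces a $Y$ satisfying all the conditions of RH problem \ref{RHP:original_p}. Since $b>0$ lies in the domains of analyticity of $g$, $\g$, and $\phi=g+\g-V-\ell$, and the factor $z^{\alpha+1-\theta}$ is analytic and nonvanishing at $b$, the hypothesised bounds $S_1(z), S_2(z) = \bigO((z-b)^{-1/4})$ propagate to $Y_1(z), Y_2(z) = \bigO((z-b)^{-1/4})$ as $z \to b$. Now the $(1,1)$ entry of the jump matrix in item \ref{enu:RHP:original_p:2} of RH problem \ref{RHP:original_p} equals $1$, so $Y_{1+}(x) = Y_{1-}(x)$ on $(0, +\infty)$; hence the back-transformed $Y_1$ is analytic in a full punctured neighbourhood of $b$. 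Because $|z-b|^{-1/4}$ satisfies $(z-b)Y_1(z) \to 0$ as $z\to b$, Riemann's removable singularity theorem ensures that $Y_1$ extends analytically across $b$, and is therefore analytic on all of $\compC$ as required by item \ref{enu:RHP:original_p:1}. The function $Y_2$ is not required to be analytic at $b$ (an interior point of its jump contour $[0, +\infty)$), so the bound on $Y_2$ poses no obstruction.

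Consequently, $Y$ satisfies all the conditions of RH problem \ref{RHP:original_p}, and the uniqueness part of \cite[Theorem 1.4]{Claeys-Romano14} forces $Y = (p_n, Cp_n)$. Reversing the (invertible, regionwise analytic) transformations $Y \to T \to S$ then determines $S$ uniquely. The key subtlety of the argument is the removable singularity step at $b$: although the pointwise endpoint condition on $S$ has been weakened from $\bigO(1)$ to $\bigO((z-b)^{-1/4})$, the strong global analyticity demanded of $Y_1$ in RH problem \ref{RHP:original_p} forces any such isolated singularity of $Y_1$ to be removable, so no spurious extra solutions can arise.
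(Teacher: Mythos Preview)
Your overall strategy --- invert the transformations $Y\to T\to S$ and appeal to uniqueness at the level of $Y$ --- is the same as the paper's, but two steps are glossed over in a way that leaves genuine gaps.

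First, the claim that ``the endpoint conditions at $0$ \ldots\ are inherited from RH problem~\ref{rhp:S}'' is not correct as stated. Inside the lens the inverse of \eqref{def:S} gives $T_1=S_1\pm\frac{\theta}{z^{\alpha+1-\theta}}e^{-n\phi}S_2$, and for $\alpha>\theta-1$ this is only $\bigO(z^{\theta-\alpha-1})$, strictly weaker than the $\bigO(1)$ required in item~\ref{enu:RHP:original_p:5}. The paper flags this explicitly. The fix is the same removable-singularity argument you give at $b$: since $Y_{1,+}=Y_{1,-}$ on $(0,+\infty)$, $Y_1$ is analytic in a punctured disk at $0$, and its $\bigO(1)$ behaviour outside the lens forces the singularity to be removable. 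You should say this.

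Second, and more seriously, your dismissal of the $Y_2$ issue at $b$ is not justified. Item~\ref{enu:RHP:original_p:2} of RH problem~\ref{RHP:original_p} requires \emph{continuous} boundary values $Y_\pm$ on all of $(0,+\infty)$; the bound $Y_2(z)=\bigO((z-b)^{-1/4})$ does not by itself deliver continuity of $Y_{2,\pm}$ at $b$, so you cannot yet invoke the uniqueness part of \cite[Theorem~1.4]{Claeys-Romano14}. The paper therefore does \emph{not} appeal to that uniqueness. Instead it argues directly: once $0$ and $b$ are seen to be removable for $\Yhat_1$, Liouville's theorem makes $\Yhat_1$ a monic polynomial of degree $n$; then the jump relation, the decay of $\Yhat_2$ at infinity, and its endpoint bounds identify $\Yhat_1$ as the biorthogonal polynomial $p_n$ and $\Yhat_2$ as its modified Cauchy transform $Cp_n$, after which uniqueness of the biorthogonal system \eqref{eq:pqbioOP} finishes the proof. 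Your argument can be completed along the same lines, but as written the assertion ``$Y$ satisfies all the conditions of RH problem~\ref{RHP:original_p}'' is unproven at $b$ for the second component.
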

\begin{proof}
  Let $\Shat(z) = (\Shat_1(z), \Shat_2(z))$ be a solution of RH problem \ref{rhp:S} with a weaker version of item \ref{enu:rhp:S:6} that reads $S_1(z) = \bigO((z-b)^{-1/4})$ and $S_2(z) = \bigO((z-b)^{-1/4})$ as $z \to b$. By reversing the transformations $Y \to T \to S$, we obtain a vector-valued function $\Yhat(z) = (\Yhat_1(z), \Yhat_2(z))$ analytic in $(\compC \setminus \{ 0, b \}, \mathbb{H}_{\theta} \setminus [0, \infty))$, which satisfies items \ref{enu:RHP:original_p:3}, \ref{enu:RHP:original_p:4},  \ref{enu:RHP:original_p:6} and \ref{enu:RHP:original_p:7} of RH problem \ref{RHP:original_p}. As $z\to 0$ in $\compC$, $\Yhat_1(z)$ has the same local behaviour as $Y_1(z)$ given in item \ref{enu:RHP:original_p:5} of RH problem \ref{RHP:original_p} provided $z \to 0$ outside the lens. If $z \to 0$ inside the lens, we can only obtain that $\Yhat_1(z)$ has the same asymptotic behaviour as $S_1(z)$ given in \eqref{eq:asy_S_1_in_lens}, which is weaker than item \ref{enu:RHP:original_p:5} of RH problem \ref{RHP:original_p}. Also as $z \to b$, we do not have the continuous behaviour of $Y_1(z)$ and $Y_2(z)$ as given in item \ref{enu:RHP:original_p:1} of RH problem \ref{RHP:original_p}, but only have that $\Yhat_1(z) = \bigO((z - b)^{-1/4})$ and $\Yhat_2(z) = \bigO((z - b)^{-1/4})$. On the other hand, $\Yhat$ still has continuous boundary values when approaching $(0, b) \cup (b, +\infty)$ from the above and the below. We note that the jump condition for $Y$ in item \ref{enu:RHP:original_p:2} of the RH problem \ref{RHP:original_p} is satisfied by $\Yhat(z)$ on $(0, b) \cup (b, +\infty)$ instead of on $(0, +\infty)$.

  A key observation now is that $\Yhat_1(z)$, which agrees with $\Shat_1(z) e^{ng(z)}$ outside the lens, has a polynomial growth at $\infty$. Since it is not hard to see that both $0$ and $b$ are removable singular points for $\Yhat_1(z)$, it then follows from Liouville's theorem that $\Yhat_1(z)$ is a monic polynomial of degree $n$. Moreover, the jump condition for $\Yhat(z)$ together with the boundary conditions that $\Yhat_2(z)$ satisfied on $(0, b) \cup (b, +\infty)$, at $z=\infty$, and at $z=b$ imply that $\Yhat_1(z)$ is the monic polynomial in $z$ satisfying the biorthogonal condition \eqref{eq:pqbioOP}, and $\Yhat_2(z)$ is the Cauchy transform of $\Yhat_1(z)$. The uniqueness of the biorthogonal polynomials in \eqref{eq:pqbioOP} shows that $\Yhat(z)$ is the unique solution to RH problem \ref{RHP:original_p}. Since the transforms $Y \to T \to S$ are invertible, we conclude that the solution of RH problem \ref{rhp:S} is unique, even if item \ref{enu:rhp:S:6} therein is weakened as stated in the proposition.
\end{proof}

\subsection{Construction of the global parametrix}\label{sec:global}

By \eqref{def:phi}, \eqref{eq:gequal} and \eqref{eq:psiandg}, we have, for $x\in (0,b)$,
\begin{equation}\label{eq:phiderivative}
\phi_{\pm}'(x)=g_{\pm}'(x)+\g_{\pm}'(x)-V'(x)=g_{\pm}'(x)-g_{\mp}'(x)= \mp 2\pi i \psi(x).
\end{equation}
Note that since $\psi(x)>0$ on $(0,b)$, it then follows from the Cauchy-Riemann conditions that
\begin{equation} \label{eq:ineq_for_phi}
  \Re \phi(z)=\Re ( g(z)+\g(z)-V(z)-\ell )>0
\end{equation}
for $z$ in a small neighbourhood around $(0,b)$. Here we give the first requirement for the shapes of $\Sigma_1$ and $\Sigma_2$: they should be close to the interval $(0, b)$ enough such that the inequality \eqref{eq:ineq_for_phi} holds on them. (See Sections \ref{sec:Pb} and \ref{subsec:P0} below for more conditions of the shapes.) This, together with \eqref{eq:gequal2}, implies that all the jump matrices in \eqref{def:Js} tend to the identity matrix $I$ for large $n$, except the one on the interval $(0,b)$. This leads us to consider the following RH problem.
\begin{RHP} \label{rhp:global} \hfill
  \begin{enumerate}[label=\emph{(\arabic*)}, ref=(\arabic*)]
  \item
    $P^{(\infty)} = (P_1^{(\infty)}, P_2^{(\infty)})$ is analytic in $(\compC \setminus [0,b], \mathbb{H}_{\theta} \setminus [0,b])$.
  \item
    For $x \in (0,b)$, we have
    \begin{equation} \label{eq:P^infty_jump}
      P^{(\infty)}_+(x) = P^{(\infty)}_-(x)
      \begin{pmatrix}
          0 & \frac{x^{\alpha+1-\theta}}{\theta} \\
          -\frac{\theta}{x^{\alpha+1-\theta}} & 0
        \end{pmatrix}.
      \end{equation}
    \item
     As $z\to\infty$ in $\compC$, $P_1^{(\infty)}$ behaves as $P_1^{(\infty)}(z)=1+\bigO(z^{-1})$.
  \item
    As $ z\to \infty $ in $\mathbb{H}_\theta$, $P_2^{(\infty)}$ behaves as $P_2^{(\infty)}(z)=\bigO(z^{-\theta})$.
  \item \label{enu:pinftyboundary}
    For $x>0$, we have the boundary condition $P_2(e^{\pi i/\theta }x) = P_2(e^{-\pi i/\theta}x)$.
  \end{enumerate}
\end{RHP}

To construct a solution to the above RH problem, we follow the idea in \cite{Claeys-Wang11} to map the RH problem \ref{rhp:global} for $P^{(\infty)}$ to a scalar RH problem which can be solved explicitly. More precisely, using the function $J_c(s)$ defined in \eqref{def:Jcs}, we set
\begin{equation}
  \tP(s) :=
  \begin{cases}
    P_1^{(\infty)}(J_c(s)), & s\in \mathbb{C}\setminus \overline{D}, \\
    P_2^{(\infty)}(J_c(s)), & s\in D \setminus [-1,0],
  \end{cases}
\end{equation}
where $D$ is the region bounded by the curves $\gamma_1$ and $\gamma_2$, as shown in Figure \ref{fig:mapJ}. Since the function $P_2^{(\infty)}$ satisfies the boundary condition indicated in item \ref{enu:pinftyboundary} of RH problem \ref{rhp:global} and the function $J_c$ maps the upper/lower side of $(-1,0)$ to the boundary of  $\mathbb{H}_\theta$, the function $\tP$ is then well-defined on $(-1,0)$ by continuation. It is straightforward to check that $\tP$ satisfies the following RH problem.

\begin{RHP} \label{RHP:F} \hfill
  \begin{enumerate}[label=\emph{(\arabic*)}, ref=(\arabic*)]
  \item
    $\tP$ is analytic in $\mathbb{C}\setminus (\gamma_1 \cup \gamma_2)$.
  \item
    For $s \in \gamma_1 \cup \gamma_2$, we have
    \begin{equation}
      \tP_+(s) = \tP_-(s) J_{\tP}(s),
      \end{equation}
      where
      \begin{equation}
      J_{\tP}(s) =
      \begin{cases}
        -\theta c^{\theta-\alpha-1} (s + 1)^{\theta-\alpha-1} \left(\frac{s+1}{s}\right)^{\frac{\theta-\alpha-1}{\theta}}, & \text{$s\in\gamma_1$}, \\
        \frac{c^{\alpha+1-\theta}}{\theta} (s + 1)^{\alpha+1-\theta} \left(\frac{s+1}{s}\right)^{\frac{\alpha+1-\theta}{\theta}}, & \hbox{$s\in\gamma_2$},
      \end{cases}
    \end{equation}
    with $c$ being the constant given in \eqref{def:Jcs}.
  \item
    As $s\to\infty$, we have $\tP(s) = 1 + \bigO(s^{-1})$.
   \item
     As $s \to 0 $, we have $\tP(s) = \bigO(s)$.
  \end{enumerate}
\end{RHP}

A solution $\tP$ to the above RH problem is explicitly given by
\begin{equation} \label{eq:Fs}
  \tP(s)=
  \begin{cases}
    \frac{s}{\sqrt{(s + 1)(s - s_b)}}  \left( \frac{s+1}{s} \right)^{\frac{\theta-\alpha-1}{\theta}}, & s \in \compC \setminus \overline{D}, \\
    \frac{c^{\alpha+1-\theta}s(s+1)^{\alpha+1-\theta}}{\theta\sqrt{(s+1)(s-s_b)}}, & s\in D,
  \end{cases}
\end{equation}
where $s_b=1/\theta$, the branch cuts of $\sqrt{(s+1)(s - s_b)}$, $\left(\frac{s+1}{s}\right)^{\frac{\theta-\alpha-1}{\theta}}$ and $(s+1)^{\alpha+1-\theta}$ are taken along $\gamma_1$, $[-1,0]$ and $(-\infty,-1]$, respectively. We note that the RH problem \ref{RHP:F} for $\tP$ may have other solutions, since we do not specify the local behaviours of $\tP$ as $z \to -1$ or $z \to s_b$. Here and below, we only consider the solution \eqref{eq:Fs}.

As a consequence, one solution to the RH problem \ref{rhp:global} for $P^{(\infty)}$ is given by
\begin{align}
  P_1^{(\infty)}(z) = {}& \tP(I_1(z)), & z \in {}& \mathbb{C}\setminus [0,b], \label{eq:P1} \\
  P_2^{(\infty)}(z) = {}& \tP(I_2(z)), & z \in {}& \mathbb{H}_\theta \setminus [0,b], \label{eq:P2}
\end{align}
where $\tP$ is given by \eqref{eq:Fs}, and $I_1$ and $I_2$ are the inverses of two branches of the mapping $J_c$ satisfying
\begin{align}
  I_1(J_c(s)) = {}& s, & s \in {}& \mathbb{C}\setminus \overline{D}, \label{eq:inverse1}
  \\
  I_2(J_c(s)) = {}&s, & s \in {}& D \setminus [-1,0]. \label{eq:inverse2}
\end{align}
Similar to the RH problem \ref{RHP:F}, the RH problem \ref{rhp:global} has more than one solutions too, since we do not specify the behaviours of $P^{(\infty)}$ as $z \to 0$ or $z \to b$. For our purpose, we only consider the solution given by \eqref{eq:Fs}--\eqref{eq:P2} and its local behaviour is collected in the following Proposition.

\begin{prop}\label{prop:P_infty_asy}
  With $P^{(\infty)}(z)=(P^{(\infty)}_1(z), P^{(\infty)}_2(z))$ defined in \eqref{eq:Fs}--\eqref{eq:P2}, we have, as $z\to 0$,
  \begin{align}
    P^{(\infty)}_1(z) = {}&
                            \begin{cases}
                              \sqrt{\frac{\theta}{1+\theta}}c^{\frac{2(\alpha+1)-\theta}{2(1+\theta)}}e^{\frac{2(\alpha+1)-\theta}{2(1+\theta)}\pi i}z^{\frac{\theta-2(\alpha+1)}{2(1+\theta)}}(1 + \bigO(z^{\frac{\theta}{1+\theta}})), & \arg z \in (0, \pi),
                              \\
                              \sqrt{\frac{\theta}{1+\theta}}c^{\frac{2(\alpha+1)-\theta}{2(1+\theta)}}e^{\frac{\theta-2(\alpha+1)}{2(1+\theta)}\pi i}z^{\frac{\theta-2(\alpha+1)}{2(1+\theta)}}(1 + \bigO(z^{\frac{\theta}{1+\theta}})), & \arg z \in (-\pi, 0),
                            \end{cases} \label{eq:asy_P^infty_1} \\
    P^{(\infty)}_2(z) = {}&
                            \begin{cases}
                              \frac{c^{\frac{2(\alpha+1)-\theta}{2(1+\theta)}}}{\sqrt{\theta(1+\theta)}}e^{\frac{\theta-2(\alpha+1)}{2(1+\theta)}\pi i} z^{\frac{(\alpha+\frac12 -\theta)\theta}{1+\theta}} (1 + \bigO(z^{\frac{\theta}{1+\theta}})), & \arg z \in (0, \frac{\pi}{\theta}), \\
                              \frac{c^{\frac{2(\alpha+1)-\theta}{2(1+\theta)}}}{\sqrt{\theta(1+\theta)}}e^{\frac{2\alpha-3\theta}{2(1+\theta)}\pi i} z^{\frac{(\alpha+\frac12 -\theta)\theta}{1+\theta}} (1 + \bigO(z^{\frac{\theta}{1+\theta}})), & \arg z \in (-\frac{\pi}{\theta}, 0),
                            \end{cases} \label{eq:asy_P^infty_2}
  \end{align}
  and as $z\to b$,
  \begin{equation} \label{eq:asy_P^infty_b}
    P^{(\infty)}_i(z)=\bigO((z-b)^{-\frac14}), \qquad i=1,2.
  \end{equation}
\end{prop}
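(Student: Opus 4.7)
The plan is to exploit the explicit representation $P^{(\infty)}_1(z)=\mathcal{P}(I_1(z))$ and $P^{(\infty)}_2(z)=\mathcal{P}(I_2(z))$ from \eqref{eq:P1}--\eqref{eq:P2}, so that the local behaviour near $z=0$ (respectively $z=b$) reduces to composing the explicit formula \eqref{eq:Fs} for $\mathcal{P}$ near $s=-1$ (respectively $s=s_b=1/\theta$) with local expansions of the inverse mapping. Both $-1$ and $s_b$ are critical points of $J_c$, of order $(\theta+1)/\theta$ and $2$ respectively, so the local structure in each case is dictated by a single power law that is then transported through $\mathcal{P}$.

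For the behaviour at the origin, I would first expand $J_c$ near $s=-1$: writing $J_c(s)=c(s+1)^{(\theta+1)/\theta}s^{-1/\theta}$ with the branch selected by $J_c(s)\sim cs$ at infinity, one gets
\[
J_c(s)=c\,(-1)^{-1/\theta}(s+1)^{(\theta+1)/\theta}\bigl(1+O(s+1)\bigr),
\]
where $(-1)^{-1/\theta}$ takes distinct values on the two sides of the branch cut $[-1,0]$ of $\bigl(\frac{s+1}{s}\bigr)^{1/\theta}$. Inverting yields
\[
I_j(z)+1=c^{-\theta/(\theta+1)}\zeta_j\, z^{\theta/(\theta+1)}\bigl(1+O(z^{\theta/(\theta+1)})\bigr),
\]
with $\zeta_j$ a unit-modulus constant determined by the sheet: $j=1$ (outside $\overline D$) splits according to $\arg z\in(0,\pi)$ versus $(-\pi,0)$, while $j=2$ (inside $D$) splits according to $\arg z\in(0,\pi/\theta)$ versus $(-\pi/\theta,0)$. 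Substituting into \eqref{eq:Fs}, the outside-$\overline D$ branch of $\mathcal{P}$ satisfies $\mathcal{P}(s)\sim A_1(s+1)^{(\theta-2\alpha-2)/(2\theta)}$ (combining the exponent $(\theta-\alpha-1)/\theta$ of the power factor with the $-1/2$ from $\sqrt{(s+1)(s-s_b)}$), while the inside-$D$ branch satisfies $\mathcal{P}(s)\sim A_2(s+1)^{\alpha+1/2-\theta}$, for explicit constants $A_1,A_2$ carrying phases and factors of $c$, $s_b$. Composing produces the asserted powers $z^{(\theta-2(\alpha+1))/(2(1+\theta))}$ and $z^{(\alpha+1/2-\theta)\theta/(1+\theta)}$ in \eqref{eq:asy_P^infty_1}--\eqref{eq:asy_P^infty_2}; the modulus $\sqrt{\theta/(1+\theta)}$ in \eqref{eq:asy_P^infty_1} comes from $|1/\sqrt{-(s_b+1)}|=\sqrt{\theta/(\theta+1)}$, the extra $1/\theta$ in \eqref{eq:asy_P^infty_2} comes from the prefactor in \eqref{eq:Fs}, and the power $c^{(2(\alpha+1)-\theta)/(2(1+\theta))}$ emerges by raising $c^{-\theta/(\theta+1)}$ to the relevant exponent (supplemented, on the interior branch, by the prefactor $c^{\alpha+1-\theta}$ in \eqref{eq:Fs}).

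For the behaviour at $z=b$, I would use $J_c'(s_b)=0$ with $J_c''(s_b)\neq 0$ to get $J_c(s)-b=\tfrac{1}{2}J_c''(s_b)(s-s_b)^2(1+o(1))$, so that both $I_1(z)-s_b$ and $I_2(z)-s_b$ are $O((z-b)^{1/2})$; combined with the $(s-s_b)^{-1/2}$ singularity of $\mathcal{P}$ at $s_b$ coming from the $\sqrt{(s+1)(s-s_b)}$ in the denominator of \eqref{eq:Fs}, this yields \eqref{eq:asy_P^infty_b}. The main obstacle of the proof is the systematic bookkeeping of branches, since the phase factors $e^{\pm\pi i\cdots/(1+\theta)}$ in \eqref{eq:asy_P^infty_1}--\eqref{eq:asy_P^infty_2} are genuinely branch-sensitive: the two sheets of $J_c^{-1}$ emerging into the upper/lower sectors, the branch of $\bigl(\frac{s+1}{s}\bigr)^{1/\theta}$ cut along $[-1,0]$, the branch of $\sqrt{(s+1)(s-s_b)}$ cut along $\gamma_1$, and the branch of $(s+1)^{\alpha+1-\theta}$ cut along $(-\infty,-1]$ all must be tracked simultaneously and matched. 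Once this tracking is completed, the claimed expansions read off directly from the substitutions above.
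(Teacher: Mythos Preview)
Your proposal is correct and follows essentially the same route as the paper: compute the local expansions $I_j(z)+1=c^{-\theta/(\theta+1)}e^{\pm\pi i/(\theta+1)}z^{\theta/(\theta+1)}(1+\bigO(z^{\theta/(\theta+1)}))$ and $I_j(z)-s_b=\bigO((z-b)^{1/2})$, expand $\mathcal{P}(s)$ near $s=-1$ on each of the three relevant regions and near $s=s_b$, and compose. The paper carries out precisely this bookkeeping, recording the explicit phases for each sectoral case before substituting.
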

\begin{proof}
  From the definition of $I_i(z)$, $i=1,2$, it is easily seen that, as $z \to 0$,
  \begin{align}
    I_1(z) = {}& -1 +
                 \begin{cases}
                   c^{-\frac{\theta}{1+\theta}}e^{\frac{\pi }{1+\theta}i}  z^{\frac{\theta}{1+\theta}} (1 + \bigO(z^{\frac{\theta}{1+\theta}})),  & \arg z \in (0, \pi),
                   \\
                   c^{-\frac{\theta}{1+\theta}}e^{-\frac{\pi }{1+\theta}i}  z^{\frac{\theta}{1+\theta}} (1 + \bigO(z^{\frac{\theta}{1+\theta}})), & \arg z \in (-\pi, 0),
                 \end{cases} \label{eq:I1zero} \\
    I_2(z) = {}& -1 +
                 \begin{cases}
                   c^{-\frac{\theta}{1+\theta}}e^{-\frac{\pi }{1+\theta}i}  z^{\frac{\theta}{1+\theta}} (1 + \bigO(z^{\frac{\theta}{1+\theta}})), & \arg z \in (0, \frac{\pi}{\theta}), \\
                   c^{-\frac{\theta}{1+\theta}}e^{\frac{\pi }{1+\theta}i}  z^{\frac{\theta}{1+\theta}} (1 + \bigO(z^{\frac{\theta}{1+\theta}})) ,  & \arg z \in (-\frac{\pi}{\theta}, 0),
                 \end{cases} \label{eq:I2zero}
  \end{align}
  and as $z\to b$, $I_i(z)=s_b + \bigO((z-b)^{1/2})$ for $i=1,2$.
  In view of \eqref{eq:Fs}, we further have
  \begin{equation}
    \tP(s) =
    \begin{cases}
      \bigO(\lvert s - s_b \rvert^{-\frac12}), & s\to s_b, \\
      -i\frac{c^{\alpha+1-\theta}}{\sqrt{\theta(1+\theta)}}(s+1)^{\alpha+\frac12-\theta}(1+\bigO(s+1)), & \text{$s\to -1$ from $D$}, \\
      \sqrt{\frac {\theta}{1+\theta}}(s+1)^{\frac{\theta-2(\alpha+1)}{2\theta}}e^{\frac{2(\alpha+1)-\theta}{2\theta}\pi i}(1+\bigO(s+1)), & \text{$s\to -1$ from $\compC_+ \setminus \overline{D}$}, \\
      \sqrt{\frac {\theta}{1+\theta}} ((s+1)e^{\pi i})^{\frac{\theta-2(\alpha+1)}{2\theta}}(1+\bigO(s+1)), & \text{$s\to -1$ from $\compC_- \setminus \overline{D}$}.
    \end{cases}
  \end{equation}
  A combination of the above formulas with \eqref{eq:Fs}--\eqref{eq:P2} gives us \eqref{eq:asy_P^infty_1}--\eqref{eq:asy_P^infty_b}.
\end{proof}

\subsection{Third transformation: $S \to Q$}

Noting that $P_1^{(\infty)}(z)\neq 0$ for $z \in \compC \setminus [0,b]$ and $P_2^{(\infty)}(z) \neq 0 $ for $z\in \mathbb{H}_\theta \setminus [0,b]$, we define the third transformation by
\begin{equation}\label{def:thirdtransform}
  Q(z) = (Q_1(z),Q_2(z))=\left(\frac{S_1(z)}{P^{(\infty)}_1(z)},\frac{S_2(z)}{P^{(\infty)}_2(z)} \right).
\end{equation}
In view of the RH problems \ref{rhp:S} and \ref{rhp:global} for $S$ and $P^{(\infty)}$, it is then easily seen that, with the aid of Proposition \ref{prop:P_infty_asy}, $Q$ satisfies the following RH problem.
\begin{RHP} \label{RHP:Svar} \hfill
  \begin{enumerate}[label=\emph{(\arabic*)}, ref=(\arabic*)]
  \item \label{enu:RHP:Svar:1}
    $Q = (Q_1, Q_2)$ is analytic in $(\compC \setminus \Sigma, \mathbb{H}_\theta \setminus \Sigma)$,  where the contour $\Sigma$ is defined in \eqref{def:Sigma}.
  \item \label{enu:RHP:Svar:2}
    For $z \in \Sigma$, we have
    \begin{equation} \label{def:Jcals}
      Q_+(z) = Q_-(z) J_Q(z),
      \end{equation}
      where
      \begin{equation} \label{def:JQ}
      J_Q(z) =
      \begin{cases}
        \begin{pmatrix}
          1 & 0 \\
          \frac{\theta P^{(\infty)}_2(z)}{z^{\alpha+1-\theta}P^{(\infty)}_1(z)}e^{-n \phi(z)} & 1
        \end{pmatrix},
        & z \in \Sigma_1 \cup \Sigma_2, \\
        \begin{pmatrix}
          0 & 1 \\
          1 & 0
        \end{pmatrix},
        & z \in (0, b), \\
        \begin{pmatrix}
          1 & \frac{z^{\alpha+1-\theta}P^{(\infty)}_1(z)}{\theta P^{(\infty)}_2(z)}e^{n \phi(z)}  \\
          0 & 1
        \end{pmatrix},
        & z \in (b, +\infty).
      \end{cases}
    \end{equation}

  \item \label{enu:RHP:Svar:3}
    As $z \to \infty$ in $\compC$, $Q_1$ behaves as $Q_1(z)=1+\bigO(z^{-1})$.

  \item \label{enu:RHP:Svar:4}
    As $z \to \infty$ in $\mathbb{H}_\theta $, $Q_2$ behaves as $Q_2(z)=\bigO(1)$.

  \item
  \label{enu:RHP:Svar:5}
    As $z \to 0$ in $\compC \setminus \Sigma$, we have
    \begin{equation}
      Q_1(z) =
      \begin{cases}
        \bigO ( z^{\frac{\theta(\theta - \alpha - 1/2)}{1 + \theta}} ), & \text{$\alpha > \theta - 1$ and $z$ inside the lens}, \\
        \bigO ( z^{\frac{\theta}{2(1+\theta)}} \log z ), & \text{$\alpha = \theta - 1$ and $z$ inside the lens}, \\
        \bigO ( z^{\frac{\alpha + 1 - \theta/2}{1+\theta}} ), & \text{$z$ outside the lens or $-1 < \alpha < \theta - 1$}.
      \end{cases}
    \end{equation}

  \item
    As $z \to 0$ in $\halfH \setminus \Sigma$, we have
    \begin{equation}
      Q_2(z)=
      \begin{cases}
        \bigO ( z^{\frac{\theta(\theta-\alpha-1/2)}{1+\theta}} ), & \alpha > \theta - 1, \\
        \bigO ( z^{\frac{\theta}{2(1+\theta)}} \log z ), & \alpha = \theta - 1, \\
        \bigO (z^{\frac{\alpha+1-\theta/2}{1+\theta}} ), & -1 < \alpha < \theta - 1.
      \end{cases}
    \end{equation}
  \item
    As $z \to b$, we have $Q_1(z) = \bigO((z - b)^{1/4})$ and $Q_2(z) = \bigO((z - b)^{1/4})$.
  \item \label{enu:RHP:Svar:7}
    For $x>0$, we have the boundary condition $Q_2(e^{\pi i/\theta}x) = Q_2(e^{-\pi i/\theta}x)$.
  \end{enumerate}
\end{RHP}

\subsection{Local parametrix around $b$} \label{sec:Pb}
Since the convergence of the jump matrix $J_S(z)$ in \eqref{def:Js} to the identity matrix on the lens is not uniform as $n\to \infty$ near the ending points $0$ and $b$, we need to construct local parametrices near these points, which serve as local approximations to the solution of the RH problem \ref{RHP:Svar} for $Q$. Near the right ending point $b$, this parametrix can be built with the aid of the well-known Airy parametrix \cite{Deift-Kriecherbauer-McLaughlin-Venakides-Zhou99,Deift99} $\Psi^{(\Ai)}$; see Appendix \ref{app:Airy} below for the definition.

To this end, we note that the regularity assumption on the potential $V$ implies that (see \eqref{eq:phiderivative} and \eqref{eq:localb}), as $z \to b$,
\begin{equation}
\phi(z)=-\frac{4\pi}{3}d_2(z-b)^{\frac32}+\bigO(|z-b|^{\frac52}),
\end{equation}
$\phi(z)/(z - b)^{3/2}$ is analytic at $b$, and then
\begin{equation}\label{def:fb}
f_b(z):=\left(-\frac34 \phi(z)\right)^{\frac23}
\end{equation}
is a conformal mapping in a neighbourhood $D(b, \epsilon)$ around $b$ satisfying $f_b(b)=0$ and $f_b'(b)>0$, where $\epsilon$ is a small positive constant and $D(b, \epsilon)$ is defined in \eqref{eq:size_U_b}. Moreover, we also choose the shape of contour $\Sigma$ so that the image of $\Sigma \cap D(b, \epsilon)$ under the mapping $f_b$ coincides with the jump contour
\begin{equation}\label{def:AiryContour}
  \Gamma_{\Ai}:=e^{-\frac{2\pi i}{3}}[0,+\infty) \cup \mathbb{R} \cup e^{\frac{2\pi i}{3}}[0,+\infty)
\end{equation}
of the RH problem \ref{rhp:Ai} for $\Psi^{(\Ai)}$ restricted in a neighbourhood of the origin; see Figure \ref{fig:Sigma_in_U_b} for an illustration.
\begin{figure}[htb]
  \centering
  \includegraphics{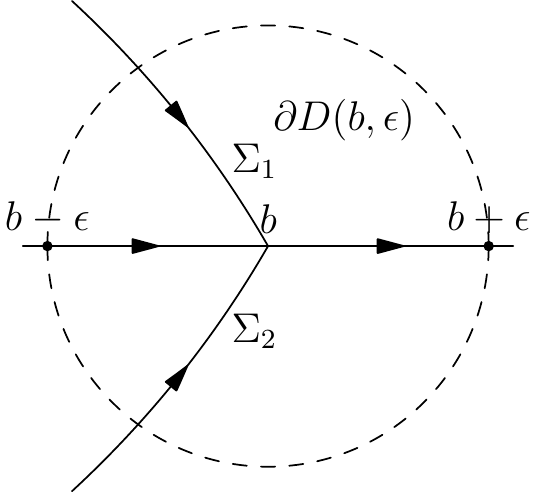}
  \caption[$D(b,\epsilon)$ and $\Sigma \cap D(b,\epsilon)$.]{Schematic figure of $D(b,\epsilon)$ and $\Sigma \cap D(b,\epsilon)$.
}
  \label{fig:Sigma_in_U_b}
\end{figure}

Let
\begin{equation}\label{def:gib}
  g^{(b)}_1(z) = \frac{z^{(\theta - \alpha - 1)/2}}{P_1^{(\infty)}(z)}, \qquad g^{(b)}_2(z) = \frac{z^{(\alpha + 1 - \theta)/2}}{\theta P_2^{(\infty)}(z)}.
\end{equation}
We then define
\begin{equation}\label{def:hatPb}
   \mathsf{P}^{(b)}(z) := \Psi^{(\Ai)}(n^{\frac23}f_b(z))
  \begin{pmatrix}
    e^{-\frac{n}{2} \phi(z)} g^{(b)}_1(z) & 0
    \\
    0 & e^{\frac{n}{2} \phi(z)} g^{(b)}_2(z)
  \end{pmatrix},  \qquad
  z \in D(b,\epsilon) \setminus \Sigma.
\end{equation}
From \eqref{def:gib}, it is easily seen that $g^{(b)}_1(z)$ and $g^{(b)}_2(z)$ are defined in $D(b,\epsilon) \setminus [b - \epsilon, b]$, and satisfy
\begin{equation} \label{eq:gbijump}
  \begin{aligned}
    g^{(b)}_{1, +}(x) = {}& -g^{(b)}_{2, -}(x),  & g^{(b)}_{2, +}(x) = {}& g^{(b)}_{1, -}(x), && \text{$x\in(b-\epsilon, b)$}, \\
    g^{(b)}_1(z) = {}& \bigO((z - b)^{\frac{1}{4}}),  & g^{(b)}_2(z) = {}& \bigO((z - b)^{\frac{1}{4}}), && \text{$z \to b$},
\end{aligned}
\end{equation}
by \eqref{eq:P^infty_jump} and \eqref{eq:asy_P^infty_b}. This, together with \eqref{eq:gequal} and the RH problem \ref{rhp:Ai} for $\Psi^{(\Ai)}$, implies the following RH problem for $\mathsf{P}^{(b)}$.
\begin{RHP} \label{RHP:Pbmodel} \hfill
  \begin{enumerate}[label=\emph{(\arabic*)}, ref=(\arabic*)]
  \item
    $ \mathsf{P}^{(b)}(z)$ is analytic for $z \in D(b,\epsilon)\setminus \Sigma$.
  \item
    For $z \in \Sigma \cap D(b,\epsilon)$, we have
    \begin{equation}\label{eq:jsfPb}
      \mathsf{P}^{(b)}_+(z) = \mathsf{P}^{(b)}_-(z) J_Q(z),
    \end{equation}
    where $J_Q(z)$ is defined in \eqref{def:JQ}.
  \item
    As $z \to b$, we have $\mathsf{P}^{(b)}(z) = \bigO((z-b)^{1/4})$ and $\mathsf{P}^{(b)}(z)^{-1} = \bigO((z-b)^{-1/4})$, which are understood in an entry-wise manner.
  \item
    For $z \in \partial D(b,\epsilon)$, we have, as $n\to \infty$,
     \begin{multline}
      \frac{1}{\sqrt{2}}
      \begin{pmatrix}
        g^{(b)}_1(z) & 0  \\
        0 & g^{(b)}_2(z)
      \end{pmatrix}^{-1}
      e^{\frac{\pi i}{4} \sigma_3}
      \begin{pmatrix}
        1 & -1 \\
        1 & 1
      \end{pmatrix}
      \begin{pmatrix}
        n^{\frac16} f_b(z)^{\frac14} & 0 \\
        0 & n^{-\frac16} f_b(z)^{-\frac14}
      \end{pmatrix}
     \mathsf{P}^{(b)}(z)\\
      = I + \bigO(n^{-1}).
    \end{multline}
  \end{enumerate}
\end{RHP}

We now define a $2 \times 2$ matrix-valued function
\begin{equation}\label{def:Pb}
  P^{(b)}(z) :=E^{(b)}(z) \mathsf{P}^{(b)}(z), \qquad  z \in D(b,\epsilon) \setminus \Sigma,
\end{equation}
where $\mathsf{P}^{(b)}$ is given in \eqref{def:hatPb} and
\begin{equation}\label{def:Eb}
  E^{(b)}(z)=\frac{1}{\sqrt{2}}
  \begin{pmatrix}
    g^{(b)}_1(z) & 0  \\
    0 & g^{(b)}_2(z)
  \end{pmatrix}^{-1}
  e^{\frac{\pi i}{4} \sigma_3}
  \begin{pmatrix}
    1 & -1 \\
    1 & 1
  \end{pmatrix}
  \begin{pmatrix}
    n^{\frac16} f_b(z)^{\frac14} & 0 \\
    0 & n^{-\frac16} f_b(z)^{-\frac14}
  \end{pmatrix}
\end{equation}
with $\sigma_3
= (\begin{smallmatrix}1 & 0
  \\
  0 & -1
\end{smallmatrix})
$ being the Pauli matrix. From \eqref{def:Eb}, it is readily seen that, for $x \in (b-\epsilon,b)$,
\begin{equation} \label{eq:jump_E^b}
    E^{(b)}_+(x)E^{(b)}_-(x)^{-1}  = \begin{pmatrix}
      g^{(b)}_{1,+}(x) & 0  \\
      0 & g^{(b)}_{2,+}(x)
    \end{pmatrix}^{-1}
    \begin{pmatrix}
      0 & -1 \\
      1 & 0
    \end{pmatrix}
    \begin{pmatrix}
      g^{(b)}_{1,-}(x) & 0  \\
      0 & g^{(b)}_{2,-}(x)
    \end{pmatrix}=\begin{pmatrix}
      0 & 1
      \\
      1 & 0
    \end{pmatrix},
\end{equation}
where we have made use of \eqref{eq:gbijump} in the last step.

A combination of the RH problem \ref{RHP:Pbmodel} for $\mathsf{P}^{(b)}$ and \eqref{eq:jump_E^b} shows that $P^{(b)}$ defined in \eqref{def:Pb} satisfies the following RH problem.
\begin{RHP} \label{prop:Pb}
\hfill
\begin{enumerate}[label=\emph{(\arabic*)}, ref=(\arabic*)]
\item
  $ P^{(b)}(z)$ is analytic in $D(b,\epsilon) \setminus \Sigma$.
\item
  For $z \in \Sigma \cap D(b,\epsilon)$, we have
  \begin{equation}
    P^{(b)}_+(z) =
    \begin{cases}
      P^{(b)}_-(z) J_Q(z), & z \in \text{$ \Sigma \cap D(b,\epsilon) \setminus [b-\epsilon, b]$}, \\
      \begin{pmatrix}
        0 & 1 \\
        1 & 0
      \end{pmatrix}
      P^{(b)}_-(z) J_Q(z), & z \in (b - \epsilon, b),
    \end{cases}
  \end{equation}
  where $J_{Q}$ is defined in \eqref{def:JQ}.
\item
  As $z \to b$, we have $P^{(b)}(z) = \bigO((z-b)^{-1/4})$ and $P^{(b)}(z)^{-1} = \bigO((z-b)^{-1/4})$, which are understood in an entry-wise manner.
\item \label{enu:Pbbnd}
  For $z$ on the boundary $\partial D(b,\epsilon)$ of $D(b,\epsilon)$, we have, as $n\to\infty$, $P^{(b)}(z) = I + \bigO(n^{-1})$.
\end{enumerate}
\end{RHP}

At last, we define a vector-valued function $V^{(b)}$ by
\begin{equation} \label{eq:defn_V^(b)}
  V^{(b)}(z) = Q(z) P^{(b)}(z)^{-1}, \quad z \in D(b,\epsilon) \setminus \Sigma,
\end{equation}
where $Q(z)$ is defined in \eqref{def:thirdtransform}. It is then easily seen that $V^{(b)}$ satisfies the following RH problem.
\begin{RHP}\label{RHP:Vb} \hfill
  \begin{enumerate}[label=\emph{(\arabic*)}, ref=(\arabic*)]
  \item
    $V^{(b)} = (V^{(b)}_1, V^{(b)}_2)$ is analytic in $D(b,\epsilon)\setminus [b-\epsilon,b]$.
  \item
    For $z \in (b-\epsilon,b)$, we have
    \begin{equation}
      V^{(b)}_+(x)=V^{(b)}_-(x)\begin{pmatrix}
0 & 1
\\
1 & 0
\end{pmatrix}.
      \end{equation}

  \item
    As $z \to b$, we have $V^{(b)}_1(z) = \bigO(1)$ and $V^{(b)}_2(z) = \bigO(1)$.

  \item
    For $z \in \partial D(b,\epsilon)$, we have, as $n\to \infty$, $V^{(b)}(z)=Q(z)(I+\bigO(n^{-1}))$.
  \end{enumerate}
\end{RHP}

\subsection{Local parametrix around $0$}\label{subsec:P0}
Near the left ending point $0$ of the support of the equilibrium measure, i.e., the hard edge in the context of random matrix theory, we still want to construct a matrix-valued function as the local parametrix. The roadmap of our construction is parallel to that for the local parametrix around $b$. We are going to have $\mathsf{P}^{(0)}$, $P^{(0)}$ and $V^{(0)}$ that are analogous to $\mathsf{P}^{(b)}$, $P^{(b)}$ and $V^{(b)}$. It comes out that the construction here is much more involved and complicated. One difficulty is that there does not exist an open disk centred at the origin lying in $\compC \cap \halfH$. Moreover, instead of using the well established Airy parametrix $\Psi^{(\Ai)}$, we have to build a Meijer G-parametrix $\Psi^{(\Mei)}$ from scratch, which will be the technical heart of this part. Throughout this subsection, we emphasize that $\theta \in \mathbb{N}$.

\subsubsection*{A local conversion of the RH problem for $Q$}
As the initial step toward the construction, we convert the RH problem \ref{RHP:Svar} for $Q$ near the origin to an equivalent one but defined in a small open disk centred at $0$. To this end, let $r > 0$ be a small enough constant, and later we will actually take $r=r_n$ to be shrinking with $n$. It is also assumed that the contours $\Sigma_1$ and $\Sigma_2$ satisfy the following requirements in the open disk $D(0,r^{1/\theta})$: $\Sigma_1\cap D(0,r^{1/\theta})$ overlaps with the ray $\{ z\in \compC \mid \arg z = \pi/(2\theta) \}$ and $\Sigma_2 \cap D(0,r^{1/\theta})$ overlaps with the ray $\{ z\in \compC \mid \arg z = -\pi/(2\theta) \}$. We then define $\theta+1$ functions $U_0(z), U_1(z), \ldots, U_{\theta}(z)$ for $z\in D(0,r)$ with some rays removed, as follows:
\begin{align}
  U_0(s^{\theta}) &=  Q_2(s), \quad \arg s \in (0, \frac{\pi}{2\theta}) \cup (\frac{\pi}{2\theta}, \frac{\pi}{\theta}] \cup (-\frac{\pi}{2\theta}, 0) \cup (-\frac{\pi}{\theta}, -\frac{\pi}{2\theta}), \label{def:U0} \\
  U_1(s^{\theta}) &=  Q_1(s), \quad \arg s \in (0, \frac{\pi}{2\theta}) \cup (\frac{\pi}{2\theta}, \frac{\pi}{\theta}] \cup (-\frac{\pi}{2\theta}, 0) \cup (-\frac{\pi}{\theta}, -\frac{\pi}{2\theta}), \\
  U_k(s^{\theta}) &=  Q_1(s), \quad \arg s \in (\frac{(2k - 3)\pi}{\theta}, \frac{(2k - 1)\pi}{\theta}), \quad k = 2, \dotsc, \theta,\label{def:Uk}
\end{align}
or equivalently,
\begin{align}
  U_0(z) &=  Q_2(z^{\frac1\theta}), && z\in D(0,r)\setminus \{(-r,r)\cup(-ir,ir)\}, \label{def:U0z}
  \\
  U_k(z) &=  Q_1(z^{\frac1\theta}e^{\frac{2(k-1)}{\theta}\pi i}), && z \in D(0,r) \setminus \{(-r,r)\cup(-ir,ir)\}, \quad k= 1, 2, \dotsc, \theta,\label{def:Uiz}
\end{align}
where $(Q_1(z), Q_2(z))$ solves the RH problem \ref{RHP:Svar} and we choose the principal branch for $z^{1/\theta}$. As a consequence, we arrive at the following $1\times (\theta+1)$ RH problem.
\begin{RHP}\label{rhp:U} \hfill
  \begin{enumerate}[label=\emph{(\arabic*)}, ref=(\arabic*)]
  \item
    $ U=(U_0,U_1,\ldots,U_\theta)$ is defined and analytic in $D(0,r) \setminus \{(-r,r) \cup (-ir,ir)\}$.
  \item
    For $z\in (-r,r)\cup(-ir,ir)\setminus\{0\}$, we have
    \begin{equation}
      U_+(z) = U_-(z) J_U(z),
    \end{equation}
    where
    \begin{equation} \label{eq:defn_J_U}
      J_U(z) =
      \begin{cases}
        \begin{pmatrix}
          1 & \theta z^{\frac{\theta - 1 - \alpha}{\theta}} e^{-n \phi(z^{1/\theta})} \frac{P^{(\infty)}_2(z^{1/\theta})}{P^{(\infty)}_1(z^{1/\theta})} \\
          0 & 1
        \end{pmatrix}
        \oplus I_{\theta - 1}, & z \in (0,ir)\cup (0,-ir), \\
        \begin{pmatrix}
          0 & 1 \\
          1 & 0
        \end{pmatrix}
        \oplus I_{\theta - 1}, & z \in (0, r), \\
        \Mcyclic, & z \in (-r, 0),
      \end{cases}
    \end{equation}
    with $\Mcyclic$ being defined in \eqref{eq:defn_Mcyclic}, and the orientations of the rays are shown in Figure \ref{fig:jumps-U}.
  \item
    As $z\to 0$ from $D(0,r) \setminus \{(-r,r) \cup (-ir,ir)\}$, we have
    \begin{enumerate}[label=\emph{(\alph*)}, ref=(\arabic*)]
    \item
      \begin{equation}
        U_0(z) =
        \begin{cases}
          \bigO (z^{1 - \frac{\alpha + 3/2}{\theta + 1}}), & \alpha > \theta-1, \\
          \bigO (z^{\frac{1}{2(1+\theta)}} \log z ), & \alpha = \theta - 1, \\
          \bigO (z^{\frac{\alpha + 1}{\theta} - \frac{\alpha + 3/2}{\theta + 1}}), & -1 < \alpha < \theta - 1,
        \end{cases}
      \end{equation}
    \item
      for $\arg z \in (0, \pi/2) \cup (-\pi/2, 0)$,
      \begin{equation}
        U_1(z) =
        \begin{cases}
          \bigO (z^{1 - \frac{\alpha + 3/2}{\theta + 1}}), & \alpha > \theta - 1 , \\
          \bigO (z^{\frac{1}{2(1+\theta)}} \log z ), & \alpha = \theta-1, \\
          \bigO (z^{\frac{\alpha + 1}{\theta} - \frac{\alpha + 3/2}{\theta + 1}}), & -1 < \alpha < \theta-1,
        \end{cases}
      \end{equation}
    \item
      for $k = 2, \dotsc, \theta$, or $k=1$ and $\arg z \in (\pi/2,\pi)\cup (-\pi,-\pi/2)$,
      \begin{equation}
        U_k(z)=\bigO (z^{\frac{\alpha + 1}{\theta} - \frac{\alpha + 3/2}{\theta + 1}}).
      \end{equation}
    \end{enumerate}
  \end{enumerate}
\end{RHP}
\begin{figure}[htb]
  \begin{minipage}[b]{0.45\linewidth}
    \centering
    \includegraphics{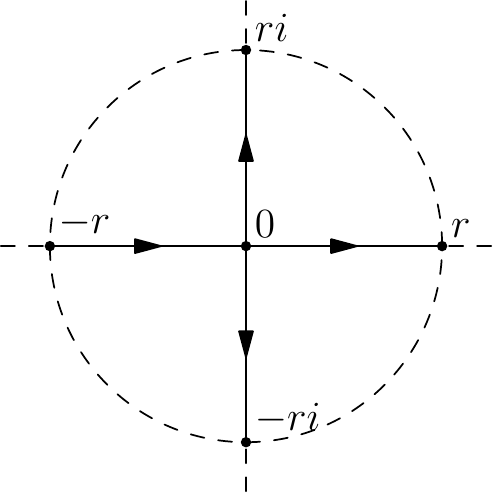}
    \caption{The jump contour for the RH problem \ref{rhp:U} for $U$ and for the RH problem \ref{rhp:tildeU} for $\U$.}
    \label{fig:jumps-U}
  \end{minipage}
  \hspace{\stretch{1}}
  \begin{minipage}[b]{0.45\linewidth}
    \centering
    \includegraphics{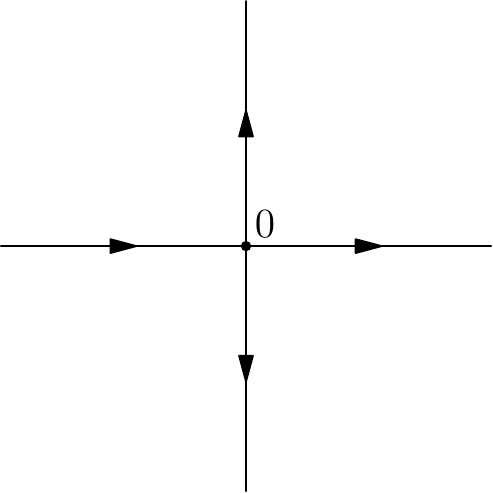}
    \caption{The jump contour for the RH problem \ref{RHP:MeiG} for $\Psi^{(\Mei)}$ and for the RH problem \ref{RHP:tildeMeiG} for $\G$.}
    \label{fig:jumps-MeiG}
  \end{minipage}
\end{figure}

We emphasize that the RH problem for $U$ is defined in $D(0,r)$, and an approximation of $U$ will in turn provide a local approximation of $Q$ near the origin through the relations \eqref{def:U0}--\eqref{def:Uk}. It turns out that the construction will be made with the aid of a model RH problem solved in terms of the Meijer G-functions, which is described next.

\subsubsection*{The Meijer G-parametrix $\Psi^{(\Mei)}$}
Here we define the model parametrix we need in the construction of a local parametrix near the origin, analogous to the Airy parametrix. We start with the exact but uninspiring definition of it, and then show that it satisfies a specified RH problem. More precisely, we define $(\theta + 1)^2$ functions $\Psi_{kj}^{(\Mei)}(\zeta)$ ($j, k = 0, 1, \dotsc, \theta$) explicitly by the Meijer G-functions in three steps for \ref{enu:Meijer_G_kernel_step_1} $j = 2, \dotsc, \theta$, \ref{enu:Meijer_G_kernel_step_2} $j = 1$, \ref{enu:Meijer_G_kernel_step_3} $j = 0$, and later show that they constitute a desired $(\theta + 1) \times (\theta + 1)$ matrix-valued function $\Psi^{(\Mei)}$. For $k=0,1,\ldots, \theta$, we set
\begin{equation}\label{def:psik}
    \psi_k(\zeta):=\zeta^{-(\alpha + 1 - \theta)} \MeijerG{\theta, 0}{0, \theta + 1}{-}{\frac{\alpha - \theta + 1}{\theta}, \frac{\alpha - \theta + 2}{\theta}, \dotsc, \frac{\alpha - 1}{\theta}, \frac{\alpha}{\theta}, k}{\zeta^\theta},
  \end{equation}
which is an entire function in $\zeta$, since the singularities in the integrand of the above Meijer G-function are located at the points $-\theta^{-1}(\alpha-\theta+m) - n$, where $m=1,\ldots,\theta$, and $n\in \{0\} \cup \mathbb{N}$. In particular, we will use the function $\psi_k(\zeta^{1/\theta})$ for $\arg \zeta \in (-\pi, (2\theta-1)\pi)$ such that it is real when $\arg \zeta =0$. The functions $\Psi_{kj}^{(\Mei)}(\zeta)$ are defined as follows.


\begin{enumerate}[label=(\arabic*), ref=(\arabic*)]
 \item $j = 2 \dotsc, \theta$: \label{enu:Meijer_G_kernel_step_1}
\begin{equation} \label{eq:defn_tilde_G_kj}
  \Psi_{kj}^{(\Mei)}(\zeta) = (-1)^k \theta \psi_k(e^{\frac{2(j-1)\pi i}{\theta}}\zeta^{\frac{1}{\theta}}), \quad \arg \zeta \in (-\pi, \pi).
\end{equation}
\item $j = 1$: \label{enu:Meijer_G_kernel_step_2}
  \begin{multline} \label{def:psiMeik1}
    \Psi_{k1}^{(\Mei)}(\zeta) = \theta \zeta^{-\frac{\alpha + 1 - \theta}{\theta}} \\
    \times
    \begin{dcases}
      (-1)^k \zeta^{\frac{\alpha + 1 - \theta}{\theta}}\psi_k(\zeta^{\frac{1}{\theta}}), & \arg \zeta \in (\frac{\pi}{2}, \pi) \cup (-\pi, -\frac{\pi}{2}), \\
      -\frac{1}{2\pi i} \MeijerG{\theta + 1, 0}{0, \theta + 1}{-}{\frac{\alpha - \theta + 1}{\theta}, \frac{\alpha - \theta + 2}{\theta}, \dotsc, \frac{\alpha - 1}{\theta}, \frac{\alpha}{\theta}, k}{\zeta e^{\pi i}}, & \arg \zeta \in (0, \frac{\pi}{2}), \\
      \frac{1}{2\pi i} \MeijerG{\theta + 1, 0}{0, \theta + 1}{-}{\frac{\alpha - \theta + 1}{\theta}, \frac{\alpha - \theta + 2}{\theta}, \dotsc, \frac{\alpha - 1}{\theta}, \frac{\alpha}{\theta}, k}{\zeta e^{-\pi i}}, & \arg \zeta \in (-\frac{\pi}{2}, 0).
    \end{dcases}
  \end{multline}
\item $j = 0$: \label{enu:Meijer_G_kernel_step_3}
  \begin{equation} \label{def:PsiMeik0}
    \Psi^{(\Mei)}_{k0}(\zeta) = \frac{1}{2\pi i} \times
    \begin{dcases}
      \MeijerG{\theta + 1, 0}{0, \theta + 1}{-}{\frac{\alpha - \theta + 1}{\theta}, \frac{\alpha - \theta + 2}{\theta}, \dotsc, \frac{\alpha - 1}{\theta}, \frac{\alpha}{\theta}, k}{\zeta e^{-\pi i}}, & \arg \zeta \in (0, \pi), \\
      \MeijerG{\theta + 1, 0}{0, \theta + 1}{-}{\frac{\alpha - \theta + 1}{\theta}, \frac{\alpha - \theta + 2}{\theta}, \dotsc, \frac{\alpha - 1}{\theta}, \frac{\alpha}{\theta}, k}{\zeta e^{\pi i}}, & \arg \zeta \in (-\pi, 0).
    \end{dcases}
  \end{equation}
\end{enumerate}

Analogous to the RH problem \ref{rhp:Ai} for the Airy parametrix $\Psi^{(\Ai)}$, the following model RH problem will play an important role in our construction of a local parametrix near the origin.
\begin{RHP}\label{RHP:MeiG} \hfill
  \begin{enumerate}[label=\emph{(\arabic*)}, ref=(\arabic*)]
  \item \label{enu:thm:MeiG_-1}
    $\Psi^{(\Mei)}(\zeta)$ is a $(\theta+1)\times (\theta+1)$ matrix-valued function defined and analytic in $\compC \setminus \{\mathbb{R}\cup i\mathbb{R} \}$.
  \item \label{enu:thm:MeiG_0}
    For $\zeta \in \mathbb{R}\cup i\mathbb{R} \setminus\{0\}$, we have
    \begin{equation}\label{eq:PsiMeijump}
      \Psi^{(\Mei)}_+(\zeta) = \Psi^{(\Mei)}_-(\zeta) J_{\Psi^{(\Mei)}}(\zeta),
    \end{equation}
    where
    \begin{equation} \label{eq:defn_J_Mei}
      J_{\Psi^{(\Mei)}}(\zeta) =
      \begin{cases}
        \begin{pmatrix}
          1 & \frac{\theta}{\zeta^{(\alpha + 1 - \theta)/\theta}} \\
          0 & 1
        \end{pmatrix}
        \oplus I_{\theta - 1}, & \zeta \in i\realR \setminus \{0\}, \\
        \begin{pmatrix}
          0 & -\frac{\theta}{\zeta^{(\alpha + 1 - \theta)/\theta}} \\
          \frac{\zeta^{(\alpha + 1 - \theta)/\theta}}{\theta} & 0
        \end{pmatrix}
        \oplus I_{\theta - 1}, & \zeta >0, \\
        \Mcyclic, & \zeta <0,
      \end{cases}
    \end{equation}
    and orientations of the real and imaginary axes are shown in Figure \ref{fig:jumps-MeiG}.
  \item \label{enu:thm:MeiG_1}
    As $\zeta \to \infty$, we have, for $\zeta \in \compC_{\pm}$,
    \begin{multline} \label{eq:G_asy_infty}
      \Psi^{(\Mei)}(\zeta) = \frac{1}{2\pi i} \frac{\theta (2\pi)^{\frac{\theta}{2}}}{\sqrt{\theta + 1}} e^{\left(\frac{2(\alpha + 1)}{\theta} - \frac{\alpha + 3/2}{\theta + 1} \right) \pi i} \zeta^{\frac{\alpha + 3/2}{\theta + 1} - \frac{\alpha + 1}{\theta}} \Upsilon(\zeta)
      \\
      \times \left( \Omega_{\pm} + \bigO(\zeta^{-\frac{1}{\theta + 1}}) \right) e^{-\Lambda(\zeta)}\Xi(\zeta),
    \end{multline}
    where
    \begin{equation}
      \Upsilon(\zeta) =  \diag\left(e^{-\frac{k}{\theta + 1} \pi i} \zeta^{\frac{k}{\theta + 1}}\right)_{k=0}^\theta, \label{def:Upsilon}
    \end{equation}
    \begin{equation}
      \Lambda(\zeta) = (\theta+1)\zeta^{\frac{1}{\theta+1}}
                           \begin{cases}
                             \begin{pmatrix}
                               e^{-\frac{\pi i}{\theta+1}} & 0 \\
                               0 & e^{\frac{\pi i}{\theta+1}}
                             \end{pmatrix}
                             \oplus \diag\left(e^{\frac{2j-1  }{\theta+1}\pi i}\right)_{j=2}^{\theta}, & \zeta \in \compC_+, \\
                             \begin{pmatrix}
                               e^{\frac{\pi i}{\theta+1}} & 0 \\
                               0 & e^{-\frac{\pi i}{\theta+1}}
                             \end{pmatrix}
                             \oplus \diag\left(e^{\frac{2j-1 }{\theta+1}\pi i} \right)_{j=2}^{\theta}, & \zeta \in \compC_-,
                           \end{cases} \label{def:Lambda}
    \end{equation}
    \begin{multline} \label{def:Xi}
      \Xi(\zeta) =
      \left\{
        \begin{aligned}
          \begin{pmatrix}
            -\frac{1}{\theta} e^{-\frac{2(\alpha + 1)}{\theta} \pi i} \zeta^{\frac{\alpha + 1 - \theta}{\theta}} & 0 \\
            0 & e^{2\left( \frac{\alpha + 3/2}{\theta + 1} - \frac{\alpha + 1}{\theta} \right) \pi i}
          \end{pmatrix},
          && \zeta \in \compC_+, \\
          \begin{pmatrix}
            -\frac{1}{\theta} e^{2\left( \frac{\alpha + 3/2}{\theta + 1} - \frac{\alpha + 1}{\theta} \right) \pi i} \zeta^{\frac{\alpha + 1 - \theta}{\theta}} & 0 \\
            0 & -e^{-\frac{2(\alpha+1)}{\theta} \pi i}
          \end{pmatrix},
          && \zeta \in \compC_-,
        \end{aligned}
      \right\} \\
      \oplus \diag\left(e^{ 2 j \left( \frac{\alpha + 3/2}{\theta + 1} - \frac{\alpha + 1}{\theta} \right) \pi i}\right)_{j=2}^{\theta},
    \end{multline}
    and
    \begin{equation} \label{def:Omegapm}
      \Omega_+=\left(e^{\frac{2 k j}{\theta+1}\pi i}\right)_{k,j=0}^{\theta},\quad \Omega_-=\Omega_+ \left(
        \begin{pmatrix}
          0 & 1 \\
          1 & 0
        \end{pmatrix}
        \oplus I_{\theta - 1} \right).
    \end{equation}
  \item \label{enu:thm:MeiG_2}
    As $\zeta \to 0$ from $\compC \setminus (\mathbb{R}\cup i\mathbb{R})$, we have for $k = 0, 1, \dotsc, \theta$,
    \begin{enumerate}[label=\emph{(\alph*)}, ref=(\arabic*)]
    \item
      \begin{equation} \label{eq:psiMeik0zero}
        \Psi^{(\Mei)}_{k0}(\zeta) =
        \begin{cases}
          \bigO(1), & \alpha > \theta - 1, \\
          \bigO(\log \zeta), & \alpha = \theta - 1, \\
          \bigO(\zeta^{\frac{\alpha + 1 - \theta}{\theta}}), & -1 < \alpha < \theta - 1,
        \end{cases}
      \end{equation}
    \item
      for $\arg \zeta \in (0, \pi/2) \cup (-\pi/2, 0)$,
      \begin{equation} \label{eq:psiMeik1zero}
        \Psi^{(\Mei)}_{k1}(\zeta) = \bigO(\zeta^{-\frac{\alpha + 1 - \theta}{\theta}} \Psi^{(\Mei)}_{k0}(\zeta)),
      \end{equation}
    \item
      for $j=2,3,\ldots,\theta$, or $j=1$ and $\arg \zeta \in (\pi/2,\pi)\cup (-\pi,-\pi/2)$,
      \begin{equation} \label{eq:psiMeikjzero}
        \Psi^{(\Mei)}_{kj}(\zeta) = \bigO(1),
      \end{equation}
    \end{enumerate}
where $\Psi^{(\Mei)}_{kj}(\zeta)$ stands for the $(k,j)$-th entry of $\Psi^{(\Mei)}(\zeta)$.

  \item \label{enu:prop:detPsiMei_2}
    As $\zeta \to 0$, each entry of $\Psi^{(\Mei)}(\zeta)^{-1}$ blows up at most as a power function near the origin. More precisely, we have, for $j=0,1,\ldots,\theta$,
    \begin{enumerate}[label=\emph{(\alph*)}, ref=(\arabic*)]
    \item for $\arg \zeta \in (\pi/2,\pi)\cup (-\pi,-\pi/2)$,
      \begin{equation} \label{eq:PsiMeiInv0jzero}
        (\Psi^{(\Mei)}(\zeta)^{-1})_{0j} = \bigO(1),
      \end{equation}
    \item for  $\arg \zeta \in (0, \pi/2) \cup (-\pi/2, 0)$,
      \begin{equation} \label{eq:PsiMeiInv0jzero2}
        (\Psi^{(\Mei)}(\zeta)^{-1})_{0j} = \begin{cases}
          \bigO(\zeta^{-\frac{\alpha}{\theta}}), & \alpha > 0, \\
          \bigO(\log \zeta), & \alpha = 0, \\
          \bigO(1), & -1 < \alpha < 0,
        \end{cases}
      \end{equation}
    \item
      for $k=1,\dotsc, \theta$,
     \begin{equation} \label{eq:PsiMeiInvkjzero}
        (\Psi^{(\Mei)}(\zeta)^{-1})_{kj} =
        \begin{cases}
          \bigO(\zeta^{\frac{1 - \theta}{\theta}}), & \alpha > 0, \\
          \bigO(\zeta^{\frac{1 - \theta}{\theta}} \log \zeta), & \alpha = 0, \\
          \bigO(\zeta^{\frac{\alpha + 1 - \theta}{\theta}}), & -1 < \alpha < 0,
        \end{cases}
      \end{equation}
    \end{enumerate}
\end{enumerate}
where $(\Psi^{(\Mei)}(\zeta)^{-1})_{kj}$ stands for the $(k,j)$-th entry of $\Psi^{(\Mei)}(\zeta)^{-1}$.
\end{RHP}

In what follows, we show the above RH problem can be solved explicitly.
\begin{prop}\label{thm:MeiG}
Let $\Psi^{(\Mei)}_{kj}(\zeta)$ be the functions given in \eqref{eq:defn_tilde_G_kj}--\eqref{def:PsiMeik0}. The matrix-valued function $\Psi^{(\Mei)}(\zeta)$ defined by
  \begin{equation}\label{eq:PsiMei}
    \Psi^{(\Mei)}(\zeta)=\left(\Psi^{(\Mei)}_{kj}(\zeta)\right)_{k,j=0}^{\theta},
  \end{equation}
  solves RH problem \ref{RHP:MeiG} for $\Psi^{(\Mei)}$. Moreover, we have
    \begin{equation}\label{eq:detPsiMei}
      \det(\Psi^{(\Mei)}(\zeta)) = \theta^{\theta} (2\pi)^{\frac{(\theta + 1)(\theta - 2)}{2}} e^{\frac{\theta(3 - \theta)}{4} \pi i} \zeta^{\frac{\theta-1}{2}}.
    \end{equation}
\end{prop}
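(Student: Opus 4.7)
The plan is to verify RH problem \ref{RHP:MeiG} directly from the explicit formulas \eqref{def:psik}--\eqref{def:PsiMeik0}, and to derive the determinant formula \eqref{eq:detPsiMei} from the jump structure together with the large-$\zeta$ asymptotics. Analyticity in $\compC \setminus (\realR \cup i\realR)$ is immediate: since each $\psi_k$ in \eqref{def:psik} is entire, the entries in columns $j = 2, \ldots, \theta$ of \eqref{eq:defn_tilde_G_kj} are analytic on $\compC \setminus (-\infty, 0]$, and the entries in columns $0$ and $1$ are analytic on each of the four open quadrants by the piecewise formulas \eqref{def:psiMeik1}--\eqref{def:PsiMeik0} combined with the analyticity of $G^{\theta+1, 0}_{0, \theta+1}(z)$ on $\compC \setminus [0, \infty)$.

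The crucial ingredient for checking the jumps \eqref{eq:defn_J_Mei} is the Meijer G analytic continuation identity
\begin{equation*}
  \MeijerG{\theta+1, 0}{0, \theta+1}{-}{\frac{\alpha - \theta + 1}{\theta}, \ldots, \frac{\alpha}{\theta}, k}{z e^{-\pi i}} - \MeijerG{\theta+1, 0}{0, \theta+1}{-}{\frac{\alpha - \theta + 1}{\theta}, \ldots, \frac{\alpha}{\theta}, k}{z e^{\pi i}} = 2\pi i (-1)^k z^{(\alpha + 1 - \theta)/\theta} \psi_k(z^{1/\theta}),
\end{equation*}
obtained by shifting the Mellin--Barnes contour past the extra pole family $\{ k + n \}_{n \geq 0}$ and summing the residues. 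Granting this, each case of \eqref{eq:defn_J_Mei} is a direct verification: on $(0, \infty)$ the $2 \times 2$ off-diagonal block mixing columns $0$ and $1$ is purely algebraic once the middle branches of \eqref{def:psiMeik1} are matched with \eqref{def:PsiMeik0}; on $i\realR$ the upper-triangular jump is precisely the identity above; and on $(-\infty, 0)$, column $0$ is continuous because both branches in \eqref{def:PsiMeik0} evaluate to $\frac{1}{2\pi i} G^{\theta + 1, 0}_{0, \theta + 1}(|\zeta|)$, columns $j = 2, \ldots, \theta$ shift cyclically because $\zeta^{1/\theta}$ picks up the factor $e^{2\pi i/\theta}$ across the cut, and the wrap-around $\Psi^{(\Mei)}_{k, \theta, +}(\zeta) = \Psi^{(\Mei)}_{k, 1, -}(\zeta)$ matches the first branch of \eqref{def:psiMeik1}.

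Both the asymptotic \eqref{eq:G_asy_infty} and the local bounds \eqref{eq:psiMeik0zero}--\eqref{eq:psiMeikjzero} then follow from the classical asymptotic expansions of $G^{\theta, 0}_{0, \theta + 1}$ and $G^{\theta + 1, 0}_{0, \theta + 1}$ (see, e.g., \cite{Boisvert-Clark-Lozier-Olver10}): at infinity the diagonal exponential $e^{-\Lambda}$ collects the unique saddle-point contribution, while $\Omega_\pm$ and $\Xi$ encode how this saddle redistributes through the $\theta + 1$ columns as $\arg \zeta$ traverses the Stokes sectors; near the origin, the leading power is governed by the bottom parameters $\frac{\alpha - \theta + 1}{\theta}, \ldots, \frac{\alpha}{\theta}, k$. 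The entry-wise bounds \eqref{eq:PsiMeiInv0jzero}--\eqref{eq:PsiMeiInvkjzero} on $(\Psi^{(\Mei)})^{-1}$ are then obtained a posteriori via Cramer's rule together with the determinant formula \eqref{eq:detPsiMei}.

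For the determinant, taking determinants of the jump matrices yields $\det J_{\Psi^{(\Mei)}} = 1$ on $i\realR \cup (0, \infty)$ and $\det \Mcyclic = (-1)^{\theta - 1}$ on $(-\infty, 0)$; since the principal branch of $\zeta^{(\theta - 1)/2}$ has the same jump across $(-\infty, 0)$, the quotient $f(\zeta) := \zeta^{-(\theta - 1)/2} \det \Psi^{(\Mei)}(\zeta)$ is single-valued on $\compC \setminus \{0\}$. The local bounds at $0$ from \eqref{eq:psiMeik0zero}--\eqref{eq:psiMeikjzero} show that $f$ has at most algebraic growth near $0$, hence an isolated singularity that is at worst a pole, while the leading term of \eqref{eq:G_asy_infty} shows $f$ approaches a finite limit at $\infty$; so $f$ is rational with a pole only possibly at $0$, and matching the full leading-order $\zeta$-dependence of \eqref{eq:G_asy_infty} (which is exactly $\zeta^{(\theta - 1)/2}$) forces $f$ to be a constant. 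To evaluate this constant, note that $\det e^{-\Lambda(\zeta)} = 1$ because
\begin{equation*}
  \sum_{j=0}^{\theta} e^{(2j - 1)\pi i/(\theta + 1)} = e^{-\pi i/(\theta + 1)} \cdot \frac{e^{2\pi i} - 1}{e^{2\pi i/(\theta + 1)} - 1} = 0,
\end{equation*}
$\det \Upsilon(\zeta) = e^{-\theta \pi i/2} \zeta^{\theta/2}$, $\det \Omega_+$ is the closed-form Vandermonde product $\prod_{0 \leq k < j \leq \theta}(e^{2j\pi i/(\theta + 1)} - e^{2k\pi i/(\theta + 1)})$, and combining with $\det \Xi$ and the scalar prefactor in \eqref{eq:G_asy_infty} raised to the $(\theta + 1)$-st power yields the right-hand side of \eqref{eq:detPsiMei}. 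The hardest step will be the Mellin--Barnes contour-shift identity underpinning the jumps and the explicit phase bookkeeping in the leading-asymptotic determinant.
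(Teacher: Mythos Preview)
Your overall strategy matches the paper's: verify the jumps via the Meijer G continuation identity (which the paper obtains by the reflection formula $\Gamma(u+k)\sin(\pi u) = (-1)^k\pi/\Gamma(1-k-u)$ applied inside the Mellin--Barnes integral, rather than by a contour shift past the poles of $\Gamma(u+k)$), read off the large-$\zeta$ asymptotics from standard Meijer G expansions, and pin down the determinant via a Liouville argument.

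There is, however, a real gap in your determinant step and, in parallel, in your Cramer's-rule approach to item~\ref{enu:prop:detPsiMei_2}. From the entry-wise bounds \eqref{eq:psiMeik0zero}--\eqref{eq:psiMeikjzero} alone one only obtains $\det\Psi^{(\Mei)}(\zeta) = \bigO(1)$ (or even $\bigO(\zeta^{(\alpha+1-\theta)/\theta})$ for small $\alpha$) near $0$, so $f(\zeta) = \zeta^{-(\theta-1)/2}\det\Psi^{(\Mei)}(\zeta)$ may a priori have a pole there; and knowing only $f(\zeta)\to C$ with an $\bigO(\zeta^{-1/(\theta+1)})$ error at infinity does not exclude, say, $f = C + c_1/\zeta$. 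Likewise, naive minor bounds in Cramer's rule yield $(\Psi^{(\Mei)}(\zeta)^{-1})_{0j} = \bigO(\zeta^{-(\theta-1)/2})$, not the $\bigO(1)$ of \eqref{eq:PsiMeiInv0jzero}. The paper closes both gaps with a single extra ingredient: by reorganizing the Puiseux expansion of $\psi_k$ it obtains, in the outer sectors, the factorization
\[
  \Psi^{(\Mei)}(\zeta) = \Theta(\zeta)\,\diag\!\bigl(\zeta^{\frac{\theta-\alpha-1}{\theta}},1,\zeta^{\frac{1}{\theta}},\dots,\zeta^{\frac{\theta-1}{\theta}}\bigr)\,\mathsf C_\alpha\,\diag\!\bigl(\zeta^{\frac{\alpha+1-\theta}{\theta}},1,\dots,1\bigr),
\]
with $\Theta$ analytic and invertible at $0$ and $\mathsf C_\alpha$ a constant matrix. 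Taking determinants gives $\det\Psi^{(\Mei)}(\zeta) = \bigO(\zeta^{(\theta-1)/2})$ exactly at $0$, whence $f$ is bounded there and Liouville applies; and inverting the four factors directly (rather than via Cramer) gives the sharp row-by-row bounds \eqref{eq:PsiMeiInv0jzero}--\eqref{eq:PsiMeiInvkjzero}. You will need to insert this factorization to make your argument complete.
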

\begin{proof}
It is clear that $\Psi^{(\Mei)}(\zeta)$ defined in \eqref{eq:PsiMei} is analytic in $\compC \setminus \{\mathbb{R}\cup i\mathbb{R} \}$. We will then check the other items of RH problem \ref{RHP:MeiG} one by one, and leave the proof of \eqref{eq:detPsiMei} to the end.

\paragraph{Item \ref{enu:thm:MeiG_0}}
It is straightforward to check that the jump condition for $\Psi^{(\Mei)}(\zeta)$ on $\mathbb{R} \setminus \{0\}$ is satisfied if it is defined by \eqref{eq:PsiMei}. For the jump on $i\mathbb{R}\setminus \{0\}$, we observe from the integral representation of Meijer G-function \eqref{def:Meijer} that for $k=0,1,\ldots,\theta$, and $\arg \zeta \in (-\pi, \pi)$,
\begin{align}
      & \MeijerG{\theta + 1, 0}{0, \theta + 1}{-}{\frac{\alpha - \theta + 1}{\theta}, \frac{\alpha - \theta + 2}{\theta}, \dotsc, \frac{\alpha - 1}{\theta}, \frac{\alpha}{\theta}, k}{\zeta e^{-\pi i}}- \MeijerG{\theta + 1, 0}{0, \theta + 1}{-}{\frac{\alpha - \theta + 1}{\theta}, \frac{\alpha - \theta + 2}{\theta}, \dotsc, \frac{\alpha - 1}{\theta}, \frac{\alpha}{\theta}, k}{\zeta e^{\pi i}}
      \nonumber
      \\
      & =  \frac{1}{\pi}\int_{L}\prod_{l=1}^{\theta}\Gamma\left(u+\frac{\alpha-\theta+l}{\theta}\right)\Gamma(u+k)\sin( \pi u)\zeta^{-u}du
      \nonumber \\
      & =  (-1)^k\int_{L}\frac{\prod_{l=1}^{\theta}\Gamma\left(u+\frac{\alpha-\theta+l}{\theta}\right)}{\Gamma(1-k-u)}\zeta^{-u}du
      \nonumber
      \\
      & = (-1)^k 2\pi i \MeijerG{\theta, 0}{0, \theta + 1}{-}{\frac{\alpha - \theta + 1}{\theta}, \frac{\alpha - \theta + 2}{\theta}, \dotsc, \frac{\alpha - 1}{\theta}, \frac{\alpha}{\theta}, k}{\zeta}, \label{eq:Grelation}
\end{align}
where we have made use of the reflection formula
  \begin{equation}\label{eq:reflformula}
    \Gamma(z)\Gamma(1-z)=\frac{\pi}{\sin(\pi z)}, \qquad z \neq 0, \pm 1, \pm 2, \ldots,
  \end{equation}
in the second equality. An appeal to this relation gives us the jump of $\Psi^{(\Mei)}$ on $i\realR \setminus \{ 0 \}$ as indicated in \eqref{eq:PsiMeijump} and \eqref{eq:defn_J_Mei}.

  \paragraph{Item \ref{enu:thm:MeiG_1}}
The large $\zeta$ behaviour of $\Psi^{(\Mei)}(\zeta)$ essentially follows from asymptotics of the Meijer G-functions. Indeed, by \cite[Theorem 5 in Section 5.7 and Theorem 2 in Section 5.10]{Luke69} and the definitions of $\Psi^{(\Mei)}_{kj}$, it follows that, for $k=0,1,\ldots,\theta$,
  \begin{multline} \label{eq:conv_G_k0}
    \Psi^{(\Mei)}_{k0}(\zeta) = -\frac{1}{2\pi i} \frac{(2\pi)^{\frac{\theta}{2}}}{\sqrt{\theta + 1}} \zeta^{\frac{\alpha + k + 3/2}{\theta + 1} - 1} \\
    \times
    \begin{cases}
      e^{-\frac{\alpha + k + 3/2}{\theta + 1}\pi i} \exp \left( -(\theta + 1) e^{-\frac{\pi i}{\theta + 1}} \zeta^{\frac{1}{\theta + 1}} \right) (1 + \bigO(\zeta^{-\frac{1}{\theta + 1}})), & \arg \zeta \in (0, \pi),
      \\
      e^{\frac{\alpha + k + 3/2}{\theta + 1}\pi i} \exp \left( -(\theta + 1) e^{\frac{\pi i}{\theta + 1}} \zeta^{\frac{1}{\theta + 1}} \right) (1 + \bigO(\zeta^{-\frac{1}{\theta + 1}})), & \arg  \zeta \in (-\pi, 0),
    \end{cases}
  \end{multline}
  \begin{multline} \label{eq:conv_G_k1}
    \Psi^{(\Mei)}_{k1}(\zeta) = \frac{1}{2\pi i} \frac{\theta(2\pi)^{\frac{\theta}{2}}}{\sqrt{\theta + 1}} \zeta^{\frac{\alpha + k + 3/2}{\theta + 1} - \frac{\alpha + 1}{\theta}} \\
    \times
    \begin{cases}
      e^{\frac{\alpha + k + 3/2}{\theta + 1} \pi i} \exp \left( -(\theta + 1) e^{\frac{\pi i}{\theta + 1}} \zeta^{\frac{1}{\theta + 1}} \right) (1 + \bigO(\zeta^{-\frac{1}{\theta + 1}})), & \arg \zeta \in (0, \frac{\pi}{2}) \cup (\frac{\pi}{2}, \pi), \\
      -e^{-\frac{\alpha + k + 3/2}{\theta + 1} \pi i} \exp \left( -(\theta + 1) e^{-\frac{\pi i}{\theta + 1}} \zeta^{\frac{1}{\theta + 1}} \right) (1 + \bigO(\zeta^{-\frac{1}{\theta + 1}})), & \arg \zeta \in (-\pi, -\frac{\pi}{2}) \cup (-\frac{\pi}{2}, 0),
    \end{cases}
  \end{multline}
  and if $j = 2, \dotsc, \theta$,
  \begin{multline} \label{eq:asy_G_jk_infty}
    \Psi^{(\Mei)}_{kj}(\zeta) = \frac{1}{2\pi i} \frac{\theta (2\pi)^{\frac{\theta}{2}}}{\sqrt{\theta + 1}} \zeta^{\frac{\alpha + k + 3/2}{\theta + 1} - \frac{\alpha + 1}{\theta}} e^{j \left( \frac{k + 3/2}{\theta + 1} - \frac{\alpha}{\theta(\theta + 1)} - \frac{1}{\theta} \right)2 \pi i} e^{\left( \frac{\alpha - k - 3/2}{\theta + 1} + \frac{2\alpha}{\theta(\theta + 1)} + \frac{2}{\theta} \right) \pi i} \\
    \times   \exp \left( -(\theta + 1) e^{\frac{(2j - 1)\pi i}{\theta + 1}} \zeta^{\frac{1}{\theta + 1}} \right) (1 + \bigO(\zeta^{-\frac{1}{\theta + 1}})), \quad \arg \zeta \in (-\pi, \pi).
  \end{multline}
  Inserting the above individual asymptotics into \eqref{eq:PsiMei}, we obtain \eqref{eq:G_asy_infty} after direct calculations. We note that the jump of $\Psi^{(\Mei)}$ on the imaginary axis does not affect the sectoral asymptotics, since
  \begin{equation}
    e^{-\Lambda(\zeta)} \Xi(\zeta) \left(
      \begin{pmatrix}
        1 & \frac{\theta}{\zeta^{(\alpha + 1 - \theta)/\theta}} \\
        0 & 1
      \end{pmatrix}
      \oplus I_{\theta - 1} \right) \Xi(\zeta)^{-1}e^{\Lambda(\zeta)}=I+\bigO(\zeta^{-\infty}),\qquad \zeta \in i\mathbb{R}\setminus \{0\}.
  \end{equation}
  Moreover, it is straightforward to check that, for $\zeta \in \mathbb{R}\setminus \{0\}$,
  \begin{equation}\label{eq:UpsilonOmegajump}
    \Upsilon_+(\zeta) \Omega_+ =  \Upsilon_-(\zeta) \Omega_- \times
    \begin{cases}
      \begin{pmatrix}
        0 & 1 \\
        1 & 0
      \end{pmatrix}
      \oplus I_{\theta - 1}, & \zeta >0, \\
      \Mcyclic, & \zeta <0,
    \end{cases}
  \end{equation}
  which is consistent with the jump of $\Psi^{(\Mei)}$ on the real axis.


  \paragraph{Item \ref{enu:thm:MeiG_2}}
  We first consider the entries $\Psi^{(\Mei)}_{kj}(\zeta)$ with either $j= 2, \dotsc, \theta$, or $j = 1$ and $\arg \zeta \in(\pi/2, \pi) \cup (-\pi, -\pi/2)$. Recall that the function $\psi_k(\zeta)$ in \eqref{def:psik} is entire in $\zeta$, it then follows from \eqref{eq:defn_tilde_G_kj} and \eqref{def:psiMeik1} that for all $k = 0, 1, \dots, \theta$, $\Psi^{(\Mei)}_{kj}(\zeta)$ admit the following Puiseux series representations:
  \begin{equation} \label{eq:Psikjzero}
    \Psi^{(\Mei)}_{kj}(\zeta) = \sum^{\infty}_{l = 0} a_{k, l} e^{\frac{2(j - 1)l\pi i}{\theta}} \zeta^{\frac{l}{\theta}}.
  \end{equation}
  The coefficients $a_{k,l}$ depending on the parameters $\alpha$ and $\theta$ in the above formula can be calculated explicitly by evaluating the residue of the integrand of $\MeijerG{\theta, 0}{0, \theta + 1}{-}{(\alpha - \theta + 1)/\theta, (\alpha - \theta + 2)/\theta, \dotsc, (\alpha - 1)/\theta, \alpha/\theta, k}{\zeta}$ at each pole. In particular, we have $a_{k,i}\neq 0$ for $i=0,1,\ldots,\theta-1$. Equivalently, by setting the analytic functions
  \begin{equation} \label{eq:defn_F_kj}
    f_{k,j}(\zeta) := \sum^{\infty}_{l = 0} a_{k, \theta l + j - 1} \zeta^l,\qquad j=1,\ldots,\theta,
  \end{equation}
  it follows that for $j =2, \dotsc, \theta$, or $j=1$ and $\arg \zeta \in (\pi/2,\pi)\cup (-\pi,-\pi/2)$,
  \begin{equation}\label{eq:expr_tilde_G_kj}
    \Psi^{(\Mei)}_{kj}(\zeta) = \sum^{\theta}_{l = 1} c_{l, j} \zeta^{\frac{l - 1}{\theta}} f_{k, l}(\zeta), \qquad c_{l, j} = e^{\frac{2(j - 1)(l - 1)\pi i}{\theta}},\qquad l,j=1,\ldots,\theta,
  \end{equation}
which gives us \eqref{eq:psiMeikjzero}.

We next show \eqref{eq:psiMeik0zero} by splitting the discussions into two cases, namely, \ref{case:alphanint} $\alpha \notin \intZ$, and \ref{case:alphaint} $\alpha\in \{0\}\cup \mathbb{N}$.
  \begin{enumerate}[label=(\roman*)]
  \item \label{case:alphanint} $\alpha \notin \intZ$.
    From the definition of $\Psi^{(\Mei)}_{k0}$ given in \eqref{def:PsiMeik0}, we see from \eqref{def:Meijer} and \eqref{eq:reflformula} that
      \begin{align} \label{eq:PsiMeiInt}
        \frac{2i}{ (-1)^k  } \Psi^{(\Mei)}_{k0}(\zeta) = {}& \frac{1}{(-1)^k \pi } \MeijerG{\theta + 1, 0}{0, \theta + 1}{-}{\frac{\alpha - \theta + 1}{\theta}, \frac{\alpha - \theta + 2}{\theta}, \dotsc, \frac{\alpha - 1}{\theta}, \frac{\alpha}{\theta}, k}{\zeta e^{\mp \pi i}} \nonumber \\
        = {}& \frac{1}{(-1)^k \pi }\frac{1}{2\pi i}\int_{L}\prod_{l=1}^{\theta}\Gamma\left(u+\frac{\alpha-\theta+l}{\theta}\right)\Gamma(u+k)(\zeta e^{\mp \pi i})^{-u}du \nonumber \\
        = {}& \frac{1}{2\pi i} \int_{L}\frac{\prod_{l=1}^{\theta} \Gamma\left(u+\frac{\alpha-\theta+l}{\theta}\right)}{\Gamma(1-k-u)\sin(\pi u)}(\zeta e^{\mp \pi i})^{-u}du, \qquad \zeta \in \compC_{\pm}.
      \end{align}

    On account of \eqref{eq:Psikjzero} and \eqref{def:psiMeik1}, we also have
    \begin{align}
       {}& \MeijerG{\theta, 0}{0, \theta + 1}{-}{\frac{\alpha - \theta + 1}{\theta}, \frac{\alpha - \theta + 2}{\theta}, \dotsc, \frac{\alpha - 1}{\theta}, \frac{\alpha}{\theta}, k}{\zeta}
        \nonumber \\ {}& =  \frac{1}{2\pi i} \int_{L}\frac{\prod_{l=1}^{\theta}\Gamma\left(u+\frac{\alpha-\theta+l}{\theta}\right)}{\Gamma(1-k-u)}\zeta^{-u}du
        = \frac{(-1)^k}{\theta}\zeta^{\frac{\alpha+1-\theta}{\theta}}\sum^{\infty}_{l = 0} a_{k, l} \zeta^{\frac{l}{\theta}}.
    \end{align}
    Thus, a combination of the above two formulas and the residue theorem implies that, for $\zeta \in \compC_{\pm}$,
    \begin{align}\label{eq:expr_tilde_G_k0}
        \Psi^{(\Mei)}_{k0}(\zeta) = {}& f_{k,0}(\zeta) + \frac{\zeta^{\frac{\alpha + 1 - \theta}{\theta}}}{\theta} \sum^{\infty}_{l = 0} a_{k, l} \frac{i e^{\mp \frac{\alpha + 1 - \theta + l}{\theta} \pi i}}{2\sin(\frac{\alpha + 1 - \theta + l}{\theta} \pi)}\zeta^{\frac{l}{\theta}} \nonumber \\
        = {}& f_{k,0}(\zeta) + \zeta^{\frac{\alpha - \theta}{\theta}} \sum^{\theta}_{l = 1} c_{l, 0} \zeta^{\frac{l}{\theta}} f_{k, l}(\zeta), \qquad c_{l, 0} = \frac{i e^{\mp \frac{\alpha - \theta + l}{\theta} \pi i}}{2\theta \sin(\frac{\alpha - \theta + l}{\theta} \pi)},
      \end{align}
   where $f_{k,0}(\zeta)$ is an analytic function with $f_{k,0}(\zeta) = \bigO(\zeta^k)$ as $\zeta \to 0$. Since $\alpha \notin \intZ$, we note that $\sin(\frac{\alpha - \theta + l}{\theta} \pi)\neq 0$ for $l=1,\ldots,\theta$, which leads to \eqref{eq:psiMeik0zero} in this case.

  \item \label{case:alphaint} $\alpha\in \{0\}\cup \mathbb{N}$.
  If $\alpha$ is a nonnegative integer, there exists a unique $j_{\alpha} \in \{ 1, \dotsc, \theta \}$ such that $(\alpha - \theta + j_{\alpha})/\theta \in \intZ$. More precisely, we have
    \begin{equation}
      m_{\alpha} := \frac{\alpha - \theta + j_{\alpha}}{\theta} \geq 0.
    \end{equation}
    The analytic functions $f_{k,j}(\zeta)$ for $k=0,1,\ldots,\theta$, and $j = 1, \dotsc, \theta$, by \eqref{eq:defn_F_kj}, satisfy $f_{k,j}(0) \neq 0$ if $j \neq j_{\alpha}$, while as $\zeta \to 0$,
    \begin{equation} \label{eq:f_kj_alpha_int}
      f_{k,j_{\alpha}}(\zeta) =
      \begin{cases}
        \bigO(1), & k \leq m_{\alpha}, \\
        \bigO(\zeta^{k - m_{\alpha}}), & k > m_{\alpha}.
      \end{cases}
    \end{equation}
    The representation of $\Psi^{(\Mei)}_{k0}$ analogue to \eqref{eq:expr_tilde_G_k0} in this case is
    \begin{multline} \label{eq:G_tilde_k0_degen}
      \Psi^{(\Mei)}_{k0}(\zeta) = \zeta^{\frac{\alpha - \theta}{\theta}} \sum_{l = 1, \dotsc, \theta, \, l \neq j_{\alpha}} c_{l,0}\zeta^{\frac{l}{\theta}} f_{k, l}(\zeta) \\
      + f_{k, 0}(\zeta) - \frac{\zeta^{m_{\alpha}}}{2 \theta \pi i}  f_{k, j_{\alpha}}(\zeta) \times
      \begin{cases}
        \log(\zeta e^{-\pi i}), & \arg \zeta \in (0, \pi), \\
        \log(\zeta e^{\pi i}), & \arg \zeta \in (-\pi, 0),
      \end{cases}
    \end{multline}
    where $c_{l,0}$ is given in \eqref{eq:expr_tilde_G_k0} and $f_{k, 0}(\zeta)$ is an analytic function with $f_{k, 0}(\zeta) = \bigO(\zeta^{\min \{k, m_{\alpha} \}})$. Hence, we again obtain \eqref{eq:psiMeik0zero} in this case.
  \end{enumerate}

Finally, the local behaviour of $\Psi^{(\Mei)}_{k1}(\zeta)$ near the origin for $\arg \zeta \in(0, \pi/2) \cup (-\pi/2, 0)$ given in \eqref{eq:psiMeik1zero} follows directly from \eqref{def:psiMeik1}, \eqref{def:PsiMeik0} and \eqref{eq:psiMeik0zero}.

\paragraph{Item \ref{enu:prop:detPsiMei_2}}

In the proof of item \ref{enu:prop:detPsiMei_2}, we assume \eqref{eq:detPsiMei}, which implies that $\Psi^{(\Mei)}(\zeta)^{-1}$ is well-defined if $\zeta \neq 0$.

If $\alpha \notin \intZ$ and $\arg \zeta \in (\pi/2,\pi)\cup (-\pi,-\pi/2)$, it is readily seen from \eqref{eq:expr_tilde_G_kj} and \eqref{eq:expr_tilde_G_k0} that
  \begin{equation} \label{eq:PsiMeiExp}
    \Psi^{(\Mei)}(\zeta) = \Theta(\zeta) \diag(\zeta^{\frac{\theta - \alpha - 1}{\theta}}, 1, \zeta^{\frac1\theta}, \dotsc, \zeta^{\frac{\theta - 1}{\theta}}) \mathsf{C}_{\alpha} \diag(\zeta^{\frac{\alpha + 1 - \theta}{\theta}}, 1, \dotsc, 1),
  \end{equation}
  where
  \begin{equation}
    \Theta(\zeta)=
    \begin{pmatrix}
      f_{k, j}(\zeta)
    \end{pmatrix}^{\theta}_{k, j = 0}
  \end{equation}
  with $f_{k,j}(\zeta)$, $k,j=0,1,\ldots,\theta$, given in \eqref{eq:defn_F_kj} and \eqref{eq:expr_tilde_G_k0}, and
  \begin{equation}\label{eq:psimeizero}
    \mathsf{C}_{\alpha}=
    \begin{pmatrix}
      1 & 0_{1 \times \theta} \\
      \vec{c}_{\alpha} & C_{\alpha}
    \end{pmatrix},
    \quad \text{where} \quad C_{\alpha} =
    \begin{pmatrix}
      c_{1,1} & \cdots & c_{1,j} & \cdots & c_{1,\theta} \\
      c_{2,1} & \cdots & c_{2,j} & \cdots & c_{2,\theta} \\
      \vdots &\cdots &\vdots &\cdots & \vdots \\
      c_{\theta,1} & \cdots & c_{\theta,j} & \cdots & c_{\theta,\theta}
    \end{pmatrix},
    \quad
    \vec{c}_{\alpha} =
    \begin{pmatrix}
      c_{1, 0} \\
      c_{2, 0} \\
      \vdots \\
      c_{\theta, 0}
    \end{pmatrix},
  \end{equation}
  with $c_{k,j}$, $k=1,\ldots,\theta$, $j=0,1,\ldots,\theta$, given in \eqref{eq:expr_tilde_G_kj} and \eqref{eq:expr_tilde_G_k0}. In view of \eqref{eq:det_Schur} below, it is clear that $\det \mathsf{C}_{\alpha} = \det C_{\alpha} \neq 0$, and then
  \begin{equation}
    \mathsf{C}^{-1}_{\alpha} =
    \begin{pmatrix}
      1 & 0_{1 \times \theta} \\
      -C^{-1}_{\alpha} \vec{c}_{\alpha} & C^{-1}_{\alpha}
    \end{pmatrix}.
  \end{equation}
  Also, by \eqref{eq:detPsiMei} and \eqref{eq:PsiMeiExp}, it follows that $\det(\Theta(\zeta))$ is a non-zero constant and
  \begin{equation}\label{eq:ThetaInvZero}
    \Theta(\zeta)^{-1}=\bigO(1),
    \qquad \zeta \to 0,
  \end{equation}
  which is understood in an entry-wise manner.
  Since
  \begin{align} \label{eq:intermediate_Psi_inverse}
      \Psi^{(\Mei)}(\zeta)^{-1} = {}& \diag(\zeta^{\frac{\theta - \alpha - 1}{\theta}}, 1, \dotsc, 1) \mathsf{C}_{\alpha}^{-1} \diag(\zeta^{\frac{\alpha + 1 - \theta}{\theta}}, 1, \zeta^{-\frac1\theta}, \dotsc, \zeta^{-\frac{\theta - 1}{\theta}}) \Theta(\zeta)^{-1} \nonumber \\
      = {}&
      \begin{pmatrix}
        1 & 0 & 0 & \cdots & 0 \\
        \bigO(\zeta^{\frac{\alpha + 1 - \theta}{\theta}}) & \bigO(1) & \bigO(\zeta^{-\frac{1}{\theta}}) & \cdots & \bigO(\zeta^{-\frac{\theta - 1}{\theta}}) \\
        \vdots & \vdots & \vdots & \cdots & \vdots \\
        \bigO(\zeta^{\frac{\alpha + 1 - \theta}{\theta}}) & \bigO(1) & \bigO(\zeta^{-\frac{1}{\theta}}) & \cdots & \bigO(\zeta^{-\frac{\theta - 1}{\theta}})
      \end{pmatrix}
      \Theta(\zeta)^{-1},
  \end{align}
  we then obtain \eqref{eq:PsiMeiInv0jzero} and \eqref{eq:PsiMeiInvkjzero} for $\alpha \notin \intZ$ and $\arg \zeta \in (\pi/2, \pi) \cup (-\pi, -\pi/2)$.

If $\alpha \in \{ 0 \} \cup \mathbb{N}$ and $\arg \zeta \in (\pi/2,\pi)\cup (-\pi,-\pi/2)$, then the decomposition \eqref{eq:PsiMeiExp} up to \eqref{eq:ThetaInvZero} remain valid (with $f_{k,0}$ given in \eqref{eq:G_tilde_k0_degen}). The component $c_{j_{\alpha}, 0}$ in $\vec{c}_{\alpha}$, however, should be replaced by $i(2\theta \pi)^{-1} \log(\zeta e^{\mp \pi i})$ (for $\zeta \in \compC_{\pm}$), and as a consequence, the entries $\bigO(\zeta^{\frac{\alpha + 1 - \theta}{\theta}})$ in the first column of the matrix in the second line of \eqref{eq:intermediate_Psi_inverse} should be replaced by $\bigO(\zeta^{\frac{\alpha + 1 - \theta}{\theta}} \log \zeta)$, which again gives us \eqref{eq:PsiMeiInv0jzero} and \eqref{eq:PsiMeiInvkjzero}.

We note that all the estimates above are still valid if we analytically continue the entries of $\Psi^{(\Mei)}(\zeta)^{-1}$ to $(0, \pi) \cup (-\pi, 0)$, cross $\pm i\realR$. Actually, from \eqref{eq:PsiMeijump} and \eqref{eq:defn_J_Mei}, we find the jump
\begin{equation}
  \Psi^{(\Mei)}_-(\zeta)^{-1}= \left(
    \begin{pmatrix}
      1 & \frac{\theta}{\zeta^{(\alpha + 1 - \theta)/\theta}} \\
      0 & 1
    \end{pmatrix}
    \oplus I_{\theta - 1} \right)
  \Psi^{(\Mei)}_+(\zeta)^{-1},\quad \zeta\in i\mathbb{R}\setminus \{0\}.
\end{equation}
Thus, the estimates above for $\arg \zeta \in (\pi/2, \pi) \cup (-\pi, -\pi/2)$ also implies item \ref{enu:prop:detPsiMei_2} for $\arg \zeta \in (0,\pi/2) \cup (-\pi/2, 0)$, and we omit the details here.

\paragraph{Evaluation of $\det(\Psi^{(\Mei)}(\zeta)) $}
From \eqref{def:Upsilon}--\eqref{def:Omegapm}, it is readily seen that
  \begin{equation} \label{eq:detUpsilon}
    \begin{gathered}
      \det (\Upsilon(\zeta)) = e^{-\frac{\theta}{2}\pi i}\zeta^{\frac {\theta}{2}}, \quad \det (e^{-\Lambda(\zeta)})=1, \quad \det(\Omega_{\pm})=\pm (\theta+1)^{\frac{\theta+1}{2}}e^{\frac{\theta(5-\theta)}{4}\pi i}, \\
      \det (\Xi(\zeta)) = \mp\frac1\theta e^{(\frac{\theta}{2}-\alpha-1-\frac{2(1+\alpha)}{\theta})\pi i}\zeta^{\frac{\alpha+1-\theta}{\theta}},\qquad \zeta \in \compC_{\pm}.
    \end{gathered}
  \end{equation}
Here, to derive $\det(\Omega_{\pm})$, we have made use of the determinant formula of the Schur matrix, that is, for any $d \in \mathbb{N}$,
  \begin{equation} \label{eq:det_Schur}
    \det \left( (\xi^{kj})^{d - 1}_{k, j = 0} \right) = d^{\frac d2} e^{\frac{2+7d-d^2}{4}\pi i}, \qquad \xi = e^{\frac{2\pi i}{d}},
  \end{equation}
  cf.~\cite[Part 4, Chapter VI, Appdendix 2, Pages 211--213]{Landau58}. A combination of \eqref{eq:detUpsilon} and \eqref{eq:G_asy_infty} implies that, as $\zeta\to \infty$,
  \begin{equation} \label{eq:det_G_at_infty}
    \det(\Psi^{(\Mei)}(\zeta)) = \theta^{\theta} (2\pi)^{\frac{(\theta + 1)(\theta - 2)}{2}} e^{\frac{\theta(3 - \theta)}{4} \pi i} \zeta^{\frac{\theta-1}{2}} (1 + \bigO(\zeta^{-\frac{1}{\theta + 1}})).
  \end{equation}

We further observe from \eqref{eq:expr_tilde_G_kj} and \eqref{eq:expr_tilde_G_k0} (if $\alpha \notin \intZ$) and from \eqref{eq:expr_tilde_G_kj}, \eqref{eq:f_kj_alpha_int} and \eqref{eq:G_tilde_k0_degen} (if $\alpha\in \{0\}\cup \mathbb{N}$) that
  \begin{equation}\label{eq:detGzero}
    \det (\Psi^{(\Mei)}(\zeta)) = \bigO(\zeta^{\frac{\theta - 1}{2}}),\qquad \zeta \to 0.
  \end{equation}
Also it is easy to see from \eqref{eq:PsiMeijump} and \eqref{eq:defn_J_Mei} that $\zeta^{(1 - \theta)/2} \det (\Psi^{(\Mei)}(\zeta))$ is analytic on $\compC \setminus \{ 0 \}$. Thus, Liouville's theorem implies \eqref{eq:detPsiMei}.
\end{proof}

We omit the discussion on uniqueness of the solution to the RH problem \ref{RHP:MeiG}, and always understand $\Psi^{(\Mei)}(\zeta)$ in the sense of \eqref{eq:PsiMei}.

\subsubsection*{Construction of the local parametrix around $0$}
We are now ready to construct the local parametrix near the origin by building a $(\theta+1)\times (\theta+1)$ matrix-valued function $P^{(0)}(z)$ as an approximation of the RH problem \ref{rhp:U} for $U$. For this purpose, we define
\begin{equation}\label{def:M}
  M(z) = \diag \left( m_0(z), m_1(z), \dotsc, m_{\theta}(z) \right), \quad z\in\compC \setminus \realR,
\end{equation}
where
\begin{align}
  m_0(z) = {}&
               \begin{cases}
                 \g(z^{\frac1 \theta}) - \g_+(0), & z \in \compC_+, \\
                 2\pi i + \g(z^{\frac 1 \theta}) - \g_+(0), & z \in \compC_-,
               \end{cases}
                                                              \label{def:m0}\\
  m_1(z) = {}&
               \begin{cases}
                 -g(z^{\frac1 \theta}) + V(z^{\frac1 \theta}) + \ell - \g_-(0), & z \in \compC_+, \\
                 -2\pi i - g(z^{\frac1 \theta}) + V(z^{\frac1 \theta}) + \ell - \g_-(0), & z \in \compC_-,
               \end{cases} \\
  \intertext{and for $j = 2, \dotsc, \theta$, $\arg z \in (-\pi, 0) \cup (0, \pi)$,}
  m_j(z) = {}&
               \begin{cases}
                 -g(z^{\frac1 \theta} e^{\frac{2(j - 1)}{\theta}\pi i}) + V(z^{\frac1 \theta} e^{\frac{2(j - 1)}{\theta}\pi i}) + \ell - \g_-(0), & z^{\frac1 \theta} e^{\frac{2(j - 1)}{\theta}\pi i} \in \compC_+, \\
                 -2\pi i - g(z^{\frac1 \theta} e^{\frac{2(j - 1)}{\theta}\pi i}) + V(z^{\frac1 \theta} e^{\frac{2(j - 1)}{\theta}\pi i}) + \ell - \g_-(0), & z^{\frac1 \theta} e^{\frac{2(j - 1)}{\theta}\pi i} \in \compC_-,
               \end{cases}
                                                                                                                                                             \label{def:mj}
\end{align}
and also define
\begin{equation} \label{def:N}
  N(z) = \diag (n_0(z), n_1(z), \dotsc, n_{\theta}(z)), \quad z\in\compC \setminus (-\infty,b^{\theta}],
\end{equation}
where
\begin{equation}\label{def:ni}
  n_0(z) = P^{(\infty)}_2(z^{\frac1 \theta}), \quad n_j(z) = P^{(\infty)}_1(e^{\frac{2(j - 1)}{\theta} \pi i} z^{\frac1 \theta}), \quad j = 1, \dotsc, \theta.
\end{equation}
In the above definitions, we recall that $g$, $\g$ and $V$ are given in \eqref{def:g}, \eqref{def:tildeg} and \eqref{def:weight}, respectively, $\ell$ is the constant in the Euler-Lagrange condition \eqref{eq:gequal}, and $P^{(\infty)} =(P^{(\infty)}_1, P^{(\infty)}_2)$ solves the RH problem \ref{rhp:global}.

In view of \eqref{eq:gequal}, \eqref{eq:asy_formula_for_g}, \eqref{eq:asy_formula_for_g_tilde} and \eqref{eq:asy_P^infty_1}, \eqref{eq:asy_P^infty_2}, the following local behaviours of $M$ and $N$ near the origin are immediate.
\begin{prop}\label{prop:MNzero}
Let $m_i(z)$ and $n_i(z)$, $i=0,1,\ldots,\theta$, be the functions defined in \eqref{def:m0}--\eqref{def:mj} and \eqref{def:ni}. As $z\to 0$, we have, with $\rho$ defined in \eqref{eq:defn_rho}, $c$ given in \eqref{def:Jcs}, and $j = 2, \dotsc, \theta$,
\begin{align}
  m_0(z) = {}&
               \begin{cases}
                 \rho  (1 + \theta) e^{-\frac{1}{1 + \theta} \pi i} z^{\frac{1}{1 + \theta}} + \bigO(z^{\frac{2}{1 + \theta}}), & z \in \compC_+, \\
                 \rho (1 + \theta)  e^{\frac{1}{1 + \theta} \pi i} z^{\frac{1}{1 + \theta}} + \bigO(z^{\frac{2}{1 + \theta}}), & z \in \compC_-,
               \end{cases} \\
  m_1(z) = {}&
               \begin{cases}
                 \rho (1 + \theta)  e^{\frac{1}{1 + \theta} \pi i} z^{\frac{1}{1 + \theta}} + \bigO(z^{\frac{1}{\theta}}), & z \in \compC_+, \\
                 \rho (1 + \theta)  e^{-\frac{1}{1 + \theta} \pi i} z^{\frac{1}{1 + \theta}} + \bigO(z^{\frac{1}{\theta}}), & z \in \compC_-,
               \end{cases} \\
  m_j(z) = {}&  \rho (1 + \theta) e^{\frac{2j - 1}{1 + \theta} \pi i} z^{\frac{1}{1 + \theta}} + \bigO(z^{\frac{1}{\theta}}), \\
  n_0(z) = {}&
               \begin{cases}
                 \frac{c^{\frac{2(\alpha+1)-\theta}{2(1+\theta)}}}{\sqrt{\theta(1+\theta)}}e^{\frac{\theta-2(\alpha+1)}{2(1+\theta)}\pi i} z^{\frac{\alpha + 3/2}{1+\theta} - 1} (1 + \bigO(z^{\frac{1}{1 + \theta}})), & z \in \compC_+, \\
                 -\frac{c^{\frac{2(\alpha+1)-\theta}{2(1+\theta)}}}{\sqrt{\theta(1+\theta)}}e^{\frac{2(\alpha+1) - \theta}{2(1+\theta)}\pi i} z^{\frac{\alpha + 3/2}{1+\theta} - 1} (1 + \bigO(z^{\frac{1}{1 + \theta}})), & z \in \compC_-,
               \end{cases} \\
  n_1(z) = {}&
               \begin{cases}
                 \frac{\theta c^{\frac{2(\alpha+1)-\theta}{2(1+\theta)}}}{\sqrt{\theta(1+\theta)}} e^{\frac{2(\alpha+1)-\theta}{2(1+\theta)}\pi i}z^{\frac{\alpha + 3/2}{1+\theta} - \frac{\alpha + 1}{\theta}} (1 + \bigO(z^{\frac{1}{1+\theta}})), & z \in \compC_+, \\
                 \frac{\theta c^{\frac{2(\alpha+1)-\theta}{2(1+\theta)}}}{\sqrt{\theta(1+\theta)}} e^{\frac{\theta - 2(\alpha+1)}{2(1+\theta)}\pi i}z^{\frac{\alpha + 3/2}{1+\theta} - \frac{\alpha + 1}{\theta}} (1 + \bigO(z^{\frac{1}{1+\theta}})), & z \in \compC_-,
               \end{cases} \\
  n_j(z) = {}& \frac{\theta c^{\frac{2(\alpha+1)-\theta}{2(1+\theta)}}}{\sqrt{\theta(1+\theta)}} e^{\frac{2(\alpha+1)-\theta}{2(1+\theta)}\pi i} e^{\frac{(j - 1)(\theta - 2(\alpha + 1))}{\theta(1 + \theta)} \pi i} z^{\frac{\alpha + 3/2}{1+\theta} - \frac{\alpha + 1}{\theta}} (1 + \bigO(z^{\frac{1}{1+\theta}})).
\end{align}
\end{prop}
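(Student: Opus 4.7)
The plan is to verify each of the six displayed asymptotic expansions by direct substitution, feeding the local expansions \eqref{eq:asy_formula_for_g}, \eqref{eq:asy_formula_for_g_tilde} for $g$ and $\g$ and the expansions \eqref{eq:asy_P^infty_1}, \eqref{eq:asy_P^infty_2} for $P^{(\infty)}_{1}$ and $P^{(\infty)}_{2}$ into the definitions \eqref{def:m0}--\eqref{def:mj} and \eqref{def:ni}. Wherever a constant term shows up after substitution, I will cancel it by the Euler-Lagrange identity \eqref{eq:gequal} specialized at $x\to 0_+$, namely $g_\pm(0)+\g_\mp(0)=V(0)+\ell$, which is exactly why the shifts by $\g_+(0)$, $\g_-(0)$ appear in \eqref{def:m0}--\eqref{def:mj}. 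Note also that $\frac{(1+\theta)d_1\pi}{\theta\sin(\pi/(1+\theta))}=(1+\theta)\rho$ with $\rho$ from \eqref{eq:defn_rho}, so the messy coefficients in \eqref{eq:asy_formula_for_g}--\eqref{eq:asy_formula_for_g_tilde} collapse to $\rho(1+\theta)$ times a unimodular phase.

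For $m_0$ the computation is immediate: substituting $z^{1/\theta}$ (with $\arg z^{1/\theta}\in(0,\pi/\theta)$ for $z\in\compC_+$) into \eqref{eq:asy_formula_for_g_tilde} and using $e^{\frac{1+2\theta}{1+\theta}\pi i}=e^{-\frac{1}{1+\theta}\pi i}$ (and $e^{\frac{3+2\theta}{1+\theta}\pi i}=e^{\frac{1}{1+\theta}\pi i}$ in the lower half plane, where the added $-2\pi i$ in \eqref{def:m0} kills the $-2\pi i$ from \eqref{eq:asy_formula_for_g_tilde}) produces the claimed expansion. For $m_1$, substitute \eqref{eq:asy_formula_for_g} into \eqref{def:m0}; the Taylor expansion of the real analytic $V$ at $0$ contributes only an $\bigO(z^{1/\theta})$ remainder, the constant terms cancel via $g_+(0)+\g_-(0)=V(0)+\ell$, and the phase identity $e^{\frac{2+\theta}{1+\theta}\pi i}=-e^{\frac{1}{1+\theta}\pi i}$ with the leading minus sign in front of $g$ in \eqref{def:m0} produces the claim.

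The subtlest cases are $m_j$ for $j=2,\dotsc,\theta$, because $w:=z^{1/\theta}e^{2(j-1)\pi i/\theta}$ has argument in a shifted sector and we need to apply \eqref{eq:asy_formula_for_g} with the principal-branch value of $w^{\theta/(1+\theta)}$. I will split into the two subcases $w\in\compC_+$ and $w\in\compC_-$ indicated in \eqref{def:mj}; in each case the $-2\pi i$ adjustment in \eqref{def:mj} matches the $-2\pi i$ from the lower-half-plane branch of \eqref{eq:asy_formula_for_g}, the constant terms again cancel by the Euler-Lagrange identity, and direct bookkeeping of $w^{\theta/(1+\theta)}$ in the principal branch shows that $\rho(1+\theta)e^{\frac{2+\theta}{1+\theta}\pi i}\,w^{\theta/(1+\theta)}$ becomes $-\rho(1+\theta)e^{\frac{2j-1}{1+\theta}\pi i}\,z^{1/(1+\theta)}$. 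The flip of sign absorbs the leading minus in \eqref{def:mj}, giving the stated formula. This phase-tracking is where I expect the main (purely algebraic) obstacle to lie; a small sanity check with $j=2,\theta=3$, $z\in\compC_+$ yields the phase $e^{3\pi i/4}$, matching $e^{\frac{2j-1}{1+\theta}\pi i}$.

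The expansions of $n_0$ and $n_1$ follow by direct substitution of $z^{1/\theta}$ into \eqref{eq:asy_P^infty_2} and \eqref{eq:asy_P^infty_1} respectively, using $(\alpha+\tfrac12-\theta)/(1+\theta)=(\alpha+\tfrac32)/(1+\theta)-1$ and $(\theta-2(\alpha+1))/(2(1+\theta))=(\alpha+\tfrac32)/(1+\theta)-(\alpha+1)/\theta$ after multiplying the exponent by $1/\theta$; the lower-half-plane sign in $n_0$ comes from the identity $-e^{\frac{2(\alpha+1)-\theta}{2(1+\theta)}\pi i}=e^{\frac{2\alpha-3\theta}{2(1+\theta)}\pi i}$. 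For $n_j$ with $j=2,\dotsc,\theta$ the argument parallels that for $m_j$: depending on whether $e^{2(j-1)\pi i/\theta}z^{1/\theta}$ lies in $\compC_+$ or $\compC_-$, I apply the corresponding branch of \eqref{eq:asy_P^infty_1}; the two cases produce the same final expression because the opposite-phase prefactors in \eqref{eq:asy_P^infty_1} are compensated by the $2\pi$ jump in the principal branch of $z^{(\theta-2(\alpha+1))/(2(1+\theta))}$, leaving $e^{\frac{2(\alpha+1)-\theta}{2(1+\theta)}\pi i}e^{\frac{(j-1)(\theta-2(\alpha+1))}{\theta(1+\theta)}\pi i}$ as claimed. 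No deeper input than analyticity of $V$, the defining identities of $g,\g$ and the Euler-Lagrange relation is required, so the proposition is ``immediate'' in the sense stated by the authors, once the phase bookkeeping is carried out.
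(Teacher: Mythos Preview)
Your proposal is correct and takes essentially the same approach as the paper: the authors simply assert that the proposition is ``immediate'' from \eqref{eq:gequal}, \eqref{eq:asy_formula_for_g}, \eqref{eq:asy_formula_for_g_tilde}, \eqref{eq:asy_P^infty_1} and \eqref{eq:asy_P^infty_2}, and your write-up just spells out the substitution and phase bookkeeping that this entails.
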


From now on, we assume, as mentioned before, that $r=r_n$ shrinks with $n$, namely,
\begin{equation}\label{def:rn}
r=r_n=n^{-\frac{2(\theta+1)}{2\theta+1}}.
\end{equation}
One could actually take the rate of shrinking to be $n^{-\kappa}$ with $1<\kappa <1 + 1/\theta$. Here, we choose $\kappa = 2(\theta + 1)/(2\theta + 1)$ so that more explicit error estimates can be obtained. For $z \in D(0,r_n^{\theta})\setminus (\mathbb{R} \cup i \mathbb{R})$, we first introduce a $(\theta + 1) \times (\theta + 1)$ matrix-valued function
\begin{multline}\label{def:sfP0}
  \mathsf{P}^{(0)}(z) = (2\pi)^{1 - \frac{\theta}{2}}\theta^{-\frac{1}{2}} c^{\frac{2(\alpha+1)-\theta}{2(1+\theta)}} (\rho n)^{\frac{\alpha + 1}{\theta} - \frac{1}{2}} \diag(1,(\rho n)^{-1},\ldots,(\rho n)^{-\theta}) \\
  \times \Psi^{(\Mei)}((\rho n)^{\theta + 1} z) ((\rho n)^{-\frac{\theta + 1}{\theta}(\alpha + 1 - \theta)} \oplus I_{\theta}) N(z)^{-1} e^{nM(z)},
\end{multline}
where $M(z)$ and $N(z)$ are defined in \eqref{def:M} and \eqref{def:N}, respectively, and $\Psi^{(\Mei)}$ is the Meijer G-parametrix solving the RH problem \ref{RHP:MeiG}. We then have the following proposition regarding the RH problem for $\mathsf{P}^{(0)}$.
\begin{prop}\label{RHP:sfP0}
The function $\mathsf{P}^{(0)}(z)$ defined in \eqref{def:sfP0} has the following properties.
  \begin{enumerate}[label=\emph{(\arabic*)}, ref=(\arabic*)]
  \item
    $ \mathsf{P}^{(0)}(z)$ is analytic in $D(0,r_n^{\theta}) \setminus (\mathbb{R} \cup i \mathbb{R})$.
  \item
    For $z\in D(0,r_n^{\theta}) \cap (\mathbb{R} \cup i \mathbb{R})$, we have
    \begin{equation}
      \mathsf{P}^{(0)}_{+}(z) = \mathsf{P}^{(0)}_{-}(z) J_U(z),
    \end{equation}
    where $J_U(z)$ is defined in \eqref{eq:defn_J_U}.
  \item
    For $z\in \partial D(0,r_n^{\theta})$, we have, as $n\to \infty$,
    \begin{equation}\label{eq:sfP0asy}
      \mathsf{P}^{(0)}(z) = \Upsilon(z)\Omega_{\pm} (I+\bigO(n^{-\frac{1}{2\theta+1}})), \qquad z \in \compC_{\pm},
    \end{equation}
    where $\Upsilon(z)$ and $\Omega_{\pm}$ are defined in \eqref{def:Upsilon} and \eqref{def:Omegapm}, respectively.
  \end{enumerate}
\end{prop}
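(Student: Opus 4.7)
The plan is to verify the three items of Proposition \ref{RHP:sfP0} directly from the factored form $\mathsf{P}^{(0)}(z) = c_n\, D_L\, \Psi^{(\Mei)}((\rho n)^{\theta+1}z)\, D_R\, N(z)^{-1}\, e^{nM(z)}$, where $c_n$ is the scalar prefactor appearing in \eqref{def:sfP0}, $D_L = \diag(1, (\rho n)^{-1}, \dotsc, (\rho n)^{-\theta})$, and $D_R = (\rho n)^{-(\theta+1)(\alpha+1-\theta)/\theta}\oplus I_\theta$ are constant diagonal factors. For analyticity, the dilation $\zeta = (\rho n)^{\theta+1}z$ preserves $\mathbb{R}\cup i\mathbb{R}$, so item \ref{enu:thm:MeiG_-1} of RH problem \ref{RHP:MeiG} makes $\Psi^{(\Mei)}((\rho n)^{\theta+1}z)$ analytic off this cross; each diagonal entry of $M(z)$ and $N(z)$ is a branch of $g$, $\g$, $V$, or $P^{(\infty)}_{1,2}$ composed with a rotation of the principal $\theta$th root, all of whose branch cuts lie on $\mathbb{R}$, which gives analyticity of $N(z)^{-1}e^{nM(z)}$ on $D(0, r_n^\theta) \setminus \mathbb{R}$.

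For the jumps, I would check each of the four rays $(0, r_n^\theta)$, $(-r_n^\theta, 0)$, $(0, i r_n^\theta)$, $(0, -i r_n^\theta)$ in turn, using item \ref{enu:thm:MeiG_0} of RH problem \ref{RHP:MeiG}, Proposition \ref{prop:propg}, and the jump \eqref{eq:P^infty_jump} of $P^{(\infty)}$. On $i\mathbb{R}$, both $M$ and $N$ are continuous (their branch cuts lie on $\mathbb{R}$), so the required form in \eqref{eq:defn_J_U} follows from \eqref{eq:defn_J_Mei} by combining the substitution $\zeta = (\rho n)^{\theta+1}z$, the identity $m_0(z)-m_1(z) = \g(z^{1/\theta})+g(z^{1/\theta})-V(z^{1/\theta})-\ell = -\phi(z^{1/\theta})$ recorded in \eqref{def:m0}--\eqref{def:phi}, and the cancellation of the $(\rho n)^{-(\theta+1)(\alpha+1-\theta)/\theta}$ factor in $D_R$ against $\zeta^{-(\alpha+1-\theta)/\theta}$. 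On $(0, r_n^\theta)$, the Euler-Lagrange identity \eqref{eq:gequal} makes $m_{0,\pm}+m_{1,\mp}$ vanish modulo $2\pi i\mathbb{Z}$, and \eqref{eq:P^infty_jump} supplies compensating multiplicative relations among $(n_0)_\pm, (n_1)_\pm$, so that the off-diagonal jump of $\Psi^{(\Mei)}$ becomes the swap $\bigl(\begin{smallmatrix} 0 & 1\\ 1 & 0 \end{smallmatrix}\bigr)\oplus I_{\theta-1}$. On $(-r_n^\theta, 0)$, the $2\pi i$ shifts built into \eqref{def:m0}--\eqref{def:mj}, the boundary relations \eqref{eq:tildgpm}, \eqref{eq:gpm}, and item \ref{enu:pinftyboundary} of RH problem \ref{rhp:global} conspire so that $\Mcyclic$-conjugation of the diagonal $N(z)^{-1}e^{nM(z)}$ matches its $\pm$ boundary values, allowing the $\Mcyclic$-jump of $\Psi^{(\Mei)}$ to pass through and producing the overall jump $\Mcyclic = J_U$.

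For the matching on $\partial D(0, r_n^\theta)$, the prescribed radius $r_n = n^{-2(\theta+1)/(2\theta+1)}$ yields $|\zeta| = (\rho n)^{\theta+1}r_n^\theta = \bigO(n^{(\theta+1)/(2\theta+1)}) \to \infty$, so the large-$\zeta$ expansion \eqref{eq:G_asy_infty} applies with intrinsic error $\bigO(|\zeta|^{-1/(\theta+1)}) = \bigO(n^{-1/(2\theta+1)})$, matching the bound in \eqref{eq:sfP0asy}. Substituting \eqref{eq:G_asy_infty} into \eqref{def:sfP0}, the diagonal factor $D_L$ absorbs the $(\rho n)^k$ powers inside $\Upsilon((\rho n)^{\theta+1}z)$ to return $\Upsilon(z)$, while $D_R$ absorbs the $(\rho n)^{(\theta+1)(\alpha+1-\theta)/\theta}$ power inside $\Xi((\rho n)^{\theta+1}z)$. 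By Proposition \ref{prop:MNzero}, the leading asymptotics of $m_j(z)$ equal $\rho(\theta+1) e^{(2j-1)\pi i/(\theta+1)} z^{1/(\theta+1)}$, which coincide with the $j$th entry of $\Lambda((\rho n)^{\theta+1}z)/n$, so $e^{-\Lambda((\rho n)^{\theta+1}z)}e^{nM(z)} = I + \bigO(n \cdot r_n^{2\theta/(\theta+1)}) = I + \bigO(n^{(1-2\theta)/(2\theta+1)})$, which is absorbed into the announced error $\bigO(n^{-1/(2\theta+1)})$. Finally, the leading constants in Proposition \ref{prop:MNzero} for $n_j(z)$ combine with $c_n$, the normalizing constant in \eqref{eq:G_asy_infty}, and the residual $z$-powers from $\Xi(z)$ to produce exactly $\Omega_\pm$.

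The main obstacle is this matching analysis in item (3): it requires three simultaneous cancellations of unbounded factors --- the $(\rho n)^k$ powers of $\Upsilon$ against $D_L$, the $(\rho n)^{(\theta+1)(\alpha+1-\theta)/\theta}$ power of $\Xi$ against $D_R$, and the leading exponentials of $e^{-\Lambda}$ against $e^{nM}$ --- each tracked to sufficient accuracy on $\partial D(0, r_n^\theta)$. The specific scaling $r_n = n^{-2(\theta+1)/(2\theta+1)}$ in \eqref{def:rn} is dictated by balancing the intrinsic $\bigO(\zeta^{-1/(\theta+1)})$ error of \eqref{eq:G_asy_infty} against the subleading $n\cdot\bigO(z^{2/(\theta+1)})$ contribution from the expansion of $m_j$, so that both are of the same order $\bigO(n^{-1/(2\theta+1)})$ uniformly on the boundary.
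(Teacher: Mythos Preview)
Your approach is essentially the paper's: check the jump on each ray from the jump of $\Psi^{(\Mei)}$ conjugated by $D_R N^{-1}e^{nM}$, using \eqref{eq:gequal}, \eqref{eq:tildgpm}--\eqref{eq:gpm} and the $P^{(\infty)}$ relations (the paper records these as \eqref{eq:relmi1}--\eqref{eq:relmi12}), and then obtain the boundary asymptotics from \eqref{eq:G_asy_infty} together with Proposition~\ref{prop:MNzero} (the paper's \eqref{eq:enMeLambda}--\eqref{eq:XiNinver}).

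Two small corrections. First, in your $i\mathbb{R}$ computation, $m_0(z)-m_1(z)=\phi(z^{1/\theta})$ modulo $2\pi i$, not $-\phi$; this is harmless since only $e^{n(m_1-m_0)}=e^{-n\phi}$ enters. Second, and more relevant to the error bookkeeping: for $j\ge 1$ Proposition~\ref{prop:MNzero} gives the subleading term of $m_j(z)$ as $\bigO(z^{1/\theta})$, not $\bigO(z^{2/(\theta+1)})$. Hence on $|z|=r_n^\theta$ one has $e^{-\Lambda((\rho n)^{\theta+1}z)}e^{nM(z)}=I+\bigO(n\cdot r_n)=I+\bigO(n^{-1/(2\theta+1)})$, which already \emph{saturates} the announced rate rather than being strictly absorbed by it; this also means the balancing that determines $r_n$ is between $|\zeta|^{-1/(\theta+1)}$ from \eqref{eq:G_asy_infty} and $n\cdot|z|^{1/\theta}$ from $m_j$ $(j\ge 1)$, both equal to $n^{-1/(2\theta+1)}$ at $r_n=n^{-2(\theta+1)/(2\theta+1)}$.
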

\begin{proof}
  While it is straightforward to check the jump of $\mathsf{P}^{(0)}$ on $(0,ir_n^\theta)\cup(0,-ir_n^\theta)$, we need a little bit effort to see its jump on $(-r_n^\theta, 0)$ and $(0, r_n^\theta)$. If $z\in(-r_n^\theta,0)$, we obtain from the definitions of $m_i(z)$ and $n_{i}(z)$, $i=0,1,\ldots,\theta$, \eqref{eq:tildgpm}, \eqref{eq:P^infty_jump} and the boundary condition indicated in item \ref{enu:pinftyboundary} of the RH problem \ref{rhp:global} for $P^{(\infty)}$ that
  \begin{equation}\label{eq:relmi1}
    \begin{gathered}
      e^{nm_{0,+}(z)}= e^{nm_{0,-}(z)}, \quad e^{nm_{\theta,+}(z)} = e^{nm_{0,-}(z)},
      \\
      \quad n_{0,+}(z)=n_{0,-}(z), \quad n_{\theta,+}(z)=n_{1,-}(z), \\
      e^{nm_{j-1,+}(z)}= e^{nm_{j,-}(z)}, \quad n_{j-1,+}(z)=n_{j,-}(z), \qquad j=2, \dotsc, \theta.
    \end{gathered}
  \end{equation}
  It is then easily seen from \eqref{def:sfP0}, \eqref{eq:PsiMeijump} and the above relations that
  \begin{equation}
    \mathsf{P}^{(0)}_+(z) = \mathsf{P}^{(0)}_-(z) \Mcyclic, \quad z\in (-r_n^\theta,0).
  \end{equation}
  On the other hand, if $z\in (0,r_n^\theta)$, it is readily seen from \eqref{eq:P^infty_jump} that
  \begin{equation}\label{eq:relmi12}
    \begin{gathered}
    e^{nM_{+}(z)}=e^{nM_{-}(z)}, \quad n_{0,+}(z) = n_{1,-}(z) \frac{z^{(\alpha+1-\theta)/\theta}}{\theta}, \quad n_{1,+}(z) = -n_{0,-}(z)\frac{\theta}{z^{(\alpha + 1 - \theta)/\theta}}, \\
     \quad n_{j,+}(z)=n_{j,-}(z), \qquad j=2,\ldots,\theta.
    \end{gathered}
  \end{equation}
  This, together with \eqref{eq:PsiMeijump}, implies that for $z\in (0,r_n^\theta)$,
  \begin{align}
      & \Psi^{(\Mei)}_+((\rho n)^{\theta + 1} z) \left( (\rho n)^{-\frac{\theta + 1}{\theta}(\alpha + 1 - \theta)} \oplus I_{\theta} \right) N_+(z)^{-1} e^{nM_+(z)} \notag \\
      = {}& \Psi^{(\Mei)}_-((\rho n)^{\theta + 1} z) \left(
        \begin{pmatrix}
          0 & -\frac{\theta}{z^{(\alpha+1-\theta)/\theta}} (\rho n)^{-\frac{\theta+1}{\theta}(\alpha+1-\theta)}
          \\
          \frac{z^{(\alpha+1-\theta)/\theta}}{\theta} (\rho n)^{\frac{\theta+1}{\theta}(\alpha+1-\theta)} & 0
        \end{pmatrix}
        \oplus I_{\theta-1} \right) \notag \\
      & \phantom{\Psi^{(\Mei)}_-}
      \times ((\rho n)^{-\frac{\theta + 1}{\theta}(\alpha + 1 - \theta)} \oplus I_{\theta}) N_+(z)^{-1} e^{nM_+(z)} \notag \\
      = {}& \Psi^{(\Mei)}_-((\rho n)^{\theta + 1} z) \left( (\rho n)^{-\frac{\theta + 1}{\theta}(\alpha + 1 - \theta)} \oplus I_{\theta} \right) \left(
        \begin{pmatrix}
          0 & -\frac{\theta}{z^{(\alpha+1-\theta)/\theta}}
          \\
          \frac{z^{(\alpha+1-\theta)/\theta}}{\theta} & 0
        \end{pmatrix}
        \oplus I_{\theta-1} \right) \notag \\
      & \phantom{\Psi^{(\Mei)}_-}
      \times \diag\left( \frac{z^{(\alpha+1-\theta)/\theta}}{\theta} n_{1,-}(z),-\frac{\theta}{z^{(\alpha+1-\theta)/\theta}} n_{0,-}(z), n_{2,-}(z), \dotsc, n_{\theta,-}(z) \right)^{-1} e^{nM_+(z)} \notag \\
      = {}& \Psi^{(\Mei)}_-((\rho n)^{\theta + 1} z) \left( (\rho n)^{-\frac{\theta + 1}{\theta}(\alpha + 1 - \theta)} \oplus I_{\theta} \right) N_-(z)^{-1} \left(
        \begin{pmatrix}
          0 & 1
          \\
          1 & 0
        \end{pmatrix}
        \oplus I_{\theta-1} \right)e^{nM_{+}(z)} \notag \\
      = {}& \Psi^{(\Mei)}_-((\rho n)^{\theta + 1} z) \left( (\rho n)^{-\frac{\theta + 1}{\theta}(\alpha + 1 - \theta)} \oplus I_{\theta} \right) N_-(z)^{-1} e^{nM_{-}(z)} \notag \\
      & \phantom{\Psi^{(\Mei)}_-}
      \times \left(
        \begin{pmatrix}
          0 & e^{n(m_{1,+}(z)-m_{0,-}(z))}
          \\
          e^{n(m_{0,+}(z)-m_{1,-}(z))} & 0
        \end{pmatrix}
        \oplus I_{\theta-1} \right) \notag \\
      = {}& \Psi^{(\Mei)}_-((\rho n)^{\theta + 1} z) \left( (\rho n)^{-\frac{\theta + 1}{\theta}(\alpha + 1 - \theta)} \oplus I_{\theta} \right) N_-(z)^{-1} e^{nM_{-}(z)} \left(
        \begin{pmatrix}
          0 & 1
          \\
          1 & 0
        \end{pmatrix}
        \oplus I_{\theta-1} \right),
  \end{align}
  where we have made use of \eqref{eq:gequal} and the fact that $\g_{+}(0)=\g_{-}(0)+2\pi i$ (see \eqref{eq:gpm}) in the last step. A combination of the above formula and \eqref{def:sfP0} shows that
  \begin{equation}
    \mathsf{P}^{(0)}_+(z) = \mathsf{P}^{(0)}_-(z)
    \left(\begin{pmatrix}
      0 & 1
      \\
      1 & 0
    \end{pmatrix}
  \oplus I_{\theta-1}\right), \quad z\in (0,r_n^\theta),
  \end{equation}
as required.

To show the large $n$ asymptotics of $\mathsf{P}^{(0)}$ on the boundary of the disc, we observe from Proposition \ref{prop:MNzero} that for $z \in \partial D(0,r_n^{\theta})$,
  \begin{equation}\label{eq:enMeLambda}
    e^{nM(z)}e^{-\Lambda((\rho n)^{\theta+1}z)}=I+\bigO(n^{-\frac{1}{2\theta+1}}),
  \end{equation}
  and
  \begin{multline}\label{eq:XiNinver}
    \left( (\rho n)^{-\frac{\theta + 1}{\theta}(\alpha + 1 - \theta)} \oplus I_{\theta} \right) \Xi((\rho n)^{\theta+1}z)N(z)^{-1} = \\
    \sqrt{(1+\theta)/\theta} c^{\frac{\theta-2(\alpha+1)}{2(1+\theta)}}e^{(\frac{\alpha+3/2}{1+\theta}
      -\frac{2(\alpha+1)}{\theta}+\frac12)\pi i} z^{\frac{\alpha+1}{\theta}-\frac{\alpha+3/2}{1+\theta}}(I+\bigO(n^{-\frac{2\theta}{2\theta+1}})),
  \end{multline}
  where the functions $\Lambda$ and $\Xi$ are defined in \eqref{def:Lambda} and \eqref{def:Xi}, respectively. Since $\lvert n^{\theta+1}z \rvert = n^{\frac{\theta+1}{2\theta+1}} \to +\infty $ if $ z \in \partial D(0,r_n^{\theta})$ and $n \to +\infty$, we then obtain \eqref{eq:sfP0asy} by combining \eqref{eq:G_asy_infty}, \eqref{eq:enMeLambda} and \eqref{eq:XiNinver}.
\end{proof}

With $\mathsf{P}^{(0)}$ given in \eqref{def:sfP0}, we next define
\begin{equation}\label{def:P0}
  P^{(0)}(z) = \Omega_{\pm}^{-1}\Upsilon(z)^{-1}\mathsf{P}^{(0)}(z), \quad z \in \compC_{\pm}.
\end{equation}
In view of \eqref{eq:UpsilonOmegajump}, \eqref{def:P0} and Proposition \ref{RHP:sfP0}, the following RH problem for $P^{(0)}$ is then immediate.
\begin{RHP}\label{rhp:P0}
\hfill
  \begin{enumerate}[label=\emph{(\arabic*)}, ref=(\arabic*)]
  \item
    $P^{(0)}(z)$ is analytic in $D(0,r_n^{\theta}) \setminus (\mathbb{R} \cup i \mathbb{R})$.
  \item
    For $z\in D(0,r_n^{\theta}) \cap (\mathbb{R} \cup i \mathbb{R})$, we have
    \begin{equation}
      P^{(0)}_+(z) =
      \begin{cases}
        P^{(0)}_-(z) J_U(z), & z \in (0, ir_n^{\theta}) \cup (0, -ir_n^{\theta}), \\
        \left(\begin{pmatrix}
            0 & 1
            \\
            1 & 0
          \end{pmatrix} \oplus I_{\theta - 1} \right)
        P^{(0)}_-(z) J_U(z), & z \in (0, r_n^{\theta}), \\
        \Mcyclic^{-1} P^{(0)}_-(z) J_U(z), & z \in (-r_n^{\theta}, 0),
      \end{cases}
    \end{equation}
    where $J_U(z)$ is defined in \eqref{eq:defn_J_U}.
  \item
    For $z\in \partial D(0,r_n^{\theta})$, we have, as $n\to \infty$,
    \begin{equation}\label{eq:P0asy}
      P^{(0)}(z) = I+\bigO(n^{-\frac{1}{2\theta+1}}).
    \end{equation}
  \end{enumerate}
\end{RHP}

Finally, as in the definition of $V^{(b)}$ in \eqref{eq:defn_V^(b)}, we set
\begin{equation} \label{def:V0}
  V^{(0)}(z) = U(z) P^{(0)}(z)^{-1}, \quad z \in D(0,r_n^{\theta}) \setminus (\mathbb{R} \cup i \mathbb{R}),
\end{equation}
where $U$ is the solution of the RH problem \ref{rhp:U}, and the following proposition holds.
\begin{prop} \label{rhp:V0}
The function $V^{(0)}(z)$ defined in \eqref{def:V0} has the following properties.
  \begin{enumerate}[label=\emph{(\arabic*)}, ref=(\arabic*)]
  \item \label{enu:rhp:V0:1}
    $V^{(0)}=(V^{(0)}_0, V^{(0)}_1, \dotsc, V^{(0)}_{\theta})$ is analytic in $D(0,r_n^{\theta}) \setminus \mathbb{R} $, or equivalently, its definition can be analytically extended onto $(0, ir^{\theta}_n) \cup (-ir^{\theta}_n, 0)$.
  \item \label{enu:rhp:V0:2}
    For $z\in (-r_n^\theta,r_n^{\theta})\setminus \{0\}$, we have
    \begin{equation} \label{eq:enu:rhp:V0:2}
      V^{(0)}_+(z) = V^{(0)}_-(z)
      \begin{cases}
        \begin{pmatrix}
          0 & 1 \\
          1 & 0
        \end{pmatrix}
        \oplus I_{\theta - 1}, & z \in (0, r_n^{\theta}), \\
        \Mcyclic^{-1}, & z \in (-r_n^{\theta}, 0).
      \end{cases}
    \end{equation}
  \item \label{enu:rhp:V0:3}
    For $z\in \partial D(0,r_n^{\theta})$, we have, as $n \to \infty$,
    \begin{equation}\label{eq:V0asy}
      V^{(0)}(z) = U(z)(I+\bigO(n^{-\frac{1}{2\theta+1}})).
    \end{equation}
  \item
    As $z \to 0$, we have
    \begin{equation}\label{eq:V0kzero}
      V^{(0)}_k(z)=\bigO(1), \qquad k=0,1,\ldots,\theta.
    \end{equation}
  \end{enumerate}
\end{prop}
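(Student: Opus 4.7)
The strategy is to verify the four items in turn. Items \ref{enu:rhp:V0:1}, \ref{enu:rhp:V0:2} and \ref{enu:rhp:V0:3} reduce to direct manipulation of the jumps already encoded in RH problems \ref{rhp:U} and \ref{rhp:P0}; item (4), the boundedness at $z=0$, is the only genuine work.

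First I would dispose of item \ref{enu:rhp:V0:1}. On each of the segments $(0,ir_n^\theta)$ and $(-ir_n^\theta,0)$ the jump matrices of $U$ and of $P^{(0)}$ coincide with $J_U(z)$ (compare the first case of \eqref{eq:defn_J_U} with the corresponding case in RH problem \ref{rhp:P0}), so
$V^{(0)}_+(z) = U_-(z)J_U(z)J_U(z)^{-1}P^{(0)}_-(z)^{-1} = V^{(0)}_-(z)$,
and Morera's theorem provides the analytic continuation across those segments. Items \ref{enu:rhp:V0:2} and \ref{enu:rhp:V0:3} are then bookkeeping: on $(0,r_n^\theta)$ the involution $J_U^2=I$ combined with $P^{(0)}_+ = \bigl(\left(\begin{smallmatrix}0&1\\1&0\end{smallmatrix}\right)\oplus I_{\theta-1}\bigr)P^{(0)}_- J_U$ gives the first line of \eqref{eq:enu:rhp:V0:2}; on $(-r_n^\theta,0)$ the analogous computation with $J_U=\Mcyclic$ and $P^{(0)}_+=\Mcyclic^{-1}P^{(0)}_-\Mcyclic$ gives the second; and inserting \eqref{eq:P0asy} into \eqref{def:V0} yields \eqref{eq:V0asy}.

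The remaining task is item (4). I would substitute \eqref{def:sfP0} and \eqref{def:P0} to rewrite
$V^{(0)}(z) = U(z)\,e^{-nM(z)}\,N(z)\,D\,\Psi^{(\Mei)}((\rho n)^{\theta+1}z)^{-1}\,D'\,\Upsilon(z)\,\Omega_\pm$,
where $D$ and $D'$ are the $n$-dependent diagonal prefactors extracted from these definitions, and then analyse each component $V^{(0)}_k(z) = \sum_{j=0}^{\theta} U_j(z)\bigl[(P^{(0)})^{-1}\bigr]_{jk}(z)$ as $z\to 0$. All the ingredients are already in place: Proposition \ref{prop:MNzero} supplies the leading power behavior of the diagonal entries of $N(z)$ together with the boundedness of $e^{-nM(z)}$, item \ref{enu:prop:detPsiMei_2} of RH problem \ref{RHP:MeiG} bounds each entry of $\Psi^{(\Mei)}(\zeta)^{-1}$ near $\zeta=0$, and the third item of RH problem \ref{rhp:U} records the prescribed behavior of the $U_j(z)$.

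The hard part is the combinatorial accounting rather than any conceptual subtlety: one must case-split on $-1<\alpha<\theta-1$, $\alpha=\theta-1$, and $\alpha>\theta-1$, and within each case on the argument sector of $z$. The local parametrix $\mathsf{P}^{(0)}$ was however engineered precisely so that its singular structure at the origin mirrors, and cancels, that of $U$; hence in each case the $z$-powers coming from $U_j$, from $N(z)$, and from $\Psi^{(\Mei)}(\zeta)^{-1}$ combine to a nonnegative net exponent, and the residual logarithms (arising at $\alpha=\theta-1$ and for $\alpha\in\{0\}\cup\mathbb{N}$) are absorbed by analytic prefactors. This yields \eqref{eq:V0kzero} and completes the proof.
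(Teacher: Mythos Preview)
Your treatment of items \ref{enu:rhp:V0:1}--\ref{enu:rhp:V0:3} is correct and matches the paper's one-line dismissal. However, there is a genuine gap in your approach to item (4).

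You assert that the $z$-powers coming from $U_j$, $N(z)$, and $\Psi^{(\Mei)}(\zeta)^{-1}$ combine to a nonnegative net exponent. This is not what the entrywise estimates actually deliver. Working through the bounds in item \ref{enu:prop:detPsiMei_2} of RH problem \ref{RHP:MeiG} together with Proposition \ref{prop:MNzero} and item (3) of RH problem \ref{rhp:U}, the row vector $U(z)\mathsf{P}^{(0)}(z)^{-1}$ is only seen to be $\bigO(z^{\frac{1}{\theta}-1})$ (for $\alpha>0$, say); cf.\ the paper's estimate \eqref{eq:UsfP0inv}. For every integer $\theta>1$ this exponent is strictly negative, so no amount of case-splitting on $\alpha$ and on argument sectors will produce boundedness by brute power-counting. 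The entries $(\Psi^{(\Mei)}(\zeta)^{-1})_{kj}$ with $k\geq 1$ in \eqref{eq:PsiMeiInvkjzero} genuinely carry fractional negative powers that are not individually cancelled by the powers in $U_k$.

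The idea you are missing is a removable-singularity argument. Since $U$ and $\mathsf{P}^{(0)}$ satisfy the \emph{identical} jump $J_U$ on every ray inside $D(0,r_n^\theta)$ (RH problem \ref{rhp:U} versus Proposition \ref{RHP:sfP0}), the product $U(z)\mathsf{P}^{(0)}(z)^{-1}$ has no jumps whatsoever and is analytic in the punctured disk $D(0,r_n^\theta)\setminus\{0\}$. Now the crude bound $\bigO(z^{\frac{1}{\theta}-1})$ (or its logarithmic or $\alpha<0$ variants in \eqref{eq:UsfP0inv}) is enough: an analytic function with an isolated singularity growing strictly slower than $z^{-1}$ has a removable singularity there. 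Only after this step do you multiply by the bounded factor $\Upsilon(z)\Omega_\pm$ via \eqref{eq:V0exp} to recover $V^{(0)}(z)$ and conclude \eqref{eq:V0kzero}. Without invoking analyticity in the punctured disk, your argument does not close.
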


\begin{proof}
Since it is straightforward to see items \ref{enu:rhp:V0:1}--\ref{enu:rhp:V0:3} with the aid of the RH problems for $U$ and $P^{(0)}$, it remains to check the local behaviour of $V^{(0)}$ near the origin. By \eqref{def:V0} and \eqref{def:P0}, we have
  \begin{equation}\label{eq:V0exp}
    V^{(0)}(z)=U(z)P^{(0)}(z)^{-1}=U(z)\mathsf{P}^{(0)}(z)^{-1}\Upsilon(z)\Omega_{\pm}, \quad z \in \compC_{\pm}.
  \end{equation}
It is readily seen from \eqref{def:sfP0}, item \ref{enu:prop:detPsiMei_2} of the RH problem \ref{thm:MeiG} for $\Psi^{(\Mei)}$ and Proposition \ref{prop:MNzero} that each entry of $\mathsf{P}^{(0)}(z)^{-1}$ blows up at most as a power function near the origin. Furthermore, let $\mathsf{P}^{(0)}(z)^{-1}=((\mathsf{P}^{(0)}(z)^{-1})_{kj})_{k,j=0}^\theta$, if $z \to 0$ from the sector $\arg z \in (\frac{\pi}{2}, \pi) \cup (-\pi, -\frac{\pi}{2})$, one has
\begin{equation} \label{eq:asy_P0_high}
  (\mathsf{P}^{(0)}(z)^{-1})_{0j} =  \bigO ( z^{\frac{\alpha + 3/2}{\theta + 1} - 1}), \quad (\mathsf{P}^{(0)}(z)^{-1})_{kj} =
  \begin{cases}
    \bigO (z^{\frac{\alpha + 3/2}{\theta + 1} - \frac{\alpha}{\theta} - 1}), & \alpha > 0, \\
    \bigO (z^{\frac{3/2}{\theta + 1} - 1} \log z), & \alpha = 0, \\
    \bigO (z^{\frac{\alpha + 3/2}{\theta + 1} - 1}), & \alpha \in (-1, 0),
  \end{cases}
\end{equation}
for $k=1,\ldots,\theta$, and $j=0,1,\ldots,\theta$. This, together with the local behaviour of $U$ indicated in item (3) of the RH problem \ref{rhp:U}, implies that each entry of $U(z)\mathsf{P}^{(0)}(z)^{-1}$ again blows up at most like a power function as $z\to 0$ and for all entries,
\begin{equation}\label{eq:UsfP0inv}
  U(z)\mathsf{P}^{(0)}(z)^{-1} =
  \begin{cases}
    \bigO(z^{\frac{1}{\theta} - 1}), & \alpha > 0, \\
    \bigO(z^{\frac{1}{\theta} - 1} \log z), & \alpha = 0, \\
    \bigO(z^{\frac{\alpha + 1}{\theta} - 1}), & \alpha \in (-1, 0),
  \end{cases}
\end{equation}
if $z \to 0$ with $\arg z \in (\frac{\pi}{2}, \pi) \cup (-\pi, -\frac{\pi}{2})$. Note that since both $U$ and $\mathsf{P}^{(0)}$ satisfy the same jump condition on $D(0,r_n^{\theta})\cap (\mathbb{R} \cup i \mathbb{R})$, it follows that $U(z)\mathsf{P}^{(0)}(z)^{-1}$ is analytic in $D(0,r_n^{\theta})\setminus \{0\}$. A further appeal to \eqref{eq:UsfP0inv} shows that $0$ is a removable singular point, and $U(z)\mathsf{P}^{(0)}(z)^{-1}$ is actually analytic in $D(0,r_n^{\theta})$. We then obtain \eqref{eq:V0kzero} by combining this fact with \eqref{eq:V0exp}, \eqref{def:Upsilon} and \eqref{def:Omegapm}.
\end{proof}

\subsection{Final transformation} \label{subsec:final_trans_Y}

The usual final transformation in the RH analysis consists of constructing a matrix-valued function tending to the identity matrix as $n \to \infty$. Following the same spirit, we will build a vector-valued function $R(z) = (R_1(z), R_2(z))$ with the aid of the functions $Q(z)$, $V^{(b)}(z)$ and $V^{(0)}(z)$, where $R_1$ and $R_2$ are defined in $\compC$ and $\halfH$, respectively. An essential difference here is that, instead of working on $R(z)$ directly, we will eventually show that a single complex-valued function $\tR(s)$ defined via $R(z)$ and the mapping $z=J_c(s)$ tends to 1 as $n \to \infty$.

To state the definition of $R(z)$, we first introduce some contours and domains. Recall the two open disks $D(0,r_n)$ with $r_n$ given in \eqref{def:rn} and $D(b, \epsilon)$, we set
\begin{equation}\label{def:sigmaR}
 \Sigma^{R}:=[0,b]\cup [b+\epsilon, \infty) \cup \partial D(0,r_n) \cup \partial D(b,\epsilon) \cup \Sigma_1^R \cup \Sigma_2^R,
\end{equation}
where
\begin{equation}\label{def:SigmaiR}
\Sigma_i^R:=\Sigma_i \setminus \{D(0,r_n)\cup D(b,\epsilon)\}, \qquad i=1,2;
\end{equation}
see Figure \ref{fig:Sigma_R} for an illustration. We also divide the open disk $D(0,r_n)$ into $\theta$ parts by setting
\begin{equation}\label{def:Wk}
  W_k:=\{z\in D(0,r_n)\setminus\{0\} \mid  \arg z \in (\frac{(2k - 3)\pi}{\theta}, \frac{(2k - 1)\pi}{\theta}) \},\qquad k=1,\ldots, \theta;
\end{equation}
see Figure \ref{fig:W_0_W_1_W_3} for an illustration (with $\theta=3$).
\begin{figure}[htb]
  \begin{minipage}[t]{0.55\linewidth}
    \centering
    \includegraphics{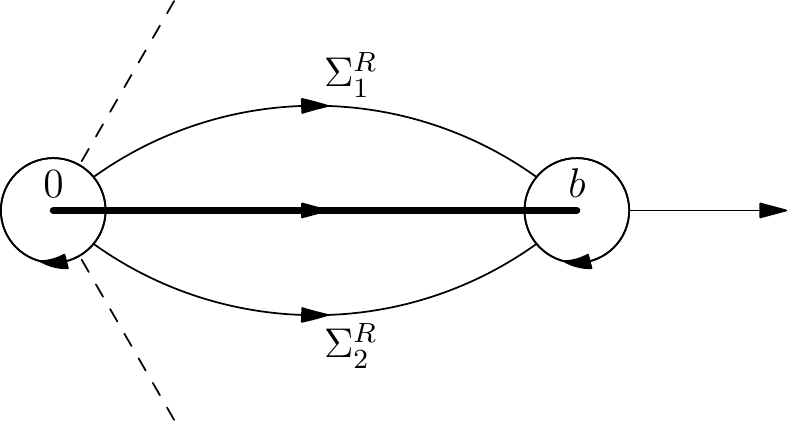}
    \caption[Contour $\Sigma^{R}$]
    {Contour $\Sigma^{R}$ }
    \label{fig:Sigma_R}
  \end{minipage}
  \hspace{\stretch{1}}
  \begin{minipage}[t]{0.4\linewidth}
  \centering
  \includegraphics{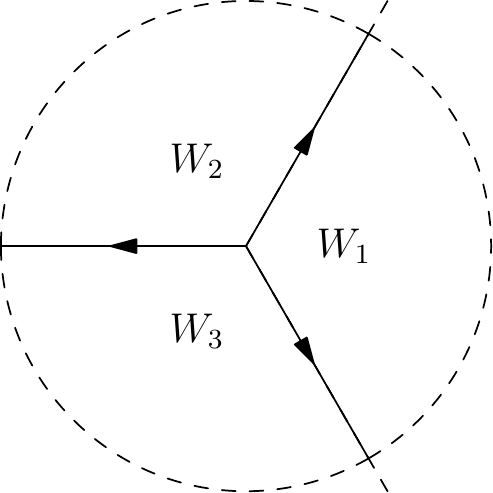}
  \caption[$W_1, \dotsc, W_{\theta}$.]{Schematic figures of $W_1, \dotsc, W_{\theta}$. ($\theta = 3$ for instance.)}
  \label{fig:W_0_W_1_W_3}
  \end{minipage}
\end{figure}
It is clear that $W_1$ can also be regarded as a subset of $\halfH$, and each $W_k$ has a outer boundary
\begin{equation}\label{def:Gammak}
  \Gamma_k := \{ z\in \partial D(0,r_n) \mid \arg z \in (\frac{(2k - 3)\pi}{\theta}, \frac{(2k - 1)\pi}{\theta}) \},\quad k=1,\ldots, \theta.
\end{equation}

We then define a $1\times 2$ vector-valued function $R(z) = (R_1(z), R_2(z))$ such that $R_1(z)$ is analytic in $\compC \setminus \Sigma^R$ and $R_2(z)$ is analytic in $\halfH \setminus \Sigma^R$, as follows:
\begin{align}
  R_1(z) = {}&
  \begin{cases}
    Q_1(z), & \hbox{$z\in \mathbb{C} \setminus \{D(b,\epsilon) \cup D(0,r_n) \cup \Sigma^R \}$,} \\
    V_1^{(b)}(z), & \hbox{$z\in D(b,\epsilon) \setminus [b-\epsilon, b]$,} \\
    V_1^{(0)}(z^\theta), & \hbox{$z\in W_1 \setminus [0,r_n]$,} \\
    V_k^{(0)}(z^\theta), & \hbox{$z\in W_k$, $k=2,\ldots,\theta$,}
  \end{cases} \label{def:R1} \\
  R_2(z) = {}&
  \begin{cases}
    Q_2(z), & \hbox{$z\in \halfH \setminus \{D(b,\epsilon) \cup D(0,r_n) \cup \Sigma^R \}$,} \\
    V_2^{(b)}(z), & \hbox{$z\in D(b,\epsilon) \setminus [b-\epsilon, b]$,} \\
    V_0^{(0)}(z^\theta), & \hbox{$z \in W_1 \setminus [0,r_n]$,} \\
  \end{cases} \label{def:R2}
\end{align}
where $\Sigma^R$ and $W_k$, $k=1,\ldots,\theta$, are defined in \eqref{def:sigmaR} and \eqref{def:Wk}, respectively. In view of the RH problems \ref{RHP:Svar}, \ref{RHP:Vb} for $Q$, $V^{(b)}$, and the RH problem for $V^{(0)}$ given in Proposition \ref{rhp:V0}, it is straightforward to check that $R$ satisfies the following RH problem.
\begin{RHP} \label{rhp:R} \hfill
  \begin{enumerate}[label=\emph{(\arabic*)}, ref=(\arabic*)]
  \item
    $R=(R_1,R_2)$ is analytic in $(\compC \setminus \Sigma^{R}, \halfH \setminus \Sigma^{R})$.

  \item \label{enu:rhp:R:2}
    $R$ satisfies the following jump conditions:
    \begin{equation} \label{eq:rhp:R:2}
      R_+(z)=R_-(z)
      \begin{cases}
        J_Q(z), & \hbox{$z\in \Sigma_1^R \cup \Sigma_2^R \cup (b+\epsilon, +\infty)$,} \\
        P^{(b)}(z), & \hbox{$z\in \partial D(b,\epsilon)$,} \\
        \begin{pmatrix}
          0 & 1 \\
          1 & 0
        \end{pmatrix}, & \hbox{$z\in (0,b) \setminus \{r_n, b-\epsilon\}$,}
      \end{cases}
    \end{equation}
    where $J_Q(z)$ and $P^{(b)}(z)$ are defined in \eqref{def:JQ} and \eqref{def:Pb}, respectively; with $\Gamma_1$ defined in \eqref{def:Gammak}, we have for $z\in\Gamma_1$,
    \begin{equation}
      R_{2,+}(z) = R_{2,-}(z)P_{00}^{(0)}(z^{\theta}) + \sum^{\theta}_{j = 1} R_{1,-}(e^{\frac{2(j - 1)\pi i}{\theta}}z) P^{(0)}_{j0}(z^{\theta}),\quad \arg z \neq 0, \pm \frac{\pi}{2\theta},\label{eq:jumpRgamma1}
    \end{equation}
    and for $k=1,\ldots,\theta$,
    \begin{equation}
    R_{1,+}(e^{\frac{2(k - 1)\pi i}{\theta}}z) = R_{2,-}(z) P^{(0)}_{0k}(z^{\theta}) + \sum^{\theta}_{j = 1} R_{1,-}(e^{\frac{2(j - 1)\pi i}{\theta}}z) P^{(0)}_{jk}(z^{\theta}), \label{eq:jumpRgammai}
    \end{equation}
    where $P^{(0)}(z)=(P^{(0)}_{jk}(z))_{j,k=0}^\theta$ defined in \eqref{def:P0} is the solution of the RH problem \ref{rhp:P0}. The orientations of the curves in $\Sigma^{R}$ are shown in Figure \ref{fig:Sigma_R}, and the orientations of the circles $\partial D(0, r_n)$ and $\partial D(b, \epsilon)$ are particularly taken in a clockwise manner.
  \item
    As $z \to \infty$ in $\compC$, $R_1$ behaves as $R_1(z) = 1 + \bigO(z^{-1})$.
  \item \label{enu:rhp:R:4}
    As $z \to \infty$ in $\halfH$, $R_2$ behaves as $R_2(z) = \bigO(1)$.
  \item \label{enu:rhp:R:5}
    As $z \to b$ or $z \to 0$, we have $R_1(z) = \bigO(1)$ and $R_2(z) = \bigO(1)$.
  \item \label{enu:rhp:R:6}
    For $x > 0$, we have the boundary condition $R_2(e^{\pi i/\theta}x) = R_2(e^{-\pi i/\theta}x)$.
   \end{enumerate}
\end{RHP}

It is readily seen from \eqref{eq:jumpRgamma1} and \eqref{eq:jumpRgammai} that the boundary value of $R$ on $\partial D(0, r_n)$, or equivalently on each of the arcs $\Gamma_i$, $i=1,\ldots,\theta$, defined in \eqref{def:Gammak} also depends on its values on the other arcs. Hence, we call these jump conditions `shifted' jump conditions, and the RH problem for $R$ a shifted RH problem, following the convention in \cite{Gakhov90}.

Similar to the idea used in the construction of global parametrix, to estimate $R$ for large $n$, we now transform the RH problem for $R$ to a scalar one on the complex $s$-plane by defining
\begin{equation} \label{eq:scalar_R_defn}
  \tR(s) =
  \begin{cases}
    R_1(J_c(s)), & \text{$s \in \compC \setminus \overline{D}$ and $s \notin I_1(\Sigma^R )$,} \\
    R_2(J_c(s)), & \text{$s \in D \setminus[-1, 0]$ and $s \notin I_2(\Sigma^R)$,}
  \end{cases}
\end{equation}
where recall that $D$ is the region bounded by the curve $\gamma=\gamma_1\cup\gamma_2$, $I_1: \compC \setminus [0, b] \to \compC \setminus \overline{D}$ and $I_2: \halfH \setminus [0, b] \to D$ are defined in \eqref{eq:inverse1} and \eqref{eq:inverse2}, respectively.

We are now at the stage of describing the RH problem for $\tR$. For that purpose, we define
\begin{equation}\label{def:SigmaRi}
  \begin{aligned}
    \SigmaR^{(1)} := {}& I_1(\Sigma^R_1 \cup \Sigma^R_2) \subseteq \compC \setminus \overline{D}, & \SigmaR^{(1')} := {}& I_2(\Sigma^R_1 \cup \Sigma^R_2) \subseteq D, \\
    \SigmaR^{(2)} := {}& I_1((b + \epsilon, +\infty)) \subseteq \compC \setminus \overline{D}, & \SigmaR^{(2')} := {}& I_2((b + \epsilon, +\infty)) \subseteq D, \\
    \SigmaR^{(3)} := {}& I_1(\partial D(b,\epsilon)) \subseteq \compC \setminus \overline{D}, & \SigmaR^{(3')} := {}& I_2(\partial D(b,\epsilon)) \subseteq D, \\
    \SigmaR^{(4)} := {}& I_2(\Gamma_1) \subseteq D, & \SigmaR^{(5)}_k := {}& I_1(\Gamma_k) \subseteq \compC \setminus \overline{D}, \qquad k = 1, \dotsc, \theta,
  \end{aligned}
\end{equation}
and set
\begin{equation}\label{def:SigmaR}
  \SigmaR = \SigmaR^{(1)} \cup \SigmaR^{(2')} \cup \SigmaR^{(3)} \cup \SigmaR^{(3')}  \cup \SigmaR^{(4)} \cup \bigcup_{k=1}^{\theta} \SigmaR^{(5)}_k ,
\end{equation}
which is the union of the solid and the dotted curves in Figure \ref{fig:jump_R_scalar}. We also define the following functions on each curve constituting $\SigmaR$:
\begin{align}
  J_{\SigmaR^{(1)}}(s) = {}& \frac{\theta P^{(\infty)}_2(z)}{z^{\alpha + 1 - \theta} P^{(\infty)}_1(z)} e^{-n\phi(z)}, & s \in {}& \SigmaR^{(1)}, \label{def:Jsigma1} \\
  \intertext{where $z = J_c(s) \in \Sigma^R_1 \cup \Sigma^R_2$,}
  J_{\SigmaR^{(2')}}(s) = {}& \frac{z^{\alpha + 1 - \theta} P^{(\infty)}_1(z)}{\theta P^{(\infty)}_2(z)} e^{n\phi(z)}, & s \in {}& \SigmaR^{(2')}, \label{def:Jsigma2'} \\
  \intertext{where $z = J_c(s) \in (b + \epsilon, +\infty)$,}
  J^{1}_{\SigmaR^{(3)}}(s) = {}& P^{(b)}_{11}(z) - 1, \quad J^{2}_{\SigmaR^{(3)}}(s) = P^{(b)}_{21}(z), & s \in {}& \SigmaR^{(3)}, \label{def:Jsigma3} \\
  \intertext{where $z = J_c(s) \in \partial D(b,\epsilon)$,}
  J^{1}_{\SigmaR^{(3')}}(s) = {}& P^{(b)}_{22}(z) - 1, \quad J^{2}_{\SigmaR^{(3')}}(s) = P^{(b)}_{12}(z), & s \in {}& \SigmaR^{(3')},
  \label{def:Jsigma32}\\
  \intertext{where $z = J_c(s) \in \partial D(b,\epsilon)$,}
  J^{0}_{\SigmaR^{(4)}}(s) = {}& P^{(0)}_{00}(z^{\theta}) - 1, \quad J^{j}_{\SigmaR^{(4)}}(s) = P^{(0)}_{j0}(z^{\theta}), & s \in {}& \SigmaR^{(4)}, \\
  \intertext{where $z = J_c(s) \in \Gamma_1$ and $j=1,\ldots,\theta$,}
  J^k_{\SigmaR^{(5)}_k}(s) = {}& P^{(0)}_{kk}(z^{\theta}) - 1, \quad J^j_{\SigmaR^{(5)}_k}(s) = P^{(0)}_{jk}(z^{\theta}), \quad j\neq k, & s \in {}& \SigmaR^{(5)}_k, \label{def:Jsigma5}
\end{align}
where $z = J_c(s) \in \Gamma_k$, $k=1,\ldots,\theta$, and $j=0,1,\ldots,\theta$. In \eqref{def:Jsigma3} and \eqref{def:Jsigma32}, $P^{(b)}(z)=(P^{(b)}_{jk}(z))_{j,k=1}^2$ is defined in \eqref{def:Pb}. With the aid of these functions, we further define an operator
$\DeltaR$ that acts on functions defined on $\SigmaR$ by
\begin{equation}\label{def:DeltaR}
  \DeltaR f(s) =
  \begin{cases}
    J_{\SigmaR^{(1)}}(s) f(\s), & \text{$s \in \SigmaR^{(1)}$ and $\s = I_2(J_c(s))$}, \\
    J_{\SigmaR^{(2')}}(s) f(\s), & \text{$s \in \SigmaR^{(2')}$ and $\s = I_1(J_c(s))$}, \\
    J^{1}_{\SigmaR^{(3)}}(s) f(s) + J^{2}_{\SigmaR^{(3)}}(s) f(\s), & \text{$s \in \SigmaR^{(3)}$ and $\s = I_2(J_c(s))$}, \\
    J^{1}_{\SigmaR^{(3')}}(s) f(s) + J^{2}_{\SigmaR^{(3')}}(s) f(\s), & \text{$s \in \SigmaR^{(3')}$ and $\s = I_1(J_c(s))$}, \\
    J^{0}_{\SigmaR^{(4)}}(s) f(s) + \sum\limits ^{\theta}_{j = 1} J^{j}_{\SigmaR^{(4)}}(s) f(\s_j), & \text{$s \in \SigmaR^{(4)}$ and $\s_j = I_1(J_c(s) e^{\frac{2(j - 1)\pi i}{\theta}}) \in I_1(\Gamma_j)$}, \\
    \sum\limits^{\theta}_{j = 0} J^j_{\SigmaR^{(5)}_k}(s) f(\s_j), & \text{$s \in \SigmaR^{(5)}_k$ and  $\s_0 = I_2(J_c(s) e^{\frac{2(1 - k)\pi i}{\theta}}) \in D$,} \\
    & \text{$\s_j = I_1(J_c(s) e^{\frac{2(j - k)\pi i}{\theta}}) \in I_1(\Gamma_j) \subseteq \compC \setminus \overline{D}$} \\
    & \text{for $j = 1, \dotsc, \theta$,  such that $\s_k = s$},
  \end{cases}
\end{equation}
where $f$ is a complex-valued function defined on $\SigmaR$. Hence, we can define a scalar shifted RH problem as follows.

\begin{figure}[htb]
  \centering
  \includegraphics{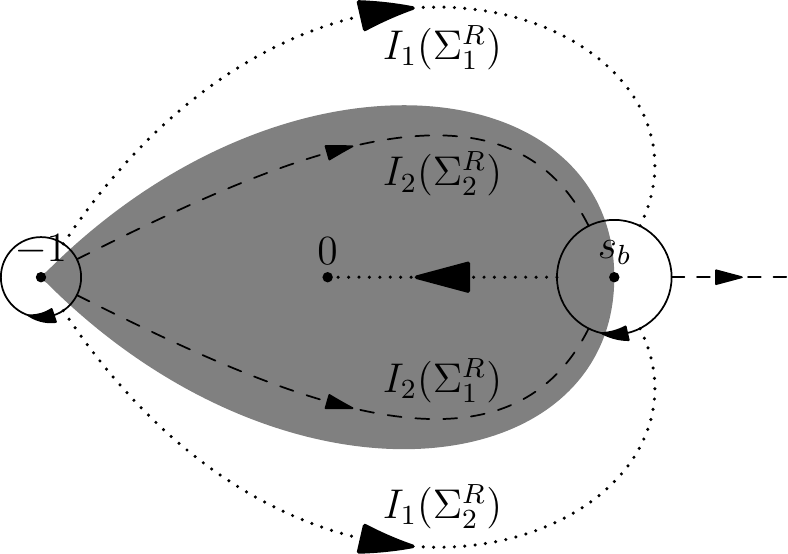}
  \caption{The solid and the dotted curves are contour $\SigmaR$ for the RH problem for $\tR$. (The solid and dashed curves are $\omega(\widetilde{\SigmaR})$ where $\widetilde{\SigmaR}$ is for the RH problem for $\widetilde{\tR}$.)}
  \label{fig:jump_R_scalar}
\end{figure}

\begin{RHP} \label{rhp:tR} \hfill
  \begin{enumerate}[label=\emph{(\arabic*)}, ref=(\arabic*)]
  \item
    $\tR(s)$ is analytic in $\compC \setminus \SigmaR$, where the contour $\SigmaR$ is defined in \eqref{def:SigmaR}.
  \item \label{enu:rhp:tR:2}
    For $s\in \SigmaR$, we have
    \begin{equation}
      \tR_+(s) - \tR_-(s) = \DeltaR \tR_-(s),
    \end{equation}
    where $\DeltaR$ is the operator defined in \eqref{def:DeltaR}.
  \item
    As $s \to \infty$, we have
    \begin{equation}
      \tR(s)=1+\bigO(s^{-1}).
    \end{equation}
  \item \label{enu:rhp:tR:4}
    As $s \to 0$, we have $\tR(s)=\bigO(1)$.
  \end{enumerate}
\end{RHP}

We then have the following proposition about the solution of the above RH problem.
\begin{prop} \label{prop:uniquess}
The function $\tR(s)$ defined in \eqref{eq:scalar_R_defn} is the unique solution of the RH problem \ref{rhp:tR} after trivial analytical extension.
\end{prop}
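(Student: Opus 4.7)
The plan is to verify existence and uniqueness separately. For existence, I translate the vector RH problem \ref{rhp:R} for $R$ into the scalar shifted RH problem for $\tR$ through the bi-holomorphic maps $I_1$ and $I_2$; for uniqueness, I reverse the chain of transformations $Y \to T \to S \to Q \to R \to \tR$ and invoke the uniqueness of the biorthogonal polynomials via an argument parallel to Proposition \ref{prop:unique_S}.

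Analyticity of $\tR$ away from $\SigmaR$ is immediate from the analyticity of $R_1$ on $\compC \setminus \Sigma^R$ and $R_2$ on $\halfH \setminus \Sigma^R$ together with the mapping properties of $J_c$, $I_1$ and $I_2$ recalled in Section \ref{sec:global}. The ``trivial analytic extension'' amounts to continuing $\tR$ across $\gamma$ and across $(-1, 0)$, where no jump is imposed by $\SigmaR$: across $\gamma$, the jump $\begin{pmatrix} 0 & 1 \\ 1 & 0 \end{pmatrix}$ of $R$ on $(0, b)$ in item \ref{enu:rhp:R:2} of RH problem \ref{rhp:R} makes $R_1 \circ J_c$ on one side of $\gamma$ match $R_2 \circ J_c$ on the other, and across $(-1, 0)$ the boundary condition in item \ref{enu:rhp:R:6}, combined with the fact that $J_c$ maps the two sides of $(-1, 0)$ onto the rays $e^{\pm \pi i/\theta}(0, \infty)$, supplies the continuation.

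The individual jumps on each component of $\SigmaR$ are then verified by direct translation. On $\SigmaR^{(1)}$, the lower triangular form of $J_Q$ in \eqref{def:JQ} makes $R_2$ continuous on $\Sigma_1 \cup \Sigma_2$ while $R_1$ jumps additively by $R_2 \cdot J_{\SigmaR^{(1)}}(s)$; identifying $R_2(J_c(s))$ with $\tR(\s)$ for $\s = I_2(J_c(s)) \in D$ gives the first case of \eqref{def:DeltaR}. The jumps on $\SigmaR^{(2')}$, $\SigmaR^{(3)}$ and $\SigmaR^{(3')}$ are derived analogously, using respectively the upper triangular form of $J_Q$ on $(b + \epsilon, \infty)$ and the jump $R_+ = R_- P^{(b)}$ on $\partial D(b, \epsilon)$. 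For the shifted jumps on $\SigmaR^{(4)}$ and $\SigmaR^{(5)}_k$, the key observation is that $R_1$ is defined globally on $\compC$, so the terms $R_{1, -}(e^{2(j - 1)\pi i/\theta} z)$ appearing in \eqref{eq:jumpRgamma1}--\eqref{eq:jumpRgammai} become $\tR$ evaluated at preimage points in $\SigmaR^{(5)}_j$, matching the last two cases of \eqref{def:DeltaR} after reindexing. The asymptotics $\tR(s) = 1 + \bigO(s^{-1})$ as $s \to \infty$ and $\tR(s) = \bigO(1)$ as $s \to 0$ then follow from $J_c(s) \sim c s$ at infinity and $J_c(s) \to \infty$ as $s \to 0$, combined with items (3) and (4) of RH problem \ref{rhp:R}.

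For uniqueness, suppose $\widehat{\tR}$ is another solution. Setting $\widehat{R}_1(z) = \widehat{\tR}(I_1(z))$ on $\compC \setminus [0, b]$ and $\widehat{R}_2(z) = \widehat{\tR}(I_2(z))$ on $\halfH \setminus [0, b]$, extended across $(0, b)$ via the jump $\begin{pmatrix} 0 & 1 \\ 1 & 0 \end{pmatrix}$, the translation run in reverse shows that $\widehat{R}$ satisfies all the jump and boundary conditions of RH problem \ref{rhp:R} with possibly weaker local behaviour at $0$ and $b$. Undoing the invertible transformations \eqref{def:R1}--\eqref{def:R2}, \eqref{def:thirdtransform}, \eqref{def:S} and \eqref{eq:Y_to_T}, all of which are multiplication by nowhere-vanishing factors built from $P^{(b)}$, $P^{(0)}$, $P^{(\infty)}$ and the $g$-functions, produces $\widehat{Y}$ satisfying RH problem \ref{RHP:original_p} with exactly the type of weakened local behaviour considered in Proposition \ref{prop:unique_S}. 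The Liouville-type argument there applies verbatim to force $\widehat{Y}_1 = p_n$ and $\widehat{Y}_2$ to be its Cauchy transform, whence $\widehat{\tR} = \tR$. The main obstacle is the careful bookkeeping of the shifted jumps on the arcs $\Gamma_k$ and confirming that the weak normalization $\tR(s) = \bigO(1)$ at $s = 0$ still yields local behaviour of $\widehat{Y}$ at $0$ mild enough for the polynomiality argument to apply.
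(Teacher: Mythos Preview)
Your approach is essentially the same as the paper's: verify that $\tR$ defined by \eqref{eq:scalar_R_defn} satisfies RH problem \ref{rhp:tR} by translating the jump conditions of $R$ through $I_1$, $I_2$, and for uniqueness reverse the chain of transformations and invoke the uniqueness of the biorthogonal polynomials via Proposition \ref{prop:unique_S}. The paper stops the reversal at $S$ and quotes Proposition \ref{prop:unique_S} directly, while you continue to $Y$; these are equivalent since Proposition \ref{prop:unique_S} itself carries out the passage from $S$ to $Y$.

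One slip worth correcting: in your final sentence you tie the condition $\tR(s)=\bigO(1)$ at $s=0$ to the local behaviour of $\widehat{Y}$ at $z=0$. Since $J_c(s)\to\infty$ in $\halfH$ as $s\to 0$, the boundedness at $s=0$ actually controls $\widehat{Y}_2$ at $z=\infty$ (encoding the biorthogonality order), not the behaviour at the origin. The behaviour of $\widehat{Y}$ at $z=0$ corresponds to $s=-1$, and at $z=b$ to $s=s_b$; both points lie off $\SigmaR$, so $\widehat{\tR}$ is automatically analytic there, and it is this analyticity (combined with the $(z-b)^{-1/4}$ blow-up of $P^{(\infty)}$) that produces the weakened condition at $b$ handled by Proposition \ref{prop:unique_S}. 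The paper makes this removability at $-1$ and $s_b$ explicit; you should do the same.
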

\begin{proof}
 First, we assume that $\tR$ is defined by \eqref{eq:scalar_R_defn}. It is clear that $\tR$ is analytic in $\compC \setminus \{[-1,0]\cup I_1(\Sigma^R) \cup I_2(\Sigma^R)\}$. It comes out that the jumps of $\tR$ on some of these curves are trivial. If $s\in(-1,0)$, the image of $J_{c,\pm}(s)$ is the ray $re^{\mp\frac{\pi i}{\theta}}$ with $r>0$; cf. Figure \ref{fig:mapJ}. Hence, by item \ref{enu:RHP:Svar:7} of the RH problem \ref{RHP:Svar} for $Q$ and the fact that $V^{(0)}_{0,+}(x)=V^{(0)}_{0,-}(x)$ for $x\in(-r_n^\theta,0)$ (see \eqref{eq:enu:rhp:V0:2}), we obtain from \eqref{def:R2} and \eqref{eq:scalar_R_defn} that $\tR_+(s)=\tR_-(s)$ for $s\in(-1,0)$. Similarly, by \eqref{eq:rhp:R:2}, it is straightforward to check that $\tR_+(s)=\tR_-(s)$ for $s\in \gamma \cup \SigmaR^{(1')} \cup \SigmaR^{(2)} \setminus\{\-1,s_b\}$, where the curves $\SigmaR^{(1')}$ and $\SigmaR^{(2)}$ are defined in \eqref{def:SigmaRi}. Hence, $\tR$ is analytic in $\compC \setminus (\SigmaR \cup \{ -1, s_b\})$. Since $J_{c}(-1)=0$ and $J_c(s_b)=b$, we further conclude from item (5) of the RH problem \ref{rhp:R} for $R$ that $-1$ and $s_b$ are removable singular points, and $\tR$ defined by \eqref{eq:scalar_R_defn} is actually analytic in $\compC \setminus \SigmaR$ after trivial analytical extension. The remaining items \ref{enu:rhp:tR:2}--\ref{enu:rhp:tR:4} of the RH problem \ref{rhp:tR} follow directly by combining \eqref{eq:scalar_R_defn} and items \ref{enu:rhp:R:2}--\ref{enu:rhp:R:4} of the RH problem \ref{rhp:R} for $R$. Thus, we have proved that the function $\tR(s)$ defined in \eqref{eq:scalar_R_defn} solves the RH problem \ref{rhp:tR}.

To show the uniqueness of the RH problem \ref{rhp:tR}, let $\tRhat(s)$ be a solution in place of $\tR$. By reversing the transformations $S \to Q \to R \to \tR$, we obtain, after some trivial analytic continuation, a vector-valued function $\widehat{S}(z) = (\widehat{S}_1(z), \widehat{S}_2(z))$ defined in $(\compC \setminus \Sigma, \mathbb{H}_{\theta} \setminus \Sigma)$ from $\widehat\tR(s)$. It can be checked that $\widehat{S}(z)$ satisfies the RH problem \ref{rhp:S} for $S$ but with item \ref{enu:rhp:S:6} therein replaced by $S_1(z) = \bigO((z - b)^{-1/4})$ and $S_2(z) = \bigO((z - b)^{-1/4})$. In view of Proposition \ref{prop:unique_S}, $\Shat$ still solves the RH problem \ref{rhp:S} for $S$ uniquely. Since all the transforms $S \to Q \to R \to \tR$ are reversible, we have that the RH problem \ref{rhp:tR} has a unique solution, which is given in \eqref{eq:scalar_R_defn}.
\end{proof}

Finally, we show that $\tR(s) \to 1$ as $n\to\infty$, using the strategy proposed in \cite{Claeys-Wang11}. We start with the claim that $\tR$ satisfies the integral equation
\begin{equation} \label{eq:constr_R}
  \tR(s) = 1 + \mathcal{C}(\DeltaR \tR_-)(s),
\end{equation}
where
\begin{equation}
  \mathcal{C}g(s) = \frac{1}{2\pi i} \int_{\SigmaR} \frac{g(\xi)}{\xi - s} d\xi,  \qquad s\in\compC \setminus \SigmaR,
\end{equation}
is the Cauchy transform of a function $g$. Indeed, on account of Proposition \ref{prop:uniquess}, it suffices to show that the right-hand side of \eqref{eq:constr_R} satisfies the RH problem for $\tR$ and the verification is straightforward. As a consequence of \eqref{eq:constr_R}, it is readily seen that
\begin{equation}\label{eq:tRs-1split}
  \tR(s) - 1 = \frac{1}{2\pi i} \int_{\SigmaR} \frac{\DeltaR(\tR_- - 1)(\xi)}{\xi - s} d\xi + \frac{1}{2\pi i} \int_{\SigmaR} \frac{\DeltaR(1)(\xi)}{\xi - s} d\xi,\qquad s\in\compC \setminus \SigmaR.
\end{equation}
To estimate the two terms on the right-hand side of the above formula, we need the following estimate of the operator $\DeltaR$.
\begin{prop}\label{prop:estdletaR}
Let $\DeltaR: L^2(\SigmaR) \to L^2(\SigmaR)$ be the operator defined in \eqref{def:DeltaR}. There exits a constant $M_{\SigmaR}>0$ such that
\begin{equation}\label{eq:estOperator}
\lVert \DeltaR \rVert_{L^2(\SigmaR)} \leq M_{\SigmaR}n^{-\frac{1}{2\theta + 1}},
\end{equation}
if $n$ is large enough.
\end{prop}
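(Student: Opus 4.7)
The plan is to decompose $\DeltaR$ along the natural partition
\[ \SigmaR = \SigmaR^{(1)} \cup \SigmaR^{(2')} \cup \SigmaR^{(3)} \cup \SigmaR^{(3')} \cup \SigmaR^{(4)} \cup \bigcup_{k=1}^{\theta} \SigmaR^{(5)}_k \]
given by \eqref{def:SigmaR} and bound each restriction separately. On any such piece $\sigma$, the operator $\DeltaR$ is, by \eqref{def:DeltaR}, a finite sum of elementary operators of the form $f \mapsto g(s)\, f(\s(s))$, where $g$ is one of the jump coefficients \eqref{def:Jsigma1}--\eqref{def:Jsigma5} and $\s$ is the associated shifted point, whose image lies again in $\SigmaR$. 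The standard change-of-variables estimate
\[ \bigl\lVert f \mapsto g \cdot (f \circ \s) \bigr\rVert_{L^2(\SigmaR) \to L^2(\sigma)} \leq \lVert g \rVert_{L^\infty(\sigma)} \cdot \lVert (\s^{-1})' \rVert_{L^\infty}^{1/2} \]
reduces the whole problem to (i) sup-norm bounds on each jump coefficient and (ii) uniform control of the Jacobians of the shifts.

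\textbf{Sup-norm bounds.} On the lens portion $\SigmaR^{(1)}$, the coefficient \eqref{def:Jsigma1} contains the factor $e^{-n\phi(z)}$; by \eqref{eq:ineq_for_phi} and the local expansion of $\phi$ inherited from \eqref{eq:asy_formula_for_g}--\eqref{eq:asy_formula_for_g_tilde}, the curves $\Sigma_1, \Sigma_2$ can be chosen so that $\Re \phi(z) \geq c\, \lvert z \rvert^{\theta/(\theta+1)}$ on the lens near $0$. Since the lens is cut off at distance $r_n$ from the origin, this yields $\lVert J_{\SigmaR^{(1)}} \rVert_\infty \leq C\, e^{-c n r_n^{\theta/(\theta+1)}} = C\, e^{-c n^{1/(2\theta+1)}}$. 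For $\SigmaR^{(2')}$, the strict inequality \eqref{eq:gequal2} together with \eqref{eq:loggrowth} gives $\lVert J_{\SigmaR^{(2')}} \rVert_\infty = \bigO(e^{-cn})$. On the fixed-size circles $\SigmaR^{(3)} \cup \SigmaR^{(3')}$ around $b$, item \ref{enu:Pbbnd} of RH problem \ref{prop:Pb} for $P^{(b)}$ gives $\bigO(n^{-1})$. Finally, on $\SigmaR^{(4)}$ and each $\SigmaR^{(5)}_k$, the crucial matching estimate \eqref{eq:P0asy} for $P^{(0)}$ provides the dominant bound $\bigO(n^{-1/(2\theta+1)})$.

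\textbf{Jacobian estimates and the main obstacle.} The shifts appearing in \eqref{def:DeltaR} are compositions of $J_c$, multiplication by a $\theta$-th root of unity, and one of the inverse branches $I_1, I_2$. Away from the critical point $-1$ of $J_c$ these are analytic diffeomorphisms with derivatives bounded uniformly in $n$, taking care of the shifts associated with $\SigmaR^{(3)} \cup \SigmaR^{(3')}$. The delicate case is $\SigmaR^{(4)} \cup \bigcup_k \SigmaR^{(5)}_k$: both $s$ and $\s(s)$ lie on arcs of scale $r_n^{\theta/(\theta+1)} = n^{-2\theta/(2\theta+1)}$ near $-1$, and the derivative $J_c'(s) \sim C(s+1)^{1/\theta}$ itself vanishes with $n$. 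However, the identity $J_c(\s(s)) = e^{2\pi i \ell / \theta} J_c(s)$ for some $\ell$ yields $\s'(s) = J_c'(s)/J_c'(\s(s))$ (times a root of unity); since $\lvert s + 1 \rvert$ and $\lvert \s(s) + 1 \rvert$ both have order $n^{-2\theta/(2\theta+1)}$, this ratio remains bounded uniformly in $n$. This scale-matching near $-1$ is the principal technical point in the proof, and it is precisely for this that the choice \eqref{def:rn} of $r_n$ matters.

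\textbf{Conclusion.} Combining the two estimates on each of the finitely many pieces, the operator norm of $\DeltaR$ on each piece is bounded by a constant multiple of the sup-norm of the corresponding jump coefficient. Summing and taking the worst rate of decay --- which comes from $\SigmaR^{(4)}$ and $\SigmaR^{(5)}_k$ and equals $n^{-1/(2\theta+1)}$ --- yields \eqref{eq:estOperator} for all sufficiently large $n$, with a constant $M_{\SigmaR}$ depending only on the geometry of $\SigmaR$ and the fixed data of the problem.
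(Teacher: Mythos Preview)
Your proof is correct and follows the same route as the paper: decompose $\DeltaR$ over the pieces of $\SigmaR$, bound each elementary operator by the sup-norm of its jump coefficient, and read off the dominant rate $n^{-1/(2\theta+1)}$ from the matching estimate \eqref{eq:P0asy} for $P^{(0)}$; the exponential bounds on $\SigmaR^{(1)}$ and $\SigmaR^{(2')}$ and the $\bigO(n^{-1})$ on $\SigmaR^{(3)}\cup\SigmaR^{(3')}$ are exactly the paper's \eqref{eq:estsigmaR5}--\eqref{eq:estsigmaR1}.

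The only difference is that you make explicit what the paper leaves implicit: the change-of-variables factor in passing from $\lVert f\circ\s\rVert_{L^2(\sigma)}$ to $\lVert f\rVert_{L^2(\SigmaR)}$. The paper simply writes the composed norms in \eqref{eq:estSigmaR} and absorbs everything into the constant $M_{\SigmaR}$, whereas you verify that on the shrinking arcs $\SigmaR^{(4)}\cup\bigcup_k\SigmaR^{(5)}_k$ near $s=-1$ the shift derivative $\s'(s)=e^{2\pi i\ell/\theta}J_c'(s)/J_c'(\s(s))$ stays bounded uniformly in $n$ because both $s+1$ and $\s(s)+1$ live at the common scale $r_n^{\theta/(\theta+1)}$. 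This is a genuine check that the paper skips, and your observation that the scale-matching relies on the specific choice \eqref{def:rn} is accurate. So your argument is a slightly more careful version of the paper's, not a different one.
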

\begin{proof}
  From \eqref{def:DeltaR}, it follows that
    \begin{align}\label{eq:estSigmaR}
      &\lVert \DeltaR f \rVert_{L^2(\SigmaR)} \nonumber \\
      &\leq \lVert J_{\SigmaR^{(1)}}(s) f(I_2(J_c(s))) \rVert_{L^2(\SigmaR^{(1)})}+ \lVert J_{\SigmaR^{(2')}}(s) f(I_1(J_c(s))) \rVert_{L^2(\SigmaR^{(2')})} \nonumber \\
      &~~~+\lVert J^{1}_{\SigmaR^{(3)}}(s) f(s) + J^{2}_{\SigmaR^{(3)}}(s) f(I_2(J_c(s))) \rVert_{L^2(\SigmaR^{(3)})} \nonumber \\
      &~~~+\lVert J^{1}_{\SigmaR^{(3')}}(s) f(s) + J^{2}_{\SigmaR^{(3')}}(s) f(I_1(J_c(s))) \rVert_{L^2(\SigmaR^{(3')})} \nonumber \\
      &~~~ +\lVert J^{0}_{\SigmaR^{(4)}}(s) f(s) + \sum\limits ^{\theta}_{j = 1} J^{j}_{\SigmaR^{(4)}}(s) f(I_1(J_c(s)e^{\frac{2(j-1)\pi i}{\theta}})) \rVert_{L^2(\SigmaR^{(4)})} \nonumber\\
      &~~~ +\sum\limits^{\theta}_{k = 1} \lVert J^{0}_{\SigmaR^{(5)}_k}(s) f(I_2(J_c(s)e^{\frac{2(1-k)\pi i}{\theta}}))+\sum\limits^{\theta}_{j = 1} J^j_{\SigmaR^{(5)}_k}(s) f(I_1(J_c(s)e^{\frac{2(j-k)\pi i}{\theta}})) \rVert_{L^2(\SigmaR^{(5)}_k)} \nonumber\\
      &\leq \max_{s \in \SigmaR^{(1)}} \lvert J_{\SigmaR^{(1)}}(s) \rvert \cdot \lVert f(I_2(J_c(s))) \rVert_{L^2(\SigmaR^{(1)})} + \max_{s \in \SigmaR^{(2')}} \lvert J_{\SigmaR^{(2')}}(s) \rvert \cdot \lVert f(I_1(J_c(s))) \rVert_{L^2(\SigmaR^{(2')})} \nonumber\\
      &~~~ + \max_{s \in \SigmaR^{(3)}} \lvert J^{1}_{\SigmaR^{(3)}}(s) \rvert \cdot \lVert f(s) \rVert_{L^2(\SigmaR^{(3)})} + \max_{s \in \SigmaR^{(3)}} \lvert J^{2}_{\SigmaR^{(3)}}(s) \rvert \cdot \lVert f(I_2(J_c(s))) \rVert_{L^2(\SigmaR^{(3)})} \nonumber\\
      &~~~ + \max_{s \in \SigmaR^{(3')}} \lvert J^{1}_{\SigmaR^{(3')}}(s) \rvert \cdot \lVert f(s) \rVert_{L^2(\SigmaR^{(3')})} + \max_{s \in \SigmaR^{(3')}} \lvert J^{2}_{\SigmaR^{(3')}}(s) \rvert \cdot \lVert f(I_1(J_c(s))) \rVert_{L^2(\SigmaR^{(3')})} \nonumber\\
      &~~~ + \max_{s \in \SigmaR^{(4)}} \lvert J^{0}_{\SigmaR^{(4)}}(s) \rvert \cdot \lVert f(s) \rVert_{L^2(\SigmaR^{(4)})} + \sum^{\theta}_{j = 1} \max_{s \in \SigmaR^{(4)}} \lvert J^{j}_{\SigmaR^{(4)}}(s) \rvert \cdot \lVert f(I_1(J_c(s) e^{\frac{2(j - 1)\pi i}{\theta}})) \rVert_{L^2(\SigmaR^{(4)})} \nonumber\\
      &~~~ + \sum^{\theta}_{k = 1} \left( \max_{s \in \SigmaR^{(5)}_k} \lvert J^{0}_{\SigmaR^{(5)}_k}(s) \rvert \cdot \lVert f(I_2(J_c(s) e^{\frac{2(1 - k)\pi i}{\theta}})) \rVert_{L^2(\SigmaR^{(5)}_k)}
        \vphantom{\sum^{\theta}_{j = 1} \max_{s \in \SigmaR^{(6)}(k)} \lvert J^{j}_{\SigmaR^{(6)}(k)}(s) \rvert \cdot \lVert f(I_1(z e^{2(j - k)\pi i/\theta})) \rVert_{L^2(\SigmaR^{(6)}(k))}}
      \right. \nonumber \\
      &~~~   \phantom{\smash{+ \sum^{\theta}_{k = 1}}\max_{s \in \SigmaR^{(5)}_k} \lvert J^{0}_{\SigmaR^{(5)}_k}(s) \rvert}
      +\left. \sum^{\theta}_{j = 1} \max_{s \in \SigmaR^{(5)}_k} \lvert J^{j}_{\SigmaR^{(5)}_k}(s) \rvert \cdot \lVert f(I_1(J_c(s) e^{\frac{2(j - k)\pi i}{\theta}})) \rVert_{L^2(\SigmaR^{(5)}_k)} \right).
    \end{align}

We next estimate the factors $\max |\cdot|$ in \eqref{eq:estSigmaR} for large $n$. From \eqref{def:Jsigma3}--\eqref{def:Jsigma5}, \eqref{eq:P0asy} and item \ref{enu:Pbbnd} of the RH problem \ref{prop:Pb} for $P^{(b)}$, it is readily seen that
  \begin{align}
    \max_{s \in \SigmaR^{(5)}_k} \lvert J^{j}_{\SigmaR^{(5)}_k}(s) \rvert = {}& \bigO(n^{-\frac{1}{2\theta + 1}}), & j= {}& 0,1,\ldots,\theta, \quad k = 1, \dotsc, \theta, \label{eq:estsigmaR5} \\
    \max_{s \in \SigmaR^{(4)}} \lvert J^{j}_{\SigmaR^{(4)}}(s) \rvert = {}& \bigO(n^{-\frac{1}{2\theta + 1}}), & j= {}& 0,1,\ldots,\theta, \\
    \max_{s \in \SigmaR^{(i)}} \lvert J^{j}_{\SigmaR^{(i)}}(s) \rvert  = {}& \bigO(n^{-1}), & j= {}& 1,2, \quad i=3,3', \label{def:estsigmaR3}
  \end{align}
if $n$ is large enough. To estimate $\max_{s \in \SigmaR^{(1)}} \lvert J_{\SigmaR^{(1)}}(s) \rvert$ and $\max_{s \in \SigmaR^{(2')}} \lvert J_{\SigmaR^{(2')}}(s) \rvert$, we observe from
\eqref{eq:localb} and \eqref{eq:phiderivative} that
\begin{equation} \label{eq:constr_Sigma}
  \lvert e^{-n\phi(z)} \rvert \leq e^{-c'n \lvert z \rvert^{\frac{\theta}{1+\theta}}}, \qquad z \in \Sigma_1^R \cup  \Sigma_2^R,
\end{equation}
and from \eqref{def:phi} and \eqref{eq:gequal2} that
  \begin{equation}
    \phi (z) < -c'', \qquad  z\in (b+\epsilon,+\infty),
  \end{equation}
where $\Sigma_i^R$, $i=1,2$, is defined in \eqref{def:SigmaiR} and $c',c''$ are two positive constants. These estimates, together with \eqref{def:Jsigma1} and \eqref{def:Jsigma2'}, shows that
  \begin{equation}\label{eq:estsigmaR1}
    \max_{s \in \SigmaR^{(1)}} \lvert J_{\SigmaR^{(1)}}(s) \rvert= \bigO(e^{-c'n^{\frac{1}{2\theta+1}}}),\qquad  \max_{s \in \SigmaR^{(2')}} \lvert J_{\SigmaR^{(2')}}(s) \rvert= \bigO (e^{-c''n}),
  \end{equation}
for large $n$.

Finally, a combination of \eqref{eq:estsigmaR1}, \eqref{eq:estsigmaR5}--\eqref{def:estsigmaR3} and \eqref{eq:estSigmaR} gives us
  \begin{equation}
    \lVert \DeltaR f \rVert_{L^2(\SigmaR)} \leq M_{\SigmaR}n^{-\frac{1}{2\theta + 1}} \lVert f \rVert_{L^2(\SigmaR)},
  \end{equation}
for some positive constant $M_{\SigmaR}$ and large $n$, which implies \eqref{eq:estOperator}.
\end{proof}

By taking the limit where $s$ approaches the minus side of $\SigmaR$, we obtain from \eqref{eq:tRs-1split} that
\begin{equation}\label{eq:tR-}
  \tR_-(s) - 1 = \mathcal{C}_{\DeltaR}(\tR_- - 1)(s) + \mathcal{C}_-(\DeltaR(1))(s),
\end{equation}
where
\begin{equation}
  \mathcal{C}_{\DeltaR}f(s) = \mathcal{C}_-(\DeltaR(f))(s), \qquad   \mathcal{C}_-g(s) = \frac{1}{2\pi i} \lim_{s' \to s_-} \int_{\SigmaR} \frac{g(\xi)}{\xi - s'} d\xi,
\end{equation}
and the limit $s' \to s_-$ is taken when approaching the contour from the minus side. Since the Cauchy operator $\mathcal{C}_-$ is bounded, we see from Proposition \ref{prop:estdletaR} that the operator norm of $\mathcal{C}_{\DeltaR}$ is also uniformly $\bigO(n^{-\frac{1}{2\theta + 1}})$ as $n\to \infty$. Hence, if $n$ is large enough, the operator $1-\mathcal{C}_{\DeltaR}$ is invertible, and we could rewrite \eqref{eq:tR-} as
\begin{equation}
\tR_-(s) - 1 = (1 - \mathcal{C}_{\DeltaR})^{-1}( \mathcal{C}_-(\DeltaR(1)))(s).
\end{equation}
As one can check directly that
\begin{equation}\label{eq:DeltaR1norm}
  \lVert \DeltaR(1) \rVert_{L^2(\SigmaR)} = \bigO(n^{-\frac{\theta + 1}{2\theta + 1}}),
\end{equation}
combining the above two formulas gives us
\begin{equation}\label{eq:tR-1norm}
  \lVert \tR_- - 1 \rVert_{L^2(\SigmaR)} = \bigO(n^{-\frac{\theta + 1}{2\theta + 1}}).
\end{equation}

By \eqref{def:SigmaRi}, \eqref{eq:I1zero} and \eqref{eq:I2zero}, it is readily seen that the radius of the circle $\SigmaR^{(4)} \cup \bigcup_{k=1}^{\theta} \SigmaR^{(5)}_k$ around $-1$ is of the order $r_n^{\theta/(1+\theta)}$. Suppose $\lvert s+1 \rvert < \epsilon r_n^{\theta/(1+\theta)}$, where $\epsilon>0$ is small enough so that $s$ is within the circle around $-1$, we then obtain from \eqref{eq:tRs-1split}, \eqref{eq:DeltaR1norm}, \eqref{eq:tR-1norm} and the Cauchy-Schwarz inequality that
  \begin{equation}
    \begin{split}
      \lvert \tR(s) - 1 \rvert & \leq  \frac{1}{2\pi} \left( \lVert \DeltaR(\tR_- - 1) \rVert_{L^2(\SigmaR)} + \lVert \DeltaR(1) \rVert_{L^2(\SigmaR)} \right) \cdot \lVert \frac{1}{\xi - s} \rVert_{L^2(\SigmaR)} \\
      &\leq  \frac{1}{2\pi} \left( \bigO(n^{-\frac{\theta + 2}{2\theta + 1}}) + \bigO(n^{-\frac{\theta + 1}{2\theta + 1}}) \right) \cdot \bigO(n^{\frac{\theta}{2\theta + 1}})= \bigO(n^{-\frac{1}{2\theta + 1}}).
    \end{split}
  \end{equation}
It is easily seen that the above estimate is also valid for $\lvert s \rvert < \epsilon$. As a consequence, we conclude the following lemma.
\begin{lemma}\label{lem:tRest}
As $n\to \infty$, we have
\begin{equation}\label{eq:esttR}
\tR(s)=1+\bigO(n^{-\frac{1}{2\theta + 1}}),
\end{equation}
uniformly for $\lvert s+1 \rvert < \epsilon r_n^{\theta/(1+\theta)}$ or $\lvert s \rvert < \epsilon$, where $\epsilon$ is a small positive constant and $r_n$ is given in \eqref{def:rn}.
\end{lemma}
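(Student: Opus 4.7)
The plan is to recast the shifted scalar RH problem for $\tR$ as a singular integral equation on $L^2(\SigmaR)$, control the relevant operator norms, and then extract a pointwise bound by Cauchy--Schwarz. By Proposition~\ref{prop:uniquess} and the normalization at infinity, $\tR-1$ admits the representation
\begin{equation}
\tR(s)-1 = \frac{1}{2\pi i}\int_{\SigmaR}\frac{\DeltaR\tR_-(\xi)}{\xi-s}\,d\xi
= \mathcal{C}(\DeltaR\tR_-)(s),
\end{equation}
and splitting $\DeltaR\tR_- = \DeltaR(1) + \DeltaR(\tR_--1)$ and taking the minus boundary value yields the fixed point equation
\begin{equation}
\tR_- - 1 = \mathcal{C}_{\DeltaR}(\tR_- - 1) + \mathcal{C}_-(\DeltaR(1)),
\qquad \mathcal{C}_{\DeltaR}f := \mathcal{C}_-(\DeltaR f).
\end{equation}

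The heart of the argument will be an operator estimate $\|\DeltaR\|_{L^2(\SigmaR)\to L^2(\SigmaR)} = \bigO(n^{-1/(2\theta+1)})$. To prove it I would break $\SigmaR$ into its constituent pieces $\SigmaR^{(1)}, \SigmaR^{(2')}, \SigmaR^{(3)}, \SigmaR^{(3')}, \SigmaR^{(4)}, \SigmaR^{(5)}_k$ and estimate each kernel in the supremum norm using the ingredients already collected in the excerpt: exponential decay of $e^{-n\phi(z)}$ on the lenses $\Sigma_1^R\cup\Sigma_2^R$ from \eqref{eq:localb}--\eqref{eq:phiderivative}, the Euler--Lagrange strict inequality on $(b+\epsilon,+\infty)$, the Airy-parametrix bound $P^{(b)}-I = \bigO(n^{-1})$ on $\partial D(b,\epsilon)$, and the Meijer~G-parametrix bound $P^{(0)}-I = \bigO(n^{-1/(2\theta+1)})$ on the shrinking circle $\partial D(0,r_n)$. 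The bottleneck comes from the last piece, whose size sets the overall rate. Since $\mathcal{C}_-$ is $L^2$-bounded on $\SigmaR$, it follows that $\|\mathcal{C}_{\DeltaR}\|_{L^2\to L^2} = \bigO(n^{-1/(2\theta+1)})$, so $(1-\mathcal{C}_{\DeltaR})^{-1}$ exists for large $n$ with uniformly bounded norm. A direct computation using arc length $\asymp r_n^{\theta/(1+\theta)}$ on the small circle gives
\begin{equation}
\|\DeltaR(1)\|_{L^2(\SigmaR)} = \bigO\!\left(n^{-\frac{1}{2\theta+1}}\cdot r_n^{\frac{\theta}{2(1+\theta)}}\right) = \bigO\!\left(n^{-\frac{\theta+1}{2\theta+1}}\right),
\end{equation}
so $\|\tR_- - 1\|_{L^2(\SigmaR)} = \bigO(n^{-(\theta+1)/(2\theta+1)})$ as well.

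Finally, for the pointwise bound in the two regions of the lemma I would apply Cauchy--Schwarz to the integral representation:
\begin{equation}
|\tR(s)-1| \leq \frac{1}{2\pi}\bigl(\|\DeltaR(\tR_- - 1)\|_{L^2(\SigmaR)} + \|\DeltaR(1)\|_{L^2(\SigmaR)}\bigr)\,\Bigl\|\frac{1}{\xi-s}\Bigr\|_{L^2(\SigmaR)}.
\end{equation}
For $|s+1| < \epsilon r_n^{\theta/(1+\theta)}$, the nearest component of $\SigmaR$ is the image-circle $\SigmaR^{(4)}\cup\bigcup_{k=1}^\theta\SigmaR^{(5)}_k$ around $-1$, whose radius is of order $r_n^{\theta/(1+\theta)}$; a direct $L^2$ computation there gives $\|(\xi-s)^{-1}\|_{L^2(\SigmaR)} = \bigO(r_n^{-\theta/(2(1+\theta))}) = \bigO(n^{\theta/(2\theta+1)})$. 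Multiplying yields the claimed $\bigO(n^{-1/(2\theta+1)})$. The case $|s|<\epsilon$ is easier because $s$ is bounded away from every component of $\SigmaR$, so $\|(\xi-s)^{-1}\|_{L^2(\SigmaR)} = \bigO(1)$ and in fact a stronger bound holds.

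The main obstacle I anticipate is the bookkeeping for the $L^2$ norm of the Cauchy kernel near the shrinking disk around $-1$: one must verify that $\epsilon$ can be chosen small enough that every $s$ in $|s+1|<\epsilon r_n^{\theta/(1+\theta)}$ stays well inside this circle, so that the distance from $s$ to the circle is comparable to $r_n^{\theta/(1+\theta)}$ itself. Once this geometric fact is in hand, the two shrinkings (of $\|\DeltaR(1)\|_{L^2}$ downward and of $\|(\xi-s)^{-1}\|_{L^2}$ upward) balance exactly to produce the uniform rate $n^{-1/(2\theta+1)}$ in both regions of the lemma.
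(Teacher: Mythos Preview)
Your proposal is correct and follows the paper's proof essentially step for step: the singular integral equation setup via Proposition~\ref{prop:uniquess}, the operator bound $\|\DeltaR\|_{L^2\to L^2}=\bigO(n^{-1/(2\theta+1)})$ (stated separately in the paper as Proposition~\ref{prop:estdletaR}), the estimate $\|\DeltaR(1)\|_{L^2}=\bigO(n^{-(\theta+1)/(2\theta+1)})$, the inversion of $1-\mathcal{C}_{\DeltaR}$, and the Cauchy--Schwarz extraction with $\|(\xi-s)^{-1}\|_{L^2}=\bigO(n^{\theta/(2\theta+1)})$ near $s=-1$ all match. The geometric obstacle you flag is exactly the one the paper addresses, and your balance of rates is correct.
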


We note that the estimate \eqref{eq:esttR} actually holds uniformly for $s\in \compC \setminus \SigmaR$, but do not pursue this generalization here.


\section{Asymptotic analysis of the RH problem for $\Y$}
\label{sec:asytildeY}

We now perform the asymptotic analysis of the RH problem \ref{RHP:original_q} for $\Y$, which characterizes the polynomials $q_n$. Again, the analysis involves a series of explicit, invertible transformations, and end up with a small-norm shifted RH problem. To emphasize the analogies with the previous section, we will use the notations $\T,\St,\ldots$, for the counter parts of the functions $T,S,\ldots$, used before.

\subsection{First transformation: $\Y \to \T$}
Recall the $g$-functions given in \eqref{def:g} and \eqref{def:tildeg}, we define
\begin{equation}\label{def:tildeT}
  \T(z) = e^{-\frac{n\ell}{2}} \Y(z)
  \begin{pmatrix}
    e^{-n\widetilde g(z)} & 0
    \\
    0 & e^{n g(z)}
  \end{pmatrix}
  e^{\frac{n\ell}{2} \sigma_3} = (\Y_1(z) e^{-n \g(z)}, \Y_2(z) e^{n (g(z) - \ell)}),
\end{equation}
where the constant $\ell$ is the same as that in \eqref{eq:EL1}. It is then readily seen that $\T$ satisfies the following RH problem.
\begin{RHP}\label{rhp:tildeT}
\hfill
  \begin{enumerate}[label=\emph{(\arabic*)}, ref=(\arabic*)]
  \item
    $ \T = ( \T_1,  \T_2)$ is analytic in $(\mathbb{H}_\theta \setminus [0, b], \mathbb{C} \setminus [0, \infty))$.
  \item For $x>0$, we have
       \begin{equation}\label{eq:tildeTjump}
       \T_+(x) = \T_-(x) J_{\T}(x),
       \end{equation}
       where
       \begin{equation}
       J_{\T}(x)=
       \begin{pmatrix}
        e^{n(\g_-(x) - \g_+(x))} & x^{\alpha} e^{n( g_+(x)+ \g_-(x) - V(x) - \ell)} \\
        0 & e^{n(g_+(x) - g_-(x))}
      \end{pmatrix}.
    \end{equation}
  \item
    As $z \to \infty$ in $\mathbb{H}_{\theta}$, $\T_1$ behaves as $\T_1(z) = 1 + \bigO(z^{-\theta})$.
  \item
    As $z \to \infty$ in $\compC$, $\T_2$ behaves as $\T_2(z) = \bigO(z^{-1})$.
  \item
    As $z \to 0$ in $\compC$ or $\mathbb{H}_{\theta}$, $\T$ has the same behaviour as $\Y$.
\item
    As $z \to b$, we have $\T_1(z)=\bigO(1)$ and $\T_2(z)=\bigO(1)$.
  \item
    For $x > 0$, we have the boundary condition $\T_1(e^{\pi i/\theta}x) = \T_1(e^{-\pi i/\theta}x)$.
  \end{enumerate}
\end{RHP}

\subsection{Second transformation: $\T \to \St$}
From \eqref{eq:gequal}, we have, like \eqref{eq:decomp_J_T}, the following factorization of $J_{\T}$:
\begin{equation}
 J_{\T}(x)=
 \begin{pmatrix}
 1 & 0
 \\
 x^{-\alpha}e^{-n\phi_-(x)} & 1
 \end{pmatrix}
 \begin{pmatrix}
 0 & x^{\alpha}
 \\
 -x^{-\alpha} & 0
 \end{pmatrix}
\begin{pmatrix}
 1 & 0
 \\
 x^{-\alpha}e^{-n\phi_+(x)} & 1
 \end{pmatrix}, \qquad x\in(0,b),
\end{equation}
where $\phi$ is defined in \eqref{def:phi}. By opening lens around $(0,b)$ as shown in Figure \ref{fig:Sigma}, we define
\begin{equation}\label{def:tildeS}
  \St(z) =
  \begin{cases}
    \T(z), & \text{$z$ outside the lens}, \\
    \T(z)
    \begin{pmatrix}
      1 & 0
      \\
   z^{-\alpha} e^{-n \phi(z)} & 1
    \end{pmatrix},
    & \text{$z$ in the lower part of the lens}, \\
    \T(z)
    \begin{pmatrix}
      1 & 0 \\
      -z^{-\alpha} e^{-n \phi(z)} & 1
    \end{pmatrix},
    & \text{$z$ in the upper part of the lens}.
  \end{cases}
\end{equation}
We then have that $\St$ solves the following RH problem.

\begin{RHP} \label{rhp:tildeS}
\hfill
  \begin{enumerate}[label=\emph{(\arabic*)}, ref=(\arabic*)]
  \item
    $\St = (\St_1, \St_2)$ is analytic in $(\mathbb{H}_\theta \setminus \Sigma, \mathbb{C} \setminus \Sigma)$, where the contour $\Sigma$ is defined in \eqref{def:Sigma}.
  \item
   For $z \in \Sigma$, we have
    \begin{equation}
     \St_+(z) = \St_-(z) J_{\St}(z),
      \end{equation}
      where
      \begin{equation} \label{def:Jtildes}
      J_{\St}(z) =
      \begin{cases}
        \begin{pmatrix}
          1 & 0 \\
         z^{-\alpha} e^{-n \phi(z)} & 1
        \end{pmatrix},
        & \text{$z \in \Sigma_1 \cup \Sigma_2$}, \\
        \begin{pmatrix}
          0 & z^{\alpha} \\
          -z^{-\alpha} & 0
        \end{pmatrix},
        & \text{$z \in (0, b)$}, \\
        \begin{pmatrix}
          1 & z^{\alpha} e^{n \phi(z)} \\
          0 & 1
        \end{pmatrix},
        & \text{$z \in (b, +\infty)$}.
      \end{cases}
    \end{equation}

    \item As $z \to \infty$ in $\compC$ or $\halfH$, $\St$ has the same behaviour as $\T$.

    \item
    As $z \to 0$ in $\mathbb{H}_\theta \setminus \Sigma$, we have
    \begin{equation} \label{eq:tildeasy_S_1_in_lens}
      \St_1(z) =
      \begin{cases}
        \bigO(z^{ - \alpha }), & \text{$\alpha  > 0$ and $z$ inside the lens,} \\
        \bigO(\log z), & \text{$\alpha  = 0$ and $z$ inside the lens,} \\
        \bigO(1), & \text{$z$ outside the lens or $\alpha < 0$.}
      \end{cases}
    \end{equation}

   \item As $z \to 0$ in $\compC \setminus \Sigma$, we have
    \begin{equation}
      \St_2(z) =
      \begin{cases}
        \bigO(1), & \alpha > 0, \\
        \bigO(\log z), & \alpha=0, \\
        \bigO(z^{\alpha}), & \alpha < 0.
      \end{cases}
    \end{equation}

   \item
   As $z \to b$,  we have $\St_1(z) = \bigO(1)$ and $\St_2(z) = \bigO(1)$.

    \item
    For $x>0$, we have the boundary condition $\St_1(e^{\pi i/\theta}x) = \St_1(e^{-\pi i/\theta}x)$.
  \end{enumerate}
\end{RHP}

Analogous to Proposition \ref{prop:unique_S}, we have the following proposition. Since the proof is similar, we omit the details here.
\begin{prop} \label{prop:unique_tildeS}
The solution of RH problem \ref{rhp:tildeS} is unique, even if item \ref{enu:rhp:S:6} therein is replaced by the weaker condition that $\St_1(z) = \bigO((z-b)^{-1/4})$ and $\St_2(z) = \bigO((z-b)^{-1/4})$ as $z \to b$.
\end{prop}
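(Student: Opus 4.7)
The strategy mirrors exactly that of Proposition \ref{prop:unique_S}, with the added observation that the relevant polynomial now lives in the variable $\zeta = z^\theta$ rather than $z$. Let $\Shat = (\Shat_1, \Shat_2)$ be any solution of RH problem \ref{rhp:tildeS} with item \ref{enu:rhp:S:6} weakened to $\Shat_j(z) = \bigO((z-b)^{-1/4})$ as $z \to b$. Reversing the transformations $\T \to \St$ in \eqref{def:tildeS} and $\Y \to \T$ in \eqref{def:tildeT}, I would obtain a vector-valued function $\Yhat = (\Yhat_1, \Yhat_2)$ with $\Yhat_1$ defined on $\halfH \setminus \{0, b\}$ and $\Yhat_2$ on $\compC \setminus [0, +\infty)$. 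The opening-of-the-lens factorization is designed so that $\Yhat_1$ has no jump across $\Sigma_1 \cup \Sigma_2$, and the Euler-Lagrange identities $g_{\pm} + \g_{\mp} = V + \ell$ on $(0, b)$ ensure that the exponential jumps of $e^{\pm n\g}$ and $e^{\pm n\phi}$ cancel (via a short computation identical to the one in Proposition \ref{prop:unique_S}), so that $\Yhat_1$ is in fact analytic on all of $\halfH \setminus \{0, b\}$.

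The core step is to upgrade $\Yhat_1$ to a polynomial in $\zeta = z^\theta$. The boundary condition $\Yhat_1(e^{\pi i/\theta} x) = \Yhat_1(e^{-\pi i/\theta} x)$, inherited from item (7) of RH problem \ref{RHP:original_q}, allows one to define $\widehat q(\zeta) := \Yhat_1(\zeta^{1/\theta})$ as a single-valued function analytic on $\compC \setminus \{0, b^\theta\}$. At $\zeta = b^\theta$, the bound $\Yhat_1(z) = \bigO((z-b)^{-1/4})$ translates to $\widehat q(\zeta) = \bigO((\zeta - b^\theta)^{-1/4})$; since $(\zeta - b^\theta)\widehat q(\zeta) \to 0$, Riemann's removable singularity criterion applies. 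At $\zeta = 0$, the outside-the-lens bound $\Shat_1 = \bigO(1)$ together with the boundedness of $e^{n\g(z)}$ at $z=0$ gives $\widehat q(\zeta) = \bigO(1)$ along a full sector approaching the origin, whereas the inside-the-lens asymptotics in \eqref{eq:tildeasy_S_1_in_lens} produce only power-law (hence non-essential) singularities. A genuine pole would force $|\widehat q(\zeta)| \to \infty$ uniformly and thus contradict the sectorial bound, so the singularity at $\zeta = 0$ is also removable. Liouville's theorem, combined with the polynomial growth at infinity coming from $\Y_1(z) = \bigO(z^{n\theta})$, then forces $\widehat q$ to be a monic polynomial in $\zeta$ of degree $n$.

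Finally, the jump relation for $\Yhat$ on $(0,+\infty)$ together with $\Yhat_2(z) = \bigO(z^{-(n+1)})$ identifies $\Yhat_2$ as the modified Cauchy transform \eqref{eq:defn_of_Cq_n} of $\widehat q(x^\theta)$, and the decay order at infinity encodes precisely the biorthogonality relations \eqref{eq:pqbioOP}. Uniqueness of the monic biorthogonal polynomial $q_n$ yields $\widehat q = q_n$, hence $\Yhat = \Y$; by invertibility of $\Y \to \T \to \St$ we conclude $\Shat = \St$. The main technical obstacle, exactly as in Proposition \ref{prop:unique_S}, is ruling out a pole at $\zeta = 0$: this is the step in which the weakened behaviour at $b$ must be reconciled with the removability argument at the origin, and it requires carefully combining the sectorial estimates on $\Shat_1, \Shat_2$ with the \emph{no essential singularity} observation, since the crude bounds inside the lens are on their own insufficient.
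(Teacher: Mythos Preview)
Your proposal is correct and follows precisely the route the paper takes (the paper itself simply writes ``the proof is similar, we omit the details here'' and refers back to Proposition~\ref{prop:unique_S}). You have filled in the omitted details accurately, including the one genuinely new ingredient in the $\Y$-case: passing from $\Yhat_1$ on $\halfH$ to $\widehat q(\zeta)=\Yhat_1(\zeta^{1/\theta})$ on $\compC\setminus\{0,b^\theta\}$ via the boundary condition $\Yhat_1(e^{\pi i/\theta}x)=\Yhat_1(e^{-\pi i/\theta}x)$, and your removability argument at $\zeta=0$ (sectorial boundedness outside the lens rules out a pole, power/log bounds inside rule out an essential singularity) is exactly what is needed.
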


\subsection{Construction of the global parametrix}
Like the RH problem \ref{rhp:S} for $S$, it is easily seen that the jump matrix $J_{\St}$ of $\St$ tends to the identity matrix uniformly as $n\to \infty$, except for $z$ in a small neighbourhood around $[0,b]$. This leads us to consider the following RH problem.
\begin{RHP}\label{rhp:global2}
\hfill
  \begin{enumerate}[label=\emph{(\arabic*)}, ref=(\arabic*)]
  \item
    $\Pinfty = (\Pinfty_1, \Pinfty_2)$ is analytic in $(\mathbb{H}_{\theta} \setminus [0,b], \compC \setminus [0,b])$.
  \item
    For $x \in (0,b)$, we have
    \begin{equation}
      \Pinfty_+(x) = \Pinfty_-(x)
      \begin{pmatrix}
          0 & x^{\alpha}
          \\
          -x^{-\alpha} & 0
      \end{pmatrix}.
      \end{equation}
  \item
    As $z \to \infty$ in $\mathbb{H}_{\theta}$, $\Pinfty_1$ behaves as $\Pinfty_1(z) = 1 + \bigO(z^{-\theta})$.
  \item
    As $z \to \infty$ in $\compC$, $\Pinfty_2$ behaves as  $\Pinfty_2(z) = \bigO(z^{-1})$.

   \item \label{enu:tildepinftyboundary}
   For $x>0$, we have the boundary condition $
\widetilde P_1^{(\infty)}(e^{\pi i/\theta}x) = \widetilde P_1^{(\infty)}(e^{-\pi i/\theta}x)$.
  \end{enumerate}
\end{RHP}

As in the construction of $P^{(\infty)}$ in Section \ref{sec:global}, we will solve the above RH problem with the aid of the function
\begin{equation}\label{def:tildeF}
\widetilde {\tP} (s):=\left\{
        \begin{array}{ll}
          \widetilde P_2^{(\infty)}(J_c(s)), & \hbox{$s\in \mathbb{C}\setminus \overline{D}$,} \\
          \widetilde P_1^{(\infty)}(J_c(s)), & \hbox{$s\in D \setminus [-1,0]$,}
        \end{array}
      \right.
\end{equation}
where recall that $J_c$ is defined in \eqref{def:Jcs} and $D$ is the region bounded by the curves $\gamma_1$ and $\gamma_2$. Since the function $\widetilde P_1^{(\infty)}$ satisfies the boundary condition indicated in item \ref{enu:tildepinftyboundary} of the RH problem \ref{rhp:global2}, it is readily seen from the mapping properties of $J_c$ that $\widetilde \tP$ satisfies the following RH problem.

\begin{RHP}
\hfill
  \begin{enumerate}[label=\emph{(\arabic*)}, ref=(\arabic*)]
  \item
    $\widetilde \tP$ is analytic in $\mathbb{C}\setminus \gamma$.
  \item
    For $s \in \gamma$, we have
    \begin{equation}
     \widetilde \tP_+(s)= \widetilde \tP_-(s) J_{\widetilde \tP}(s),
\end{equation}
where
\begin{equation}
 J_{\widetilde \tP}(s)=\left\{
        \begin{array}{ll}
          c^\alpha (s+1)^{\alpha} \left(\frac{s+1}{s}\right)^{\frac{\alpha}{\theta}}, & \hbox{$s\in\gamma_1$,} \\
          -c^{-\alpha} (s+1)^{-\alpha} \left(\frac{s+1}{s}\right)^{-\frac{\alpha}{\theta}}, & \hbox{$s\in\gamma_2$,}
        \end{array}
      \right.
    \end{equation}
with $c$ being the constant given in \eqref{def:Jcs}
  \item  As $s\to\infty$, we have $\widetilde \tP(s)=\mathcal{O}(s^{-1})$.
  \item  As $s \to 0 $, we have $\widetilde \tP(s)=1+ \mathcal{O}(s)$.
  \end{enumerate}
\end{RHP}

A solution $\widetilde \tP$ to the above RH problem is explicitly given by
\begin{equation}\label{eq:tildeFs}
  \widetilde \tP(s)=
  \begin{cases}
    \frac{c^{\alpha}\sqrt{s_b}i}{\sqrt{(s + 1)(s - s_b)}} \left( \frac{s+1}{s} \right)^{\frac{\alpha}{\theta}}, & \text{$s \in \compC \setminus \overline{D}$,} \\
     \frac{\sqrt{s_b}i }{\sqrt{(s + 1)(s - s_b)}} (s+1)^{-\alpha}, & \text{$s\in D$,}
  \end{cases}
\end{equation}
where $s_b=1/\theta$, the branch cuts of $\sqrt{(s+1)(s - s_b)}$, $\left(\frac{s+1}{s}\right)^{\frac{\theta-\alpha-1}{\theta}}$ and $(s+1)^{\alpha+1-\theta}$ are taken along $\gamma_2$, $[-1,0]$ and $(-\infty,-1]$, respectively. Hence, in view of \eqref{def:tildeF}, we obtain the following solution to the RH problem \ref{rhp:global2} for $\Pinfty$:
\begin{align}\label{eq:tildeP1infty}
\widetilde P_1^{(\infty)}(z)&=\widetilde \tP(I_2(z)), \qquad z\in \mathbb{H}_\theta \setminus [0,b],
\\
\widetilde P_2^{(\infty)}(z)&=\widetilde \tP(I_1(z)), \qquad z\in \mathbb{C} \setminus [0,b], \label{eq:tildeP2infty}
\end{align}
where $\widetilde \tP$ is given in \eqref{eq:tildeFs}, $I_1$ and $I_2$ are the two inverses of two branches of the mapping $J_c$ defined in \eqref{eq:inverse1} and \eqref{eq:inverse2}, respectively. Again, we note that the RH problem \ref{rhp:global2} admits more than one solution and the one relevant to the present work is given by \eqref{eq:tildeP1infty} and \eqref{eq:tildeP2infty}. We conclude this section with its local behaviours at $z=0$ and $z=b$ for later use.

\begin{prop}\label{prop:tildeP_infty}
With $\Pinfty(z)=(\widetilde P_1^{(\infty)}(z),\widetilde P_2^{(\infty)}(z))$ defined in \eqref{eq:tildeP1infty} and \eqref{eq:tildeP2infty}, we have, as $z \to 0$,
\begin{align}
\label{eq:asy0tildeP1infty}
 \Pinfty_1(z) &=  \left\{
                          \begin{array}{ll}
                            \frac{c^{\frac{(\alpha+1/2)\theta}{1+\theta}}}{\sqrt{1+\theta}}e^{\frac{\alpha+1/2}{1+\theta}\pi i}z^{-\frac{(\alpha+1/2)\theta}{1+\theta}}(1 + \bigO(z^{\frac{\theta}{1+\theta}})), & \arg z \in (0, \frac{\pi}{\theta}),
                            \\
                           \frac{c^{\frac{(\alpha+1/2)\theta}{1+\theta}}}{\sqrt{1+\theta}}e^{-\frac{\alpha+1/2}{1+\theta}\pi i}z^{-\frac{(\alpha+1/2)\theta}{1+\theta}}(1 + \bigO(z^{\frac{\theta}{1+\theta}})), & \arg z \in (-\frac{\pi}{\theta}, 0),
                          \end{array}
                        \right.
\\
\Pinfty_2(z) &=
                          \begin{cases}
                             \frac{c^{\frac{(\alpha+1/2)\theta}{1+\theta}}}{\sqrt{1+\theta}}e^{-\frac{\alpha+1/2}{1+\theta}\pi i} z^{\frac{2\alpha -\theta}{2(1+\theta)}} (1 + \bigO(z^{\frac{\theta}{1+\theta}})), & \arg z \in (0, \pi), \\
                            -\frac{c^{\frac{(\alpha+1/2)\theta}{1+\theta}}}{\sqrt{1+\theta}}e^{\frac{\alpha+1/2}{1+\theta}\pi i} z^{\frac{2\alpha -\theta}{2(1+\theta)}} (1 + \bigO(z^{\frac{\theta}{1+\theta}})), & \arg z \in (-\pi, 0),
                          \end{cases}
\label{eq:asy0tildeP1infty2}
\end{align}
and as $z\to b$,
\begin{equation} \label{eq:asytildeP^infty_b}
\Pinfty_i(z)=\bigO((z-b)^{-\frac14}), \qquad i=1,2.
\end{equation}
\end{prop}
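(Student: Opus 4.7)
The proof follows the same strategy as for Proposition \ref{prop:P_infty_asy}. Since $\Pinfty_1(z)=\widetilde \tP(I_2(z))$ and $\Pinfty_2(z)=\widetilde \tP(I_1(z))$ with $\widetilde \tP$ given piecewise in \eqref{eq:tildeFs}, and since the local behaviour of the inverse maps $I_1$ and $I_2$ near the origin is already encoded by \eqref{eq:I1zero} and \eqref{eq:I2zero}, everything reduces to computing $\widetilde \tP(s)$ as $s\to -1$ approached appropriately, and then composing with $I_1$, $I_2$.

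First I would record the behaviour of $\widetilde \tP$ near the singular points. At $s=s_b=1/\theta$ both branches of \eqref{eq:tildeFs} give $\widetilde \tP(s)=\bigO(|s-s_b|^{-1/2})$. At $s=-1$, only the factor $\sqrt{(s+1)(s-s_b)}$ (and, for the $D$-branch, $(s+1)^{-\alpha}$; for the exterior branch, $(\tfrac{s+1}{s})^{\alpha/\theta}$) is singular; evaluating the regular factors at $s=-1$ gives
\begin{align*}
\widetilde \tP(s) = {}& -\tfrac{1}{\sqrt{1+\theta}}(s+1)^{-\alpha-\frac{1}{2}}(1+\bigO(s+1)), && \text{as }s\to -1 \text{ from } D, \\
\widetilde \tP(s) = {}& \pm\tfrac{c^\alpha}{\sqrt{1+\theta}}((s+1)e^{\mp \pi i})^{-\frac{1}{2}}e^{\mp\frac{\alpha}{\theta}\pi i}(1+\bigO(s+1)), && \text{as }s\to -1 \text{ from }\compC_{\pm}\setminus\overline D,
\end{align*}
where the signs and the phases from $(s-s_b)^{-1/2}$ and the $(\tfrac{s+1}{s})^{\alpha/\theta}$ factor are determined by the prescribed branch cuts in \eqref{eq:tildeFs} (cut on $\gamma_2$, $[-1,0]$ and $(-\infty,-1]$, respectively). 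This step is analogous to the four-case expansion of $\tP(s)$ done in the proof of Proposition \ref{prop:P_infty_asy}; the bookkeeping of the signs and the $e^{\pm \pi i/(1+\theta)}$ phases coming from $I_1, I_2$ is the only place where one has to be careful.

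Next I would substitute \eqref{eq:I1zero}, \eqref{eq:I2zero} into the formulas above. For $\Pinfty_1$ the relevant branch is the $D$-branch, and $I_2(z)+1$ produces a factor $c^{-\theta/(1+\theta)}e^{\mp \pi i/(1+\theta)}z^{\theta/(1+\theta)}(1+\bigO(z^{\theta/(1+\theta)}))$ for $\arg z\in (0,\pi/\theta)$ and $\arg z\in (-\pi/\theta,0)$, respectively; raising this to the power $-\alpha-\tfrac12$ and multiplying by the constant $-1/\sqrt{1+\theta}$ yields \eqref{eq:asy0tildeP1infty}. For $\Pinfty_2$ the exterior branch is used and $I_1(z)+1$ gives the analogous expansion in the upper and lower half planes; together with the branch-dependent phase $e^{\mp \alpha \pi i/\theta}$ and the sign $\pm$ this produces \eqref{eq:asy0tildeP1infty2}.

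Finally, the behaviour near $z=b$ follows immediately: since $I_i(z)=s_b+\bigO((z-b)^{1/2})$ for $i=1,2$, and the only singularity of $\widetilde \tP(s)$ at $s=s_b$ is the $(s-s_b)^{-1/2}$ factor (all other factors in \eqref{eq:tildeFs} are holomorphic and nonzero at $s_b$), we obtain $\Pinfty_i(z)=\bigO((z-b)^{-1/4})$, which is \eqref{eq:asytildeP^infty_b}. The only delicate part of the argument is the consistent tracking of the branch cuts of $\sqrt{(s+1)(s-s_b)}$, $(s+1)^{-\alpha}$ and $((s+1)/s)^{\alpha/\theta}$, but these are fixed by the explicit prescription in \eqref{eq:tildeFs} so the computation is mechanical.
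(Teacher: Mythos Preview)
Your approach is exactly the paper's: expand $\widetilde\tP(s)$ locally at $s=-1$ (separately from $D$ and from $\compC_\pm\setminus\overline D$) and at $s=s_b$, then substitute the expansions \eqref{eq:I1zero}--\eqref{eq:I2zero} of $I_1,I_2$. Two small bookkeeping slips in your displayed formulas: the $D$-branch should carry the sign $+\tfrac{1}{\sqrt{1+\theta}}$ rather than $-\tfrac{1}{\sqrt{1+\theta}}$ (track the branch of $\sqrt{(s+1)(s-s_b)}$ cut along $\gamma_2$), and your exterior-branch expansion is missing the factor $(s+1)^{\alpha/\theta}$ coming from $((s+1)/s)^{\alpha/\theta}$; with those fixed your computation reproduces the paper's intermediate expansion verbatim.
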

\begin{proof}
From \eqref{eq:tildeFs}, it is easily seen that
\begin{equation*}\label{eq:tildeFlocal}
 \widetilde \tP(s)=
  \begin{cases}
    \bigO(\lvert s - s_b \rvert^{-\frac12}), & \text{$s\to s_b$}, \\
    \sqrt{\frac{1}{1+\theta}}(s+1)^{-\alpha-\frac12}(1+\bigO(s+1)), & \text{$s\to -1$ from $D$}, \\
    c^{\alpha}\sqrt{\frac {1}{1+\theta}}(s+1)^{\frac{\alpha}{\theta}-\frac 12}e^{-\frac{\alpha \pi i}{\theta}}(1+\bigO(s+1)), & \text{$s\to -1$ from $\compC_+ \setminus \overline{D}$}, \\
    -c^{\alpha}\sqrt{\frac {1}{1+\theta}}(s+1)^{\frac{\alpha}{\theta}-\frac 12}e^{\frac{\alpha \pi i}{\theta}}(1+\bigO(s+1)), & \text{$s\to -1$ from $\compC_- \setminus \overline{D}$},
  \end{cases}
\end{equation*}
Combining the above formula with \eqref{eq:I1zero}, \eqref{eq:I2zero}, \eqref{eq:tildeP1infty} and \eqref{eq:tildeP2infty} gives us  \eqref{eq:asy0tildeP1infty}--\eqref{eq:asytildeP^infty_b}.
\end{proof}

\subsection{Third transformation: $\St \to \Q$}
Like the transformation defined in \eqref{def:thirdtransform}, the third transformation is defined by
\begin{equation} \label{eq:thirdtransform_tilde}
\Q(z)=(\Q_1(z),\widetilde{ Q}_2(z))=\left(\frac{\St_1(z)}{\Pinfty_1(z)},\frac{\St_2(z)}{\Pinfty_2(z)} \right).
\end{equation}
In view of the RH problems \ref{rhp:tildeS} and \ref{rhp:global2} for $\St$ and $\Pinfty$, it is then straightforward to check that, with the aid of Proposition \ref{prop:tildeP_infty}, $\Q$ satisfies the following RH problem.
\begin{RHP} \label{RHP:tildeSvar}
\hfill
  \begin{enumerate}[label=\emph{(\arabic*)}, ref=(\arabic*)]
  \item
    $\Q = (\Q_1, \Q_2)$ is analytic in $(\mathbb{H}_\theta  \setminus \Sigma, \compC \setminus \Sigma)$,  where the contour $\Sigma$ is given in \eqref{def:Sigma}.
  \item
   For $z \in \Sigma$, we have
    \begin{equation}
      \Q_+(z) = \Q_-(z) J_{\Q}(z),
      \end{equation}
      where
      \begin{equation} \label{def:Jtildecals}
      J_{\Q}(z) =
      \begin{cases}
        \begin{pmatrix}
          1 & 0 \\
          \frac{\Pinfty_2(z)}{\Pinfty_1(z)}z^{-\alpha}e^{-n \phi(z)} & 1
        \end{pmatrix},
        & \text{$z \in \Sigma_1 \cup \Sigma_2$}, \\
        \begin{pmatrix}
          0 & 1 \\
          1 & 0
        \end{pmatrix},
        & \text{$z \in (0, b)$}, \\
        \begin{pmatrix}
          1 & \frac{\Pinfty_1(z)}{\Pinfty_2(z)}z^{\alpha}e^{n \phi(z)}  \\
          0 & 1
        \end{pmatrix},
        & \text{ $z \in (b, +\infty)$}.
      \end{cases}
    \end{equation}

    \item As $z \to \infty$ in $\mathbb{H}_\theta $, $\widetilde {Q}_1$ behaves as $\Q_1(z)=1+\bigO(z^{-\theta})$.

    \item  As $z \to \infty$ in $\compC$, $\Q_2$ behaves as $\Q_2(z)=\bigO(1)$.

    \item
    As $z \to 0$ in $\mathbb{H}_\theta \setminus \Sigma$, we have
    \begin{equation} \label{eq:tildeasy_Q_1_in_lens}
      \Q_1(z) =
      \begin{cases}
        \bigO(z^{ \frac{\theta-2\alpha}{2(1+\theta)} }), & \text{$\alpha  > 0$ and $z$ inside the lens,} \\
        \bigO(z^{\frac{\theta/2}{1+\theta}}\log z), & \text{$\alpha  = 0$ and $z$ inside the lens,} \\
        \bigO(z^{\frac{(\alpha+1/2)\theta}{1+\theta}}), & \text{$z$ outside the lens or $\alpha < 0$.}
      \end{cases}
    \end{equation}

    \item As $z \to 0$ in $\compC \setminus \Sigma$,  we have
    \begin{equation}
     \Q_2(z)=
      \begin{cases}
        \bigO(z^{\frac{\theta-2\alpha}{2(1+\theta)}}), & \alpha > 0,
       \\
        \bigO (  z^{\frac{\theta}{2(1+\theta)}} \log z ), & \alpha =0,
        \\
        \bigO (z^{\frac{\theta(1+2\alpha)}{2(1+\theta)}} ), & \alpha < 0.
      \end{cases}
    \end{equation}

    \item
    As $z \to b$, we have $\widetilde Q_1(z) = \bigO((z - b)^{1/4})$ and $\widetilde Q_2(z) = \bigO((z - b)^{1/4})$.

    \item For $x>0$, we have the boundary condition
    $
   \Q_1(e^{\pi i/\theta}x) = \Q_1(e^{-\pi i/\theta}x).
    $
  \end{enumerate}
\end{RHP}


\subsection{Local parametrix around $b$}
Again, due to the non-uniform convergence of $J_{\St}$ near the ending points $0$ and $b$ for large $n$, we need to build local parametrices around these points. In a similar way as for the construction of $P^{(b)}$ performed in Section \ref{sec:Pb}, we set
\begin{equation}\label{def:tildePb}
 \Pb (z) :=\widetilde E^{(b)}(z)
  \Psi^{(\Ai)}(n^{\frac23}f_b(z))
  \begin{pmatrix}
    e^{-\frac{n}{2} \phi(z)} \widetilde g^{(b)}_1(z) & 0
    \\
    0 & e^{\frac{n}{2} \phi(z)} \widetilde g^{(b)}_2(z)
  \end{pmatrix},  \quad
  z \in D(b,\epsilon) \setminus \Sigma,
\end{equation}
where $\Psi^{(\Ai)}$ is the Airy parametrix, $f_b(z)$ is defined in \eqref{def:fb},
\begin{equation}\label{def:tildegib}
  \widetilde g^{(b)}_1(z) = \frac{z^{-\frac{\alpha }{2}}}{\widetilde P_1^{(\infty)}(z)}, \qquad \widetilde g^{(b)}_2(z) = \frac{z^{\frac{\alpha }{2}}}{\widetilde P_2^{(\infty)}(z)},
\end{equation}
and
\begin{equation}\label{def:tildeEb}
\widetilde E^{(b)}(z)=\frac{1}{\sqrt{2}}
  \begin{pmatrix}
    \widetilde g^{(b)}_1(z) & 0  \\
    0 & \widetilde g^{(b)}_2(z)
  \end{pmatrix}^{-1}
  e^{\frac{\pi i}{4} \sigma_3}
  \begin{pmatrix}
    1 & -1 \\
    1 & 1
  \end{pmatrix}
  \begin{pmatrix}
    n^{\frac16} f_b(z)^{\frac14} & 0
    \\
    0 & n^{-\frac16} f_b(z)^{-\frac14}
  \end{pmatrix}.
\end{equation}
The contour $\Sigma$ satisfies the same condition as required in the construction of $P^{(b)}$. Thus, analogue to the RH problem \ref{prop:Pb} for $P^{(b)}$, $\Pb$ solves the following RH problem.
\begin{RHP}\label{RHP:Pbtilde}
\hfill
\begin{enumerate}[label=\emph{(\arabic*)}, ref=(\arabic*)]
   \item  $\Pb(z)$ is analytic in $D(b,\epsilon) \setminus \Sigma$.
   \item For $z \in \Sigma \cap D(b,\epsilon)$, we have
    \begin{equation}
      \Pb_+(z) =
      \begin{cases}
        \Pb_-(z) J_{\Q}(z), & z \in \text{$\Sigma \cap D(b,\epsilon) \setminus [b-\epsilon, b]$}, \\
        \begin{pmatrix}
          0 & 1 \\
          1 & 0
        \end{pmatrix}
        \Pb_-(z) J_{\Q}(z), & z \in (b - \epsilon, b),
      \end{cases}
    \end{equation}
    where $J_{\Q}$ is defined in \eqref{def:Jtildecals}.

   \item  As $z \to b$, we have
    $\Pb(z) = \bigO((z-b)^{-1/4})$ and $\Pb(z)^{-1} = \bigO((z-b)^{-1/4})$, which are understood in an entry-wise manner.
   \item For $z\in \partial D(b,\epsilon)$, we have, as $n\to \infty$,
    $ \Pb(z) = I + \bigO(n^{-1}). $
  \end{enumerate}
\end{RHP}

Finally, we define
\begin{equation} \label{eq:defn_V^(b)_tilde}
    \Vb(z) = \widetilde Q(z) \Pb (z)^{-1}, \qquad z \in D(b,\epsilon) \setminus \Sigma.
\end{equation}
It is then easily seen that $\Vb$ satisfies the following RH problem.
\begin{RHP}\label{RHP:tildeVb}
\hfill
  \begin{enumerate}[label=\emph{(\arabic*)}, ref=(\arabic*)]
  \item
    $\Vb = (\Vb_1, \Vb_2)$ is analytic in $D(b,\epsilon)\setminus [b-\epsilon,b]$.
  \item
    For $z \in (b-\epsilon,b)$, we have
    \begin{equation}
      \Vb_+(x)=\Vb_-(x)\begin{pmatrix}
0 & 1
\\
1 & 0
\end{pmatrix}.
      \end{equation}

  \item [\rm (3)]
    As $z \to b$, $\Vb_1(z) = \bigO(1)$ and $\Vb_2(z) = \bigO(1)$.

  \item [\rm (4)]
   For $z \in \partial D(b,\epsilon)$, we have, as $n\to \infty$,
$
\Vb(z)=\Q(z)(I+\bigO(n^{-1})).
$
  \end{enumerate}
\end{RHP}

\subsection{Local parametrix around $0$}
To construct the local parametrix near the origin, we follow a similar strategy used in Section \ref{subsec:P0} and start with a local conversion of the RH problem for $\Q$. Again, we emphasize that $\theta \in \mathbb{N}$ throughout this subsection.

\subsubsection*{A local conversion of the RH problem for $\Q$}
By assuming that $\Sigma_1$ overlaps with the ray $\{ z\in \compC \mid \arg z = \pi/(2\theta) \}$ and $\Sigma_2$ overlaps with the ray $\{ z\in \compC \mid \arg z = -\pi / (2\theta) \}$ for $z\in D(0,r^{1/\theta})$ with $r$ being a small positive constant, we define $\theta+1$ functions $\U_0(z), \U_1(z), \dotsc, \U_{\theta}(z)$ for $z\in D(0,r)$ with some rays removed as follows:

\begin{align}
  \U_0(s^{\theta}) &=  \Q_1(s), \quad \arg s \in (0, \frac{\pi}{2\theta}) \cup (\frac{\pi}{2\theta}, \frac{\pi}{\theta}] \cup (-\frac{\pi}{2\theta}, 0) \cup (-\frac{\pi}{\theta}, -\frac{\pi}{2\theta}), \label{def:tildeU0} \\
  \U_1(s^{\theta}) &=  \Q_2(s), \quad \arg s \in (0, \frac{\pi}{2\theta}) \cup (\frac{\pi}{2\theta}, \frac{\pi}{\theta}] \cup (-\frac{\pi}{2\theta}, 0) \cup (-\frac{\pi}{\theta}, -\frac{\pi}{2\theta}), \label{def:tildeU1}\\
  \U_k(s^{\theta}) &=  \Q_2(s), \quad \arg s \in (\frac{(2k - 3)\pi}{\theta}, \frac{(2k - 1)\pi}{\theta}), \quad k = 2, \dotsc, \theta,\label{def:tildeUk}
\end{align}
or equivalently,
\begin{align}
  \U_0(z) &=  \Q_1(z^{\frac1\theta}), && z\in D(0,r)\setminus \{(-r,r)\cup(-ir,ir)\},
  \\
  \U_k(z) &=  \Q_2(z^{\frac1\theta}e^{\frac{2(k-1)}{\theta}\pi i}), && z \in D(0,r) \setminus \{(-r,r)\cup(-ir,ir)\}, \quad k= 1, 2, \dotsc, \theta,
\end{align}
where $(\Q_1,\Q_2)$ solves the RH problem \ref{RHP:tildeSvar}. Thus, we arrive at the  following $1\times (\theta+1)$ RH problem.

\begin{RHP}\label{rhp:tildeU} \hfill
  \begin{enumerate}[label=\emph{(\arabic*)}, ref=(\arabic*)]
  \item
    $ \U=(\U_0,\U_1,\ldots,\U_\theta)$ is defined and analytic in $D(0,r) \setminus \{(-r,r) \cup (-ir,ir)\}$.
  \item
    For $z\in (-r,r)\cup(-ir,ir)\setminus\{0\}$, we have
    \begin{equation}
      \U_+(z) = \U_-(z) J_{\U}(z),
    \end{equation}
    where
    \begin{equation} \label{eq:defn_J_tildeU}
      J_{\U}(z) =
      \begin{cases}
        \begin{pmatrix}
          1 & 0 \\
          z^{-\frac{\alpha}{\theta}}e^{-n \phi(z^{1/\theta})}\frac{\Pinfty_2(z^{1/\theta})}{\Pinfty_1(z^{1/\theta})} & 1
        \end{pmatrix}
        \oplus I_{\theta - 1}, & z \in (0,ir)\cup (0,-ir), \\
        \begin{pmatrix}
          0 & 1 \\
          1 & 0
        \end{pmatrix}
        \oplus I_{\theta - 1}, & z \in (0, r), \\
        \Mcyclic, & z \in (-r, 0),
      \end{cases}
    \end{equation}
    with $\Mcyclic$ being defined in \eqref{eq:defn_Mcyclic}, and the orientations of the rays are shown in Figure \ref{fig:jumps-U}.
  \item
    As $z\to 0$ from $D(0,r) \setminus \{(-r,r) \cup (-ir,ir)\}$, we have
    \begin{enumerate}[label=\emph{(\alph*)}, ref=(\arabic*)]
    \item for $k=1,\ldots,\theta$, or $k=0$ and $\arg z\in (0, \pi/2) \cup (-\pi/2, 0)$,
      \begin{equation}
      \U_k(z) =
      \begin{cases}
        \bigO (z^{\frac{\theta-2\alpha}{2\theta(1+\theta)}}), & \alpha  > 0, \\
        \bigO ( z^{\frac{1}{2(1+\theta)}} \log z ), &  \alpha = 0, \\
        \bigO ( z^{\frac{\alpha+ 1/2}{1+\theta}} ), & -1< \alpha  < 0,
      \end{cases}
    \end{equation}

    \item
      for $\arg z \in (\pi/2,\pi)\cup (-\pi,-\pi/2)$,
      \begin{equation}
        \U_0(z)=\bigO (z^{\frac{\alpha+ 1/2}{1+\theta}}).
      \end{equation}
    \end{enumerate}
\end{enumerate}
\end{RHP}

Instead of using the Meijer G-parametrix $\Psi^{(\Mei)}$ introduced in Section \ref{subsec:P0}, we need a different one in the construction of the approximation of $\U$, which we describe next.

\subsubsection*{The Meijer G-parametrix $\G$}
We will define $(\theta + 1)^2$ functions $\G_{kj}(\zeta)$ ($j, k = 0, 1, \dotsc, \theta$) with the aid of the Meijer G-functions and show that they constitute a $(\theta + 1) \times (\theta + 1)$ matrix-valued function $\G$ solving a specified RH problem similar to the RH problem for $\Psi^{(\Mei)}$. For $k = 0, 1, \dotsc, \theta$, we set
\begin{equation}\label{def:tildepsik}
  \widetilde{\psi}_k(\zeta) := \frac{(-1)^{k+1}}{(2\pi)^{\theta}}i\zeta^{\alpha}\MeijerG{\theta + 1, 0}{0, \theta + 1}{-}{k, -\frac{\alpha}{\theta}, \frac{1 - \alpha}{\theta}, \dotsc, \frac{\theta - 1 - \alpha}{\theta}}{\zeta^{\theta}},
\end{equation}
which is analytic in $\compC \setminus \realR_-$, and these functions are then defined in two steps for \ref{enu:tildeMeijer_G_kernel_step_1} $j = 1, \dotsc, \theta$, and \ref{enu:tildeMeijer_G_kernel_step_2} $j = 0$ as follows.

\begin{enumerate}[label=(\arabic*), ref=(\arabic*)]
 \item $j = 1, \dotsc, \theta$: \label{enu:tildeMeijer_G_kernel_step_1}
 \begin{equation}\label{def:tildepsikj}
 \G_{kj}(\zeta)= \widetilde{\psi}_k(-e^{\frac{2(j - 1)\pi i}{\theta}} \zeta^{\frac{1}{\theta}}), \qquad \zeta \in \compC \setminus \realR_-.
 \end{equation}

\item $j = 0$: \label{enu:tildeMeijer_G_kernel_step_2}
  \begin{multline} \label{def:tildepsiMeik0}
    \G_{k0}(\zeta) = (-1)^k \MeijerG{1, 0}{0, \theta + 1}{-}{k, -\frac{\alpha}{\theta}, \frac{1 - \alpha}{\theta}, \dotsc, \frac{\theta - 1 - \alpha}{\theta}}{\zeta}\\
    +
    \begin{dcases}
      0, & \arg \zeta \in (\frac{\pi}{2}, \pi) \cup (-\pi, -\frac{\pi}{2}), \\
      -\zeta^{-\frac{\alpha}{\theta}}\widetilde{\psi}_k(-\zeta^{\frac{1}{\theta}}), & \arg \zeta \in (0, \frac{\pi}{2}), \\
      \zeta^{-\frac{\alpha}{\theta}}\widetilde{\psi}_k(-\zeta^{\frac{1}{\theta}}), & \arg \zeta \in (-\frac{\pi}{2}, 0).
    \end{dcases}
  \end{multline}
\end{enumerate}

In \eqref{def:tildepsiMeik0}, we note that the function $\MeijerG{1, 0}{0, \theta + 1}{-}{k, -\alpha/\theta, (1 - \alpha)/\theta, \dotsc, (\theta - 1 - \alpha)/\theta}{\zeta}$ is an entire function in $\zeta$. The model RH problem, analogous to the RH problem \ref{RHP:MeiG} for $\Psi^{(\Mei)}$, used in the approximation of $\U$ reads in the following way.
\begin{RHP}\label{RHP:tildeMeiG} \hfill
  \begin{enumerate}[label=\emph{(\arabic*)}, ref=(\arabic*)]
  \item \label{enu:thm:tildeMeiG_-1}
    $\G(\zeta)$ is a $(\theta+1)\times (\theta+1)$ matrix-valued function defined and analytic in $\compC \setminus \{\mathbb{R}\cup i\mathbb{R} \}$.
  \item \label{enu:thm:tildeMeiG_0}
    For $\zeta \in \mathbb{R}\cup i\mathbb{R} \setminus\{0\}$, we have
    \begin{equation}\label{eq:PsitildeMeijump}
      \G_+(\zeta) = \G_-(\zeta) J_{\G}(\zeta),
    \end{equation}
    where
    \begin{equation} \label{eq:defn_J_tildeMei}
     J_{\G}(\zeta)=\begin{cases}
    \begin{pmatrix}
      1 & 0 \\
      \zeta^{-\frac{\alpha}{\theta}} & 1
    \end{pmatrix}
    \oplus I_{\theta - 1}, & \zeta \in i\realR \setminus \{0\}, \\
    \begin{pmatrix}
      0 & \zeta^{\frac{\alpha}{\theta}}
      \\
      -\zeta^{-\frac{\alpha}{\theta}} & 0
    \end{pmatrix}
    \oplus I_{\theta - 1}, & \zeta > 0, \\
    \Mcyclic, & \zeta <0,
      \end{cases}
    \end{equation}
    and orientations of the real and imaginary axes are shown in Figure \ref{fig:jumps-MeiG}.

  \item \label{enu:thm:tildeMeiG_1}
    As $\zeta \to \infty$, we have, for $\zeta \in \compC_{\pm}$,
    \begin{equation} \label{eq:tildeG_asy_infty}
      \G(\zeta) = \frac{e^{\left( \frac{\alpha + 1/2}{\theta + 1} - \frac{2\alpha}{\theta} \right) \pi i}}{\sqrt{(\theta+1)(2\pi)^{\theta}}} \zeta^{\frac{\alpha}{\theta} - \frac{\alpha + 1/2}{\theta + 1}} \Upsilon(\zeta)\left( \Omega_{\pm} + \bigO(\zeta^{-\frac{1}{\theta + 1}}) \right) e^{\Lambda(\zeta)}\widetilde\Xi(\zeta),
    \end{equation}
    where  $\Upsilon(\zeta)$, $\Omega_{\pm}$ and $\Lambda(\zeta)$ are defined in \eqref{def:Upsilon}, \eqref{def:Omegapm} and \eqref{def:Lambda}, respectively, and
    \begin{multline}\label{def:tildexi}
      \widetilde\Xi(\zeta) =
      \left\{
        \begin{aligned}
          \begin{pmatrix}
           e^{\frac{2\alpha}{\theta} \pi i} \zeta^{-\frac{\alpha }{\theta}} & 0 \\
            0 & e^{2\left( \frac{\alpha}{\theta} - \frac{\alpha + 1/2}{\theta + 1} \right) \pi i}
          \end{pmatrix},
          && \zeta \in \compC_+, \\
          \begin{pmatrix}
           e^{2\left( \frac{\alpha}{\theta} - \frac{\alpha + 1/2}{\theta + 1} \right) \pi i}\zeta^{-\frac{\alpha }{\theta}} & 0 \\
            0 & -e^{\frac{2\alpha}{\theta} \pi i}
          \end{pmatrix},
          && \zeta \in \compC_-,
        \end{aligned}
      \right\} \\
      \oplus \diag\left(e^{2j\left( \frac{\alpha}{\theta} - \frac{\alpha + 1/2}{\theta + 1} \right) \pi i}\right)_{j=2}^{\theta}.
    \end{multline}

  \item \label{enu:thm:tildeMeiG_2}
    As $\zeta \to 0$ from $\compC \setminus (\mathbb{R}\cup i\mathbb{R})$, we have for $k = 0, 1, \dotsc, \theta$,
    \begin{enumerate}[label=\emph{(\alph*)}, ref=(\arabic*)]
    \item for $\arg \zeta \in (\pi/2, \pi) \cup (-\pi, -\pi/2)$,
      \begin{equation} \label{eq:tildepsiMeik0zero}
        \G_{k0}(\zeta) = \bigO(1),
      \end{equation}
    \item
      for $j=1,\ldots,\theta$,
      \begin{equation} \label{eq:tildepsiMeik1zero}
        \G_{kj}(\zeta) = \begin{cases}
          \bigO(1), & \alpha > 0, \\
          \bigO(\log \zeta), & \alpha = 0, \\
          \bigO(\zeta^{\frac{\alpha }{\theta}}), & -1 < \alpha < 0,
        \end{cases}
      \end{equation}
    \end{enumerate}
where $\G_{kj}(\zeta)$ stands for the $(k,j)$-th entry of $\G(\zeta)$.

  \item \label{enu:prop:dettildePsiMei_2}
    As $\zeta \to 0$, each entry of $\G(\zeta)^{-1}$ blows up at most as a power function near the origin. More precisely, we have, for $j=0,1,\ldots,\theta$ and $\arg \zeta \in (\pi/2,\pi)\cup (-\pi,-\pi/2)$
    \begin{enumerate}[label=\emph{(\alph*)}, ref=(\arabic*)]
    \item
      \begin{equation} \label{eq:tildePsiMeiInv0jzero}
        (\G(\zeta)^{-1})_{0j} = \begin{cases}
          \bigO(1), & \alpha \geq \theta-1, \\
          \bigO(\log \zeta), & \alpha = \theta-1, \\
          \bigO(\zeta^{\frac{\alpha+1-\theta}{\theta}}), & -1 < \alpha < \theta-1,
        \end{cases}
      \end{equation}
    \item for $k=1,\ldots,\theta$,
      \begin{equation} \label{eq:tildePsiMeiInv0jzero2}
        (\G(\zeta)^{-1})_{kj} = \bigO(\zeta^{\frac{1-\theta}{\theta}}),
      \end{equation}
    \end{enumerate}
    where $(\G(\zeta)^{-1})_{kj}$ stands for the $(k,j)$-th entry of $\G(\zeta)^{-1}$.
\end{enumerate}
\end{RHP}

In what follows, we show the above RH problem can be solved explicitly.
\begin{prop}\label{thm:tildeMeiG}
Let $\G_{kj}(\zeta)$ be the functions given in \eqref{def:tildepsikj} and \eqref{def:tildepsiMeik0}. The matrix-valued function $\G(\zeta)$ defined by
  \begin{equation}\label{eq:tildePsiMei}
    \G(\zeta)=\left(\widetilde\Psi^{(\Mei)}_{kj}(\zeta)\right)_{k,j=0}^{\theta},
  \end{equation}
solves the RH problem \ref{RHP:tildeMeiG} for $\G$. Moreover, we have
     \begin{equation}\label{eq:dettildePsiMei}
      \det(\G (\zeta))= (2\pi)^{-\frac{\theta(\theta + 1)}{2}} e^{\frac{(\theta + 1)(2 - \theta)}{4} \pi i}\zeta^{\frac{\theta - 1}{2}}.
    \end{equation}
\end{prop}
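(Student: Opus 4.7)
I will follow the proof strategy of Proposition \ref{thm:MeiG} step by step, since RH problem \ref{RHP:tildeMeiG} is structurally identical to RH problem \ref{RHP:MeiG} up to an interchange of the ``singular'' column (column $1$ there versus column $0$ here), a sign change in the dominant exponential at infinity, and modified power-function prefactors. Analyticity of $\G$ on $\compC\setminus(\realR\cup i\realR)$ is immediate from \eqref{def:tildepsikj}--\eqref{def:tildepsiMeik0}. The cyclic jump $\Mcyclic$ on $(-\infty,0)$ follows because crossing the negative axis sends $-e^{2(j-1)\pi i/\theta}\zeta^{1/\theta}$ to $-e^{2j\pi i/\theta}\zeta^{1/\theta}$, cyclically permuting columns $1,\ldots,\theta$, while the entire Meijer G-function appearing in the first line of \eqref{def:tildepsiMeik0} has no discontinuity there. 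The jump on $(0,+\infty)$ will follow by matching the two piecewise definitions of $\G_{k0}$ in \eqref{def:tildepsiMeik0} against the branches of $\widetilde\psi_k(-\zeta^{1/\theta})$ appearing in column $1$.

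The delicate computation is the jump on $i\realR$. I will establish it by a residue-theorem argument analogous to \eqref{eq:Grelation}, applied to the discontinuity of $\MeijerG{\theta+1,0}{0,\theta+1}{-}{k,-\alpha/\theta,\ldots,(\theta-1-\alpha)/\theta}{\zeta e^{\mp\pi i}}$ across the imaginary axis. After using the reflection formula \eqref{eq:reflformula}, this discontinuity collapses to a multiple of $\zeta^{-\alpha/\theta}\widetilde\psi_k(-\zeta^{1/\theta})=\zeta^{-\alpha/\theta}\G_{k1}(\zeta)$, which is exactly the off-diagonal entry of the upper-triangular $2\times 2$ block in $J_{\G}$ prescribed by \eqref{eq:defn_J_tildeMei}.

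For the large-$\zeta$ asymptotics \eqref{eq:tildeG_asy_infty}, I will invoke the asymptotic expansions of the Meijer G-function from \cite[Sections 5.7 and 5.10]{Luke69}, as in the derivation of \eqref{eq:conv_G_k0}--\eqref{eq:asy_G_jk_infty}, to compute the leading order of each $\G_{kj}$ in the upper and lower half-planes, and then assemble them into the product form involving $\Upsilon,\Omega_\pm,e^{\Lambda},\widetilde\Xi$. The sign of the exponential factor is $+\Lambda$ rather than $-\Lambda$ (compare with \eqref{eq:G_asy_infty}) because the dominant contribution of $\MeijerG{\theta+1,0}{0,\theta+1}{}$ evaluated on the rays $\arg(-e^{2(j-1)\pi i/\theta}\zeta^{1/\theta})$ carries the opposite phase. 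For the local behaviour at $\zeta=0$, items \eqref{eq:tildepsiMeik0zero}--\eqref{eq:tildepsiMeik1zero} follow by residue-sum expansions of the integrand of each Meijer G-function, producing Puiseux series in $\zeta^{1/\theta}$; the integer case $\alpha\in\{0\}\cup\mathbb{N}$ introduces a logarithmic term analogous to \eqref{eq:G_tilde_k0_degen}, while the generic case $\alpha\notin\intZ$ is purely a power expansion.

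Item \ref{enu:prop:dettildePsiMei_2} on the singularities of $\G^{-1}$ follows by writing a factorisation $\G(\zeta)=\widetilde\Theta(\zeta)\,\diag(\ldots)\,\widetilde{\mathsf{C}}_\alpha\,\diag(\ldots)$ parallel to \eqref{eq:PsiMeiExp}, showing that $\det\widetilde\Theta$ is a nonzero constant using the local expansions together with \eqref{eq:dettildePsiMei}, and inverting the block structure to read off the entry-wise singular behaviour; analytic continuation across $i\realR$ via \eqref{eq:defn_J_tildeMei} then handles the remaining sector. Finally, \eqref{eq:dettildePsiMei} itself follows from the same four-step argument used to prove \eqref{eq:detPsiMei}: (i) the leading coefficient at $\zeta\to\infty$ is extracted from \eqref{eq:tildeG_asy_infty} using \eqref{eq:det_Schur} for $\det\Omega_\pm$ together with the explicit diagonal determinants of $\Upsilon,e^{\Lambda},\widetilde\Xi$; (ii) every jump matrix in \eqref{eq:defn_J_tildeMei} has determinant $1$ or $(-1)^{\theta-1}$, so $\zeta^{(1-\theta)/2}\det\G(\zeta)$ is single-valued and analytic on $\compC\setminus\{0\}$; (iii) the Puiseux expansions at the origin give $\det\G(\zeta)=\bigO(\zeta^{(\theta-1)/2})$, so the origin is a removable singularity; (iv) Liouville's theorem concludes. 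I expect the main obstacle to be the careful phase bookkeeping in \eqref{eq:tildeG_asy_infty} across the four sectors separated by $\realR\cup i\realR$, and in particular verifying that the branches of $\widetilde\psi_k(-e^{2(j-1)\pi i/\theta}\zeta^{1/\theta})$ conspire to produce the compact block form of $\widetilde\Xi$ given in \eqref{def:tildexi}.
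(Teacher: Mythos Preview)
Your proposal follows the same overall scaffold as the paper's proof, and your treatment of items \ref{enu:thm:tildeMeiG_1}, \ref{enu:thm:tildeMeiG_2}, \ref{enu:prop:dettildePsiMei_2} and the determinant formula is essentially identical to what the paper does. There is, however, a confusion in your verification of the jump conditions (item \ref{enu:thm:tildeMeiG_0}): you have the roles of the $i\realR$ and $(0,+\infty)$ jumps reversed.

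The jump across $i\realR$ is in fact the \emph{trivial} one: it is built directly into the piecewise definition \eqref{def:tildepsiMeik0}, since the correction term there adds or subtracts precisely $\zeta^{-\alpha/\theta}\G_{k1}(\zeta)$ to column $0$ depending on the sector, while the entire function in the first line of \eqref{def:tildepsiMeik0} and all columns $j\ge1$ are analytic across $i\realR$. No residue-theorem computation is needed, and no $G^{\theta+1,0}_{0,\theta+1}$ evaluated at $\zeta e^{\mp\pi i}$ appears in the definition of column $0$.

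The nontrivial jump is the one across $(0,+\infty)$, and it does \emph{not} reduce to matching piecewise definitions. What you actually need is the identity
\[
\widetilde\psi_k(\zeta^{1/\theta}e^{-\pi i})-\widetilde\psi_k(\zeta^{1/\theta}e^{\pi i})=(-1)^k\,\zeta^{\alpha/\theta}\MeijerG{1,0}{0,\theta+1}{-}{k,\,-\frac{\alpha}{\theta},\,\ldots,\,\frac{\theta-1-\alpha}{\theta}}{\zeta},
\]
and its proof requires, in addition to the reflection formula \eqref{eq:reflformula}, the product-of-sines identity
\[
\prod_{l=0}^{\theta-1}\sin\!\left(\pi\Bigl(u+\tfrac{l-\alpha}{\theta}\Bigr)\right)=2^{1-\theta}\sin\bigl((\theta u-\alpha)\pi\bigr),
\]
which is \eqref{eq:prodsin} in the paper. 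The point is that here the reflection formula must be applied to all $\theta$ gamma factors $\Gamma\bigl(u+(l-\alpha)/\theta\bigr)$ at once, unlike in \eqref{eq:Grelation} where only the single factor $\Gamma(u+k)$ is flipped; the resulting product of $\theta$ sines collapses only via \eqref{eq:prodsin}. This identity is the one genuinely new ingredient relative to Proposition \ref{thm:MeiG}, and your plan omits it.
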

\begin{proof}
It is clear that $\G$ defined in \eqref{eq:tildePsiMei} is analytic in $\compC \setminus \{\mathbb{R}\cup i\mathbb{R} \}$. We will then show it indeed satisfies the other items of RH problem \ref{RHP:tildeMeiG}, and leave the proof of \eqref{eq:dettildePsiMei} at the end.

\paragraph{Item \ref{enu:thm:tildeMeiG_0}}
It is straightforward to show that the jumps for $\G(\zeta)$ on the negative real axis and on $i\mathbb{R} \setminus \{0\}$ are satisfied if it is defined by \eqref{eq:tildePsiMei}. For the jump on the positive real axis, we observe from \eqref{def:tildepsik} and the integral representation of Meijer G-function \eqref{def:Meijer} that for $k=0,1,\ldots,\theta$, and $\zeta>0$,
  \begin{equation}
    \begin{split}
      & \widetilde{\psi}_k(\zeta^{\frac1\theta}e^{-\pi i} )- \widetilde{\psi}_k( \zeta^{\frac1\theta}e^{\pi i})
      \\
      & = \frac{(-1)^k}{(2\pi)^\theta\pi i} \zeta^{\frac{\alpha}{\theta}} \int_{L}\prod_{l=0}^{\theta-1}\Gamma\left(u+\frac{l-\alpha}{\theta}\right)\Gamma(u+k)\sin( (\theta u-\alpha)\pi)\zeta^{-u}du \\
      & =  \frac{(-1)^k}{2^\theta\pi i} \zeta^{\frac{\alpha}{\theta}} \int_{L}\frac{\Gamma(u+k)\sin( (\theta u-\alpha)\pi)}{\prod_{l=0}^{\theta-1} \Gamma\left(1-\frac{l-\alpha}{\theta}-u\right)\sin \left(\pi (u+\frac{l-\alpha}{\theta})\right)}\zeta^{-u}du
      \\
      & = (-1)^k \frac{\zeta^{\frac{\alpha}{\theta}}}{2 \pi i} \int_{L}\frac{\Gamma(u+k)}{\prod_{l=0}^{\theta-1}\Gamma\left(1-\frac{l-\alpha-u}{\theta}\right)}\zeta^{-u}du
      =(-1)^k \zeta^{\frac{\alpha}{\theta}}\MeijerG{1, 0}{0, \theta + 1}{-}{k, -\frac{\alpha}{\theta}, \frac{1 - \alpha}{\theta}, \dotsc, \frac{\theta - 1 - \alpha}{\theta}}{\zeta},
    \end{split}
  \end{equation}
where we have made use of the reflection formula \eqref{eq:reflformula} in the second equality and the fact (cf. \cite[Equation 1.392.1]{Gradshteyn-Ryzhik07})
\begin{equation}\label{eq:prodsin}
\prod_{l=0}^{\theta-1} \sin \left(\pi (u+\frac{l-\alpha}{\theta})\right)=2^{1-\theta}\sin((\theta u-\alpha)\pi)
\end{equation}
in the third equality. An appeal to this relation gives us the jump of $\G$ on the positive real axis as indicated in \eqref{eq:PsitildeMeijump} and \eqref{eq:defn_J_tildeMei}.

\paragraph{Item \ref{enu:thm:tildeMeiG_1}}
By the definition of $\widetilde{\psi}_k$ given in \eqref{def:tildepsik} and \cite[Equations (2)--(5) in Section 5.10]{Luke69}, it follows that, for $k=0,1,\ldots,\theta$, and $\zeta \to \infty$,
\begin{multline}
 \widetilde{\psi}_k(\zeta) = \frac{(-1)^{k+1}i}{\sqrt{(\theta + 1)(2\pi)^{\theta}}} \zeta^{k - \frac{1}{2} + \frac{\alpha + 1/2 - k}{\theta + 1}}
  \\
  \times \exp(-(\theta + 1) \zeta^{\frac{\theta}{\theta + 1}}) (1 + \bigO(\zeta^{-\frac{\theta}{\theta + 1}})), \quad \arg \zeta \in (-\frac{\theta+2}{\theta}\pi, \frac{\theta+2}{\theta}\pi),
\end{multline}
and
\begin{multline}
  \MeijerG{1, 0}{0, \theta + 1}{-}{k, -\frac{\alpha}{\theta}, \frac{1 - \alpha}{\theta}, \dotsc, \frac{\theta - 1 - \alpha}{\theta}}{\zeta} = \frac{(-1)^k}{\sqrt{(\theta + 1)(2\pi)^{\theta}}}\zeta^{\frac{k - \alpha - 1/2}{\theta + 1}}  \\
  \times
  \begin{cases}
     e^{\frac{1/2 + \alpha - k}{\theta + 1} \pi i} \exp((\theta + 1) e^{-\frac{1}{\theta + 1} \pi i} \zeta^{\frac{1}{\theta + 1}}) (1 + \bigO(\zeta^{\frac{1}{\theta + 1}})), & \arg \zeta \in (0, \pi), \\
    e^{-\frac{1/2 + \alpha - k}{\theta + 1} \pi i} \exp((\theta + 1) e^{\frac{1}{\theta + 1} \pi i} \zeta^{\frac{1}{\theta + 1}}) (1 + \bigO(\zeta^{\frac{1}{\theta + 1}})), & \arg \zeta \in (-\pi, 0).
  \end{cases}
\end{multline}
Hence, we further obtain from \eqref{def:tildepsikj}, \eqref{def:tildepsiMeik0} and \eqref{eq:tildePsiMei} that, as $\zeta \to \infty$,
\begin{multline}
  \G_{k1}(\zeta) = \frac{\zeta^{\frac{\alpha}{\theta} + \frac{k - \alpha - 1/2}{\theta + 1}}}{\sqrt{(\theta + 1)(2\pi)^{\theta}}}  \\
  \times
  \begin{cases}
    e^{\frac{k - \alpha - 1/2}{\theta + 1} \pi i} \exp((\theta + 1) e^{\frac{1}{\theta + 1}\pi i} \zeta^{\frac{1}{\theta + 1}}) (1 + \bigO(\zeta^{-\frac{1}{\theta + 1}})), & \arg \zeta \in (0, \pi), \\
    -e^{-\frac{k - \alpha - 1/2}{\theta + 1} \pi i} \exp((\theta + 1) e^{-\frac{1}{\theta + 1}\pi i} \zeta^{\frac{1}{\theta + 1}}) (1 + \bigO(\zeta^{-\frac{1}{\theta + 1}})), & \arg \zeta \in (-\pi, 0),
  \end{cases}
\end{multline}
for $j = 2, \dotsc, \theta$,
\begin{multline}
  \G_{kj}(\zeta) = \frac{\zeta^{\frac{\alpha}{\theta} + \frac{k - \alpha - 1/2}{\theta + 1}}}{\sqrt{(\theta + 1)(2\pi)^{\theta}}}e^{\left( \frac{2(j - 1)\alpha}{\theta} + \frac{(2j - 1)(k - \alpha - 1/2)}{\theta + 1} \right) \pi i} \\
  \times  \exp((\theta + 1) e^{\frac{2j - 1}{\theta + 1})\pi i} \zeta^{\frac{1}{\theta + 1}}) (1 + \bigO(\zeta^{-\frac{1}{\theta + 1}})), \quad \arg \zeta \in (-\pi, \pi),
\end{multline}
and
\begin{multline}
 \G_{k0}(\zeta) = \frac{\zeta^{\frac{k - \alpha - 1/2}{\theta + 1}}}{\sqrt{(\theta + 1)(2\pi)^{\theta}}} \\
  \times
  \begin{cases}
    e^{\frac{1/2 + \alpha - k}{\theta + 1} \pi i} \exp((\theta + 1) e^{-\frac{1}{\theta + 1} \pi i} \zeta^{\frac{1}{\theta + 1}}) (1 + \bigO(\zeta^{\frac{1}{\theta + 1}})), & \arg \zeta \in (0,\frac{\pi}{2}) \cup (\frac{\pi}{2}, \pi), \\
    e^{-\frac{1/2 + \alpha - k}{\theta + 1} \pi i} \exp((\theta + 1) e^{\frac{1}{\theta + 1} \pi i} \zeta^{\frac{1}{\theta + 1}}) (1 + \bigO(\zeta^{\frac{1}{\theta + 1}})), & \arg \zeta \in (-\pi, -\frac{\pi}{2})\cup(-\frac{\pi}{2},0).
  \end{cases}
\end{multline}
Inserting the above individual asymptotics into \eqref{eq:tildePsiMei}, we arrive at \eqref{eq:tildeG_asy_infty} after straightforward calculations.

\paragraph{Items \ref{enu:thm:tildeMeiG_2} and \ref{enu:prop:dettildePsiMei_2}}
For convenience, we only consider the case that $\alpha \notin \intZ$, and leave the proof of other cases to the reader.

With $k \in \{ 0, \dotsc, \theta \}$ be fixed, it follows from \eqref{def:tildepsik}, \eqref{def:tildepsikj} and \eqref{def:Meijer} that, for $\arg \zeta \in (0, \pi)$,
\begin{equation}\label{eq:tildeGk1int}
\G_{k1}(\zeta)=\psi_k(-\zeta^{\frac{1}{\theta}})=\frac{(-1)^{k+1}}{(2\pi)^{\theta+1}}e^{-\alpha \pi i}\zeta^{\frac{\alpha}{\theta}}
\int_{L}\prod_{l=0}^{\theta-1}\Gamma\left(u+\frac{l-\alpha}{\theta}\right)\Gamma(u+k)(e^{-\theta\pi i}\zeta)^{-u}du.
\end{equation}
Thus, $\G_{k1}(\zeta)$ admits the following Puiseux series representation:
\begin{equation}\label{eq:tildeGk1Puis}
  \G_{k1}(\zeta) = (-i) \left( \sum^{\infty}_{l = 0} \a_{k, l} \zeta^{\frac{l}{\theta}} + e^{-\alpha \pi i} \zeta^{\frac{\alpha}{\theta}} \sum^{\infty}_{l = k} \b_{k, l} \zeta^l \right), \quad \arg \zeta \in (0, \pi),
\end{equation}
where $\a_{k, l}$ and $\b_{k, l}$ are real numbers which can be calculated explicitly by evaluating the residues of the integrand in \eqref{eq:tildeGk1int} at the poles $(\alpha-l)/\theta-m$ and $-k-m$ with $m\in \{0\}\cup \mathbb{N}$, respectively.  By setting
\begin{equation}\label{def:tildefg}
  \f_{k, j}(\zeta) := \sum^{\infty}_{l = 0} \a_{k, \theta l + j - 1} \zeta^l, \quad (j = 1, \dotsc, \theta), \quad \text{and} \quad \gg_k(\zeta): = \sum^{\infty}_{l = k} b_{k, l} \zeta^l,
\end{equation}
it is then readily seen from \eqref{eq:tildeGk1Puis} and \eqref{def:tildepsikj} that
\begin{align}
  \G_{k1}(\zeta) = {}& (-i) \times
  \begin{cases}
    \sum^{\theta}_{l = 1} \zeta^{\frac{l - 1}{\theta}} \f_{k, l}(\zeta) + e^{-\alpha \pi i}  \zeta^{\frac{\alpha}{\theta}} \gg_k(\zeta), & \arg \zeta \in (0, \pi), \\
    \sum^{\theta}_{l = 1} \zeta^{\frac{l - 1}{\theta}} \f_{k, l}(\zeta) + e^{\alpha \pi i} \zeta^{\frac{\alpha}{\theta}} \gg_k(\zeta), & \arg \zeta \in (-\pi, 0),
  \end{cases}
  \label{eq:tildeGk1exp}
  \\
  \G_{kj}(\zeta) = {}& (-i) \left( \sum^{\theta}_{l = 1} c_{l, j} \zeta^{\frac{l - 1}{\theta}} \f_{k, l}(\zeta) + e^{\frac{(2j - 2 - \theta)\alpha \pi i}{\theta}} \zeta^{\frac{\alpha}{\theta}} \gg_k(\zeta) \right), \quad \arg \zeta \in (-\pi, \pi),
  \label{eq:tildeGkjexp}
\end{align}
for $j = 2, \dotsc, \theta$, where $c_{l, j}$ is given in \eqref{eq:expr_tilde_G_kj}. The estimates \eqref{eq:tildepsiMeik1zero} follow directly from the above two formulas.

To show the local behaviour of $\G_{k0}(\zeta)$ near the origin, we observe from \eqref{def:tildepsiMeik0}, \eqref{def:Meijer}, \eqref{eq:reflformula} and \eqref{eq:prodsin} that, for $\arg \in (-\pi,- \pi/2)\cup (\pi /2, \pi )$,
\begin{equation}
\G_{k 0}(\zeta) = \frac{(-1)^k}{(2\pi)^\theta\pi i}\int_{L}\prod_{l=0}^{\theta-1}\Gamma\left(u+\frac{l-\alpha}{\theta}\right)\Gamma(u+k)\sin( (\theta u-\alpha)\pi)\zeta^{-u}du.
\end{equation}
This, together with \eqref{eq:tildeGk1int} and \eqref{eq:tildeGk1exp}, implies that
\begin{equation}\label{eq:tildepsik0zero}
  \G_{k0}(z) = -2 \sin(\alpha \pi) \gg_k(z),
\end{equation}
which gives us \eqref{eq:tildepsiMeik0zero}.

If $\arg \zeta \in (\pi/2,\pi)$, it is readily seen from \eqref{eq:tildeGk1exp}, \eqref{eq:tildeGkjexp} and \eqref{eq:tildepsik0zero} that
  \begin{equation} \label{eq:tildePsiMeiExp}
    \G(\zeta) = \widetilde\Theta(\zeta) \diag(\zeta^{\frac{\alpha}{\theta}}, 1, \zeta^{\frac1\theta}, \dotsc, \zeta^{\frac{\theta - 1}{\theta}}) \widetilde{\mathsf{C}}_{\alpha} \diag(\zeta^{-\frac{\alpha}{\theta}}, 1, \dotsc, 1),
  \end{equation}
  where
  \begin{equation}
    \widetilde\Theta(\zeta)=
     -i
  \begin{pmatrix}
    \gg_{0}(\zeta) & \f_{0, 1}(\zeta) & \cdots & \f_{0, \theta}(\zeta) \\
    \gg_{1}(\zeta) & \f_{1, 1}(\zeta) & \cdots & \f_{2, \theta}(\zeta) \\
    \vdots & \vdots & \cdots & \vdots \\
    \gg_{\theta}(\zeta) & \f_{\theta, 1}(\zeta) & \cdots & \f_{\theta, \theta}(\zeta) \\
  \end{pmatrix}
  \end{equation}
  with $\gg_k(\zeta)$ and $\f_{k,j}(\zeta)$, $k=0,1,\ldots,\theta$, $j=1,\ldots,\theta$, given in \eqref{def:tildefg},
  and
  \begin{equation}
    \widetilde{\mathsf{C}}_{\alpha}=
    \begin{pmatrix}
      -2\sin(\alpha \pi)i & \vec{d}_{\alpha} \\
       0_{\theta \times 1}& C_{\alpha}
    \end{pmatrix},
    \quad \text{where}
    \quad
    \vec{d}_{\alpha} =
    \begin{pmatrix}
      e^{-\alpha \pi i} &  e^{(\frac2\theta-1)\alpha \pi i} & \cdots &  e^{(\frac{2(\theta-1)}{\theta}-1) \alpha \pi i}
      \end{pmatrix},
  \end{equation}
and the $\theta \times \theta$ constant matrix $C_\alpha$ is given in \eqref{eq:psimeizero}. Note that since $\det C_{\alpha} \neq 0$, it is easily seen that
  \begin{equation}
    \widetilde{\mathsf{C}}^{-1}_{\alpha} =
    \begin{pmatrix}
      \frac{i}{2\sin(\alpha \pi)} & -\frac{i}{2\sin(\alpha \pi)}\vec{d}_{\alpha}C_\alpha^{-1}
      \\
      0 & C^{-1}_{\alpha}
    \end{pmatrix}.
  \end{equation}
Also, by assuming \eqref{eq:dettildePsiMei}, we obtain from \eqref{eq:tildePsiMeiExp} that $\det(\widetilde\Theta(\zeta))$ is a non-zero constant and
  \begin{equation}\label{eq:tildeThetaInvZero}
    \widetilde\Theta(\zeta)^{-1}=\bigO(1),
    \qquad \zeta \to 0,
  \end{equation}
which is understood in an entry-wise manner.
  Since
  \begin{align} \label{eq:intermediate_tildePsi_inverse}
      \G(\zeta)^{-1} = {}& \diag(\zeta^{\frac{\alpha}{\theta}}, 1, \dotsc, 1) \widetilde{\mathsf{C}}_{\alpha}^{-1} \diag(\zeta^{-\frac{\alpha}{\theta}}, 1, \zeta^{-\frac1\theta}, \dotsc, \zeta^{-\frac{\theta - 1}{\theta}}) \widetilde\Theta(\zeta)^{-1} \nonumber \\
      = {}&
      \begin{pmatrix}
         \frac{i}{2\sin(\alpha \pi)} & \bigO(\zeta^{\frac{\alpha}{\theta}}) & \bigO(\zeta^{\frac{\alpha - 1}{\theta}}) & \cdots & \bigO(\zeta^{\frac{\alpha + 1 - \theta}{\theta}})
         \\
        0 & \bigO(1) & \bigO(\zeta^{-\frac{1}{\theta}}) & \cdots & \bigO(\zeta^{-\frac{\theta - 1}{\theta}}) \\
        \vdots & \vdots & \vdots & \cdots & \vdots \\
        0 & \bigO(1) & \bigO(\zeta^{-\frac{1}{\theta}}) & \cdots & \bigO(\zeta^{-\frac{\theta - 1}{\theta}})
      \end{pmatrix}
      \widetilde\Theta(\zeta)^{-1},
  \end{align}
we then obtain \eqref{eq:tildePsiMeiInv0jzero} and \eqref{eq:tildePsiMeiInv0jzero2} for $\arg \zeta \in (\pi/2, \pi)$. If $\arg \zeta \in (-\pi, -\pi/2)$, the proof is similar and we omit the details here.

\paragraph{Evaluation of $\det(\G(\zeta))$}
From \eqref{def:tildexi}, it is readily seen that
  \begin{equation}
    \det (\widetilde\Xi(\zeta)) = \pm e^{(\frac{2\alpha}{\theta}+\alpha-\frac{\alpha}{2})\pi i}\zeta^{-\frac{\alpha}{\theta}},\qquad \zeta \in \compC_{\pm}.
  \end{equation}
This, together with \eqref{eq:detUpsilon} and \eqref{eq:tildeG_asy_infty}, implies that, as $\zeta \to \infty$,
\begin{equation}
  \det(\G(\zeta)) =  (2\pi)^{-\frac{\theta(\theta + 1)}{2}} e^{\frac{(\theta + 1)(2 - \theta)}{4} \pi i} \zeta^{\frac{\theta - 1}{2}} (1 + \bigO(\zeta^{-\frac{1}{\theta + 1}})).
\end{equation}
Since it can be shown that
$
  \det(\G(\zeta)) = \bigO(\zeta^{(\theta - 1)/2}),
$
as $\zeta \to 0$, we then obtain \eqref{eq:dettildePsiMei} by combining \eqref{eq:PsitildeMeijump}, \eqref{eq:defn_J_tildeMei} and Liouville's theorem.
\end{proof}

We are now ready to construct the local parametrix near the origin.

\subsubsection*{Construction of the local parametrix around $0$}
Analogous to the definition of $N(z)$ given in \eqref{def:N}, we first set
\begin{equation} \label{def:tildeN}
  \N(z) = \diag (\n_0(z), \n_1(z), \dotsc, \n_{\theta}(z)), \quad z\in\compC \setminus (-\infty,b^{\theta}],
\end{equation}
where
\begin{equation}\label{def:tildeni}
  \n_0(z) = \P^{(\infty)}_1(z^{\frac1 \theta}), \quad \n_j(z) = \P^{(\infty)}_2(e^{\frac{2(j - 1)}{\theta} \pi i} z^{\frac1 \theta}), \quad j = 1, \dotsc, \theta,
\end{equation}
with $\P^{(\infty)} =(\P^{(\infty)}_1, \P^{(\infty)}_2)$ defined in \eqref{eq:tildeFs}--\eqref{eq:tildeP2infty}.

In view of \eqref{eq:asy0tildeP1infty} and  \eqref{eq:asy0tildeP1infty2}, the following local behaviour of $\N$ near the origin is immediate.
\begin{prop}\label{prop:tildeNzero}
Let $\n_i(z)$, $i=0,1,\ldots,\theta$, be the functions defined in \eqref{def:tildeni}. As $z \to 0$, we have, with the constant $c$ given in \eqref{def:Jcs},
\begin{align}
  \n_0(z) = {}&
                \begin{cases}
                  \frac{c^{\frac{(\alpha+1/2)\theta}{1+\theta}}}{\sqrt{1+\theta}}e^{\frac{\alpha+1/2}{1+\theta}\pi i}z^{-\frac{\alpha+1/2}{1+\theta}}(1 + \bigO(z^{\frac{1}{1+\theta}})), & z \in \compC_+, \\
                  \frac{c^{\frac{(\alpha+1/2)\theta}{1+\theta}}}{\sqrt{1+\theta}}e^{-\frac{\alpha+1/2}{1+\theta}\pi i}z^{-\frac{\alpha+1/2}{1+\theta}}(1 + \bigO(z^{\frac{1}{1+\theta}})), & z \in \compC_-, \\
                \end{cases} && \\
  \n_1(z) = {}&
                \begin{cases}
                  \frac{c^{\frac{(\alpha+1/2)\theta}{1+\theta}}}{\sqrt{1+\theta}}e^{-\frac{\alpha+1/2}{1+\theta}\pi i} z^{\frac{\alpha - \theta/2}{\theta(1+\theta)}} (1 + \bigO(z^{\frac{1}{1+\theta}})), & z \in \compC_+, \\
                  -\frac{c^{\frac{(\alpha+1/2)\theta}{1+\theta}}}{\sqrt{1+\theta}}e^{\frac{\alpha+1/2}{1+\theta}\pi i} z^{\frac{\alpha - \theta/2}{\theta(1+\theta)}} (1 + \bigO(z^{\frac{1}{1+\theta}})), & z \in \compC_-,
                \end{cases}
  \end{align}
  and for $j=2,\ldots,\theta$,
  \begin{align}
  \n_j(z) = {}& \frac{c^{\frac{(\alpha+1/2)\theta}{1+\theta}}}{\sqrt{1+\theta}}e^{-\frac{\alpha+1/2}{1+\theta}\pi i} e^{\frac{(2\alpha - 1)(j - 1)}{\theta(1 + \theta)} \pi i} z^{\frac{\alpha - \theta/2}{\theta(1+\theta)}}(1 + \bigO(z^{\frac{1}{1+\theta}})).
\end{align}
\end{prop}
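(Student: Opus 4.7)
The strategy is a direct substitution: the three cases $j=0$, $j=1$, and $j=2,\dotsc,\theta$ in \eqref{def:tildeni} correspond to composing one of the two asymptotic formulas \eqref{eq:asy0tildeP1infty}--\eqref{eq:asy0tildeP1infty2} with the map $z\mapsto e^{2(j-1)\pi i/\theta}z^{1/\theta}$, so the work is purely a matter of tracking branches of the fractional powers.

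For the $j=0$ estimate I would apply \eqref{eq:asy0tildeP1infty} with argument $z^{1/\theta}$. When $z\in\compC_\pm$, the principal branch sends $\arg z^{1/\theta}$ into $(0,\pi/\theta)$ or $(-\pi/\theta,0)$, matching the two cases of \eqref{eq:asy0tildeP1infty} exactly; substituting, the exponent $-\tfrac{(\alpha+1/2)\theta}{1+\theta}$ applied to $z^{1/\theta}$ becomes $-\tfrac{\alpha+1/2}{1+\theta}$, and the remainder $\bigO(z^{\theta/(1+\theta)})$ becomes $\bigO(z^{1/(1+\theta)})$, which gives the claimed expansion. The $j=1$ estimate is the same calculation but with \eqref{eq:asy0tildeP1infty2}: for $z\in\compC_+$ the argument $z^{1/\theta}$ lies in $(0,\pi/\theta)\subset(0,\pi)$ so the first case of \eqref{eq:asy0tildeP1infty2} applies, the exponent $\tfrac{2\alpha-\theta}{2(1+\theta)}$ becomes $\tfrac{\alpha-\theta/2}{\theta(1+\theta)}$, and similarly for $\compC_-$.

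For $j=2,\dotsc,\theta$, the argument $w:=e^{2(j-1)\pi i/\theta}z^{1/\theta}$ still tends to $0$, but now the relevant sector for $\arg w$ is $((2j-3)\pi/\theta,(2j-1)\pi/\theta)$, a fixed sector of width $2\pi/\theta$ centred away from $\realR_+$. Since $\widetilde P^{(\infty)}_2$ is analytic in $\compC\setminus[0,b]$, in particular across $\realR_-$, one has to verify that the two-case formula \eqref{eq:asy0tildeP1infty2} matches across $\arg w=\pm\pi$ so that the asymptotics carry across the negative real axis without ambiguity; a quick computation with the phases $\mp\tfrac{\alpha+1/2}{1+\theta}$ and $\pm\tfrac{2\alpha-\theta}{2(1+\theta)}$ shows they both collapse to $\mp1/2$ at $\arg w=\pm\pi$, producing the common limit $-i\tfrac{c^{(\alpha+1/2)\theta/(1+\theta)}}{\sqrt{1+\theta}}|w|^{(2\alpha-\theta)/(2(1+\theta))}$, so \eqref{eq:asy0tildeP1infty2} defines an unambiguous single branch in a punctured disc. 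With that in hand I simply insert $w$ into whichever case is valid, and combine the global phase $e^{-(\alpha+1/2)\pi i/(1+\theta)}$ from \eqref{eq:asy0tildeP1infty2} with the phase picked up from $w^{(2\alpha-\theta)/(2(1+\theta))}=e^{(j-1)(2\alpha-\theta)\pi i/(\theta(1+\theta))}z^{(\alpha-\theta/2)/(\theta(1+\theta))}$; using $\tfrac{2\alpha-\theta}{\theta}=\tfrac{2\alpha-1}{\theta}-1+\tfrac{1}{\theta}$ and absorbing the integer part into the factor $e^{2(j-1)\pi i}=1$, this rearranges exactly to $e^{(2\alpha-1)(j-1)\pi i/(\theta(1+\theta))}$ as claimed. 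The remainder $\bigO(w^{\theta/(1+\theta)})=\bigO(z^{1/(1+\theta)})$ is immediate.

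There is no real obstacle; the only care needed is the branch bookkeeping in the third case, where one must check that the prefactors of the two cases of \eqref{eq:asy0tildeP1infty2} agree at $\arg w=\pi$ so the expansion is globally valid in the sector relevant to $w=e^{2(j-1)\pi i/\theta}z^{1/\theta}$. Everything else is algebraic simplification of exponents of $z$ and phase factors.
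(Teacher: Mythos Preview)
Your approach is exactly the one the paper has in mind: the paper does not give a separate argument but simply declares the proposition ``immediate'' from \eqref{eq:asy0tildeP1infty}--\eqref{eq:asy0tildeP1infty2}, and direct substitution of $z\mapsto e^{2(j-1)\pi i/\theta}z^{1/\theta}$ into those formulas is all that is intended. Your treatment of $j=0,1$ is clean, and your check that the two branches of \eqref{eq:asy0tildeP1infty2} match across $\arg w=\pi$ (both giving the prefactor $-i$) is the right way to justify using the expansion in the rotated sectors for $j\ge 2$.

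There is one genuine slip, however, in the last algebraic step for $j\ge 2$. From the substitution you correctly obtain the phase $e^{(j-1)(2\alpha-\theta)\pi i/(\theta(1+\theta))}$, but your reduction to $e^{(j-1)(2\alpha-1)\pi i/(\theta(1+\theta))}$ does not go through: the identity $\tfrac{2\alpha-\theta}{\theta}=\tfrac{2\alpha-1}{\theta}-1+\tfrac{1}{\theta}$ is true, but the exponent you need to simplify has $\theta(1+\theta)$ in the denominator, not $\theta$, so there is no even integer to absorb. Concretely, the discrepancy $\tfrac{(j-1)(1-\theta)}{\theta(1+\theta)}$ is never an even integer for $2\le j\le\theta$ and $\theta\ge 2$; for instance, with $\theta=2$, $j=2$ your computed phase is $-i$ while the stated one is $e^{-\pi i/3}$. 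In fact the phase $e^{(j-1)(2\alpha-\theta)\pi i/(\theta(1+\theta))}$ that drops out of the substitution is the one that is consistent with the later use of $\n_j$ in the analogue of \eqref{eq:XiNinver} (combining $\widetilde\Xi_{jj}$ from \eqref{def:tildexi} with $\n_j^{-1}$ must be $j$-independent, which forces the $2\alpha-\theta$ form). So your method is right; the mismatch is a typo in the stated exponent of the proposition, and you should not attempt to bridge it algebraically.
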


With $r=r_n$ given in \eqref{def:rn}, we then define  a $(\theta + 1) \times (\theta + 1)$ matrix-valued function for  $z \in D(0,r_n^{\theta})\setminus (\mathbb{R} \cup i \mathbb{R})$ as follows:
\begin{multline}\label{def:tildesfP0}
  \widetilde{\mathsf{P}}^{(0)}(z) = (2\pi)^{\frac{\theta}{2}} c^{\frac{(\alpha + 1/2)\theta}{1+\theta}} (\rho n)^{\frac{1}{2} - \frac{\alpha}{\theta}} \diag(1,(\rho n)^{-1},\ldots,(\rho n)^{-\theta}) \\
  \times \G((\rho n)^{\theta + 1} z) ((\rho n)^{\frac{\alpha(\theta + 1)}{\theta}} \oplus I_{\theta}) \N(z)^{-1} e^{-nM(z)},
\end{multline}
where $\rho$, $M(z)$ and $\N(z)$ are defined in \eqref{eq:defn_rho}, \eqref{def:M} and \eqref{def:tildeN}, respectively, and $\G$ is the Meijer G-parametrix solving the RH problem \ref{RHP:tildeMeiG}.

\begin{prop}\label{RHP:tildesfP0}
The function $\widetilde{\mathsf{P}}^{(0)}(z)$ defined in \eqref{def:tildesfP0} has the following properties.
  \begin{enumerate}[label=\emph{(\arabic*)}, ref=(\arabic*)]
  \item
    $ \widetilde{\mathsf{P}}^{(0)}(z)$ is analytic in $D(0,r_n^{\theta}) \setminus (\mathbb{R} \cup i \mathbb{R})$.
  \item
    For $z\in D(0,r_n^{\theta}) \cap (\mathbb{R} \cup i \mathbb{R})$, we have
    \begin{equation}
      \widetilde{\mathsf{P}}^{(0)}_{+}(z) = \widetilde{\mathsf{P}}^{(0)}_{-}(z) J_{\U}(z),
    \end{equation}
    where $J_{\U}(z)$ is defined in \eqref{eq:defn_J_tildeU}.
  \item
    For $z\in \partial D(0,r_n^{\theta})$, we have, as $n\to \infty$,
    \begin{equation}\label{eq:tildesfP0asy}
      \widetilde{\mathsf{P}}^{(0)}(z) = \Upsilon(z)\Omega_{\pm} (I+\bigO(n^{-\frac{1}{2\theta+1}})), \qquad z \in \compC_{\pm},
    \end{equation}
    where $\Upsilon(z)$ and $\Omega_{\pm}$ are defined in \eqref{def:Upsilon} and \eqref{def:Omegapm}, respectively.
  \end{enumerate}
\end{prop}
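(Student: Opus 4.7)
The plan is to proceed in direct parallel with the proof of Proposition \ref{RHP:sfP0}, exploiting the fact that the construction of $\widetilde{\mathsf{P}}^{(0)}$ in \eqref{def:tildesfP0} mirrors that of $\mathsf{P}^{(0)}$: the model solution $\Psi^{(\Mei)}$ is replaced by $\G$, the diagonal matrix $N$ is replaced by $\N$ (in which the roles of $P^{(\infty)}_1$ and $P^{(\infty)}_2$ are interchanged), and the exponent $e^{nM(z)}$ is replaced by $e^{-nM(z)}$. Analyticity on $D(0,r_n^{\theta}) \setminus (\mathbb{R} \cup i \mathbb{R})$ is immediate from the constituents.

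First I would verify the jump conditions on the three arcs $(-r_n^{\theta},0)$, $(0,r_n^{\theta})$, and the imaginary segments. The imaginary case is a direct consequence of the jump \eqref{eq:PsitildeMeijump}--\eqref{eq:defn_J_tildeMei} of $\G$ on $i\mathbb{R}$, since $\N$ and $e^{-nM}$ do not jump there. For $z\in(-r_n^{\theta},0)$, the boundary condition on $\P^{(\infty)}_1$ (item \ref{enu:tildepinftyboundary} of RH problem \ref{rhp:global2}) together with \eqref{eq:tildgpm} and \eqref{eq:gpm} yields the cyclic identities $\n_{0,+}(z)=\n_{0,-}(z)$, $\n_{\theta,+}(z)=\n_{1,-}(z)$, and $\n_{j-1,+}(z)=\n_{j,-}(z)$ for $j=2,\dotsc,\theta$, along with the corresponding equalities $e^{-nm_{0,+}(z)}=e^{-nm_{0,-}(z)}$, $e^{-nm_{\theta,+}(z)}=e^{-nm_{1,-}(z)}$, and $e^{-nm_{j-1,+}(z)}=e^{-nm_{j,-}(z)}$. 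Combined with $\G_{+}=\G_{-}\Mcyclic$ on $(-r_n^{\theta},0)$ and the conjugation identity $\Mcyclic\,\diag(a_0,a_1,\dotsc,a_{\theta})\,\Mcyclic^{-1}=\diag(a_0,a_{\theta},a_1,\dotsc,a_{\theta-1})$, this yields $\widetilde{\mathsf P}^{(0)}_{+}=\widetilde{\mathsf P}^{(0)}_{-}\Mcyclic$. For $z\in(0,r_n^{\theta})$, the jump \eqref{eq:P^infty_jump} of $\P^{(\infty)}$ across $(0,b)$ produces $\n_{0,+}(x)=-x^{-\alpha/\theta}\n_{1,-}(x)$ and $\n_{1,+}(x)=x^{\alpha/\theta}\n_{0,-}(x)$ with $\n_{j,+}=\n_{j,-}$ for $j\geq 2$, while the Euler-Lagrange identity $g_{\pm}+\g_{\mp}=V+\ell$ gives $m_{0,+}(x)+m_{1,+}(x)=m_{0,-}(x)+m_{1,-}(x)$, which combined with the jump of $\G$ on the positive axis (carrying the factor $\zeta^{\alpha/\theta}$) collapses to the $2\times 2$ swap $(\begin{smallmatrix}0&1\\1&0\end{smallmatrix})\oplus I_{\theta-1}$ after the scaling $(\rho n)^{\alpha(\theta+1)/\theta}\oplus I_{\theta}$ is passed through; the $x^{\alpha/\theta}$ powers from the two sides cancel exactly.

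For the asymptotic behaviour on $\partial D(0,r_n^{\theta})$, I would substitute \eqref{eq:tildeG_asy_infty} into \eqref{def:tildesfP0}. The key observation is the scaling identity
\begin{equation*}
  \diag(1,(\rho n)^{-1},\dotsc,(\rho n)^{-\theta})\,\Upsilon\!\left((\rho n)^{\theta+1}z\right)=\Upsilon(z),
\end{equation*}
so that the $\Upsilon$ and $\Omega_{\pm}$ factors in \eqref{eq:tildesfP0asy} appear on the nose. What remains is to show, using Proposition \ref{prop:tildeNzero}, that
\begin{equation*}
  e^{\Lambda((\rho n)^{\theta+1}z)}\,e^{-nM(z)}=I+\bigO(n^{-\frac{1}{2\theta+1}})
\end{equation*}
and that the product $\big((\rho n)^{\alpha(\theta+1)/\theta}\oplus I_{\theta}\big)\widetilde\Xi((\rho n)^{\theta+1}z)\N(z)^{-1}$ collapses, after multiplying by the scalar prefactors in \eqref{def:tildesfP0}, to the identity plus an error of size $\bigO(n^{-2\theta/(2\theta+1)})$. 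Both are Taylor comparisons at $|z|=r_n^{\theta}$: by construction the diagonal entries of $\Lambda((\rho n)^{\theta+1}z)$ agree with $-nm_j(z)$ to leading order in $z^{1/(1+\theta)}$, so the next-order remainder is $\bigO(nz^{2/(1+\theta)})=\bigO(n^{(1-2\theta)/(2\theta+1)})$, which is dominated by $\bigO(n^{-1/(2\theta+1)})$; the corresponding comparison for $\widetilde\Xi\,\N^{-1}$ is governed by the $\bigO(z^{1/(1+\theta)})$ remainders in Proposition \ref{prop:tildeNzero}, yielding the sharper error $\bigO(n^{-2\theta/(2\theta+1)})$.

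The main obstacle is the bookkeeping on $(0,r_n^{\theta})$, where the $z^{\alpha/\theta}$ factor from the jump of $\G$ (as opposed to the $z^{(\alpha+1-\theta)/\theta}$ factor in the $Y$-side argument) must be correctly balanced against the $(\rho n)^{\alpha(\theta+1)/\theta}$ scaling and the jumps of $\P^{(\infty)}$ and $M$; this is exactly the same kind of bead-threading that appears in the analogous block in the proof of Proposition \ref{RHP:sfP0}, and once aligned it reduces to a direct computation that we shall write out in detail.
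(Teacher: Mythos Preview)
Your approach is essentially the paper's own: treat the proof as a direct variation of Proposition~\ref{RHP:sfP0}, using the jump relations for $\n_i$ (your cyclic identities on $(-r_n^\theta,0)$ and the $\alpha/\theta$-power relations on $(0,r_n^\theta)$ match the paper's \eqref{eq:tilden1}--\eqref{eq:tilden2}), and then recycle the estimate \eqref{eq:enMeLambda} together with the analogue of \eqref{eq:XiNinver} for $\widetilde\Xi\,\N^{-1}$.

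Two minor imprecisions to fix when you write it out. First, on $(0,r_n^\theta)$ the sum relation $m_{0,+}+m_{1,+}=m_{0,-}+m_{1,-}$ is not what makes the $2\times2$ block collapse to the swap: you need the individual cross-identities $m_{0,+}=m_{1,-}$ and $m_{1,+}=m_{0,-}$, which require not only \eqref{eq:gequal} but also $\g_+(0)=\g_-(0)+2\pi i$ (exactly as in the last step of the proof of Proposition~\ref{RHP:sfP0}). Second, in your error analysis for $e^{\Lambda}\,e^{-nM}$, the remainders in $m_j$ for $j\ge1$ are $\bigO(z^{1/\theta})$ (see Proposition~\ref{prop:MNzero}), not $\bigO(z^{2/(1+\theta)})$; on $\partial D(0,r_n^\theta)$ this gives $n|z|^{1/\theta}=n^{-1/(2\theta+1)}$ exactly, so the final bound is correct but the intermediate exponent you wrote is too optimistic for $\theta>1$.
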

\begin{proof}
Since the proof is similar to that of Proposition \ref{RHP:sfP0}, we only give a sketch here. While it is straightforward to check the
jump of $\widetilde{\mathsf{P}}^{(0)}$ on $(0,ir_n^\theta)\cup(0,-ir_n^\theta)$, one needs to use the relations among $m_i(z)$, $i=0,1,\ldots,\theta$, established in \eqref{eq:relmi1}, \eqref{eq:relmi12}, and the facts that for $z<0$,
\begin{equation}\label{eq:tilden1}
\n_{0,+}(z)=\n_{0,-}(z), \quad \n_{\theta,+}(z)=\n_{1,-}(z), \quad \n_{j-1,+}(z)=\n_{j,-}(z), \quad j=2, \dotsc, \theta,
\end{equation}
and for $z\in (0,r_n^\theta)$,
\begin{equation}\label{eq:tilden2}
\n_{0,+}(z) = -\n_{1,-}(z) z^{-\frac{\alpha}{\theta}}, \quad \n_{1,+}(z) = \n_{0,-}(z)z^{\frac{\alpha}{\theta}}, \quad \n_{j,+}(z)=\n_{j,-}(z), \quad j=2,\ldots,\theta,
\end{equation}
to see its jump on $(-r_n^\theta, 0)$ and $(0, r_n^\theta)$. The relations in \eqref{eq:tilden1} and \eqref{eq:tilden2} are easily seen from the definitions of $\n_i(z)$, $i=0,1,\ldots,\theta$, given in \eqref{def:tildeni} and the RH problem \ref{rhp:global2} for $\P^{(\infty)}$.

The large $n$ asymptotics of $\widetilde{\mathsf{P}}^{(0)}$ \eqref{eq:tildesfP0asy} on the boundary of the disc follows directly from \eqref{def:tildesfP0}, \eqref{eq:tildeG_asy_infty}, \eqref{eq:enMeLambda} and
\begin{multline}
    \left( (\rho n)^{\frac{\alpha(\theta + 1)}{\theta}} \oplus I_{\theta} \right) \widetilde \Xi((\rho n)^{\theta+1}z)\N(z)^{-1} = \\
    \sqrt{1+\theta}c^{-\frac{(\alpha+1/2)\theta}{\theta+1}}e^{(\frac{2\alpha}{\theta}-\frac{\alpha+1/2}{\theta+1})\pi i} z^{\frac{\alpha+1/2}{\theta+1}-\frac{\alpha}{\theta}}(I+\bigO(n^{-\frac{2\theta}{2\theta+1}})), \quad z\in \partial D(0,r_n^\theta),
  \end{multline}
where the function $\widetilde \Xi$ is defined in \eqref{def:tildexi}.
\end{proof}

With $\widetilde{\mathsf{P}}^{(0)}$ given in \eqref{def:tildesfP0}, we next define
\begin{equation}\label{def:tildeP0}
  \P^{(0)}(z) = \Omega_{\pm}^{-1}\Upsilon(z)^{-1}\widetilde{\mathsf{P}}^{(0)}(z), \quad z \in \compC_{\pm}.
\end{equation}
In view of \eqref{eq:UpsilonOmegajump}, \eqref{def:tildeP0} and Proposition \ref{RHP:tildesfP0}, the following RH problem for $\P^{(0)}$ is then immediate.
\begin{RHP}\label{rhp:tildep0}
\hfill
  \begin{enumerate}[label=\emph{(\arabic*)}, ref=(\arabic*)]
  \item
    $\P^{(0)}(z)$ is analytic in $D(0,r_n^{\theta}) \setminus (\mathbb{R} \cup i \mathbb{R})$.
  \item
    For $z\in D(0,r_n^{\theta}) \cap (\mathbb{R} \cup i \mathbb{R})$, we have
    \begin{equation}
      \P^{(0)}_+(z) =
      \begin{cases}
        \P^{(0)}_-(z) J_{\U}(z), & z \in (0, ir_n^{\theta}) \cup (0, -ir_n^{\theta}), \\
        \left(\begin{pmatrix}
            0 & 1
            \\
            1 & 0
          \end{pmatrix} \oplus I_{\theta - 1} \right)
        \P^{(0)}_-(z) J_{\U}(z), & z \in (0, r_n^{\theta}), \\
        \Mcyclic^{-1} \P^{(0)}_-(z) J_{\U}(z), & z \in (-r_n^{\theta}, 0),
      \end{cases}
    \end{equation}
    where $J_{\U}(z)$ is defined in \eqref{eq:defn_J_tildeU}.
  \item
    For $z\in \partial D(0,r_n^{\theta})$, we have, as $n\to \infty$,
    \begin{equation}\label{eq:tildeP0asy}
      \P^{(0)}(z) = I+\bigO(n^{-\frac{1}{2\theta+1}}).
    \end{equation}
  \end{enumerate}
\end{RHP}

Finally, as in the definition of $V^{(0)}$ in \eqref{def:V0}, we set
\begin{equation} \label{def:tildeV0}
  \V^{(0)}(z) = \U(z) \P^{(0)}(z)^{-1}, \quad z \in D(0,r_n^{\theta}) \setminus (\mathbb{R} \cup i \mathbb{R}),
\end{equation}
where $\U$ is the solution of the RH problem \ref{rhp:tildeU}. It is then straightforward to check the following RH problem for $\V^{(0)}$.
\begin{RHP}\label{rhp:tildeV0}
\hfill
\begin{enumerate}[label=\emph{(\arabic*)}, ref=(\arabic*)]
  \item \label{enu:rhp:tildeV0:1}
    $\V^{(0)}=(\V^{(0)}_0, \V^{(0)}_1, \dotsc, \V^{(0)}_{\theta})$ is analytic in $D(0,r_n^{\theta}) \setminus \mathbb{R} $, or equivalently, its definition can be analytically extended onto $(0, ir^{\theta}_n) \cup (-ir^{\theta}_n, 0)$.
  \item \label{enu:rhp:tildeV0:2}
    For $z\in (-r_n^\theta,r_n^{\theta})\setminus \{0\}$, we have
    \begin{equation} \label{eq:enu:rhp:tildeV0:2}
      \V^{(0)}_+(z) = \V^{(0)}_-(z)
      \begin{cases}
        \begin{pmatrix}
          0 & 1 \\
          1 & 0
        \end{pmatrix}
        \oplus I_{\theta - 1}, & z \in (0, r_n^{\theta}), \\
        \Mcyclic^{-1}, & z \in (-r_n^{\theta}, 0).
      \end{cases}
    \end{equation}
  \item \label{enu:rhp:tildeV0:3}
    For $z\in \partial D(0,r_n^{\theta})$, we have, as $n \to \infty$,
    \begin{equation}\label{eq:tildeV0asy}
      \V^{(0)}(z) = \U(z)(I+\bigO(n^{-\frac{1}{2\theta+1}})).
    \end{equation}
  \item
    As $z \to 0$, we have
    \begin{equation}\label{eq:tildeV0kzero}
      \V^{(0)}_k(z)=\bigO(1), \qquad k=0,1,\ldots,\theta.
    \end{equation}
  \end{enumerate}
\end{RHP}

\subsection{Final transformation}
With the aid of the functions $\Q(z)$, $\V^{(b)}(z)$ and $\V^{(0)}(z)$ given in previous sections, our final transformation is the following definition of a $1\times 2$ vector-valued function $\R(z) = (\R_1(z), \R_2(z))$ such that $\R_1(z)$ is analytic in $\halfH \setminus \Sigma^R$ and $\R_2(z)$ is analytic in $\compC \setminus \Sigma^R$:
\begin{align}
   \R_1(z) = {}&
  \begin{cases}
    \Q_1(z), & \hbox{$z\in \halfH \setminus \{D(b,\epsilon) \cup D(0,r_n) \cup \Sigma^R \}$,} \\
    \V_1^{(b)}(z), & \hbox{$z\in D(b,\epsilon) \setminus [b-\epsilon, b]$,} \\
    \V_0^{(0)}(z^\theta), & \hbox{$z \in W_1 \setminus [0,r_n]$,} \\
  \end{cases} \label{def:tildeR1}
  \\
  \R_2(z) = {}&
  \begin{cases}
    \Q_2(z), & \hbox{$z\in \mathbb{C} \setminus \{D(b,\epsilon) \cup D(0,r_n) \cup \Sigma^R \}$,} \\
    \V_2^{(b)}(z), & \hbox{$z\in D(b,\epsilon) \setminus [b-\epsilon, b]$,} \\
    \V_1^{(0)}(z^\theta), & \hbox{$z\in W_1 \setminus [0,r_n]$,} \\
    \V_k^{(0)}(z^\theta), & \hbox{$z\in W_k$, $k=2,\ldots,\theta$,}
  \end{cases} \label{def:tildeR2}
\end{align}
where $\Sigma^R$ and $W_k$, $k=1,\ldots,\theta$, are defined in \eqref{def:sigmaR} and \eqref{def:Wk}, respectively. In view of the RH problems \ref{RHP:tildeSvar}, \ref{RHP:tildeVb} and \ref{rhp:tildeV0} for $\Q$, $\V^{(b)}$ and $\V^{(0)}$, it is straightforward to check that $\R$ satisfies the following shifted RH problem.
\begin{RHP} \label{rhp:tildeR}
\hfill
  \begin{enumerate}[label=\emph{(\arabic*)}, ref=(\arabic*)]
  \item
    $\R=(\R_1,\R_2)$ is analytic in $(\halfH  \setminus \Sigma^{R}, \compC \setminus \Sigma^{R})$.

  \item \label{enu:rhp:tildeR:2}
    $\R$ satisfies the following jump conditions:
    \begin{equation} \label{eq:rhp:tildeR:2}
      \R_+(z)=\R_-(z)
      \begin{cases}
        J_{\Q}(z), & \hbox{$z\in \Sigma_1^R \cup \Sigma_2^R \cup (b+\epsilon, +\infty)$,} \\
        \P^{(b)}(z), & \hbox{$z\in \partial D(b,\epsilon)$,} \\
        \begin{pmatrix}
          0 & 1 \\
          1 & 0
        \end{pmatrix}, & \hbox{$z\in (0,b) \setminus \{r_n, b-\epsilon\}$,}
      \end{cases}
    \end{equation}
    where $J_{\Q}(z)$ and $\P^{(b)}(z)$ are defined in \eqref{def:Jtildecals} and \eqref{def:tildePb}, respectively, and with $\Gamma_1$ defined in \eqref{def:Gammak}, we have  for $z\in\Gamma_1$,
    \begin{equation}
      \R_{1,+}(z) = \R_{1,-}(z)\P_{00}^{(0)}(z^{\theta}) + \sum^{\theta}_{j = 1} \R_{2,-}( e^{\frac{2(j - 1)\pi i}{\theta}}z) \P^{(0)}_{j0}(z^{\theta}),\quad \arg z \neq 0, \pm \frac{\pi}{2\theta},\label{eq:jumptildeRgamma1}
    \end{equation}
    and for $k=1,\ldots,\theta$,
    \begin{equation}
    \R_{2,+}(e^{\frac{2(k - 1)\pi i}{\theta}}z) = \R_{1,-}(z) \P^{(0)}_{0k}(z^{\theta}) + \sum^{\theta}_{j = 1} \R_{2,-}(e^{\frac{2(j - 1)\pi i}{\theta}}z) \P^{(0)}_{jk}(z^{\theta}), \label{eq:jumptildeRgammai}
    \end{equation}
     where $\P^{(0)}(z)=(\P^{(0)}_{jk}(z))_{j,k=0}^\theta$ defined in \eqref{def:tildeP0} is the solution of the RH problem \ref{rhp:tildep0}.
    The orientations of the curves in $\Sigma^{R}$ are shown in Figure \ref{fig:Sigma_R}, and the orientations of the circles $\partial D(0, r_n)$ and $\partial D(b, \epsilon)$ are particularly taken in a clockwise manner.
  \item
    As $z \to \infty$ in $\halfH$, $\R_1$ behaves as $\R_1(z) = 1 + \bigO(z^{-\theta})$.
  \item \label{enu:rhp:tildeR:4}
    As $z \to \infty$ in $\compC$, $\R_2$ behaves as $\R_2(z) = \bigO(1)$.
  \item \label{enu:rhp:tildeR:5}
    As $z \to b$ or $z \to 0$, we have $\R_1(z) = \bigO(1)$ and $\R_2(z) = \bigO(1)$.
  \item
    For $x > 0$, we have the boundary condition $\R_1(e^{\pi i/\theta}x) = \R_1(e^{-\pi i/\theta}x)$.
\end{enumerate}
\end{RHP}

To estimate $\R$ for large $n$, we again transform the RH problem for $\R$ to a scalar one on the complex-$s$ plane. For this purpose, we set
\begin{equation}
\omega(z):=\frac1z,
\end{equation}
which maps $\compC\cup \{\infty\}$ to $\compC\cup \{\infty\}$, and denote by
\begin{equation}
\J(s):=J_c \circ \omega(s)=J_c(1/s)
\end{equation}
the composition of the function $J_c$ given in \eqref{def:Jcs} with $\omega$. Similar to the definition of $\tR$ in \eqref{eq:scalar_R_defn}, the transformation is then defined as follows:
\begin{equation} \label{eq:scalar_tildeR_defn}
  \widetilde{\tR}(s) =
  \begin{cases}
  \R_1(\J(s)), & \text{$s \in \omega(D \setminus[-1, 0])$ and $s \notin \omega(I_2(\Sigma^R))$}, \\
  \R_2(\J(s)), & \text{$s \in \omega(\compC \setminus \overline{D})$ and $s \notin \omega(I_1(\Sigma^R ))$,}
  \end{cases}
\end{equation}
where $D$ is the region bounded by the curve $\gamma=\gamma_1\cup\gamma_2$, $\Sigma^R$ is given in \eqref{def:sigmaR}, and $I_1: \compC \setminus [0, b] \to \compC \setminus \overline{D}$ and $I_2: \halfH \setminus [0, b] \to D$ are defined in \eqref{eq:inverse1} and \eqref{eq:inverse2}, respectively.
We emphasize that it is crucial to introduce the mapping $\omega$ here, which ensures $\widetilde{\tR}$ is normalized at infinity, as indicated in item \ref{enu:rhp:tildetR:3} of the RH problem \ref{rhp:tildetR} for $\widetilde{\tR}$ below.

To state the RH problem for $\widetilde{\tR}$, we recall the curves $\SigmaR^{(\cdot)}$ given in \eqref{def:SigmaRi}, and define
\begin{equation}\label{def:tildeSigmaR}
  \widetilde{\SigmaR}: = \widetilde{\SigmaR}^{(1')} \cup \widetilde{\SigmaR}^{(2)} \cup \widetilde{\SigmaR}^{(3)} \cup \widetilde{\SigmaR}^{(3')}  \cup \widetilde{\SigmaR}^{(4)} \cup \bigcup_{k=1}^{\theta} \widetilde{\SigmaR}^{(5)}_k ,
\end{equation}
with $\widetilde{\SigmaR}^{(\cdot)}:=\omega(\SigmaR^{(\cdot)})$, which is the images of the union of the solid and the dashed curves in Figure \ref{fig:jump_R_scalar} under the mapping $\omega$. We also define the following functions on each curve constituting $\widetilde{\SigmaR}$:
\begin{align}
  J_{\widetilde{\SigmaR}^{(1')}}(s) = {}& \frac{\P^{(\infty)}_2(z)}{z^{\alpha} P^{(\infty)}_1(z)} e^{-n\phi(z)}, & s \in {}& \widetilde{\SigmaR}^{(1')}, \\
  \intertext{where $z = \J(s) \in \Sigma^R_1 \cup \Sigma^R_2$,}
  J_{\widetilde{\SigmaR}^{(2)}}(s) = {}& \frac{z^{\alpha} \P^{(\infty)}_1(z)}{\P^{(\infty)}_2(z)} e^{n\phi(z)}, & s \in {}& \widetilde{\SigmaR}^{(2)}, \\
  \intertext{where $z = \J(s) \in (b + \epsilon, +\infty)$.}
  J^{1}_{\widetilde{\SigmaR}^{(3)}}(s) = {}& \P^{(b)}_{22}(z) - 1, \quad J^{2}_{\widetilde{\SigmaR}^{(3)}}(s) = \P^{(b)}_{12}(z), & s \in {}& \widetilde{\SigmaR}^{(3)}, \label{def:Jtildesigma3}\\
  \intertext{where $z = \J(s) \in \partial D(b,\epsilon)$,}
  J^{1}_{\widetilde{\SigmaR}^{(3')}}(s) = {}& \P^{(b)}_{11}(z) - 1, \quad J^{2}_{\widetilde{\SigmaR}^{(3')}}(s) = \P^{(b)}_{21}(z), & s \in {}& \widetilde{\SigmaR}^{(3')}, \label{def:Jtildesigma32}\\
  \intertext{where $z = \J(s) \in \partial D(b,\epsilon)$,}
  J^{0}_{\widetilde{\SigmaR}^{(4)}}(s) = {}& \P^{(0)}_{00}(z^{\theta}) - 1, \quad J^{j}_{\widetilde{\SigmaR}^{(4)}}(s) = \P^{(0)}_{j0}(z^{\theta}), & s \in {}& \widetilde{\SigmaR}^{(4)}, \\
  \intertext{where $z = \J(s) \in \Gamma_1$ and $j=1,\ldots,\theta$,}
  J^k_{\widetilde{\SigmaR}^{(5)}_k}(s) = {}& \P^{(0)}_{kk}(z^{\theta}) - 1, \quad J^j_{\widetilde{\SigmaR}^{(5)}_k}(s) = \P^{(0)}_{j k}(z^{\theta}),\quad j\neq k, & s \in {}& \widetilde{\SigmaR}^{(5)}_k,
\end{align}
where $z = \J(s) \in \Gamma_k$, $k=1,\ldots,\theta$, and $j=0,1,\ldots,\theta$. In \eqref{def:Jtildesigma3} and \eqref{def:Jtildesigma32}, $\P^{(b)}(z)=(\P^{(b)}_{jk}(z))_{j,k=1}^2$ is defined in \eqref{def:tildePb}. With the aid of these functions, analogue to the definition of $\DeltaR$ in \eqref{def:DeltaR}, we further define an operator $\DeltatildeR$ that acts on functions defined on $\widetilde\SigmaR$ by
\begin{equation}\label{def:DeltatildeR}
  \DeltatildeR f(s) =
  \begin{cases}
    J_{\widetilde{\SigmaR}^{(1')}}(s) f(\s), & \text{$s \in \widetilde{\SigmaR}^{(1')}$ and $\s = \omega(I_1(z))$}, \\
    J_{\widetilde{\SigmaR}^{(2)}}(s) f(\s), & \text{$s \in \widetilde{\SigmaR}^{(2)}$ and $\s = \omega(I_2(z))$}, \\
    J^{1}_{\widetilde{\SigmaR}^{(3)}}(s) f(s) + J^{2}_{\widetilde{\SigmaR}^{(3)}}(s) f(\s), & \text{$s \in \widetilde{\SigmaR}^{(3)}$ and $\s = \omega(I_2(z))$}, \\
    J^{1}_{\widetilde{\SigmaR}^{(3')}}(s) f(s) + J^{2}_{\widetilde{\SigmaR}^{(3')}}(s) f(\s), & \text{$s \in \widetilde{\SigmaR}^{(3')}$ and $\s = \omega(I_1(z))$}, \\
    J^{0}_{\widetilde{\SigmaR}^{(4)}}(s) f(s) + \sum\limits ^{\theta}_{j = 1} J^{j}_{\widetilde{\SigmaR}^{(4)}}(s) f(\s_j), & \text{$s \in \widetilde{\SigmaR}^{(4)}$ and $\s_j = \omega(I_1(z e^{\frac{2(j - 1)\pi i}{\theta}})) \in \omega(I_1(\Gamma_j))$}, \\
    \sum\limits^{\theta}_{j = 0} J^j_{\widetilde{\SigmaR}^{(5)}_k}(s) f(\s_j), & \text{$s \in \widetilde{\SigmaR}^{(5)}_k$ and  $\s_0 = \omega(I_2(z e^{\frac{2(1 - k)\pi i}{\theta}})) \in \omega(D)$,} \\
    & \text{$\s_j = \omega(I_1(z e^{\frac{2(j - k)\pi i}{\theta}})) \in \omega(I_1(\Gamma_j)) \subseteq \omega(\compC \setminus \overline{D})$} \\
    & \text{for $j = 1, \dotsc, \theta$,  such that $\s_k = s$},
  \end{cases}
\end{equation}
where $z=\J(s)$ and $f$ is a complex-valued function defined on $\widetilde{\SigmaR}$. Similar to the proof of Proposition \ref{prop:uniquess}, we have that the function $\widetilde{\tR}(s)$ defined in \eqref{eq:scalar_tildeR_defn} is the unique solution of the following RH problem after trivial analytical extension.

\begin{RHP} \label{rhp:tildetR} \hfill
  \begin{enumerate}[label=\emph{(\arabic*)}, ref=(\arabic*)]
  \item
    $\widetilde{\tR}(s)$ is analytic in $\compC \setminus \widetilde{\SigmaR}$, where the contour $\widetilde{\SigmaR}$ is defined in \eqref{def:tildeSigmaR}.
  \item \label{enu:rhp:tildetR:2}
    For $s\in \widetilde{\SigmaR}$, we have
    \begin{equation}
      \widetilde{\tR}_+(s) - \widetilde{\tR}_-(s) = \DeltatildeR \widetilde{\tR}_-(s),
    \end{equation}
    where $\DeltatildeR$ is the operator defined in \eqref{def:DeltatildeR}.
  \item \label{enu:rhp:tildetR:3}
    As $s \to \infty$, we have
    \begin{equation}
      \widetilde{\tR}(s)=1+\bigO(s^{-1}).
    \end{equation}
  \item \label{enu:rhp:tildetR:4}
    As $s \to 0$, we have $\widetilde{\tR}(s)=\bigO(1)$.
  \end{enumerate}
\end{RHP}

As $n\to \infty$, we have the asymptotic estimate of the operator $\DeltatildeR$ analogous to \eqref{eq:estOperator}
\begin{equation}\label{eq:estOperator2}
\lVert \DeltatildeR \rVert_{L^2(\widetilde{\SigmaR})} \leq M_{\widetilde{\SigmaR}}n^{-\frac{1}{2\theta + 1}},
\end{equation}
for some positive constant $M_{\widetilde{\SigmaR}}$. Thus, by using similar arguments leading to Lemma \ref{lem:tRest}, we finally conclude the following estimate of $\widetilde{\tR}$.
\begin{lemma}\label{lem:tildetRest}
As $n\to \infty$, we have
\begin{equation}\label{eq:esttildetR}
\widetilde{\tR}(s)=1+\bigO(n^{-\frac{1}{2\theta + 1}}),
\end{equation}
uniformly for $\lvert s+1 \rvert < \epsilon r_n^{\theta/(1+\theta)}$, where $\epsilon$ is a small positive constant and $r_n$ is defined in \eqref{def:rn}.
\end{lemma}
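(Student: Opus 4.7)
The plan is to mirror the argument used to prove Lemma \ref{lem:tRest}, exploiting the parallel structure between the RH problem \ref{rhp:tR} for $\tR$ and the RH problem \ref{rhp:tildetR} for $\widetilde{\tR}$. First I would note that, by the uniqueness argument analogous to Proposition \ref{prop:uniquess} (which follows from Proposition \ref{prop:unique_tildeS} in the $\Y$ case), $\widetilde{\tR}$ is the unique solution of RH problem \ref{rhp:tildetR} and therefore admits the integral representation
\begin{equation}
\widetilde{\tR}(s) = 1 + \widetilde{\mathcal{C}}(\DeltatildeR \widetilde{\tR}_-)(s), \qquad \widetilde{\mathcal{C}}g(s) = \frac{1}{2\pi i}\int_{\widetilde{\SigmaR}} \frac{g(\xi)}{\xi - s}d\xi,
\end{equation}
since the right-hand side can be verified directly to satisfy RH problem \ref{rhp:tildetR}. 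Splitting $\DeltatildeR\widetilde{\tR}_- = \DeltatildeR(\widetilde{\tR}_- - 1) + \DeltatildeR(1)$ gives the counterpart of \eqref{eq:tRs-1split}.

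Next I would take boundary values from the minus side to obtain
\begin{equation}
\widetilde{\tR}_-(s) - 1 = \widetilde{\mathcal{C}}_{\DeltatildeR}(\widetilde{\tR}_- - 1)(s) + \widetilde{\mathcal{C}}_-(\DeltatildeR(1))(s),
\end{equation}
where $\widetilde{\mathcal{C}}_{\DeltatildeR}f := \widetilde{\mathcal{C}}_-(\DeltatildeR f)$. Since $\widetilde{\mathcal{C}}_-$ is bounded on $L^2(\widetilde{\SigmaR})$ and \eqref{eq:estOperator2} gives $\lVert \DeltatildeR \rVert_{L^2(\widetilde{\SigmaR})} = \bigO(n^{-1/(2\theta+1)})$, the operator $1 - \widetilde{\mathcal{C}}_{\DeltatildeR}$ is invertible on $L^2(\widetilde{\SigmaR})$ for large $n$, with uniformly bounded inverse. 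A direct computation, using the explicit estimates on the jump functions $J^{\cdot}_{\widetilde{\SigmaR}^{(\cdot)}}$ together with the fact that each piece of $\widetilde{\SigmaR}$ has length $\bigO(1)$ except for the circle around $-1$ which has length $\bigO(r_n^{\theta/(1+\theta)}) = \bigO(n^{-2\theta/(2\theta+1)})$, yields
\begin{equation}
\lVert \DeltatildeR(1) \rVert_{L^2(\widetilde{\SigmaR})} = \bigO(n^{-(\theta+1)/(2\theta+1)}),
\end{equation}
which in turn gives $\lVert \widetilde{\tR}_- - 1 \rVert_{L^2(\widetilde{\SigmaR})} = \bigO(n^{-(\theta+1)/(2\theta+1)})$.

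Finally I would obtain the pointwise estimate via Cauchy--Schwarz. Since $\omega(s) = 1/s$ is locally a biholomorphism near $s = -1$ (with $\omega(-1) = -1$ and $\omega'(-1) = -1$), the image $\widetilde{\SigmaR}^{(4)} \cup \bigcup_{k=1}^{\theta}\widetilde{\SigmaR}^{(5)}_k = \omega(\SigmaR^{(4)} \cup \bigcup_{k=1}^{\theta}\SigmaR^{(5)}_k)$ is again a closed curve around $-1$ of diameter comparable to $r_n^{\theta/(1+\theta)}$. Hence for $\lvert s+1 \rvert < \epsilon r_n^{\theta/(1+\theta)}$ one has $\lVert (\xi - s)^{-1} \rVert_{L^2(\widetilde{\SigmaR})} = \bigO(r_n^{-\theta/(2(1+\theta))}) = \bigO(n^{\theta/(2\theta+1)})$, and from
\begin{equation}
\widetilde{\tR}(s) - 1 = \frac{1}{2\pi i}\int_{\widetilde{\SigmaR}} \frac{\DeltatildeR(\widetilde{\tR}_- - 1)(\xi) + \DeltatildeR(1)(\xi)}{\xi - s}d\xi
\end{equation}
together with Cauchy--Schwarz and the $L^2$ bounds above, I obtain $\widetilde{\tR}(s) - 1 = \bigO(n^{-1/(2\theta+1)})$, which is the desired estimate \eqref{eq:esttildetR}.

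The main technical obstacle is verifying the $L^2$ operator-norm estimate \eqref{eq:estOperator2}, but this is already stated in the excerpt by analogy with Proposition \ref{prop:estdletaR}: one decomposes $\lVert \DeltatildeR f \rVert_{L^2(\widetilde{\SigmaR})}$ piece by piece and uses (i) the exponential decay $e^{-n\phi(z)}$ of the jumps of $\widetilde{\Q}$ on the lens contours (which follows from \eqref{eq:localb} and \eqref{eq:phiderivative} exactly as in \eqref{eq:constr_Sigma}), (ii) the $\bigO(n^{-1})$ bounds on $\P^{(b)} - I$ coming from RH problem \ref{RHP:Pbtilde}, and (iii) the $\bigO(n^{-1/(2\theta+1)})$ bounds on $\P^{(0)} - I$ from \eqref{eq:tildeP0asy}. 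All remaining verifications are routine analogues of those carried out in Section \ref{subsec:final_trans_Y}.
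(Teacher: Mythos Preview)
Your proposal is correct and follows exactly the approach the paper takes: the paper simply states \eqref{eq:estOperator2} and then says ``by using similar arguments leading to Lemma \ref{lem:tRest}, we finally conclude the following estimate of $\widetilde{\tR}$,'' which is precisely the integral-equation/Cauchy--Schwarz argument you have spelled out. If anything, you have supplied more detail than the paper does (e.g., the observation that $\omega(-1)=-1$ so that the relevant portion of $\widetilde{\SigmaR}$ is again a curve of diameter $\sim r_n^{\theta/(1+\theta)}$ around $-1$).
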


\section{Proofs of main results}\label{sec:provmainresult}
In this section, we prove Theorems \ref{thm:pqkappa} and \ref{thm:kernel} by using the asymptotic results obtained in Sections \ref{sec:asyY} and \ref{sec:asytildeY}.

\subsection{Proof of Theorem \ref{thm:pqkappa}}

\paragraph{Proof of \eqref{eq:p_thm}}

By tracing back the transformations $Y \to T \to S \to Q$ given in \eqref{eq:Y_to_T}, \eqref{def:S} and \eqref{def:thirdtransform}, it is readily seen from \eqref{def:Y} that
\begin{equation} \label{eq:Y_1_in_Z_i}
  p_n(z)=Y_1(z) =
  \begin{cases}
    Z_1(z), & \text{$z$ outside the lens}, \\
    Z_1(z) + Z_2(z), & \text{$z$ in the upper part of the lens}, \\
    Z_1(z) - Z_2(z), & \text{$z$ in the lower part of the lens},
  \end{cases}
  \end{equation}
where
\begin{equation} \label{def:Zi}
  Z_1(z) = Q_1(z) P^{(\infty)}_1(z) e^{ng(z)}, \quad Z_2(z) = Q_2(z) P^{(\infty)}_2(z) \theta z^{-\alpha - 1 + \theta} e^{n(V(z) - \g(z) + \ell)},
\end{equation}
with $g(z)$ and $\g(z)$ defined in \eqref{def:g} and \eqref{def:tildeg}, respectively.

If $z\in D(0,r_n)$, we see from \eqref{def:U0}--\eqref{def:Uk} that $Q_i(z)$, $i=1,2$, are determined by $U_k(z^{\theta})$, $k = 0, 1, \dotsc, \theta$, and moreover, by \eqref{def:V0},
\begin{equation}\label{eq:UkinVP0}
U_k(z) = \sum^{\theta}_{i = 0} V^{(0)}_i(z) P^{(0)}_{ik}(z),
\end{equation}
where $V^{(0)}(z)=(V^{(0)}_0(z), V^{(0)}_1(z), \dotsc, V^{(0)}_{\theta}(z))$ and $P^{(0)}(z)=(P^{(0)}_{ik}(z))_{i,k=0}^\theta$ are defined in \eqref{def:V0} and \eqref{def:P0}, respectively. We next represent $V^{(0)}$ via $R = (R_1, R_2)$ through \eqref{def:R1} and \eqref{def:R2} in the vicinity of $0$. To this end, we define, for $0 < \lvert s \rvert < \varepsilon r_n^{\theta/(\theta+1)}=\varepsilon n^{-2\theta/(2\theta + 1)}$ with $\arg s \neq \pm \pi/(\theta + 1)$ and $\varepsilon$ being a small enough positive constant,
\begin{equation}\label{def:v}
  v(s) =
  \begin{cases}
    R_2(e^{-\frac{\pi i}{\theta}} s^{1 + \frac{1}{\theta}}), & \arg s \in (0, \frac{\pi}{\theta + 1}), \\
    R_2(e^{\frac{\pi i}{\theta}} s^{1 + \frac{1}{\theta}}), & \arg s \in (-\frac{\pi}{\theta + 1}, 0), \\
    R_1(-(-s)^{1 + \frac{1}{\theta}}), & \arg s \in (\frac{\pi}{\theta + 1}, \pi] \cup (-\pi, -\frac{\pi}{\theta + 1}).
  \end{cases}
\end{equation}
In view of the RH problem \ref{rhp:R} for $R$, it is easily seen that $v(s)$ can be extended analytically in the disc $\lvert s \rvert \leq \varepsilon n^{-2\theta/(2\theta + 1)}$, and thus admits the Taylor expansion $v(s) = \sum^{\infty}_{k = 0} c_k s^k$ there. Moreover, as $\varepsilon$ is small enough, we see from \eqref{def:v}, \eqref{eq:scalar_R_defn} and Lemma \ref{lem:tRest} that uniformly for $s$ in this disk
\begin{equation}\label{eq:estv}
v(s)=1+\bigO(n^{-\frac{1}{2\theta+1}}).
\end{equation}
A combination of \eqref{def:R1}, \eqref{def:R2} and \eqref{def:v} shows that, for $\lvert z \rvert < \varepsilon^{\theta + 1} n^{-2\theta(\theta+1)/(2\theta + 1)}$,
\begin{multline}\label{eq:Vinv}
  (V^{(0)}_0(z), V^{(0)}_1(z), \dotsc, V^{(0)}_{\theta}(z)) = \\
    \begin{cases}
      (v(z^{\frac{1}{\theta + 1}} e^{-\frac{\pi i}{\theta + 1}}), v(z^{\frac{1}{\theta + 1}} e^{\frac{\pi i}{\theta + 1}}), v(z^{\frac{1}{\theta + 1}} e^{\frac{3\pi i}{\theta + 1}}), \dotsc, v(z^{\frac{1}{\theta + 1}} e^{\frac{(2\theta - 1)\pi i}{\theta + 1} })), & z \in \compC_+, \\
      (v(z^{\frac{1}{\theta + 1}} e^{\frac{\pi i}{\theta + 1}}), v(z^{\frac{1}{\theta + 1}} e^{-\frac{\pi i}{\theta + 1}}), v(z^{\frac{1}{\theta + 1}} e^{\frac{3\pi i}{\theta + 1}}), \dotsc, v(z^{\frac{1}{\theta + 1}} e^{\frac{(2\theta - 1)\pi i}{\theta + 1} })), & z \in \compC_-.
    \end{cases}
\end{multline}
We then define for $\lvert s \rvert < \varepsilon^{\theta + 1} n^{-2\theta(\theta+1)/(2\theta + 1)}$,
\begin{equation} \label{eq:defn_v_k(s)}
  v_k(s) = \frac{1}{2\pi i} \oint_{\lvert \zeta \rvert = \varepsilon n^{-2\theta/(2\theta + 1)}} \frac{v(\zeta)}{\zeta^{k + 1}(1 - s/\zeta^{\theta + 1})} d\zeta = \sum^{\infty}_{l = 0} c_{k + l(\theta + 1)} s^{l}, \quad k = 0, 1, \dotsc, \theta,
\end{equation}
which are analytic near the origin and $v(s)=\sum_{k=0}^\theta s^k v_k(s^{\theta+1})$ for $\lvert s \rvert < \varepsilon n^{-2\theta/(2\theta + 1)}$. Hence, we rewrite \eqref{eq:Vinv} as
\begin{equation}
  (V^{(0)}_0(z), V^{(0)}_1(z), \dotsc, V^{(0)}_{\theta}(z)) = (v_0(-z), v_1(-z), \dotsc, v_{\theta}(-z)) \Upsilon(z) \Omega_{\pm}, \quad z\in \compC_{\pm},
\end{equation}
where $\Upsilon(z)$  and $\Omega_{\pm}$  are defined in \eqref{def:Upsilon} and \eqref{def:Omegapm}, respectively. Inserting the above formula into \eqref{eq:UkinVP0}, we obtain from \eqref{def:P0} that, for $k=0,1,\ldots,\theta$,
\begin{equation} \label{eq:U_j_asy_simplified}
  \begin{split}
    U_k(z) = {}& \sum^{\theta}_{i = 0} V^{(0)}_i(z) P^{(0)}_{ik}(z) = \sum^{\theta}_{i = 0} v_i(-z) \mathsf{P}^{(0)}_{ik}(z) \\
    = {}& \frac{(2\pi)^{1 - \frac{\theta}{2}}}{\sqrt{\theta}} c^{\frac{2(\alpha+1)-\theta}{2(1+\theta)}} (\rho n)^{\frac{\alpha + 1}{\theta} - \frac{1}{2}} \left( \sum^{\theta}_{l = 0} \frac{v_l(-z)}{(\rho n)^l} \Psi^{(\Mei)}_{lk}((\rho n)^{\theta + 1} z) \right) \\
    & \times
    \begin{cases}
      (\rho n)^{-\frac{\theta + 1}{\theta}(\alpha + 1 - \theta)} (n_0(z))^{-1} e^{n m_0(z)}, & k = 0, \\
      (n_k(z))^{-1} e^{n m_k(z)}, & k = 1, 2, \dotsc, \theta,
    \end{cases}
  \end{split}
\end{equation}
where $\mathsf{P}^{(0)}(z)=(\mathsf{P}^{(0)}_{ik}(z))_{i,k=0}^\theta$, $m_k(z)$ and $n_k(z)$, $k = 0, 1, \dotsc, \theta$, are defined in \eqref{def:sfP0}, \eqref{def:m0}--\eqref{def:mj} and \eqref{def:ni}, the constants $c$ and $\rho$ are given in \eqref{def:Jcs} and \eqref{eq:defn_rho}, respectively.

From now on, we assume that $z \neq 0$ belongs to a compact subset $K$ of $\compC$. Additionally, it is also assumed that $z \notin \Sigma$ and $\arg z \neq (2k + 1)\pi/\theta$ for any integer $k$. If $n$ is large enough, it is clear that there exists $k\in\{1,\ldots,\theta\}$, such that $(\rho n)^{-1 - 1/\theta} z \in W_k \subset D(0,r_n)$ and $|(\rho n)^{-1 - \theta} z^\theta| < \varepsilon^{\theta + 1} n^{-2\theta(\theta+1)/(2\theta + 1)}$, where the domain $W_k$ is defined in \eqref{def:Wk}. Thus, it follows from \eqref{def:Zi}, \eqref{def:U0}--\eqref{def:Uk} and \eqref{eq:U_j_asy_simplified} that
\begin{multline} \label{eq:Z_j_asy}
  Z_j \left( \frac{z}{(\rho n)^{1 + 1/\theta}} \right) = \frac{(2\pi)^{1 - \theta/2}}{\sqrt{\theta}} c^{\frac{2(\alpha+1)-\theta}{2(1+\theta)}} (\rho n)^{\frac{\alpha + 1}{\theta} - \frac{1}{2}} \exp \left[ n \left( V \left( \frac{z}{(\rho n)^{1 + 1/\theta}} \right) + \ell - \g_+(0) \right) \right] \\
  \times \left( \sum^{\theta}_{l = 0} (\rho n)^{-l} v_l \left( -\frac{z^{\theta}}{(\rho n)^{\theta + 1}} \right) \Psi^{(\Mei)}_{l,i(j, z)}(z^{\theta}) \right) \times
  \begin{cases}
    1, & j = 1, \\
    \theta z^{\theta - \alpha - 1}, & j = 2,
  \end{cases}
\end{multline}
where
\begin{equation}
  i(j, z) =
  \begin{cases}
    0, & j = 2, \\
    k, & j = 1 \text{ and } (\rho n)^{-1 - 1/\theta} z \in W_k.
  \end{cases}
\end{equation}

If $(\rho n)^{-1 - 1/\theta} z \in W_k$ with $k = 2, \dotsc, \theta$, or $(\rho n)^{-1 - 1/\theta} z \in W_1$ but outside the lens, it follows from  \eqref{eq:Y_1_in_Z_i} that
\begin{equation} \label{eq:final_p_n:1}
  p_n \left( \frac{z}{(\rho n)^{1 + 1/\theta}} \right) = Z_1 \left( \frac{z}{(\rho n)^{1 + 1/\theta}} \right).
\end{equation}
To estimate $Z_1$ for large $n$, we note that uniformly for all $z \in K$,
\begin{equation}\label{eq:Vzeroexp}
  V \left( \frac{z}{(\rho n)^{1 + 1/\theta}} \right)=V(0)+\bigO(n^{-1-\frac1\theta}).
\end{equation}
This, together with \eqref{eq:gequal}, implies that uniformly
\begin{align}\label{eq:estenV}
    \exp \left[ n \left( V \left( \frac{z}{(\rho n)^{1 + 1/\theta}} \right) + \ell - \g_+(0) \right) \right] = {}& e^{n g_-(0)}(1+\bigO(n^{-\frac1\theta}))
    \nonumber \\
    = {}& (-1)^ne^{n \Re g(0)}(1+\bigO(n^{-\frac1\theta})),
\end{align}
where we use that $\Re g_-(0)=\Re g_+(0)$ and $\Im g_\pm(0)=\pm \pi i$ in the second equality, as can be seen directly from its definition in \eqref{def:g}. Furthermore, on account of \eqref{eq:estv}, the functions $v_k(s)$ defined in \eqref{eq:defn_v_k(s)} satisfy the estimates
\begin{equation} \label{eq:est_v_k}
  v_0(s) = 1 + \bigO(n^{-\frac{1}{2\theta + 1}}), \quad v_k(s) = \bigO(n^{\frac{2k\theta - 1}{2\theta + 1}}), \quad k = 1, \dotsc, \theta,
\end{equation}
for $\lvert s \rvert < \varepsilon^{\theta + 1} n^{-2\theta(\theta + 1)/(2\theta + 1)}$. Substituting the above two formulas into \eqref{eq:Z_j_asy}, it is readily seen from \eqref{eq:final_p_n:1} that, with $C_n$ defined in \eqref{def:Cn} and uniformly in $z$,
\begin{equation} \label{eq:p_n_outside}
p_n \left( \frac{z}{(\rho n)^{1 + 1/\theta}} \right) = (-1)^n \theta^{-1} C_n \left( \Psi^{(\Mei)}_{0k}(z^{\theta}) + \bigO ( n^{-\frac{1}{2\theta + 1}} ) \right),
\end{equation}
which is \eqref{eq:p_thm} by \eqref{eq:defn_tilde_G_kj} and \eqref{def:psiMeik1}. On the other hand, if $(\rho n)^{-1 - 1/\theta} z \in W_1$ and locates inside the upper/lower part of the lens, we obtain from \eqref{eq:Y_1_in_Z_i}, \eqref{eq:Z_j_asy}, \eqref{eq:estenV} and \eqref{eq:est_v_k} that, for $z\in \compC_{\pm}$,
\begin{equation} \label{eq:p_n_inside}
  \begin{split}
    p_n \left( \frac{z}{(\rho n)^{1 + 1/\theta}} \right) = {}& Z_1 \left( \frac{z}{(\rho n)^{1 + 1/\theta}} \right) \pm Z_2 \left( \frac{z}{(\rho n)^{1 + 1/\theta}} \right) \\
    = {}& (-1)^n \theta^{-1} C_n \left( \Psi^{(\Mei)}_{01}(z^{\theta}) \pm \theta z^{\theta - \alpha - 1} \Psi^{(\Mei)}_{00}(z^{\theta}) + \bigO ( n^{-\frac{1}{2\theta + 1}} ) \right),
\end{split}
\end{equation}
which again gives us \eqref{eq:p_thm}, in view of \eqref{def:psiMeik1}, \eqref{def:PsiMeik0} and the relation \eqref{eq:Grelation}. We note the error terms $\bigO(n^{-1/(2\theta + 1)})$ in both \eqref{eq:p_n_outside} and \eqref{eq:p_n_inside} are uniform for all $z \in K \setminus \{ z = 0 \text{ or } z \in \Sigma \text{ or } \arg z = (2k + 1)\pi/\theta, \, k \in \intZ \}$.

Finally, we note that it is straightforward obtain \eqref{eq:p_thm} by analytic continuation if $(\rho n)^{-1 - 1/\theta} z $ belongs to the boundaries of $W_k$ or the lens, especially when $z \in [0,\infty)$.

\paragraph{Proof of \eqref{eq:q_thm}}
The proof of \eqref{eq:q_thm} is analogous to that of \eqref{eq:p_thm}, and we will omit some details here. Moreover, we will prove \eqref{eq:q_thm} for $z \in \halfH$ or its boundary.

By tracing back the transformations $\Y \to \T \to \St \to \Q$ given in \eqref{def:tildeT}, \eqref{def:tildeS} and \eqref{eq:thirdtransform_tilde}, it is readily seen from \eqref{def:Ytilde} that for $z\in \halfH$,
\begin{equation}\label{eq:tildeY_1_in_Z_i}
  q_n(z^\theta)=\Y_1(z) =
  \begin{cases}
    \Z_1(z), & \text{$z$ outside the lens,} \\
    \Z_1(z) + \Z_2(z), & \text{$z$ in the upper part of the lens,} \\
    \Z_1(z) - \Z_2(z), & \text{$z$ in the lower part of the lens,}
  \end{cases}
  \end{equation}
  where
\begin{equation}\label{def:tildeZi}
  \Z_1(z) = \Q_1(z) \P^{(\infty)}_1(z) e^{n\g(z)}, \quad \Z_2(z) = \Q_2(z) \P^{(\infty)}_2(z) z^{-\alpha} e^{n(V(z) - g(z) + \ell)}.
\end{equation}

If $z \in \halfH \cap D(0,r_n)$, we see from \eqref{def:tildeU0} and \eqref{def:tildeU1} that $\Q_i(z)$, $i=1,2$, are determined by $\U_{i - 1}(z^{\theta})$, and moreover, by \eqref{def:tildeV0},
\begin{equation}\label{eq:tildeUkinVP0}
\U_k(z) = \sum^{\theta}_{i = 0} \V^{(0)}_i(z) \P^{(0)}_{ik}(z),
\end{equation}
where $\V^{(0)}(z)=(\V^{(0)}_0(z), \V^{(0)}_1(z), \dotsc, \V^{(0)}_{\theta}(z))$ and $\P^{(0)}(z)=(\P^{(0)}_{ik}(z))_{i,k=0}^\theta$ are defined in \eqref{def:tildeV0} and \eqref{def:tildeP0}, respectively. We next represent $\V^{(0)}$ via $\R = (\R_1, \R_2)$ through \eqref{def:tildeR1} and \eqref{def:tildeR2}. For $0 < \lvert s \rvert < \varepsilon n^{-2\theta/(2\theta + 1)}$ with $\arg s \neq \pm \pi/(\theta + 1)$ and $\varepsilon$ being a small enough positive constant, we define
\begin{equation}\label{def:tildev}
  \v(s) =
  \begin{cases}
    \R_1(e^{-\frac{\pi i}{\theta}} s^{1 + \frac{1}{\theta}}), & \arg s \in (0, \frac{\pi}{\theta + 1}), \\
    \R_1(e^{\frac{\pi i}{\theta}} s^{1 + \frac{1}{\theta}}), & \arg s \in (-\frac{\pi}{\theta + 1}, 0), \\
    \R_2(-(-s)^{1 + \frac{1}{\theta}}), & \arg s \in (\frac{\pi}{\theta + 1}, \pi] \cup (-\pi, -\frac{\pi}{\theta + 1}),
  \end{cases}
\end{equation}
which is an analogue of the function $v$ given in \eqref{def:v}. In view of the RH problem \ref{rhp:tildeR} for $\R$, we also have that $\v(s)$ can be extended analytically in the disk $\lvert s \rvert \leq \varepsilon n^{-2\theta/(2\theta + 1)}$ and admits a Taylor expansion $\v(s) = \sum^{\infty}_{k = 0} \c_k s^k$ there. Since $\varepsilon$ is small enough, we see from \eqref{def:tildev}, \eqref{eq:scalar_tildeR_defn} and Lemma \ref{lem:tildetRest} that uniformly for $s$ in this disk,
\begin{equation}\label{eq:esttildev}
\v(s)=1+\bigO(n^{-\frac{1}{2\theta+1}}).
\end{equation}
A combination of \eqref{def:tildeR1}, \eqref{def:tildeR2} and \eqref{def:tildev} shows that, for $\lvert z \rvert < \varepsilon^{\theta + 1} n^{-2\theta(\theta+1)/(2\theta + 1)}$,
\begin{multline}\label{eq:tildeVinv}
  (\V^{(0)}_0(z), \V^{(0)}_1(z), \dotsc, \V^{(0)}_{\theta}(z)) = \\
  \begin{cases}
    (\v(z^{\frac{1}{\theta + 1}} e^{-\frac{\pi i}{\theta + 1}}), \v(z^{\frac{1}{\theta + 1}} e^{\frac{\pi i}{\theta + 1}}), \v(z^{\frac{1}{\theta + 1}} e^{\frac{3\pi i}{\theta + 1}}), \dotsc, \v(z^{\frac{1}{\theta + 1}} e^{\frac{(2\theta - 1) \pi i}{\theta + 1} })), & z \in \compC_+ \cap \halfH, \\
    (\v(z^{\frac{1}{\theta + 1}} e^{\frac{\pi i}{\theta + 1}}), \v(z^{\frac{1}{\theta + 1}} e^{-\frac{\pi i}{\theta + 1}}), \v(z^{\frac{1}{\theta + 1}} e^{\frac{3\pi i}{\theta + 1}}), \dotsc, \v(z^{\frac{1}{\theta + 1}} e^{\frac{(2\theta - 1)\pi i}{\theta + 1} })), & z \in \compC_-\cap \halfH.
  \end{cases}
\end{multline}
We then define for $\lvert s \rvert < \varepsilon^{\theta + 1} n^{-2\theta(\theta+1)/(2\theta + 1)}$,
\begin{equation}\label{eq:defn_tildev_k(s)}
  \v_k(s) = \frac{1}{2\pi i} \oint_{\lvert \zeta \rvert = \varepsilon n^{-2\theta/(2\theta + 1)}} \frac{\v(\zeta)}{\zeta^{k + 1}(1 - s/\zeta^{\theta + 1})} d\zeta  = \sum^{\infty}_{l = 0} \c_{k + l(\theta + 1)} s^{l}, \quad k = 0, 1, \dotsc, \theta,
\end{equation}
which are analytic near the origin and $\v(s)=\sum_{k=0}^\theta s^k \v_k(s^{\theta+1})$ for $\lvert s \rvert < \varepsilon n^{-2\theta/(2\theta + 1)}$. Hence, we rewrite \eqref{eq:tildeVinv} as
\begin{equation}
  (\V^{(0)}_0(z), \V^{(0)}_1(z), \dotsc, \V^{(0)}_{\theta}(z)) = (\v_0(-z), \v_1(-z), \dotsc, \v_{\theta}(-z)) \Upsilon(z) \Omega_{\pm},\quad z\in \compC_{\pm}\cap \halfH,
\end{equation}
where $\Upsilon(z)$  and $\Omega_{\pm}$  are defined in \eqref{def:Upsilon} and \eqref{def:Omegapm}, respectively. Inserting the above formula into \eqref{eq:tildeUkinVP0}, we obtain from \eqref{def:tildeU0}, \eqref{def:tildeU1} and \eqref{def:tildeP0} that
\begin{equation} \label{eq:tildeQ}
  \begin{split}
    \Q_j(z) = \U_{j - 1}(z^{\theta}) = {}& \sum^{\theta}_{i = 0} \V^{(0)}_i(z^{\theta}) \P^{(0)}_{i, j - 1}(z^{\theta}) = \sum^{\theta}_{i = 0} \v_i(-z^{\theta}) \Pmodeltilde^{(0)}_{i, j - 1}(z^{\theta}) \\
    = {}& (2\pi)^{\frac{\theta}{2}} c^{\frac{(\alpha + 1/2)\theta}{1+\theta}} (\rho n)^{\frac{1}{2} - \frac{\alpha}{\theta}} \frac{e^{n\g_+(0)}}{\P^{(\infty)}_{j}(z)} \sum^{\theta}_{i = 0} \frac{\v_i(-z^{\theta})}{(\rho n)} \G_{i, j - 1}((\rho n)^{\theta + 1} z^{\theta}) \\
    & \times
    \begin{cases}
      (\rho n)^{\frac{\alpha(\theta + 1)}{\theta}} e^{-n\g(z)}, & j = 1, \\
      e^{n(g(z) - V(z) - \ell)}, & j = 2,
    \end{cases}
  \end{split}
\end{equation}
where $\Pmodeltilde^{(0)}(z)=\Pmodeltilde^{(0)}_{ik}(z))_{i,k=0}^\theta$, $c$ and $\rho$ are given in \eqref{def:tildesfP0}, \eqref{def:Jcs} and \eqref{eq:defn_rho}, respectively.

We now assume that $z \in K \cap \halfH$ and $(\rho n)^{-1 - 1/\theta} z \in W_1 \setminus \Sigma \subset D(0,r_n)\cap \halfH$, where $K$ is a compact subset of $\overline{\halfH}$, $W_1$ and $\Sigma$ are defined in \eqref{def:Wk} and \eqref{def:Sigma}, respectively. Since $|(\rho n)^{-1 - \theta} z^\theta| < \varepsilon^{\theta + 1} n^{-2\theta(\theta+1)/(2\theta + 1)}$ for $n$ large enough, it is then readily seen from \eqref{def:tildeZi} and \eqref{eq:tildeQ} that for $j = 1, 2$,
\begin{multline}\label{eq:tildeZj}
  \Z_j \left( \frac{z}{(\rho n)^{1 + 1/\theta}} \right) = (2\pi)^{\frac{\theta}{2}} c^{\frac{(\alpha + 1/2)\theta}{(\theta + 1)}} (\rho n)^{\frac{1}{2} + \alpha} e^{n \g_+(0)} \\
  \times \sum^{\theta}_{i = 0} (\rho n)^{-i} \v_i \left( -\frac{z^{\theta}}{(\rho n)^{\theta + 1}} \right) \Psitilde^{(\Mei)}_{i, j - 1}(z^{\theta}) \times
  \begin{cases}
    1, & j = 1, \\
    z^{-\alpha}, & j = 2.
  \end{cases}
\end{multline}
The asymptotic formula \eqref{eq:q_thm} for $q_n(z^\theta)$ then follows from \eqref{eq:tildeY_1_in_Z_i}, \eqref{eq:tildeZj}, \eqref{def:tildepsikj}, \eqref{def:tildepsiMeik0} and direct calculations with the error bound $\bigO(n^{-1/(2\theta + 1)})$ uniformly valid for all such $z$. To that end, one needs the estimates
\begin{equation}
  \v_0(s) = 1 + \bigO(n^{-\frac{1}{2\theta + 1}}), \quad \v_k(s) = \bigO(n^{\frac{2k\theta - 1}{2\theta + 1}}), \quad k = 1, \dotsc, \theta,
\end{equation}
for $\lvert s \rvert < \varepsilon n^{-2\theta/(2\theta + 1)}$, and the facts that $\Re \g_-(0)=\Re \g_+(0)$ and $\Im \g_\pm(0)=\pm \pi i$. We leave the details to the interested readers, and also remark that the asymptotics can be extended to $\Sigma$ and the boundary of $\halfH$.

\paragraph{An auxiliary lemma for Theorem \ref{thm:kernel}}
Although the proof of Theorem \ref{thm:kernel} is given in Section \ref{subsec:proof_kernel_formula} below, we prove the following lemma for the use therein, since it is closely related to the asymptotic formulas \eqref{eq:p_thm} and \eqref{eq:q_thm} proved above.
\begin{lemma}\label{lem:asyknnV}
 As $n\to\infty$, we have,
 \begin{equation}\label{eq:k_n_asy}
    \frac{1}{\kappa_n} \frac{e^{-nV(x/(\rho n)^{1 + 1/\theta})}}{(\rho n)^{\alpha(1 + 1/\theta) + 1/\theta}} p_n \left( \frac{x}{(\rho n)^{1 + 1/\theta}} \right) q_n \left( \frac{y^{\theta}}{(\rho n)^{1 + \theta}} \right) = \theta c^{-\frac{\theta}{\theta + 1}} k^{(\alpha, \theta)}(x, y) + \bigO (n^{-\frac{1}{2\theta + 1}}),
  \end{equation}
uniformly for $x,y$ in compact subsets of $[0,\infty)$, where the constants $\kappa_n$, $\rho$, $c$ are given in \eqref{eq:pqbioOP}, \eqref{eq:defn_rho}, \eqref{def:Jcs}, and
\begin{multline}\label{def:kalphatheta}
    k^{(\alpha, \theta)}(x, y)= x^{\theta-\alpha-1} \MeijerG{\theta, 0}{0, \theta + 1}{-}{\frac{\alpha - \theta + 1}{\theta}, \frac{\alpha - \theta + 2}{\theta}, \dotsc, \frac{\alpha - 1}{\theta}, \frac{\alpha}{\theta}, 0}{x^{\theta}}
\\ \times
\MeijerG{1, 0}{0, \theta + 1}{-}{0, -\frac{\alpha}{\theta}, \frac{1 - \alpha}{\theta}, \dotsc, \frac{\theta - 1 - \alpha}{\theta}}{y^{\theta}}.
  \end{multline}
\end{lemma}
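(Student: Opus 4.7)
The plan is to derive an asymptotic formula for $\kappa_n$, multiply it with the asymptotics \eqref{eq:p_thm} and \eqref{eq:q_thm} supplied by Theorem~\ref{thm:pqkappa}, and verify that the exponential and polynomial prefactors collapse to produce the constant $\theta c^{-\theta/(\theta+1)}$ on the right-hand side of \eqref{eq:k_n_asy}.

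To extract $\kappa_n$, I would expand the Cauchy-type kernel in \eqref{eq:defn_of_Cp_n} as $1/(x^\theta - z^\theta) = -\sum_{j\ge 0} x^{j\theta} z^{-(j+1)\theta}$ for large $z$. The biorthogonality \eqref{eq:pqbioOP}, combined with the fact that $x^{j\theta} = (x^\theta)^j$ is a polynomial of degree $j$ in $x^\theta$, annihilates the first $n$ terms and yields $Y_2(z) = -(2\pi i)^{-1}\kappa_n\,z^{-(n+1)\theta}(1+o(1))$ as $z\to\infty$ in $\halfH$. Unwinding $Y \to T \to S \to Q$ via \eqref{eq:Y_to_T}, \eqref{def:S}, and \eqref{def:thirdtransform}, using $\g(z) = \theta\log z + \bigO(z^{-\theta})$ from \eqref{eq:gtilde_asy}, and reading off the leading coefficient of $P^{(\infty)}_2(z)$ at infinity from \eqref{eq:Fs} together with $I_2(z) \sim c^\theta z^{-\theta}$, I obtain $Q_2(z) \to \kappa_n e^{-n\ell}\sqrt{\theta}/(2\pi c^{\alpha+1})$ as $z\to\infty$ in $\halfH$ (the sign being fixed by the positivity of $\kappa_n$). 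Since $Q_2 = R_2$ outside the disks and $R_2(z) = \tR(I_2(z))$ with $I_2(z) \to 0$ in $D$, Lemma~\ref{lem:tRest} (which explicitly covers $|s| < \epsilon$) gives $R_2(z) = 1 + \bigO(n^{-1/(2\theta+1)})$ at infinity and hence $\kappa_n = (2\pi c^{\alpha+1}/\sqrt{\theta})\, e^{n\ell}\bigl(1 + \bigO(n^{-1/(2\theta+1)})\bigr)$.

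Next, multiplying \eqref{eq:p_thm} and \eqref{eq:q_thm} produces $C_n\C_n \bigl(k^{(\alpha,\theta)}(x,y) + \bigO(n^{-1/(2\theta+1)})\bigr)$, and a direct simplification from \eqref{def:Cn} collapses the $c$-exponents through $\frac{2(\alpha+1)-\theta}{2(\theta+1)} + \frac{(\alpha+1/2)\theta}{\theta+1} = \alpha + \frac{1}{\theta+1}$ to give $C_n\C_n = 2\pi\sqrt{\theta}\,c^{\alpha+1/(\theta+1)}(\rho n)^{(\alpha+1)/\theta + \alpha}e^{n(\Re g(0) + \Re \g(0))}$. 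Taking real parts of the Euler--Lagrange identity \eqref{eq:gequal} as $x \to 0^+$ supplies $\Re g(0) + \Re \g(0) = V(0) + \ell$, while Taylor expansion yields $e^{-nV(x/(\rho n)^{1+1/\theta})} = e^{-nV(0)}(1 + \bigO(n^{-1/\theta}))$. Substituting everything into the left-hand side of \eqref{eq:k_n_asy}, the three exponential factors $e^{n\ell}$, $e^{-nV(0)}$, $e^{nV(0)}$ cancel, the powers $(\rho n)^{(\alpha+1)/\theta + \alpha}$ and $(\rho n)^{\alpha(1+1/\theta)+1/\theta}$ coincide, and the numerical constant reduces to $\theta c^{1/(\theta+1)-1} = \theta c^{-\theta/(\theta+1)}$, giving the claimed identity.

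The only delicate step I anticipate is the sign and branch bookkeeping when computing the leading coefficient of $P^{(\infty)}_2$ at infinity: the branch of $\sqrt{(s+1)(s-s_b)}$ in \eqref{eq:Fs} near $s=0$ and the phase of $I_2$ at infinity must be tracked consistently, and the resulting sign ambiguity is resolved using the positivity $\kappa_n > 0$ recorded in \eqref{eq:pqbioOP}. All other steps are algebraic simplifications.
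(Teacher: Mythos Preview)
Your proposal is correct and follows essentially the same approach as the paper: the paper also reduces the lemma to the asymptotic $\kappa_n = 2\pi\theta^{-1/2}c^{\alpha+1}e^{n\ell}(1+\bigO(n^{-1/(2\theta+1)}))$, extracts it from the large-$z$ behaviour of $Y_2$ via the chain $Y_2 = R_2 P_2^{(\infty)} e^{n(\ell-\g)}$ and Lemma~\ref{lem:tRest} at $s=0$, and then combines with \eqref{eq:p_thm}, \eqref{eq:q_thm}, \eqref{eq:gequal} and the Taylor expansion of $V$. The paper tracks the sign explicitly through the factor $i$ in $\lim_{z\to\infty} z^\theta P_2^{(\infty)}(z) = c^{\alpha+1}i/\sqrt{\theta}$ rather than invoking positivity of $\kappa_n$, but this is a cosmetic difference.
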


\begin{proof}
In view of \eqref{eq:p_thm}, \eqref{eq:q_thm}, \eqref{eq:Vzeroexp} and \eqref{eq:gequal}, it suffices to show
\begin{equation} \label{eq:kappa_thm}
  \kappa_n = 2\pi \theta^{-1/2} c^{\alpha + 1} e^{n\ell} ( 1 + \bigO ( n^{-\frac{1}{2\theta + 1}} ) ),
\end{equation}
where $\ell$ is the constant given in \eqref{eq:EL1}.

By \eqref{def:Y}, we note that
\begin{equation}\label{eq:kapparep}
  Y_2(z) = \frac{i}{2\pi} \kappa_n z^{-(n + 1)\theta} + \bigO(z^{-(n + 2)\theta}), \quad z \to +\infty.
\end{equation}
On the other hand, we see from \eqref{eq:Y_to_T}, \eqref{def:S}, \eqref{def:thirdtransform} and \eqref{def:R2} that for $z \in \halfH \setminus [0,\infty)$ large enough,
\begin{equation}\label{eq:Y2rep}
  Y_2(z) = R_2(z) P^{(\infty)}_2(z) e^{n(\ell - \g(z))},
\end{equation}
where the functions $R_2(z)$, $P^{(\infty)}_2(z)$ and $\g(z)$ are given in \eqref{def:R2}, \eqref{eq:P2} and \eqref{def:tildeg}, respectively.
As $z \to \infty$, it is readily seen from  \eqref{eq:gtilde_asy}, \eqref{def:Jcs},\eqref{eq:Fs}, \eqref{eq:P2} and \eqref{eq:inverse2} that
\begin{equation} \label{eq:entglim}
  \lim_{z \to \infty} \frac{e^{n\g(z)}}{z^{n\theta}} = 1, \qquad  \lim_{z \to \infty} z^{\theta} P^{(\infty)}_2(z) = \lim_{s \to 0}(J_c(s))^{\theta} \tP(s) = \frac{c^{\alpha + 1} i}{\sqrt{\theta}},
\end{equation}
and from \eqref{eq:scalar_R_defn}, \eqref{def:Jcs}, \eqref{eq:inverse2} and Lemma \ref{lem:tRest} that
\begin{equation}\label{eq:R2lim}
  \lim_{z \to \infty} R_2(z) = \lim_{z \to \infty} \tR(I_2(z)) =\lim_{s \to 0} \tR(s) = 1 + \bigO ( n^{-\frac{1}{2\theta + 1}} ), \quad n\to \infty.
\end{equation}
Inserting \eqref{eq:entglim} and \eqref{eq:R2lim} into \eqref{eq:Y2rep}, we then obtain \eqref{eq:kappa_thm} from \eqref{eq:kapparep} and conclude the proof.
\end{proof}

\subsection{Proof of Theorem \ref{thm:kernel}} \label{subsec:proof_kernel_formula}
Throughout this subsection, we will use the notations $c^{(V)}, \rho^{(V)}, \kappa_{n,j}^{(V)}, p_{n,j}^{(V)}$ and $q_{n,j}^{(V)}$ to emphasize their dependence on the external field $V$ and the relevant parameters, since we need to consider different $V$ at certain  stage of the proof.

Recall the correlation kernel $K_n$ defined in \eqref{eq:sum_form_K}, it is easily seen that
\begin{equation}\label{def:knj}
  K_n(x,y)= x^{\alpha} \sum_{j=0}^n k^{(V)}_{n, j}(x, y),  \qquad  k^{(V)}_{n, j}(x, y) := \frac{e^{-nV(x)}}{\kappa^{(V)}_{n, j}}  p^{(V)}_{n, j}(x)q^{(V)}_{n, j}(y^{\theta}),
\end{equation}
where $\kappa^{(V)}_{n, j}$ is given \eqref{eq:pqbioOP}. To proceed, we assume that, without loss of generality,
$$V(x) = x^r + \bigO(x^{r + 1}), \qquad x\to 0,$$
for some positive integer $r$, due to the analyticity of $V$ and the assumption \eqref{eq:one-cut_regular}. Then we define a family of functions $V_{\tau}$ indexed by a continuous parameter $\tau \in [0, 1]$ as follows:
\begin{equation}\label{def:vtau}
  V_{\tau}(x): =
  \begin{cases}
    \tau^{-1} V(\tau^{1/r} x), & \tau \in (0, 1], \\
    x^r, & \tau = 0.
  \end{cases}
\end{equation}
Clearly, $V_{\tau}(x)$ is continuous in both $x$ and $\tau$, and our assumption on the external field $V$ implies that Theorem \ref{thm:pqkappa} still holds with $V$ replaced by $V_{\tau}$. Moreover, we have the following technical lemma:
\begin{lemma} \label{prop:generalized}
  Suppose $V$ satisfies \eqref{eq:loggrowth} and \eqref{eq:one-cut_regular}. With $V_{\tau}$ defined in \eqref{def:vtau}, we have, for any $M, \epsilon > 0$, there exists a positive integer $N_{M, \epsilon}$ such that if $n > N_{M, \epsilon}$, then
\begin{multline}
  \left\lvert n^{-\alpha(1 + \frac{1}{\theta}) - \frac{1}{\theta}} k^{(V_{\tau})}_{n, n} \left( \frac{x}{n^{1 + 1/\theta}}, \frac{y}{n^{1 + 1/\theta}} \right) \right. \\
  - \left. \theta (c^{(V_{\tau})})^{-\frac{\theta}{\theta + 1}} (\rho^{(V_{\tau})})^{\alpha(1 + \frac{1}{\theta}) + \frac{1}{\theta}} k^{(\alpha, \theta)}((\rho^{(V_{\tau})})^{1 + \frac{1}{\theta}} x, (\rho^{(V_{\tau})})^{1 + \frac{1}{\theta}} y) \right\rvert < \epsilon,
\end{multline}
uniformly for all $\tau \in [0, 1]$ and $x, y \in [0, M]$, where $k^{(V_{\tau})}_{n, n}(x,y)$ and $k^{(\alpha, \theta)}(x,y)$ are defined in \eqref{def:knj} and \eqref{def:kalphatheta}, respectively.
\end{lemma}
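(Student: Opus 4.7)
The plan is to observe that Lemma \ref{prop:generalized} is a uniform-in-$\tau$ restatement of Lemma \ref{lem:asyknnV} applied to the external field $V_{\tau}$, combined with the change of variables $x\mapsto(\rho^{(V_{\tau})})^{1+1/\theta}x$, $y\mapsto(\rho^{(V_{\tau})})^{1+1/\theta}y$. The task therefore reduces to verifying that every estimate in the Deift/Zhou steepest descent analysis of Sections \ref{sec:asyY} and \ref{sec:asytildeY} can be made uniform in $\tau\in[0,1]$. The whole argument rests on continuity of the equilibrium-problem data in $\tau$, together with compactness of $[0,1]$.

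First I would check that the one-parameter family $\{V_{\tau}\}_{\tau\in[0,1]}$ is ``uniformly one-cut regular''. A direct computation gives
\[
V_{\tau}''(x)x+V_{\tau}'(x)=\tau^{1/r-1}\bigl(V''(y)y+V'(y)\bigr)\bigr|_{y=\tau^{1/r}x}>0, \quad \tau\in(0,1],
\]
while at $\tau=0$ the limiting potential $V_{0}(x)=x^{r}$ satisfies $V_{0}''(x)x+V_{0}'(x)=r^{2}x^{r-1}>0$. Hence every $V_{\tau}$ satisfies \eqref{eq:one-cut_regular} and is therefore one-cut regular by the discussion following that equation. Using $V(y)=y^{r}+\bigO(y^{r+1})$ near the origin, one verifies that $V_{\tau}\to V_{0}$ together with all of its derivatives, uniformly on compact subsets of $[0,\infty)$, so the map $\tau\mapsto V_{\tau}$ is continuous into every $C^{k}$-topology on compact subsets. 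Via the explicit formulas \eqref{def:psi}, \eqref{def:d1}, \eqref{eq:defn_d2} and the defining equation \eqref{eq:defn_c}, this continuity propagates to the endpoint $b^{(V_{\tau})}$ and the constants $c^{(V_{\tau})},\ell^{(V_{\tau})},d_{1}^{(V_{\tau})},d_{2}^{(V_{\tau})},\rho^{(V_{\tau})}$, as well as to the $g$-functions $g^{(V_{\tau})},\g^{(V_{\tau})}$ and all error terms of Proposition \ref{prop:propg}. By compactness of $[0,1]$, these quantities are uniformly bounded above and away from zero.

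With these uniform bounds in hand, I would rerun the RH analysis of Sections \ref{sec:asyY} and \ref{sec:asytildeY} simultaneously for all $\tau$. The lens contours $\Sigma_{1},\Sigma_{2}$ and the disks $D(0,r_{n})$, $D(b^{(V_{\tau})},\epsilon)$ may be chosen to vary continuously with $\tau$; the strict inequality $\Re\phi^{(V_{\tau})}>0$ on the lens together with $\phi^{(V_{\tau})}<0$ on $(b^{(V_{\tau})},\infty)$ then holds with a lower bound uniform in $\tau\in[0,1]$. Since both the Meijer G-parametrix $\Psi^{(\Mei)}$ and the Airy parametrix $\Psi^{(\Ai)}$ are independent of $\tau$, the matching errors in \eqref{eq:sfP0asy} and item \ref{enu:Pbbnd} of RH problem \ref{prop:Pb} depend on $\tau$ only through the continuous factors $\rho^{(V_{\tau})}$, $f_{b}'(b^{(V_{\tau})})$ and the constants of Proposition \ref{prop:MNzero}, all of which are uniformly bounded. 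Consequently, the small-norm estimate of Proposition \ref{prop:estdletaR} and its analogue for $\DeltatildeR$ hold with operator-norm constants independent of $\tau$, giving uniform versions of Lemmas \ref{lem:tRest} and \ref{lem:tildetRest}. Feeding these uniform estimates through the proofs of \eqref{eq:p_thm}, \eqref{eq:q_thm} and \eqref{eq:kappa_thm} delivers the uniform-in-$\tau$ version of Lemma \ref{lem:asyknnV}, from which the claim follows by the substitution described above.

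The main obstacle I anticipate is uniformity at the degenerate endpoint $\tau=0$, where the rescaling that defines $V_{\tau}$ breaks down. This is handled by the observation that the limit $V_{0}(x)=x^{r}$ itself lies in the one-cut regular class satisfying all hypotheses of Theorem \ref{thm:kernel}, so the $C^{k}$-convergence $V_{\tau}\to V_{0}$ on compact subsets of $[0,\infty)$ allows the continuity arguments of the preceding paragraphs to be extended from $(0,1]$ to the closed interval $[0,1]$.
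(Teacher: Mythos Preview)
Your proposal is correct and follows exactly the approach the paper takes: the paper states that Lemma~\ref{prop:generalized} is a ``straightforward generalization of Lemma~\ref{lem:asyknnV}'' obtained by noting that each $V_{\tau}$ satisfies \eqref{eq:loggrowth} and \eqref{eq:one-cut_regular}, and that uniformity in $\tau$ follows from the continuity of $V_{\tau}$ in $\tau$; the details are explicitly omitted. You have essentially supplied those omitted details---the verification of \eqref{eq:one-cut_regular} for $V_{\tau}$, the continuity of the equilibrium data via \eqref{eq:defn_c}--\eqref{eq:defn_d2}, the resulting uniform small-norm bounds, and the treatment of the endpoint $\tau=0$ via $C^k$-convergence to $V_0(x)=x^r$.
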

Lemma \ref{prop:generalized} is a straightforward generalization of Lemma \ref{lem:asyknnV} which deals with the special case $\tau = 1$. Note that if $V$ satisfies \eqref{eq:loggrowth} and \eqref{eq:one-cut_regular}, then for all $\tau \in [0, 1]$, $V_{\tau}$ also satisfy \eqref{eq:loggrowth} and \eqref{eq:one-cut_regular}, so they are also one-cut regular, due to the discussion below \eqref{eq:one-cut_regular}. For a proof of Lemma \ref{prop:generalized}, one needs to show an extra result that the estimate holds uniformly for all $\tau \in [0, 1]$, which can be done by using the continuity of $V_{\tau}$ with respect to $\tau$ and we omit the details here.

The strategy of proof now is to split the summation in the correlation kernel $K_n$ into two parts, and estimate each part separately. To this end, we see from the uniqueness of biorthogonal polynomials satisfying \eqref{eq:pqbioOP} that for $j=1,\ldots,n$,
\begin{equation}
  p^{(V)}_{n, j}(x) = \left( \frac{j}{n} \right)^{\frac{j}{r}} p^{(V_{j/n})}_{j, j} ( (n/j)^{\frac{1}{r}} x ), \quad q^{(V)}_{n, j}(y^{\theta}) = \left( \frac{j}{n} \right)^{\frac{j \theta }{r}} q^{(V_{j/n})}_{j, j} ( (n/j)^{\frac{\theta}{r}} y^{\theta} ),
\end{equation}
and
\begin{equation}
  k^{(V)}_{n, j}(x, y) = \left( \frac{n}{j} \right)^{\frac{\alpha + 1}{r}} k^{(V_{j/n})}_{j, j} ( (n/j)^{\frac{1}{r}} x, (n/j)^{\frac{1}{r}} y ).
\end{equation}
Thus, we have, for any fixed positive integer $N$ and $n > N$,
\begin{multline}
  n^{-(\alpha + 1)(1 + \frac{1}{\theta}) +1 } \sum^n_{j = N + 1} k^{(V)}_{n, j} \left( \frac{x}{n^{1 + 1/\theta}}, \frac{y}{n^{1 + 1/\theta}} \right) =
 \\
  \sum^n_{j = N + 1} \left( \frac{j}{n} \right)^{(\alpha + 1)(\frac{1}{\theta} - \frac{1}{r}) + \alpha} j^{-\alpha(1 + \frac{1}{\theta}) - \frac{1}{\theta}} k^{(V_{j/n})}_{j, j} \left( \left( \frac{j}{n} \right)^{1 + \frac{1}{\theta} - \frac{1}{r}} \frac{x}{j^{1 + 1/\theta}}, \left( \frac{j}{n} \right)^{1 + \frac{1}{\theta} - \frac{1}{r}} \frac{y}{j^{1 + 1/\theta}} \right).
\end{multline}
Given $M, \epsilon > 0$, if we further take $N > N_{M, \epsilon}$ in the above formula with $N_{M, \epsilon}$ given in Lemma \ref{prop:generalized}, it then follows that
\begin{multline} \label{eq:est_K_first}
 \left\lvert \frac{1}{n^{(\alpha + 1)(1 + 1/\theta)}} \sum^n_{j = N + 1} k^{(V)}_{n, j} \left( \frac{x}{n^{1 + 1/\theta}}, \frac{y}{n^{1 + 1/\theta}} \right) - \frac{1}{n} \sum^n_{j = N + 1} f \left( \frac{j}{n} \right) \right\rvert
\\
\leq \frac{\epsilon }{n} \sum^n_{j = N + 1} \left( \frac{j}{n} \right)^{(\alpha + 1)(\frac{1}{\theta} - \frac{1}{r}) + \alpha}
< \epsilon',
\end{multline}
for $x, y \in [0, M]$, where
\begin{multline}
f(t) = t^{(\alpha + 1)(\frac{1}{\theta} - \frac{1}{r}) + \alpha} \theta (c^{(V_{t})})^{-\frac{\theta}{\theta + 1}} (\rho^{(V_{t})})^{\alpha(1 + \frac{1}{\theta}) + \frac{1}{\theta}}
\\
\times k^{(\alpha, \theta)} \left( t^{1 + \frac{1}{\theta} - \frac{1}{r}} (\rho^{(V_{t})})^{1 + \frac{1}{\theta}} x, t^{1 + \frac{1}{\theta} - \frac{1}{r}} (\rho^{(V_{t})})^{1 + \frac{1}{\theta}} y \right)
\end{multline}
and $\epsilon' = \epsilon/[(\alpha + 1)(1 + \frac{1}{\theta} - \frac{1}{r})]$.
Since $k^{(\alpha, \theta)}(x, y)$ is a continuous function in $x, y \in [0, \infty)$, and $c^{(V_t)}$ and $\rho^{(V_t)}$ are continuous functions in $t$ with values in a compact subset of $(0, \infty)$ as $t \in [0, 1]$, we have that $f(t)$ is continuous for $t \in (0, 1]$ with $f(t) = \bigO(t^{(\alpha + 1)(\frac{1}{\theta} - \frac{1}{r}) + \alpha})$ as $t \to 0_+$. Thus, we observe that the summation involving $f(j/n)$ in \eqref{eq:est_K_first} is a Riemann sum of a definite integral as $n\to \infty$, that is,
\begin{equation} \label{eq:est_k_sum_main}
  \lim_{n \to \infty} \frac{1}{n} \sum^n_{j = N + 1} f \left( \frac{j}{n} \right) = \int^1_0 f(t)dt.
\end{equation}
Also, it is not hard to see that for a fixed $N$, uniformly for $x, y \in [0, M]$,
\begin{multline} \label{eq:est_k_sum_remainder}
\lim_{n \to \infty} \frac{1}{n^{(\alpha + 1)(1 + \frac{1}{\theta})}} \sum^N_{k = 0} k^{(V)}_{n, k}\left( \frac{x}{n^{1 + 1/\theta}}, \frac{y}{n^{1 + 1/\theta}} \right)
\\
= \lim_{n \to \infty} \frac{1}{n^{(\alpha + 1)(1 + \frac{1}{\theta})}} \sum^N_{k = 0} k^{(V_0)}_{n, k}\left( \frac{x}{n^{1 + 1/\theta}}, \frac{y}{n^{1 + 1/\theta}} \right) = 0,
\end{multline}
where the second identity requires a scaling argument. As a consequence, a combination of  \eqref{def:knj}, \eqref{eq:defn_kernel_integral}, and the estimates \eqref{eq:est_K_first}, \eqref{eq:est_k_sum_main} and \eqref{eq:est_k_sum_remainder} implies that
we only need to show the equality
\begin{equation} \label{eq:integral_for_kernel}
  \int^1_0 f(t) dt = \theta^2 \int^{(\rho^{(V)})^{1 + 1/\theta}}_0 u^{\alpha} k^{(\alpha, \theta)}(ux, uy) du,
\end{equation}
to conclude Theorem \ref{thm:kernel}, which will be the task for the rest of the proof.

To show the integral identity \eqref{eq:integral_for_kernel}, by the change of variable
\begin{equation}\label{def:u}
u =  t^{1 + \frac{1}{\theta} - \frac{1}{r}} (\rho^{(V_{t})})^{1 + \frac{1}{\theta}},
\end{equation}
it suffices to show that
\begin{equation} \label{eq:last_main_id}
  \theta du = t^{\frac{1}{\theta} - \frac{1}{r}} (c^{(V_{t})})^{-\frac{\theta}{\theta + 1}} (\rho^{(V_{t})})^{\frac{1}{\theta}} dt.
\end{equation}
To this end, we introduce an intermediate variable $\c = c^{(V/t)}$. Note that by \eqref{eq:defn_c}, $\c=t^{1/r}c^{(V_{t})}$ depends on $t \in (0, 1]$ analytically. Thus, we can view $t$ and $u$ as functions of $\c$ locally, and the identity \eqref{eq:last_main_id} is reduced to
\begin{equation} \label{eq:alt_last}
  \theta \frac{du}{d\c} = t^{\frac{1}{\theta} - \frac{1}{r}} (c^{(V_{t})})^{-\frac{\theta}{\theta + 1}} (\rho^{(V_{t})})^{\frac{1}{\theta}} \frac{dt}{d\c}.
\end{equation}
We next evaluate the functions $dt/d\c$ and $du/d\c$ in \eqref{eq:alt_last} explicitly one by one. For $dt/d\c$, it follows from  \eqref{eq:defn_c} that
\begin{equation} \label{eq:dtau_dc}
  \frac{1}{2\pi i} \oint_{\gamma} \frac{\widehat{U}(J_{\c}(s))}{s} ds = (1 + \theta)t, \qquad  \widehat{U}(z) = zV'(z),
\end{equation}
where $\gamma$ is the counterclockwise oriented closed contour consisting of the union of $\gamma_1$ and $\gamma_2$ in Figure \ref{fig:mapJ}. Hence,
\begin{equation}\label{eq:dtaudc}
  \frac{dt}{d\c} = \frac{1}{\theta + 1} \frac{1}{2\pi i} \oint_{\gamma} \widehat{U}'(J_{\c}(s)) \left( \frac{s + 1}{s} \right)^{1 + \frac{1}{\theta}} ds.
\end{equation}
To evaluate $du/d\c$, we note that
\begin{align} \label{eq:simplify_integral_tau}
&\int^{b^{(V_{t})}}_0 (V''_{t}(y)y + V'_{t}(y)) \Im \left( \frac{-1}{1 + I^{(V_{t})}_+(y)} \right) dy \nonumber \\
    &=  \frac{1}{2i} \oint_{\gamma} \frac{V''_{t}(J_{c^{(V_{t})}}(s)) J_{c^{(V_{t})}}(s) + V'_{t}(J_{c^{(V_{t})}}(s))}{1 + s} d J_{c^{(V_{t})}}(s) \nonumber \\
    &=  \frac{1}{2i} \oint_{\gamma} \frac{\widehat{V}''_{t}(J_{\c}(s)) J_{\c}(s) + \widehat{V}'_{t}(J_{\c}(s))}{1 + s} d J_{\c}(s) = \frac{1}{2t i}\oint_{\gamma} \frac{V''(J_{\c}(s)) J_{\c}(s) + V'(J_{\c}(s))}{1 + s} d J_{\c}(s),
\end{align}
where $\widehat{V}_{t}(x) =  V(x)/t$ and we have made use of the fact that  $\c=t^{1/r}c^{(V_{t})}$ in the second equality. It then follows from \eqref{def:u}, \eqref{eq:defn_rho}, \eqref{def:d1} and \eqref{eq:simplify_integral_tau} that
\begin{equation} \label{eq:intro_U}
  u = \theta^{-1 - \frac{1}{\theta}} \c^{-1} \left( \frac{1}{2\pi i} \oint_{\gamma} \frac{\widehat{U}'(J_{\c}(s))}{1 + s} d J_{\c}(s) \right)^{1 + \frac{1}{\theta}},
\end{equation}
where $\widehat{U}(z)$ is defined in \eqref{eq:dtau_dc}. By taking derivatives on both sides of \eqref{eq:intro_U}, we obtain
\begin{multline}\label{eq:dudc}
  \frac{d u}{d \c} = \theta^{-1 - \frac{1}{\theta}} \c^{-2} \left( \frac{1}{2\pi i} \oint_{\gamma} \frac{\widehat{U}'(J_{\c}(s))}{1 + s} d J_{\c}(s) \right)^{\frac{1}{\theta}} \\
  \times \left( \frac{1}{\theta} \frac{1}{2\pi i} \oint_{\gamma} \frac{\widehat{U}'(J_{\c}(s))}{1 + s} d J_{\c}(s) + \frac{\theta + 1}{\theta} \frac{1}{2\pi i} \oint_{\gamma} \frac{\widehat{U}''(J_{\c}(s)) J_{\c}(s)}{1 + s} d J_{\c}(s) \right).
\end{multline}

Finally, substituting \eqref{eq:dtaudc}, \eqref{eq:intro_U} and \eqref{eq:dudc} into \eqref{eq:alt_last}, we find that the identity \eqref{eq:alt_last} reduces to
\begin{equation} \label{eq:final_identity_kernel}
  \frac{\c}{\theta + 1}  \oint_{\gamma} \widehat{U}'(J_{\c}(s)) \left( \frac{s + 1}{s} \right)^{1 + \frac{1}{\theta}} ds =   \frac{1}{\theta} \oint_{\gamma} \frac{\widehat{U}'(J_{\c}(s))}{1 + s} d J_{\c}(s) + \frac{\theta + 1}{\theta} \oint_{\gamma} \frac{\widehat{U}''(J_{\c}(s)) J_{\c}(s)}{1 + s} d J_{\c}(s).
\end{equation}
Since the left-hand side of \eqref{eq:final_identity_kernel} is equal to
\begin{equation}
  \c  \oint_{\gamma} \widehat{U}'(J_{\c}(s)) \left( \frac{s + 1}{s} \right)^{\frac{1}{\theta}} ds - \frac{\theta}{\theta + 1} \oint_{\gamma} \widehat{U}'(J_{\c}(s)) d J_{\c}(s) = \c \oint_{\gamma} \widehat{U}'(J_{\c}(s)) \left( \frac{s + 1}{s} \right)^{\frac{1}{\theta}} ds,
\end{equation}
we only need to show that
\begin{equation}\label{eq:finalreduc}
  \theta \c  \oint_{\gamma} \widehat{U}'(J_{\c}(s)) \left( \frac{s + 1}{s} \right)^{\frac{1}{\theta}} ds -  \oint_{\gamma} \frac{\widehat{U}'(J_{\c}(s))}{1 + s} d J_{\c}(s) - (\theta + 1)  \oint_{\gamma} \frac{\widehat{U}''(J_{\c}(s)) J_{\c}(s)}{1 + s} d J_{\c}(s)
  = 0.
\end{equation}
As one can check that the left-hand side of the formula above can be written as
\begin{equation}
  \frac{1}{2\pi i} \oint_{\gamma} \frac{d}{ds} \left( \theta \widehat{U}(J_{\c}(s)) - (\theta + 1) \c \widehat{U}'(J_{\c}(s)) \left( \frac{s + 1}{s} \right)^{\frac{1}{\theta}} \right) ds,
\end{equation}
the equality \eqref{eq:finalreduc} holds automatically.

\appendix
\section{The Meijer G-functions}\label{appendix:Meijer}
By definition, the Meijer G-function is given by the
following contour integral in the complex plane:
\begin{equation}\label{def:Meijer}
\MeijerG{m, n}{p, q}{a_1,\ldots,a_p}{b_1,\ldots,b_q}{z}
=\frac{1}{2\pi i}\int_L
\frac{\prod_{j=1}^m\Gamma(b_j+u)\prod_{j=1}^n\Gamma(1-a_j-u)}
{\prod_{j=m+1}^q\Gamma(1-b_j-u)\prod_{j=n+1}^p\Gamma(a_j+u)}z^{-u}
d u,
\end{equation}
where $\Gamma$ denotes the usual gamma function and the branch cut
of $z^{-u}$ is taken along the negative real axis. It is also
assumed that
\begin{itemize}
  \item $0\leq m\leq q$ and $0\leq n \leq p$, where $m,n,p$ and $q$
  are integer numbers;
  \item The real or complex parameters $a_1,\ldots,a_p$, and
  $b_1,\ldots,b_q$, satisfy the conditions
  \begin{equation*}
  a_k-b_j \neq 1,2,3, \ldots, \quad \textrm{for $k=1,2,\ldots,n$ and $j=1,2,\ldots,m$,}
  \end{equation*}
  i.e., none of the poles of $\Gamma(b_j+u)$, $j=1,2,\ldots,m$ coincides
  with any poles of $\Gamma(1-a_k-u)$, $k=1,2,\ldots,n$.
\end{itemize}
The contour $L$ is chosen in such a way that all the poles of
$\Gamma(b_j+u)$, $j=1,\ldots,m$ are on the left of the path, while
all the poles of $\Gamma(1-a_k-u)$, $k=1,\ldots,n$ are on the right,
which is usually taken to go from $-i\infty$ to $i\infty$. Most of the known special functions can be viewed as special cases of the Meijer G-functions. For more details, we refer to the references \cite{Beals-Szmigielski13,Luke69,Boisvert-Clark-Lozier-Olver10}.

\section{The Airy parametrix}\label{app:Airy}
Let $y_0$, $y_1$ and $y_2$ be the functions defined by
\begin{equation}
  y_0(\zeta) = \sqrt{2\pi}e^{-\frac{\pi i}{4}} \Ai(\zeta), \quad y_1(\zeta) = \sqrt{2\pi}e^{-\frac{\pi i}{4}} \omega\Ai(\omega \zeta), \quad y_2(\zeta) = \sqrt{2\pi}e^{-\frac{\pi i}{4}} \omega^2\Ai(\omega^2 \zeta),
\end{equation}
where $\Ai$ is the usual Airy function (cf. \cite[Chapter 9]{Boisvert-Clark-Lozier-Olver10}) and $\omega=e^{2\pi i/3}$. We then define a $2\times 2$ matrix-valued function $\Psi^{(\Ai)}$ by
\begin{equation}
\Psi^{(\Ai)}(\zeta)
= \left\{
    \begin{array}{ll}
      \begin{pmatrix}
      y_0(\zeta) &  -y_2(\zeta) \\
      y_0'(\zeta) & -y_2'(\zeta)
    \end{pmatrix}, & \hbox{$\arg \zeta \in (0,\frac{2\pi}{3})$,} \\
       \begin{pmatrix}
      -y_1(\zeta) &  -y_2(\zeta) \\
      -y_1'(\zeta) & -y_2'(\zeta)
    \end{pmatrix}, & \hbox{$\arg \zeta \in (\frac{2\pi}{3},\pi)$,} \\
       \begin{pmatrix}
      -y_2(\zeta) &  y_1(\zeta) \\
      -y_2'(\zeta) & y_1'(\zeta)
    \end{pmatrix}, & \hbox{$\arg \zeta \in (-\pi,-\frac{2\pi}{3})$,} \\
      \begin{pmatrix}
      y_0(\zeta) &  y_1(\zeta) \\
      y_0'(\zeta) & y_1'(\zeta)
    \end{pmatrix}, & \hbox{$\arg \zeta \in  (-\frac{2\pi}{3},0)$.}
    \end{array}
  \right.
\end{equation}
It is well-known that $\det (\Psi^{(\Ai)}(z))=1$ and $\Psi^{(\Ai)}(\zeta)$ is the unique solution of the following RH problem; cf. \cite{Deift99}.
\begin{RHP}\label{rhp:Ai}
\textrm{}
\begin{enumerate}
  \item[\rm (1)]   $ \Psi^{(\Ai)}(\zeta)$ is analytic in
  $\mathbb{C} \setminus \Gamma_{\Ai}$, where the contour $\Gamma_{\Ai}$ is defined in \eqref{def:AiryContour}
 with the orientation shown in Figure \ref{fig:jumps-Psi-A}.
\item [\rm (2)]
  For $z \in \Gamma_{\Ai}$, we have
  \begin{equation}
     \Psi^{(\Ai)}_+(\zeta) =  \Psi^{(\Ai)}_-(\zeta)
    \begin{cases}
      \begin{pmatrix}
        1 & 1 \\
        0 & 1
      \end{pmatrix},
      & \arg \zeta =0, \\
      \begin{pmatrix}
        1 & 0 \\
        1 & 1
      \end{pmatrix},
      & \arg \zeta = \pm \frac{2\pi }{3}, \\
      \begin{pmatrix}
        0 & 1  \\
        -1 & 0
      \end{pmatrix},
      & \arg \zeta = \pi.
    \end{cases}
  \end{equation}
\item [\rm (3)]
  As $\zeta \to \infty$, we have
    \begin{equation}
    \Psi^{(\Ai)}(\zeta)=\frac{1}{\sqrt{2}}\zeta^{-\frac14 \sigma_3}
\begin{pmatrix}
1 & 1
\\
-1 & 1
\end{pmatrix}e^{-\frac{\pi i}{4} \sigma_3}(I+\bigO(\zeta^{-\frac32}))e^{-\frac23 \zeta^{\frac32}\sigma_3}.
\end{equation}
\item [\rm (4)]
  As $\zeta \to 0$, we have $\Psi^{(\Ai)}(\zeta) = \bigO(1)$ and $\Psi^{(\Ai)}(\zeta)^{-1} = \bigO(1)$, which is understood in an entry-wise manner.
\end{enumerate}
\end{RHP}

\begin{figure}[t]
\begin{center}
   \setlength{\unitlength}{1truemm}
   \begin{picture}(100,70)(-5,2)
       \put(40,40){\line(-2,-3){16}}
       \put(40,40){\line(-2,3){16}}
       \put(40,40){\line(-1,0){30}}
       \put(40,40){\line(1,0){30}}

       \put(30,55){\thicklines\vector(2,-3){1}}
       \put(30,40){\thicklines\vector(1,0){1}}
       \put(50,40){\thicklines\vector(1,0){1}}
       \put(30,25){\thicklines\vector(2,3){1}}

        \put(39,36.3){$0$}
       \put(40,40){\thicklines\circle*{1}}
\end{picture}
   \caption{The jump contour $\Gamma_{\Ai}$ for the RH problem \ref{rhp:Ai} for $\Psi^{(\Ai)}$.}
   \label{fig:jumps-Psi-A}
\end{center}
\end{figure}
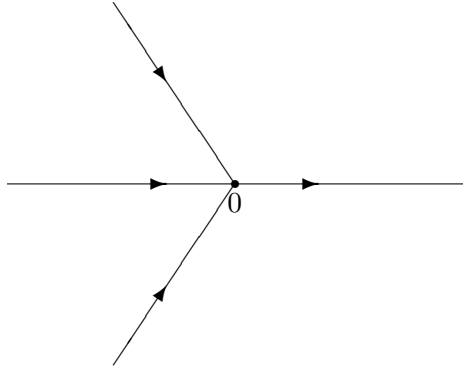

\section*{Acknowledgements}
We thank the anonymous referees for their careful reading and constructive suggestions. Dong Wang was partially supported by Singapore AcRF Tier 1 Grant R-146-000-217-112, National Natural Science Foundation of China under grant number 11871425, and the University of Chinese Academy of Sciences start-up grant 118900M043. Lun Zhang was partially supported by National Natural Science Foundation of China under grant number 11822104, ``Shuguang Program'' supported by Shanghai Education Development Foundation and Shanghai Municipal Education Commission, and The Program for Professor of Special Appointment (Eastern Scholar) at Shanghai Institutions of Higher Learning. We thank Arno Kuijlaars and Leslie Molag for helpful discussions at the very early stage of the project.


\end{document}